\theoremstyle{definition}
\newtheorem{theorem}{Theorem}[subsection]
\newtheorem{corollary}[theorem]{Corollary}
\newtheorem{lemma}[theorem]{Lemma}
\newtheorem{proposition}[theorem]{Proposition}
\newtheorem{remark}[theorem]{Remark}
\newtheorem{definition}[theorem]{Definition}
\newtheorem{fact}[theorem]{Fact}
\newcommand{\st}{\text{st}}
\newcommand{\acl}{\text{acl}}
\newcommand{\dcl}{\text{dcl}}
\newcommand{\RCF}{\text{RCF}}
\newcommand{\ODVF}{\text{ODVF}}
\newcommand{\CODF}{\text{CODF}}
\newcommand{\CODVF}{\text{CODVF}}
\newcommand{\tp}{\text{tp}}
\newcommand{\Th}{\text{Th}}
\newcommand{\cl}{\text{cl}}
\newcommand{\Jet}{\text{Jet}}
\newcommand{\rk}{\text{rk}}
\DeclarePairedDelimiterX{\inp}[2]{\langle}{\rangle}{#1, #2}
\begin{document}
\begin{center}
\fontsize{13pt}{10pt}\selectfont
    \textsc{\textbf{T-Convexity, Tame Extensions and Definability of Hausdorff Limits in O-minimal Structures with Generic Derivations}}
    \end{center}
\vspace{0.1cm}
\begin{center}
   \fontsize{12pt}{10pt}\selectfont
    \textsc{Xiaoduo Wang}
\end{center}
\vspace{0.2cm}

\begin{abstract}
Let $T$ be a complete, model complete o-minimal extension of the theory of real closed fields in a fixed language $\mathcal{L}$. A $T$-convex subring is a convex subring that is closed under every $0$-definable continuous function, and a $T$-derivation is a derivation that is compatible with all $0$-definable $\mathcal{C}^{1}$-functions with open domains. The theory $T_{\text{convex}}$ extending $T$ by requiring that each model has a $T$-convex subring admits a relative quantifier elimination result and has been studied extensively by van den Dries and Lewenberg. On the other hand, the theory $T^{\delta}$ extending $T$ by requiring that each model be equipped with a $T$-derivation admits a model completion $T^{\delta}_{g}$, which has been studied thoroughly by Fornasiero and Kaplan. In this paper, we are going to combine the theories $T_{\text{convex}}$ and $T^{\delta}$ and show that the combined theory has a model completion $T^{\delta}_{\text{g,convex}}$. By adding an additional definable unary function $\st$, we will also give a relative quantifier elimination result for the theory of pairs $(\mathcal{M},\delta^{\mathcal{M}},\st^{\mathcal{M}},\mathcal{N},\delta^{\mathcal{N}},\st^{\mathcal{N}})$ such that $(\mathcal{M},\delta^{\mathcal{M}})$ is a model of $T^{\delta}_{g}$, $\st$ is the standard part map, $(\mathcal{N},\delta^{\mathcal{N}},\st^{\mathcal{N}})$ is a proper elementary substructure of $(\mathcal{M},\delta^{\mathcal{M}},\st^{\mathcal{M}})$, and $N$ is Dedekind complete in $M$; we call such pairs tame. An application of the relative quantifier elimination result is that, if $(\mathcal{M},\delta^{\mathcal{M}},\st^{\mathcal{M}},\mathcal{N},\delta^{\mathcal{N}},\st^{\mathcal{N}})$ is a tame pair of $T^{\delta}_{g}$, then $(\mathcal{N},\delta^{\mathcal{N}},\st^{\mathcal{N}})$ is stably embedded into $(\mathcal{M},\delta^{\mathcal{M}},\st^{\mathcal{M}})$. Lastly, we associate a sequence of $0$-definable metric topologies with models of $T^{\delta}_{g}$ and prove the Marker–Steinhorn Theorem for $T^{\delta}_{g}$. We also present the major application of the Marker–Steinhorn Theorem and the stable embedding property, that is the definability of Hausdorff limits. It is worth noting that a special case of the theory $T^{\delta}$ is the theory $\CODF$, and the quantifier elimination results of $\CODF_{\text{convex}}$ and $\CODF_{\text{tame}}$ were studied by Borrata.
\end{abstract}

\tableofcontents

\section{Introduction}

If $\mathcal{L}$ is any first-order language and $A$ is a set of parameters, we write ``$X$ is $\mathcal{L}(A)$-definable'' to mean that $X$ is defined by some $\mathcal{L}$-formula with parameters from $A$. In particular, we write ``$X$ is $\mathcal{L}(\varnothing)$-definable'' to mean that $X$ is both $\mathcal{L}$-definable and $0$-definable. The tame extensions in o-minimal structures have been extensively studied in the past. For a totally ordered abelian group structure $\mathcal{M}$, an element $c \in M$, and subsets $A,B \subseteq M$, we denote $|A|$ as the set of all absolute values of elements in $A$, that is, $|A| = \{|a|:a \in A\}$, and we write $c<A$ to mean that $c<a$ for every $a \in A$ and $A<B$ to mean that $a<b$ for every $a \in A$ and $b \in B$. We denote $c>A$ and $A>B$ likewise. 

Let $\mathcal{N} \preceq \mathcal{M}$. We say that the pair $(\mathcal{M},\mathcal{N})$ is \textbf{tame} if for every $a \in M$, either $|a|>N$, in which case we say $a$ is \textbf{infinite} with respect to $N$, or there exists some $b \in N$ such that $b$ is \textbf{infinitesimal} to $a$ (or $b-a$ is an \textbf{infinitesimal element}), that is $|b-a| < |N| \backslash \{0\}$. In the case that there exists such $b \in N$ that is infinitesimal to $a$, we say that $a$ is \textbf{$N$-bounded}, and we call $b$ the \textbf{standard part} of $a$, denoted by $\st(a) = b$. It is an easy exercise to see that the standard part of an element is necessarily unique and the function $\st:M \rightarrow N$ defined by
\[
\st(a) = 
\begin{cases}
b, & \text{if $b \in N$ and $b-a$ is an infinitesimal element} \\
0, & \text{otherwise}
\end{cases}
\]
is definable in the pair structure $(\mathcal{M},\mathcal{N})$. This function is called the \textbf{standard part map} induced on $\mathcal{M}$ by $\mathcal{N}$.

A well-known theorem that illustrates the connection between tame extension and the definability of types is the Marker-Steinhorn theorem in \cite{MarkerSteinhorn1994}. The theorem says that $\mathcal{M} \succeq \mathcal{N}$ is a tame extension if and only if for every nonnegative integer $n$ and every $n$-type $p(\overline{x}) \in S_{n}(N)$, $p$ is definable. This theorem covers all o-minimal structures (including those not expanding groups), but in this paper we only consider o-minimal structures expanding fields. After that, van den Dries and Lewenberg studied the tame extensions of o-minimal structures with the field structures in \cite{vandendriesLewenberg1995} by first proving results on the theory of $T$-convex subrings, denoted $T_{\text{convex}}$, which are models of $T$ with convex subrings that are closed under $0$-definable functions. They showed that $T_{\text{convex}}$ has relative quantifier elimination, and by identifying the convex hulls of tame elementary substructures as $T$-convex subrings, they showed that if $T$ is a complete o-minimal theory extending the theory of $\RCF$, then the theory $T_{\text{tame}}$ of all proper elementary pairs $\mathcal{N} \preceq \mathcal{M}$ of models of $T$ such that $\mathcal{M}$ is a tame extension of $\mathcal{N}$, together with a new unary function symbol $\st$ which is the standard part function, is complete and has quantifier elimination if $T$ has quantifier elimination and is universally axiomatizable.

An important consequence is the stable embedding property, which says that if $\mathcal{N} \preceq \mathcal{M}$ is a tame pair, then for any set $X \subseteq M^{n}$ that is definable in the structure $(\mathcal{M},\mathcal{N})$, its trace in $\mathcal{N}$, namely the set $X \cap N^{n}$, is definable in the structure $\mathcal{N}$. Later, van den Dries showed that Hausdorff limits of a definable family in an o-minimal structure extending the real field, form a definable family itself \cite{Lisbon2003}, and this is a geometric interpretation of the stable embedding property of tame extensions (many describe the definability of Hausdorff limits as the geometric interpretation of the Marker-Steinhorn Theorem).

Another extensively studied structure in model theory is a differential ring. Let $\mathcal{M}$ be a ring. A function $\delta:M \rightarrow M$ is called a \textbf{derivation} on $M$ if it is an additive group homomorphism and the multiplication satisfies the Leibniz rule: $\delta(xy) = y\delta x+x\delta y$. In 1978, Singer showed that the theory of real closed fields with a derivation, has a model completion which is called the theory of closed ordered differential fields, or simply $\CODF$ \cite{Singer1978}. The theory $\CODF$ has been studied extensively since then. For example, in \cite{BrihayeMichauxRivière2009}, it is shown that $\CODF$ has a well-defined cell decomposition theorem and a dimension theory, and in \cite{Point2011}, it is shown that $\CODF$ has open o-minimal core and admits elimination of imaginaries.

It is then natural to wonder if the theory of closed ordered differential fields with a convex subring and the theory of tame pairs of closed ordered differential fields are also complete theories with nice model-theoretic properties. The answers to these questions are positive, and they have been established by Borrata in her thesis \cite{Borrata2021}. She showed that the theory of ordered differential valued fields with a convex valuation ring, denoted $\ODVF$, has a model completion, and it is exactly the theory of closed ordered differential value fields, or simply $\CODVF$. She also showed that the theory of tame pairs $(\mathcal{M},\mathcal{N},\delta)$ of real closed fields, denoted $\RCF_{\text{tame}}^{\delta}$, that is, $\mathcal{N} \preceq \mathcal{M}$ is proper and tame, the structures $(\mathcal{M},\delta),(\mathcal{N},\delta|_{N})$ are differential rings, has a model completion which is denoted by her as $\CODF_{\text{tame}}$. She also proved relative quantifier elimination results for both $\CODVF$ and $\CODF_{\text{tame}}$, and she showed that the stable embedding property holds for $\CODF_{\text{tame}}$.

In recent years, Fornasiero and Kaplan studied a generalization of the theory $\CODF$ in \cite{FornasieroTerzo2024}. For a complete and model complete o-minimal theory $T$ extending the theory $\RCF$ in a language $\mathcal{L}$, they denoted by $\mathcal{L}^{\delta}$ the language $\mathcal{L} \cup \{\delta\}$, where $\delta$ is a unary function symbol, and they denoted the theory $T^{\delta}$ as the theory containing $T$ and an additional axiom schema saying that $\delta$ is \textbf{compatible} with every $\mathcal{L}(\varnothing)$-definable $\mathcal{C}^{1}$ function $f:U \rightarrow M$ with $U \subseteq M^{n}$ open, in the following sense:
\[
\delta f(\overline{u}) = \textbf{J}_{f}(\overline{u})\delta\overline{u},
\]
for each $\overline{u} \in U$, where $\textbf{J}_{f}(\overline{u})$ is the Jacobian matrix of $f$ at $\overline{u}$ (the Jacobian is computed with respect to the standard derivative on real closed fields; see Chapter 7 of \cite{vandenDries2003} for details on differentiations on o-minimal structures extending fields). It is not hard to see that a $T$-derivation is indeed a derivation (see Lemma 2.2 in \cite{FornasieroKaplan2020}). They showed that $T^{\delta}$ has a model completion $T^{\delta}_{g}$, which is $T^{\delta}$ with extra axioms of genericity. They also showed many properties of $\CODF$, such as having a relative quantifier elimination, a dimension theory, an $T$ open core and NIP, also hold in $T^{\delta}_{g}$. This raises the question of whether it is possible to generalize Borrata's results as well in a suitable sense, and whether one can make sense of the definability of Hausdorff limits as a geometric interpretation of the stable embedding property. In this paper, we show that all of the above have positive answers.

Relatedly, Kaplan and Pynn-Coates show that the theory of o-minimal structures expanding $\RCF$ with a $T$-derivation and a $T$-convex subring admits a model completion and is distal when the derivation is monotone in the sense of \cite{KaplanPynn-Coates2025}. We do not impose such a monotonicity constraint on the derivation in this paper.

Let $T$ be a complete and model complete o-minimal theory extending $\RCF$ in a language $\mathcal{L}$. In Section \ref{Section 2}, we present the necessary preliminaries. In Section \ref{Section 3}, we study the theory $T_{\text{convex}}^{\delta}$ of models of $T$ equipped with a $T$-derivation and a $T$-convex subring, and show that it has a model completion $T_{g,\text{convex}}^{\delta}$ (see Corollary \ref{Model Completeness of T Convex Delta G}). Moreover, if $T$ has quantifier elimination and is universally axiomatizable, then $T_{g,\text{convex}}^{\delta}$ also has quantifier elimination (see Theorem \ref{Quantifier Elimination of T Convex Delta G}). We also show that $T_{g,\text{convex}}^{\delta}$ is distal (see Proposition \ref{T delta g convex is distal}). In Section \ref{Section 4}, we consider the theory $T_{\text{tame}}^{\delta}$ of proper elementary tame pairs of models of $T^{\delta}$ expanded by a standard part map, and show that it also has a model completion $T_{g,\text{tame}}^{\delta}$ (see Corollary \ref{T tame G Complete and Model Complete}). Furthermore, if $T$ has quantifier elimination and is universally axiomatizable, then so does $T_{g,\text{tame}}^{\delta}$ (see Theorem \ref{T tame G QE}). As a consequence, $T_{g,\text{tame}}^{\delta}$ satisfies the stable embedding property (see Proposition \ref{Stable Embedding of T Tame delta G}) and has NIP (see Proposition \ref{T delta g tame has NIP}). Finally, in Section \ref{Section 5}, we focus on the geometry of models of $T^{\delta}_{g}$. We first introduce a sequence of $\mathcal{L}^{\delta}$-definable distance functions that approximate a natural but not definable topology, the $\delta$-topology, on models of $T^{\delta}_{g}$. We then prove the Marker–Steinhorn Theorem for $T^{\delta}_{g}$ (see Proposition \ref{Marker-Steinhorn for Derivation}). We then show that the Hausdorff limit of a sequence from a definable family is itself definable (see Theorem \ref{Hausdorff limits are definable}), and under an additional assumption on the pairs, that the collection of such Hausdorff limits forms a definable family (see Theorem \ref{Hausdorff Limits form a Definable Family}).

\section{Preliminaries} \label{Section 2}

In this section, we set up conventions, establish notation and list all necessary preliminary lemmas. We assume basic knowledge in model theory (for more details, see the introductory texts \cite{Marker2011} or \cite{TentZiegler2012}). For the rest of this paper, the theory $T$ is a complete and model complete o-minimal theory extending the theory of real closed fields, in the language $\mathcal{L}$. Let $\mathcal{M}$ be a model of $T$. A \textbf{$T$-convex subring} $V \subseteq M$ is a convex subring of $M$ such that $f(V) \subseteq V$ for every $\mathcal{L}(\varnothing)$-definable continuous function $f:M \rightarrow M$. A derivation $\delta$ on $M$ is called a \textbf{$T$-derivation} on $M$ if it is compatible (as defined in the introduction) with every $\mathcal{L}(\varnothing)$-definable $\mathcal{C}^{1}$-function $f:U \rightarrow M$ with $U \subseteq M^{n}$ open (in the Euclidean topology).

Let $\mathcal{L}_{\text{convex}}$ be the language $\mathcal{L}$ expanded by a unary relation symbol $V$, and let $\mathcal{L}^{\delta}$ be the language $\mathcal{L}$ expanded by a unary function symbol $\delta$. Let $\mathcal{L}^{\delta}_{\text{convex}} := \mathcal{L}_{\text{convex}} \cup \mathcal{L}^{\delta}$. We define $T_{\text{convex}}$ to be the $\mathcal{L}_{\text{convex}}$-theory of pairs $(\mathcal{M},V)$ where $\mathcal{M} \models T$ and $V$ is a proper $T$-convex subring. We also define $T^{\delta}$ to be the $\mathcal{L}^{\delta}$-theory of pairs $(\mathcal{M},\delta)$ where $\mathcal{M} \models T$ and $\delta$ is a $T$-derivation on $\mathcal{M}$. Finally, set $T_{\text{convex}}^{\delta} := T_{\text{convex}} \cup T^{\delta}$.

Let $\mathcal{L}_{\text{tame}}$ be the language $\mathcal{L}$ expanded by a unary predicate symbol $U$ and a unary function symbol $\st$. We define $T_{\text{tame}}$ to be the $\mathcal{L}_{\text{tame}}$-theory of proper tame elementary pairs of models of $T$; that is, an $\mathcal{L}_{\text{tame}}$-structure $(\mathcal{M},\mathcal{N},\st)$ satisfies $T_{\text{tame}}$ if $\mathcal{M},\mathcal{N} \models T$, $\mathcal{M}$ is a proper elementary tame extension of $\mathcal{N}$, and $\st:M \rightarrow N$ is the standard part function (defined in the introduction) induced on $\mathcal{M}$ by $\mathcal{N}$. We define $T_{\text{tame}}^{-}$ to be the same as $T_{\text{tame}}$, except that we also allow $\mathcal{N} = \mathcal{M}$. Set $\mathcal{L}_{\text{tame}}^{\delta} := \mathcal{L}_{\text{tame}} \cup \{\delta\}$. We define $T_{\text{tame}}^{\delta}$ to be the theory such that $(\mathcal{M},\mathcal{N},\st,\delta) \models T_{\text{tame}}^{\delta}$ if $(\mathcal{N},\delta|_{N}), (\mathcal{M},\delta) \models T^{\delta}$, and $(\mathcal{M},\mathcal{N},\st) \models T_{\text{tame}}$. Finally, we define $T_{\text{tame}}^{\delta,-}$ to be the same as $T_{\text{tame}}^{\delta}$, except that we also allow $\mathcal{N} = \mathcal{M}$.

For any language $\mathcal{L}_{1}$ theory $T_{1}$, let $(\mathcal{L}_{1})^{\text{df}}$ be the language obtained by adding a function symbol $f$ for every $\mathcal{L}_{1}(\varnothing)$-definable function $f$. Let $(T_{1})^{\text{df}}$ denotes the theory $T_{1}$ together with additional axioms stating that each such function symbol $f$ is exactly the corresponding $\mathcal{L}_{1}(\varnothing)$-definable function $f$. It is well known that o-minimal theories have definable Skolem functions; hence $T^{\text{df}}$ has quantifier elimination and is universally axiomatizable.

If $(\mathcal{M},\delta) \subseteq (\mathcal{N},\delta)$ are $\mathcal{L}^{\delta}$-structures and $A \subseteq N$, then $\mathcal{M}\langle A \rangle$ denotes the $\mathcal{L}$-structure generated by $M$ and $A$, and $\mathcal{M}\langle A \rangle_{\delta}$ denotes the $\mathcal{L}^{\delta}$-structure generated by $M$ and $A$. We let $\dcl_{\mathcal{M}}(A)$ denote the definable closure of $A$ in the $\mathcal{L}$-structure $\mathcal{M}$. It is well known in model theory that if $b \in \dcl_{\mathcal{M}}(A)$, then there exist an $\mathcal{L}(\varnothing)$-definable function $f : M^{n} \to M$ and a tuple $\overline{a} \in A^{n}$ such that $f(\overline{a}) = b$. It follows immediately that definable closure is invariant under taking elementary extensions. A set $B \subseteq M$ is said to be \textbf{$\dcl_{\mathcal{M}}$-independent over $A$} if $b \notin \dcl_{\mathcal{M}}(A \cup (B \backslash \{b\}))$ for all $b \in B$. It is well known in o-minimality that if $\mathcal{M} \models T$ and $A \subseteq M^{n}$, then $\dcl_{\mathcal{M}}(A)$ is an elementary substructure of $\mathcal{M}$, and $(\mathcal{M},\dcl_{\mathcal{M}})$ is a pregeometry. Let $\rk_{\mathcal{L}}$ denote the rank function associated with $(\mathcal{M},\dcl_{\mathcal{M}})$.

In later sections, we will use the following facts well-known to model theorists.

\begin{fact} (Quantifier Elimination Test) \label{Quantifier Elimination Test}
Let $\mathcal{L}_{1}$ be a language and $T_{1}$ be an $\mathcal{L}_{1}$-theory without finite models. Then $T_{1}$ has quantifier elimination if and only if the following holds: for any models $\mathcal{M},\mathcal{N} \models T_{1}$, if $\mathcal{U}$ is a common substructure of $\mathcal{M},\mathcal{N}$ with $U \neq M$, there exists $\alpha \in M \backslash U$, an elementary extension $\mathcal{N}_{1}$ of $\mathcal{N}$ and an embedding $f:U\langle\alpha\rangle \rightarrow N_{1}$ over $U$.
\end{fact}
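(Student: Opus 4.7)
The plan is to run the standard proof of the quantifier elimination test, handling each direction in turn.

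For the forward direction ($\Rightarrow$), suppose $T_1$ has quantifier elimination. Given $\mathcal{M}, \mathcal{N} \models T_1$ with common substructure $\mathcal{U}$ and $U \neq M$, I pick any $\alpha \in M \setminus U$ and set $p(x) := \text{qftp}^{\mathcal{M}}(\alpha / U)$. For each finite conjunction $\varphi(\overline{u}, x)$ of formulas in $p$, quantifier elimination produces a quantifier-free $\psi(\overline{y})$ such that $T_1 \vdash \exists x\, \varphi(\overline{y}, x) \leftrightarrow \psi(\overline{y})$. Since $\mathcal{M} \models \psi(\overline{u})$ and $\mathcal{U}$ is a common substructure, $\mathcal{N} \models \psi(\overline{u})$, whence $\mathcal{N} \models \exists x\, \varphi(\overline{u}, x)$. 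By compactness, $p(x)$ is consistent with the elementary diagram of $\mathcal{N}$ over $U$, so it is realized by some $\beta$ in an elementary extension $\mathcal{N}_{1} \succeq \mathcal{N}$; the assignment $\alpha \mapsto \beta$ extends to the required $\mathcal{L}_1$-embedding $\mathcal{U}\langle\alpha\rangle \to \mathcal{N}_1$ over $U$.

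For the reverse direction ($\Leftarrow$), the key step is the intermediate claim: if $\mathcal{M}, \mathcal{N} \models T_1$ share the common substructure $\mathcal{U}$ and $\mathcal{N}$ is $|M|^{+}$-saturated, then there is an $\mathcal{L}_1$-embedding $\mathcal{M} \hookrightarrow \mathcal{N}$ over $\mathcal{U}$. By Zorn's lemma, fix a maximal $\mathcal{L}_1$-embedding $g: \mathcal{U}' \hookrightarrow \mathcal{N}$ extending the inclusion, with $\mathcal{U} \subseteq \mathcal{U}' \subseteq \mathcal{M}$. If $U' \neq M$, the one-point hypothesis (applied to $\mathcal{M}$ and $\mathcal{N}$ with common substructure $\mathcal{U}'$ identified via $g$) yields $\alpha \in M \setminus U'$, some $\mathcal{N}_1 \succeq \mathcal{N}$, and an embedding $h: \mathcal{U}'\langle\alpha\rangle \to \mathcal{N}_1$ extending $g$. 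By elementarity, the $\mathcal{L}_1$-type of $h(\alpha)$ over $g(U')$ is consistent with $\Th(\mathcal{N}, g(U'))$, and $|U'|^{+}$-saturation of $\mathcal{N}$ realizes it by some $\beta \in N$. Then $\alpha \mapsto \beta$ properly extends $g$, contradicting maximality. To conclude quantifier elimination, I then invoke the standard back-and-forth argument: if $\overline{a} \in M_1$ and $\overline{b} \in M_2$ are tuples from models of $T_1$ with the same quantifier-free type, then identifying $\langle\overline{a}\rangle$ with $\langle\overline{b}\rangle$ and passing to sufficiently saturated elementary extensions of both, alternating applications of the intermediate claim produce a partial $\mathcal{L}_1$-isomorphism witnessing $\tp(\overline{a}) = \tp(\overline{b})$. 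Hence every formula is $T_1$-equivalent to a quantifier-free one.

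The principal technical subtlety is in the reverse direction: the hypothesis only extends embeddings into an \emph{elementary extension} of $\mathcal{N}$, whereas the Zorn argument needs an extension that lands inside $\mathcal{N}$ itself. The saturation assumption in the intermediate claim bridges this gap, and that assumption is in turn discharged by running the back-and-forth inside sufficiently saturated elementary extensions.
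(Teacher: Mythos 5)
The paper states this Quantifier Elimination Test as a background fact, well known to model theorists, and gives no proof of its own, so there is nothing to compare against beyond the standard argument. Your proof is correct and is exactly that standard argument: the forward direction via transferring the quantifier-free type of $\alpha$ over $U$ through a quantifier-free equivalent of each existential formula and realizing it by compactness in an elementary extension of $\mathcal{N}$, and the converse via Zorn's lemma plus $|M|^{+}$-saturation to absorb the elementary extension $\mathcal{N}_{1}$ back into $\mathcal{N}$, followed by the usual back-and-forth (more precisely, an alternating chain of embeddings whose union is an isomorphism of elementary extensions) and the separation lemma to conclude that quantifier-free types determine complete types, hence quantifier elimination.
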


\begin{fact} (Model Completion Test) \label{Model Completion Test}
Let $T_{1}$ and $(T_{1})^{*}$ be theories in the same language $\mathcal{L}_{1}$ such that $T_{1} \subseteq (T_{1})^{*}$. Then $(T_{1})^{*}$ is the model completion of $T_{1}$ and $(T_{1})^{*}$ has quantifier elimination if and only if 
\begin{itemize}
    \item[(i)] for every $\mathcal{M} \models T_{1}$, for every $\varphi_{1},...,\varphi_{n} \in (T_{1})^{*}$, there exists $\mathcal{N} \models (T_{1})^{*}$ such that $\mathcal{M} \subseteq \mathcal{N}$ and $\mathcal{N} \models \varphi_{1} \wedge \cdots \wedge \varphi_{n}$;
    \item[(ii)] for every $\mathcal{L}_{1}$-structures $\mathcal{A},\mathcal{B},\mathcal{C}$ such that $\mathcal{B} \models T_{1}$ and $\mathcal{C} \models (T_{1})^{*}$, and $A$ is a common substructure of $\mathcal{B}$ and $\mathcal{C}$, for every quantifier-free $\mathcal{L}_{1}(A)$-formula $\varphi(x)$, and every $b \in B$ such that $\mathcal{B} \models \varphi(b)$, there exists $c \in C$ such that $\mathcal{C} \models \varphi(c)$.
\end{itemize}
\end{fact}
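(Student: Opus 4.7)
The plan is to prove the two directions separately: the forward direction essentially unpacks the definition of a model completion, while the backward direction combines (ii) with the existential-transfer form of Robinson's quantifier elimination test.

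For the forward direction, assume $(T_{1})^{*}$ is the model completion of $T_{1}$ with quantifier elimination. Condition (i) is immediate: given $\mathcal{M} \models T_{1}$, the model completion property provides an extension $\mathcal{N} \models (T_{1})^{*}$, and every $\varphi_{i} \in (T_{1})^{*}$ trivially holds in $\mathcal{N}$. For (ii), first extend $\mathcal{B} \models T_{1}$ to some $\mathcal{B}' \models (T_{1})^{*}$ over the substructure $\mathcal{A}$; then $\mathcal{B}'$ and $\mathcal{C}$ both model $(T_{1})^{*} \cup \Diag(\mathcal{A})$, which is complete because $(T_{1})^{*}$ is a model completion of $T_{1}$. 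Hence the existential $\exists x\,\varphi(x)$, witnessed in $\mathcal{B}'$ by $b$, also holds in $\mathcal{C}$, producing the required $c$.

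For the backward direction, assume (i) and (ii). First I would derive QE for $(T_{1})^{*}$ directly from (ii): given $\mathcal{M}_{1}, \mathcal{M}_{2} \models (T_{1})^{*}$ sharing a common substructure $\mathcal{A}$, apply (ii) with $\mathcal{B} := \mathcal{M}_{1}$ (legitimate since $T_{1} \subseteq (T_{1})^{*}$ makes $\mathcal{M}_{1}$ a model of $T_{1}$) and $\mathcal{C} := \mathcal{M}_{2}$ to transfer any existential witness from $\mathcal{M}_{1}$ to $\mathcal{M}_{2}$, which is the existential-transfer form of QE. Next, condition (i) together with a compactness argument on $\Diag(\mathcal{M}) \cup (T_{1})^{*}$ shows that every model of $T_{1}$ embeds into a model of $(T_{1})^{*}$. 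Finally, for every $\mathcal{M} \models T_{1}$, the theory $(T_{1})^{*} \cup \Diag(\mathcal{M})$ is complete: by QE every $\mathcal{L}_{1}(M)$-sentence is $(T_{1})^{*}$-equivalent to a quantifier-free one, and all such sentences are decided by $\Diag(\mathcal{M})$.

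The main obstacle is really the packaging: one must recognize that the asymmetric form of (ii) — requiring only $\mathcal{B} \models T_{1}$ but $\mathcal{C} \models (T_{1})^{*}$ — simultaneously implies QE for $(T_{1})^{*}$ (by specializing $\mathcal{B}$ to be a model of $(T_{1})^{*}$) and is exactly what is needed to verify the model completion property against arbitrary models of $T_{1}$. A secondary subtlety, already invoked in the forward direction, is that extending $\mathcal{B} \models T_{1}$ to a model of $(T_{1})^{*}$ while preserving $\mathcal{A}$ requires compactness applied to $\Diag(\mathcal{B}) \cup (T_{1})^{*}$; the consistency of this theory is precisely the content of (i) when $\mathcal{B}$ plays the role of $\mathcal{M}$.
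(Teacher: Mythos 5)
The paper does not prove this statement at all: it is quoted as a standard Fact (a version of the Blum/Robinson model-completion criterion), so there is no in-paper argument to compare against; your proposal has to stand on its own, and it essentially does. The backward direction is correct and well organized: (ii) specialised to $\mathcal{B},\mathcal{C}\models (T_{1})^{*}$ is exactly the one-variable existential-transfer criterion for quantifier elimination; (i) plus compactness applied to $(T_{1})^{*}\cup\Diag(\mathcal{M})$ gives the embedding of every model of $T_{1}$ into a model of $(T_{1})^{*}$ (and your compactness step is precisely what is needed under the intended reading of (i), in which the finitely many axioms $\varphi_{1},\dots,\varphi_{n}$ are satisfied in an extension that is only required to model $T_{1}$; as literally printed, with $\mathcal{N}\models (T_{1})^{*}$, the conjunction is redundant); and completeness of $(T_{1})^{*}\cup\Diag(\mathcal{M})$ then follows since QE reduces every $\mathcal{L}_{1}(M)$-sentence to a quantifier-free one, which $\Diag(\mathcal{M})$ decides.

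The one point to repair is in the forward direction: you justify the completeness of $(T_{1})^{*}\cup\Diag(\mathcal{A})$ by appealing to the model-completion property, but $\mathcal{A}$ is only a common substructure of $\mathcal{B}$ and $\mathcal{C}$ and need not be a model of $T_{1}$, so the model-completion property (which speaks only of $\Diag(\mathcal{M})$ for $\mathcal{M}\models T_{1}$) does not apply to it. Indeed, a model completion need not be substructure-complete in general (e.g.\ $\RCF$ in the pure ring language is its own model completion but lacks quantifier elimination), so this step genuinely needs the other half of the hypothesis: since $(T_{1})^{*}$ is assumed to have quantifier elimination, any two models of $(T_{1})^{*}$ containing the common substructure $\mathcal{A}$ satisfy the same $\mathcal{L}_{1}(A)$-sentences, because each such sentence is $(T_{1})^{*}$-equivalent to a quantifier-free one decided by $\Diag(\mathcal{A})$. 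With that substitution (QE in place of the model-completion property as the reason for completeness over $A$), your argument for (ii) — extend $\mathcal{B}$ to $\mathcal{B}'\models(T_{1})^{*}$ and transfer the existential witness from $\mathcal{B}'$ to $\mathcal{C}$ — goes through, and the proof as a whole is correct.
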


\begin{fact} (Stone Duality Theorem) \label{Stone Duality Theorem}
Let $\mathcal{L}_{1}$ be a language, $T_{1}$ a complete $\mathcal{L}_{1}$-theory and $n \in \mathbb{N}$. Let $\Phi(\overline{x})$ be a set of $\mathcal{L}_{1}$-formulas, where $|\overline{x}| = n$, that is closed under disjunctions and conjunctions and contains $\top,\bot$ up to $T_{1}$-equivalence. Let $\psi(\overline{x})$ be an $\mathcal{L}_{1}$-formula. Then the following are equivalent:
\begin{itemize}
    \item[(i)] $\psi$ is $T_{1}$-equivalent to some formula from $\Phi$;
    \item[(ii)] for all $n$-types $p,q \in S_{n}(T_{1})$, if $\psi \in p$ and $\neg \psi \in q$, then there exists $\theta \in \Phi$ such that $\theta \in p$ and $\neg \theta \in q$.
\end{itemize}
\end{fact}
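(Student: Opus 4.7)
The plan is to prove (i)$\Rightarrow$(ii) directly and to handle the substantive direction (ii)$\Rightarrow$(i) by two nested compactness arguments on the Stone space $S_{n}(T_{1})$. Direction (i)$\Rightarrow$(ii) is immediate: if $\psi$ is $T_{1}$-equivalent to some $\theta \in \Phi$, then for every $p$ with $\psi \in p$ and every $q$ with $\neg\psi \in q$, the same $\theta$ witnesses the separation.

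For (ii)$\Rightarrow$(i), I would let $P := [\psi]$ and $Q := [\neg\psi]$ be the two disjoint clopen subsets of $S_{n}(T_{1})$ whose union is $S_{n}(T_{1})$. Hypothesis (ii) supplies, for each $(p,q) \in P \times Q$, a formula $\theta_{p,q} \in \Phi$ with $\theta_{p,q} \in p$ and $\neg\theta_{p,q} \in q$. My first step is to fix $p \in P$ and observe that the set $\{\theta_{p,q} : q \in Q\} \cup \{\neg\psi\}$ is inconsistent with $T_{1}$: any complete type extending $\neg\psi$ lies in $Q$ and is killed by its own $\theta_{p,q}$. Compactness then yields finitely many $q_{1},\ldots,q_{k} \in Q$ with $T_{1} \vdash \bigwedge_{i} \theta_{p,q_{i}} \to \psi$. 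Setting $\theta_{p} := \bigwedge_{i} \theta_{p,q_{i}}$, I get $\theta_{p} \in \Phi$ by closure under conjunctions, $\theta_{p} \in p$, and $T_{1} \vdash \theta_{p} \to \psi$.

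The second step is the observation that $\{[\theta_{p}] : p \in P\}$ is an open cover of the closed, hence compact, subset $P \subseteq S_{n}(T_{1})$. A finite subcover yields $p_{1},\ldots,p_{m} \in P$ with $T_{1} \vdash \psi \to \bigvee_{j} \theta_{p_{j}}$, and the formula $\theta := \bigvee_{j} \theta_{p_{j}}$ belongs to $\Phi$ by closure under disjunctions; combining the individual implications $\theta_{p_{j}} \to \psi$ with $\psi \to \theta$ gives $T_{1} \vdash \theta \leftrightarrow \psi$, as required. I do not expect a genuine obstacle here: the only point requiring care is the degenerate case where $P$ or $Q$ is empty, which is handled by the assumption that $\top$ and $\bot$ lie in $\Phi$ up to $T_{1}$-equivalence (take $\theta = \bot$ if $T_{1} \vdash \neg\psi$, and $\theta = \top$ if $T_{1} \vdash \psi$).
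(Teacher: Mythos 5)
Your argument is correct and complete: the paper states this result as a Fact without proof (it is cited as standard background), so there is no in-paper argument to compare against, but what you give is exactly the standard separation proof — the easy direction from closure of complete types under $T_{1}$-equivalence, and the converse by the two nested compactness steps, first producing $\theta_{p} = \bigwedge_{i}\theta_{p,q_{i}} \in p$ with $T_{1} \vdash \theta_{p} \to \psi$, then covering the clopen set $[\psi]$ by finitely many $[\theta_{p_{j}}]$ and taking the disjunction. Your handling of the degenerate cases via $\top$ and $\bot$ is also precisely why those formulas are required to lie in $\Phi$ up to $T_{1}$-equivalence, so nothing is missing.
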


Lastly, we use classical results in o-minimality, such as the cell decomposition theorem, without further reference; see \cite{vandenDries2003} for details.

\subsection{$T$-Convexity and Tame Extensions}

In this section, we collect the necessary theorems on $T$-convex and $T$-tame theories that will be used later.

\begin{theorem} (Theorem 3.10 and Corollary 3.13 in \cite{vandendriesLewenberg1995}) \label{Quantifier Elimination of T Convex}
Suppose that $T$ has quantifier elimination and is universally axiomatizable. Then $T_{\text{convex}}$ has quantifier elimination. Without these assumptions on $T$, the theory $T_{\text{convex}}$ is complete and model complete.
\end{theorem}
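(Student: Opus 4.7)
The plan is to apply the quantifier elimination test, Fact~\ref{Quantifier Elimination Test}, under the assumption that $T$ has quantifier elimination and is universally axiomatizable. Let $(\mathcal{M},V_M),(\mathcal{N},V_N) \models T_{\text{convex}}$ share a common $\mathcal{L}_{\text{convex}}$-substructure $(\mathcal{U},V_U)$, where $V_U = V_M \cap U = V_N \cap U$, and fix $\alpha \in M \setminus U$. Since $T$ is universally axiomatizable and has QE, $\mathcal{U}$ is already a model of $T$, so $\mathcal{U}\langle\alpha\rangle = \dcl_{\mathcal{M}}(U \cup \{\alpha\})$ is an elementary substructure of $\mathcal{M}$ closed under all $\mathcal{L}(\varnothing)$-definable functions. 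The target is to produce an elementary extension $(\mathcal{N}_1,V_{N_1}) \succeq (\mathcal{N},V_N)$ and an $\mathcal{L}_{\text{convex}}$-embedding $\mathcal{U}\langle\alpha\rangle \hookrightarrow \mathcal{N}_1$ fixing $\mathcal{U}$ and matching $V_M \cap U\langle\alpha\rangle$ with $V_{N_1} \cap U\langle\beta\rangle$, where $\beta$ is the image of $\alpha$.

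I would split on the position of $\alpha$ with respect to $V_M$ and $V_U$, in parallel with the three classical extensions of a valued field: \emph{(i)} $|\alpha| > |U|$, so $\alpha$ lies outside $V_M$; \emph{(ii)} $\alpha \in V_M$ but no element of $V_U$ is infinitesimally close to $\alpha$, i.e.\ the extension enlarges the residue field; \emph{(iii)} $\alpha \in V_M$ and some element of $V_U$ is infinitesimally close to $\alpha$, i.e.\ the extension is immediate. In each case I pass to a sufficiently saturated $(\mathcal{N}_1,V_{N_1})\succeq (\mathcal{N},V_N)$ and use the $\mathcal{L}$-cut type of $\alpha$ over $U$ to realize the same cut by some $\beta \in N_1$ sitting in the analogous position relative to $V_{N_1}$. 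The $\mathcal{L}$-isomorphism $\mathcal{U}\langle\alpha\rangle \to \mathcal{U}\langle\beta\rangle$ sending $\alpha \mapsto \beta$ exists by quantifier elimination for $T$; what must be checked is that it automatically carries $V_M \cap U\langle\alpha\rangle$ onto $V_{N_1} \cap U\langle\beta\rangle$.

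The matching of convex subrings is the technical heart of the argument, and I expect it to be the main obstacle. For every $\mathcal{L}(\varnothing)$-definable continuous $g$ and tuple $\overline{u} \in U$, I would argue, using $T$-convexity of $V_M$ and $V_{N_1}$ together with the o-minimal monotonicity theorem, that $g(\overline{u},\alpha) \in V_M$ iff $g(\overline{u},\beta) \in V_{N_1}$: cases (i) and (ii) reduce to comparing one-variable behaviour at infinity and at freshly realized residues, while case (iii) relies on the fact that the residue field of a $T$-convex subring is canonically a model of $T$ and that the standard-part map commutes with $\mathcal{L}(\varnothing)$-definable continuous maps on $V$. Establishing this last property, and thus closing the immediate case, is the content of the structure theory for $T$-convex subrings developed in \cite{vandendriesLewenberg1995}.

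For the assertion that $T_{\text{convex}}$ is complete and model complete without the hypotheses of QE and universality on $T$, I would replace $T$ by its definitional expansion $T^{\text{df}}$, which is universally axiomatizable and has QE by the definable Skolem property of o-minimal theories recalled in Section~\ref{Section 2}. The previous argument then yields QE for $(T^{\text{df}})_{\text{convex}} = (T_{\text{convex}})^{\text{df}}$, from which model completeness of $T_{\text{convex}}$ in the original language $\mathcal{L}_{\text{convex}}$ follows. Completeness then follows because any two models of $T_{\text{convex}}$ share a common $T$-convex substructure (for example, the prime model of $T$ equipped with itself as a trivial $T$-convex subring embeds into both), so model completeness gives elementary equivalence.
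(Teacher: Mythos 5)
This statement is quoted from van den Dries and Lewenberg (\cite{vandendriesLewenberg1995}, Theorem 3.10 and Corollary 3.13); the paper cites it as background and does not reprove it. Your sketch is therefore an attempt to reconstruct their argument, and while the overall strategy (QE test, one-element extensions, $T^{\text{df}}$ for the unconditional part) is the right one, two points in the middle deserve flags.

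First, your trichotomy is not exhaustive. Case (i) is $\lvert\alpha\rvert > \lvert U\rvert$, and cases (ii)--(iii) cover $\alpha \in V_M$. But $\alpha \notin V_M$ does not force $\lvert\alpha\rvert > \lvert U\rvert$: since $U \not\subseteq V_M$ in general, there may be $u \in U \setminus V_U$ with $\lvert\alpha\rvert < \lvert u\rvert$ while $\alpha$ still lies above $V_M$. Such $\alpha$ is in none of your three cases. The correct organizing principle is the valuation-theoretic one you gesture at, but it has to be set up via $v(\alpha)$ relative to $v(U)$: either $v(\alpha)$ realizes a new cut in the value group (which includes more than the ``$\lvert\alpha\rvert > \lvert U\rvert$'' subcase), or $v(\alpha) = v(u)$ for some $u\in U$, and then one passes to $\alpha/u \in V_M$ and examines its residue.

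Second, and more importantly, your case (iii) — the would-be ``immediate'' case, after normalizing by subtracting a close element of $V_U$ — should not be ``closed'' by further structure theory; it should be shown to be \emph{vacuous}. The decisive ingredient in van den Dries--Lewenberg is the Wilkie inequality, which forces that a proper simple extension $(\mathcal{U}\langle\alpha\rangle, V_{\mathcal{U}\langle\alpha\rangle}) \supsetneq (\mathcal{U}, V_U)$ of $T$-convex pairs must enlarge the residue field or the value group. In other words, there are \emph{no} proper immediate extensions in the $T$-convex setting, and the case analysis terminates because the normalization procedure (subtracting close elements, dividing by same-valuation elements) must eventually land in one of the two genuine cases. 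Your appeal to ``the standard-part map commutes with $\mathcal{L}(\varnothing)$-definable continuous maps'' is a true and useful fact (it is how one shows the residue field is a model of $T$), but it is aimed at the residue-field case, not at eliminating the immediate one. Without invoking the no-immediate-extension fact, the induction does not obviously terminate.

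Your final paragraph — deducing model completeness of $T_{\text{convex}}$ from QE of $(T^{\text{df}})_{\text{convex}}$, and completeness from a common substructure given by the prime model with the trivial convex subring — is correct, modulo the pedantic point that $(T^{\text{df}})_{\text{convex}}$ and $(T_{\text{convex}})^{\text{df}}$ are not literally the same theory (the latter has function symbols for $V$-dependent definable functions as well); only the former is needed, and the inference to model completeness goes through.
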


\begin{theorem} (Theorem 5.9 and Corollary 5.10 in \cite{vandendriesLewenberg1995}) \label{Quantifier Elimination of T tame}
Suppose that $T$ has quantifier elimination and is universally axiomatizable. Then $T_{\text{tame}}$ has quantifier elimination. Without these assumptions on $T$, the theory $T_{\text{tame}}$ is complete and model complete.
\end{theorem}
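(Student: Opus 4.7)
The plan is to reduce this to the quantifier elimination result for $T_{\text{convex}}$ (Theorem \ref{Quantifier Elimination of T Convex}) via the canonical correspondence between tame pairs and $T$-convex subrings: if $(\mathcal{M},\mathcal{N},\st) \models T_{\text{tame}}$, then the convex hull $V := \{a \in M : |a| \leq c \text{ for some } c \in N\}$ is a proper $T$-convex subring of $\mathcal{M}$, and $\st$ is the composition of the residue map $V \to V/\mathfrak{m}(V)$ with the canonical isomorphism $V/\mathfrak{m}(V) \cong \mathcal{N}$ witnessed inside $M$ by $\mathcal{N}$. Every $\mathcal{L}_{\text{tame}}$-substructure of a model of $T_{\text{tame}}$ accordingly determines an $\mathcal{L}_{\text{convex}}$-substructure, and the task of extending embeddings in the tame setting reduces, modulo bookkeeping of the standard-part data, to the same task in the $T$-convex setting.

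Assuming $T$ has quantifier elimination and is universally axiomatizable, I would apply the quantifier elimination test (Fact \ref{Quantifier Elimination Test}). Let $(\mathcal{M}_{1},\mathcal{N}_{1},\st_{1})$ and $(\mathcal{M}_{2},\mathcal{N}_{2},\st_{2})$ be models of $T_{\text{tame}}$, $\mathcal{U}$ a common $\mathcal{L}_{\text{tame}}$-substructure, and $\alpha \in M_{1}\setminus U$. The argument splits on the position of $\alpha$ relative to $\mathcal{N}_{1}$: (i) $\alpha \in N_{1}$; (ii) $\alpha$ is $N_{1}$-bounded but not in $N_{1}$, further subdivided by whether $\st_{1}(\alpha)$ already lies in $U$; and (iii) $\alpha$ is infinite over $N_{1}$. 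In each case I would first use Theorem \ref{Quantifier Elimination of T Convex} to embed the $\mathcal{L}_{\text{convex}}$-reduct of $\mathcal{U}\langle \alpha \rangle$ into an elementary extension of the $\mathcal{L}_{\text{convex}}$-reduct of $(\mathcal{M}_{2},\mathcal{N}_{2},\st_{2})$, and then verify that $\st$ extends uniquely along this embedding in a way compatible with the data already recorded on $\mathcal{U}$. The delicate subcase is (ii) with $\st_{1}(\alpha)\notin U$: one first adjoins $\st_{1}(\alpha)$ to the image of $U \cap N_{1}$ inside $\mathcal{N}_{2}$ by applying the $T$-convex QE to $\mathcal{N}_{2}$ (viewed as the trivial $T$-convex pair $(\mathcal{N}_{2},\mathcal{N}_{2})$), then realizes $\alpha$ as an infinitesimal neighbor of the image of $\st_{1}(\alpha)$ inside a sufficiently saturated elementary extension of $\mathcal{M}_{2}$.

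For the unconditional completeness and model completeness I would invoke the Skolemization trick: since $T^{\text{df}}$ has quantifier elimination and is universally axiomatizable (because o-minimal theories admit definable Skolem functions), the argument above applies verbatim to yield quantifier elimination for $(T^{\text{df}})_{\text{tame}}$. Every model of $T_{\text{tame}}$ canonically expands to a model of $(T^{\text{df}})_{\text{tame}}$ since the new function symbols name $\mathcal{L}(\varnothing)$-definable functions, and every $\mathcal{L}_{\text{tame}}$-embedding between models of $T_{\text{tame}}$ lifts to an $\mathcal{L}_{\text{tame}}^{\text{df}}$-embedding for the same reason; hence $T_{\text{tame}}$ inherits completeness and model completeness from $(T^{\text{df}})_{\text{tame}}$. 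The main technical obstacle is case (ii) above, where one must coordinate three simultaneous extensions --- the $\mathcal{L}$-structure, the predicate $U$, and the standard-part function --- and ensure that the infinitesimal neighbor chosen for $\alpha$ in $\mathcal{M}_{2}$ realizes precisely the $\mathcal{L}(U \cup \{\st_{1}(\alpha)\})$-type prescribed by $\mathcal{M}_{1}$.
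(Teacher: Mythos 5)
This statement is not proved in the paper at all: it is quoted verbatim from van den Dries--Lewenberg (Theorem 5.9 and Corollary 5.10 of \cite{vandendriesLewenberg1995}), so the only meaningful comparison is with their original argument. Your outline does follow the same broad strategy as that source -- first establish the $T$-convex results, then treat tame pairs via the convex hull of $N$, run an embedding test, and use Skolemization ($T^{\text{df}}$) to get completeness and model completeness without the hypotheses on $T$ -- and your observation that closure under $\st$ makes the trace of the valuation ring on a substructure equal to the convex hull of its $U$-part is correct and is indeed one of the reasons $\st$ is put into the language.

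However, as a proof there is a genuine gap exactly at the step you compress into ``then verify that $\st$ extends uniquely along this embedding.'' Passing to the $\mathcal{L}_{\text{convex}}$-reduct loses precisely the data at issue: the $T$-convex subring $V$ does not determine the predicate $N$ (which is one of many lifts of the residue field $V/\mathfrak{m}(V)$) nor the map $\st$, so an embedding of $\mathcal{L}_{\text{convex}}$-reducts supplied by Theorem \ref{Quantifier Elimination of T Convex} need not send $U\langle\alpha\rangle\cap N_{1}$ into $N_{2}$ or commute with the standard part maps; arranging this is the actual content of the theorem, handled in \cite{vandendriesLewenberg1995} by the Section~5 lemmas on extending tame pairs (the ones this paper later quotes as Lemma \ref{Extend Tame Pair} and Lemma \ref{Substructures of T tame}). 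Relatedly, in the test of Fact \ref{Quantifier Elimination Test} the structure $U\langle\alpha\rangle$ is generated in $\mathcal{L}_{\text{tame}}$, hence is closed under $\st$: adjoining $\alpha$ forces in the standard parts $\st_{1}(t(\alpha,\overline{u}))$ of all terms over $U$, typically infinitely many new elements of $N_{1}$ outside $U$, so none of your three cases is a genuine one-element extension and the case split on the position of $\alpha$ alone does not control this cascade. Your unconditional part (transfer from $(T^{\text{df}})_{\text{tame}}$) is fine granted QE there, since model completeness of $T$ makes the $\mathcal{L}^{\text{df}}$-expansions compatible and a common substructure exists, but the conditional QE itself still rests on the unproved extension-of-$(U,\st)$ step.
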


The following lemma is needed in the first step in proving that $T_{\text{convex}}^{\delta}$ has a model completion.

\begin{lemma} (Lemma 2.6 in \cite{vandenDries1997}) \label{Lemma for Definable Functions in T convex}
Suppose that $f:M^{n} \rightarrow M$ is an $\mathcal{L}_{\text{convex}}(A)$-definable function for some set $A \subseteq M$ and $n > 0$. Then there exist a finite number of $\mathcal{L}(A)$-definable functions $\{f_{i}:M^{n} \rightarrow M \mid i=1,...,k\}$ such that for every $x \in M^{n}$, there is an $i \in \{1,...,k\}$ with $f_{i}(x) = f(x)$.
\end{lemma}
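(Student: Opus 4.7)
The plan is to argue by compactness, reducing the lemma to an equality of definable closures. Let $\phi(\bar{x},y)$ be an $\mathcal{L}_{\text{convex}}(A)$-formula defining the graph of $f$. Assume for contradiction that no finite collection of $\mathcal{L}(A)$-definable functions $M^{n}\to M$ covers $f$ in the sense of the lemma. Then for every finite list $g_{1},\ldots,g_{k}$ of such functions there is some $\bar{x}\in M^{n}$ with $f(\bar{x})\neq g_{i}(\bar{x})$ for all $i$, so the partial $\mathcal{L}_{\text{convex}}(A)$-type
\[
\Sigma(\bar{x},y)\;:=\;\{\phi(\bar{x},y)\}\;\cup\;\{y\neq g(\bar{x}):g\text{ an }\mathcal{L}(A)\text{-definable function }M^{n}\to M\}
\]
is finitely satisfiable in $(\mathcal{M},V)$ and hence realized by some $(\bar{x}^{*},b^{*})$ in a sufficiently saturated $\mathcal{L}_{\text{convex}}(A)$-elementary extension $(\mathcal{M}^{*},V^{*})$. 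The first clause gives $b^{*}=f^{\mathcal{M}^{*}}(\bar{x}^{*})\in\dcl_{\mathcal{L}_{\text{convex}}}(A\cup\bar{x}^{*})$, while the remaining clauses force $b^{*}\notin\dcl_{\mathcal{L}}(A\cup\bar{x}^{*})$.

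The contradiction would follow from the equality $\dcl_{\mathcal{L}_{\text{convex}}}(B)=\dcl_{\mathcal{L}}(B)$ (the inclusion $\supseteq$ is trivial) for every $B$ inside every model of $T_{\text{convex}}$. Via Theorem \ref{Quantifier Elimination of T Convex}, every $\mathcal{L}_{\text{convex}}(B)$-formula in one variable is $T_{\text{convex}}$-equivalent to a Boolean combination of $\mathcal{L}(B)$-formulas and atoms of the form $V(\tau(y))$ with $\tau$ an $\mathcal{L}(B)$-term. A short observation shows that a proper $T$-convex subring has no supremum or infimum in $M$: if $s=\sup V$ existed with $s>1$, then $\sqrt{s}<s$ would lie in $V$, and squaring would give $s\in V$, contradicting additive closure of $V$. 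Partitioning $M$ into $\mathcal{L}(B)$-cells on which $\tau$ is monotone, the same reasoning shows each $\{y:\tau(y)\in V\}$ is a finite union of convex sets with no endpoints in $M$. Hence any singleton isolated by such a Boolean combination must already be isolated by its $\mathcal{L}(B)$-part, placing $b^{*}$ in $\dcl_{\mathcal{L}}(A\cup\bar{x}^{*})$, which is the desired contradiction.

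The main obstacle is the dcl equality, and specifically the verification that the $V$-atoms contribute only endpointless convex sets: beyond the sup-sqrt remark, one must treat finitely many $\tau$'s simultaneously and combine the cell-by-cell analysis with the Boolean structure of the formula, which is exactly the content underlying Section 3 of \cite{vandendriesLewenberg1995}. Once this equality of definable closures is in hand, the compactness reduction above closes the argument essentially formally, and the integer $k$ is extracted from the finitely-many $g_{i}$'s used when the negation of finite satisfiability of $\Sigma$ is unpacked.
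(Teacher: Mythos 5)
Your proposal is correct in approach, and in fact the paper gives no proof of this statement at all: it is quoted verbatim as Lemma 2.6 of \cite{vandenDries1997}, whose own proof runs along the same lines you propose, namely a compactness reduction to the equality of definable closures $\dcl_{\mathcal{L}_{\text{convex}}}(B)=\dcl_{\mathcal{L}}(B)$. Your compactness step is fine: the clause $\phi$ continues to define a total function in the saturated extension, and any witness to $b^{*}\in\dcl_{\mathcal{L}}(A\cup\bar{x}^{*})$ computed in $(\mathcal{M}^{*},V^{*})$ comes, by elementarity, from an $\mathcal{L}(A)$-definable total function on $\mathcal{M}$ and hence is excluded by $\Sigma$. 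You also rightly avoid circularity: you do not appeal to Corollary \ref{Definable Closure in T convex} of the paper, which is itself deduced from the present lemma, but sketch an independent proof of the dcl equality via Theorem \ref{Quantifier Elimination of T Convex}.

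Two points in that middle step need repair, though both are fixable. First, to use quantifier elimination for a general model complete $T$ one must pass to the definitional expansion $T^{\text{df}}$, so the $V$-atoms take the form $V(g(y))$ with $g$ an $\mathcal{L}(B)$-definable function rather than a literal $\mathcal{L}(B)$-term, and on the cells you need $g$ continuous as well as monotone (o-minimal cell decomposition supplies both; monotonicity alone does not transport the no-supremum property of $V$ to the preimage). Second, the assertion that each $\{y:\tau(y)\in V\}$ is a finite union of convex sets ``with no endpoints in $M$'' is false as stated: endpoints can occur at the $\mathcal{L}(B)$-definable boundary points of the cells, and whole singleton components can sit at such points (take a piecewise-defined $g$ whose value at an $\mathcal{L}(B)$-definable point lies in $V$ while nearby values do not). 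What your $\sup V$ argument actually yields, and what suffices, is that on each open cell the trace of $V(g(y))$ is convex with no supremum or infimum lying in the interior of that cell; consequently a Boolean combination of such traces together with $\mathcal{L}(B)$-definable sets cannot have an isolated point in the interior of an open cell, so a set isolating $b^{*}$ forces the cell containing $b^{*}$ to be a point-cell, which is $\mathcal{L}(B)$-definable. With these adjustments your argument is complete and matches the standard proof of the cited lemma.
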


In a later section, where we proof a relative quantifier elimination result for the theory of tame pairs of $T^{\delta}_{g}$, we will use the following lemmas to construct appropriate embeddings.

\begin{lemma} (Corollary 5.5 and 5.6 in \cite{vandenDries2003}) \label{Extend Tame Pair}
Let $(\mathcal{M},\mathcal{N},\st) \models T_{\text{tame}}^{-}$, and let $\mathcal{M} \langle a \rangle$ be an extension of $\mathcal{M}$ with $|V| < a < |M \backslash V|$, where $V$ is the convex hull of $N$ in $M$. Then there always exist two ways to extend $(\mathcal{M},\mathcal{N},\st)$ to a model $(\mathcal{M}\langle a \rangle,\mathcal{N}_{a},\st_{a})$ of $T_{\text{tame}}^{-}$:
\begin{enumerate}
    \item $\mathcal{N}_{a} = \mathcal{N}\langle a \rangle$, so that $a \in \mathcal{N}_{a}$;
    \item $\mathcal{N}_{a} = \mathcal{N}$, so that $a \notin \mathcal{N}_{a}$.
\end{enumerate}
In both cases, the extension is unique.
\end{lemma}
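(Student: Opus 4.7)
The plan is to exploit that the element $a$ realizes a very specific cut over $\mathcal{M}$---namely, the cut lying strictly between $V$ and $M \backslash V$---and that this cut restricts to the ``positive infinity'' cut of $\mathcal{N}$ (since $N \subseteq V$, we have $a > N$ while $a < |M \backslash V|$). By o-minimality of $T$, adjoining an element realizing a prescribed cut to an elementary substructure is a canonical operation: the resulting $\mathcal{L}$-structure is determined up to unique isomorphism over the base, and embeds elementarily into any larger elementary extension realizing the same cut. This observation underlies both cases.

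For Case 2 ($\mathcal{N}_a = \mathcal{N}$), the inclusion $\mathcal{N} \preceq \mathcal{M} \preceq \mathcal{M}\langle a \rangle$ is immediate, and $a$ is $\mathcal{N}$-infinite since $a > N$. For Case 1, the cut of $a$ over $N$ is the unique ``greater than everything'' type, so $\mathcal{N}\langle a \rangle$ is determined up to isomorphism over $N$, and the elementarity $\mathcal{N}\langle a \rangle \preceq \mathcal{M}\langle a \rangle$ follows from o-minimality applied to the pair $\mathcal{N} \preceq \mathcal{M}$, since $a$ realizes the same cut over $N$ inside $\mathcal{N}\langle a \rangle$ as it does inside $\mathcal{M}\langle a \rangle$.

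The core task is to verify tameness of the pair $(\mathcal{M}\langle a \rangle, \mathcal{N}_a)$: every $c \in M\langle a \rangle$ is either $\mathcal{N}_a$-infinite or $\mathcal{N}_a$-bounded with a standard part in $\mathcal{N}_a$. Any such $c$ has the form $f(\bar{m}, a)$ for some $\mathcal{L}(\varnothing)$-definable $f$ and $\bar{m} \in M$; by o-minimal cell decomposition, $t \mapsto f(\bar{m}, t)$ is monotone or constant on an $\mathcal{L}(M)$-definable neighborhood of the cut realized by $a$. Using this together with the existing tameness of $(\mathcal{M}, \mathcal{N})$, one shows that either $|c|$ exceeds every element of $\mathcal{N}_a$ (so $c$ is $\mathcal{N}_a$-infinite), or $c$ lies in the convex hull of $\mathcal{N}_a$ in $\mathcal{M}\langle a \rangle$, in which case its standard part is produced by applying $\st$ to a suitable proxy---directly in Case 2, or via an $\mathcal{L}$-term in $N \cup \{a\}$ that is infinitesimally close to $c$ in Case 1.

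Uniqueness then follows because, once $\mathcal{N}_a$ is fixed, the cut data determines the $\mathcal{L}$-structure up to isomorphism over $N$, and the standard part map is pinned down as the unique function $M\langle a \rangle \to N_a$ consistent with infinitesimal equivalence modulo $\mathcal{N}_a$. The most delicate step is the tameness verification in Case 1: one must show that the convex hull of $\mathcal{N}\langle a \rangle$ in $\mathcal{M}\langle a \rangle$, while strictly extending $V$, still excludes all of $M \backslash V$. This relies on the defining condition $a < |M \backslash V|$ together with the fact that every $\mathcal{L}(N)$-definable function applied to $a$ stays, asymptotically, within the gap between $V$ and $M \backslash V$; proving this cleanly is where I expect to need the full strength of o-minimal cell decomposition and monotonicity for $T$.
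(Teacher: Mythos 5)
The paper itself offers no proof of this lemma: it is imported from van den Dries--Lewenberg (Corollary 5.5 and 5.6 of \cite{vandendriesLewenberg1995}; the citation printed in the statement points to \cite{vandenDries2003}, but the remark that follows makes the intended source clear), with only the observation that the existence of such an $a$ follows from compactness. So your sketch has to stand on its own, and as written it has a genuine gap: the tameness verification, which is the entire content of the cited corollaries, is asserted rather than proved. In Case 1, the claim that every $\mathcal{N}\langle a\rangle$-bounded element $c=f(\overline{m},a)$ --- where $\overline{m}$ comes from $M$, not from $N$ --- has a standard part realized by an $\mathcal{L}$-term in $N\cup\{a\}$ is exactly what must be shown; writing ``one shows that \dots its standard part is produced via an $\mathcal{L}$-term infinitesimally close to $c$'' presupposes the conclusion. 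A one-variable monotonicity analysis of $t\mapsto f(\overline{m},t)$ near the cut does not deliver it, because the obstruction is not the germ of $f$ at the cut but the possibility that the residue of $c$ with respect to the convex hull of $N\langle a\rangle$ lies outside the image of $N\langle a\rangle$; ruling this out is where van den Dries and Lewenberg use their $T$-convexity machinery (the hull of an elementary substructure is $T$-convex, residue fields of $T$-convex subrings are models of $T$, and the analysis of the two $T$-convex subrings of $M\langle a\rangle$ lying over $V$). The same issue is hidden in your ``directly in Case 2'': for an $N$-bounded $c=f(\overline{m},a)$ you must first produce a point of $M$ within $N$-infinitesimal distance of $c$ before $\st$ can be applied, and that is again a no-new-residues statement, not a formal squeeze from tameness of $(\mathcal{M},\mathcal{N})$.

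Two smaller points. The step you single out as the most delicate --- that the convex hull of $N\langle a\rangle$ in $M\langle a\rangle$ still omits $M\setminus V$ --- is in fact the more routine one: for $f(\overline{n},\cdot)$ with $\overline{n}\in N$, either it is eventually bounded by an element of $N$, or it increases to $+\infty$ over $N$, in which case comparing $a$ with the $\mathcal{L}(N)$-definable compositional inverse evaluated at a positive element of $M\setminus V$ (which again lies in $M\setminus V$) gives $f(\overline{n},a)<(M\setminus V)^{>0}$. Finally, your uniqueness argument only pins down $\st_a$ and the isomorphism type once $N_a$ is fixed; if the lemma's ``the extension is unique'' is read as saying that any expansion with $a\in N_a$ (respectively $a\notin N_a$) forces $N_a=N\langle a\rangle$ (respectively $N_a=N$), that requires a separate argument which the proposal does not supply.
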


\begin{remark}
Corollary 5.5 in \cite{vandendriesLewenberg1995} does not guarantee the existence of such an element $a$. 
However, the existence of $a$ follows easily from the compactness theorem.
\end{remark}

\begin{lemma} (Lemma~5.7 in \cite{vandendriesLewenberg1995}) \label{Substructures of T tame}
Suppose that $T$ has quantifier elimination and is universally axiomatizable. Then the universal part of $T_{\text{tame}}$ is $T_{\text{tame}}^{-}$. Moreover, if $(\mathcal{M},\mathcal{N},\st) \models T_{\text{tame}}^{-}$, then either:
\begin{enumerate}
    \item $\mathcal{M} \neq \mathcal{N}$, in which case $(\mathcal{M},\mathcal{N},\st) \models T_{\text{tame}}$;
    \item $\mathcal{M} = \mathcal{N}$, in which case for any $a > M$ in some elementary extension of $\mathcal{M}$, the $\mathcal{L}_{\text{tame}}$-structure $(\mathcal{M}\langle a \rangle,\mathcal{M},\st)$ is the unique model of $T_{\text{tame}}$ extending $(\mathcal{M},\mathcal{M},\st)$ such that for any $(\mathcal{M}_{1},\mathcal{N}_{1},\st_{1}) \models T_{\text{tame}}$ extending $(\mathcal{M},\mathcal{M},\st)$, the structure $(\mathcal{M}\langle a \rangle,\mathcal{M},\st)$ embeds into $(\mathcal{M}_{1},\mathcal{N}_{1},\st_{1})$ over $(\mathcal{M},\mathcal{M},\st)$.
\end{enumerate}
\end{lemma}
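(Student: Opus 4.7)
The plan is to establish the three assertions using quantifier elimination of $T_{\text{tame}}$ (Theorem \ref{Quantifier Elimination of T tame}), quantifier elimination of $T$, and o-minimal convergence at infinity. Since $T_{\text{tame}}$ has QE, the universal part of $T_{\text{tame}}$ consists of precisely those sentences valid in every $\mathcal{L}_{\text{tame}}$-substructure of a model of $T_{\text{tame}}$. Because $T$ is universally axiomatizable, each defining condition of $T_{\text{tame}}^{-}$---that $\mathcal{M}$ and the $U$-predicate $\mathcal{N}$ model $T$, that $\mathcal{N}$ is closed under $\mathcal{L}$-operations, that $\st$ retracts $M$ onto $N$, and that each $a \in M$ either satisfies $|a| > N$ or has $a - \st(a)$ infinitesimal---is expressible by a universal $\mathcal{L}_{\text{tame}}$-sentence, hence inherited by substructures. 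The converse (every $T_{\text{tame}}^{-}$-model embeds into a $T_{\text{tame}}$-model) will follow from parts (1) and (2): in case (1) the structure is already a $T_{\text{tame}}$-model, and in case (2) the construction below supplies an explicit extension.

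For part (1), with $\mathcal{M} \neq \mathcal{N}$: since $\mathcal{N} \subseteq \mathcal{M}$ as $\mathcal{L}$-structures and both model $T$, QE of $T$ upgrades the inclusion to $\mathcal{N} \preceq \mathcal{M}$, which together with properness and the tameness axiom yields $(\mathcal{M}, \mathcal{N}, \st) \models T_{\text{tame}}$.

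For part (2), with $\mathcal{M} = \mathcal{N}$: fix $a > M$ in some elementary extension $\mathcal{M}^{*}$ (existence by compactness). By o-minimality, $\mathcal{M}\langle a \rangle = \dcl_{\mathcal{M}^{*}}(M \cup \{a\})$ is an elementary extension of $\mathcal{M}$ in which $a$ is infinite. Each element $c = f(\overline{m}, a) \in \mathcal{M}\langle a \rangle$ (with $f$ an $\mathcal{L}(\varnothing)$-definable function and $\overline{m} \in M$) has a well-defined limit $L := \lim_{x \to +\infty} f(\overline{m}, x) \in M \cup \{\pm\infty\}$ by o-minimality; I set $\st^{*}(c) := L$ if $L \in M$ and $\st^{*}(c) := 0$ otherwise, with well-definedness across representations following because both describe the same value at the infinite input $a$. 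A direct check yields $(\mathcal{M}\langle a \rangle, \mathcal{M}, \st^{*}) \models T_{\text{tame}}$. For the embedding property, given any $(\mathcal{M}_{1}, \mathcal{N}_{1}, \st_{1}) \models T_{\text{tame}}$ extending $(\mathcal{M}, \mathcal{M}, \st)$, properness and tameness of $(\mathcal{M}_{1}, \mathcal{N}_{1})$ produce $b \in \mathcal{M}_{1}$ with $b > \mathcal{N}_{1}$ (taking $1/|c' - \st_{1}(c')|$ for any bounded $c' \in \mathcal{M}_{1} \setminus \mathcal{N}_{1}$ if no infinite element is directly at hand), and the assignment $a \mapsto b$ extends by QE of $T$ to an $\mathcal{L}$-embedding $\sigma: \mathcal{M}\langle a \rangle \hookrightarrow \mathcal{M}_{1}$ fixing $\mathcal{M}$.

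The main obstacle is verifying that $\sigma$ is actually an $\mathcal{L}_{\text{tame}}$-embedding, i.e., compatible with the standard-part maps. This rests on the observation that $\mathcal{M} \preceq \mathcal{N}_{1} \preceq \mathcal{M}_{1}$, so the limit $L$ above coincides whether computed in $\mathcal{M}$ or in $\mathcal{N}_{1}$. Choosing $b$ infinite over $\mathcal{N}_{1}$ (and not merely over $\mathcal{M}$) is essential: the o-minimal convergence estimate for $f(\overline{m}, x) \to L$, being first-order, is inherited by $\mathcal{N}_{1}$, and applying it at $x = b$ forces $\sigma(f(\overline{m}, a)) - L$ to be $\mathcal{N}_{1}$-infinitesimal when $L \in M$ and $\sigma(f(\overline{m}, a))$ itself to be $\mathcal{N}_{1}$-infinite when $L = \pm\infty$. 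Hence $\st_{1} \circ \sigma = \sigma \circ \st^{*}$, completing the embedding property and therefore the lemma.
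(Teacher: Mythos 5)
The paper does not actually prove this lemma --- it is imported verbatim as Lemma~5.7 of van den Dries--Lewenberg --- so your proposal can only be judged against the standard argument, and it is essentially a correct reconstruction of it: the universal expressibility of the $T_{\text{tame}}^{-}$ conditions (with elementarity of $\mathcal{N}\preceq\mathcal{M}$ recovered from quantifier elimination of $T$), the construction of $\st^{*}$ on $\mathcal{M}\langle a\rangle$ via o-minimal limits at $+\infty$, and the embedding obtained by sending $a$ to an element $b>N_{1}$ and transferring the first-order convergence statements through $\mathcal{M}\preceq\mathcal{N}_{1}\preceq\mathcal{M}_{1}$ are exactly the right ingredients, and your insistence that $b$ be infinite over $\mathcal{N}_{1}$ (not merely over $M$) is indeed the crucial point.

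Two small items are left implicit. First, an $\mathcal{L}_{\text{tame}}$-embedding must also preserve and reflect the predicate $U$; you only verify compatibility with $\st$. This is harmless, since $x\in U$ iff $\st(x)=x$ in any model of $T_{\text{tame}}^{-}$, so $U$-preservation follows from $\st_{1}\circ\sigma=\sigma\circ\st^{*}$ together with injectivity of $\sigma$ (and for $x\notin M$ one should note $\sigma(x)\neq\st^{*}(x)$, which holds because $x\notin M=\dcl_{\mathcal{M}}(M)$). Second, the statement also asserts uniqueness of $(\mathcal{M}\langle a\rangle,\mathcal{M},\st)$ among $T_{\text{tame}}$-extensions of $(\mathcal{M},\mathcal{M},\st)$ with the embedding property, which you do not address; it follows quickly from what you prove: any such extension embeds over the base into $(\mathcal{M}\langle a\rangle,\mathcal{M},\st)$, its image is a $\dcl$-closed proper extension of $M$ inside $\mathcal{M}\langle a\rangle=\dcl_{\mathcal{M}^{*}}(M\cup\{a\})$, so by exchange the image contains $a$ and hence is all of $\mathcal{M}\langle a\rangle$, giving an isomorphism over $(\mathcal{M},\mathcal{M},\st)$. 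Relatedly, when listing the universal axioms you should include the normalization $\st(x)=0$ on $N$-infinite elements (e.g.\ via the universal sentence ``$\st(x)\neq 0$ implies $|x-\st(x)|$ is infinitesimal''), since the tameness disjunction alone does not constrain $\st$ on infinite elements; with these additions the argument is complete.
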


In establishing that the stable embedding properties extend to tame pairs of $T^{\delta}_{g}$, we will use the following lemma.

\begin{lemma} (Proposition 8.1 in the last section of \cite{Lisbon2003}) 
\label{Stable Embedding of T Tame Preparation}
Let $(\mathcal{M},\mathcal{N},\st)$ be a common elementary substructure of $(\mathcal{M}_{1},\mathcal{N}_{1},\st_{1})$ and $(\mathcal{M}_{2},\mathcal{N}_{2},\st_{2})$, both models of $T_{\text{tame}}$. Let $n>0$, $\overline{a} \in N_{1}^{n}$ and $\overline{b} \in N_{2}^{n}$. Suppose that $\tp^{\mathcal{M}_{1}}(\overline{a}/N) = \tp^{\mathcal{M}_{2}}(\overline{b}/N)$. Then $\tp^{(\mathcal{M}_{1},\mathcal{N}_{1},\st_{1})}(\overline{a}/M) = \tp^{(\mathcal{M}_{2},\mathcal{N}_{2},\st_{2})}(\overline{b}/M)$.
\end{lemma}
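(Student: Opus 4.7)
The plan is to produce an $\mathcal{L}_{\text{tame}}$-elementary map fixing $M$ pointwise and sending $\overline{a}$ to $\overline{b}$, into a sufficiently saturated elementary extension of $(\mathcal{M}_{2},\mathcal{N}_{2},\st_{2})$; the type equality in the conclusion then follows immediately. By the model completeness half of Theorem \ref{Quantifier Elimination of T tame}, it is enough to exhibit any $\mathcal{L}_{\text{tame}}$-embedding over $M$ from the $\mathcal{L}_{\text{tame}}$-substructure of $(\mathcal{M}_{1},\mathcal{N}_{1},\st_{1})$ generated by $M \cup \overline{a}$ into such an elementary extension, sending $\overline{a} \mapsto \overline{b}$. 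First I would reduce to the case where $T$ has quantifier elimination and is universally axiomatizable by replacing $T$ with $T^{\text{df}}$: definable closures and hence types over parameter sets are invariant under this definitional expansion, and Lemma \ref{Substructures of T tame} becomes available in full strength.

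Next, I would construct the required embedding inductively on the coordinates of $\overline{a}$, starting from the identity on $(\mathcal{M},\mathcal{N},\st)$ and adjoining $a_{i}$ to the source and $b_{i}$ to the target one at a time. Since both new elements lie in the smaller field of the respective tame pair, Lemma \ref{Substructures of T tame} guarantees that the generated $\mathcal{L}_{\text{tame}}$-substructure is again a model of $T_{\text{tame}}^{-}$, and Lemma \ref{Extend Tame Pair} (applied in the ``small-field'' extension case) shows that this $\mathcal{L}_{\text{tame}}$-structure is uniquely determined, over the current base, by the $\mathcal{L}$-type of the new element. Thus the inductive step reduces to producing an $\mathcal{L}$-isomorphism over the current substructure sending $a_{i}$ to $b_{i}$, and pasting these local isomorphisms together yields the desired global $\mathcal{L}_{\text{tame}}$-embedding.

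The main obstacle is verifying that at each stage, $a_{i}$ and $b_{i}$ have the same $\mathcal{L}$-type not merely over the small-field part of the current substructure, but over the full current substructure, which contains parameters from $M$. For this I would invoke the relative quantifier elimination of $T_{\text{convex}}$ (Theorem \ref{Quantifier Elimination of T Convex}) applied to the $T$-convex pair $(\mathcal{M},V)$, where $V$ is the convex hull of $N$ in $M$, together with Lemma \ref{Lemma for Definable Functions in T convex}: the content of any $\mathcal{L}(M)$-formula evaluated on a tuple from $N_{1}$ (respectively $N_{2}$) is captured by the $\mathcal{L}(N)$-type of that tuple together with the standard-part and $N$-boundedness data of the $M$-parameters, all of which is already encoded inside the common base $(\mathcal{M},\mathcal{N},\st)$ and is therefore identical on the two sides. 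The hypothesis $\tp^{\mathcal{M}_{1}}(\overline{a}/N) = \tp^{\mathcal{M}_{2}}(\overline{b}/N)$ then promotes to the $\mathcal{L}$-type equality needed to extend the isomorphism at each step, so the induction closes and the embedding is built, completing the proof.
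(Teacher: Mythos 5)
The paper itself offers no proof of this lemma---it is imported verbatim as Proposition 8.1 of van den Dries's Lisbon notes---so your argument has to stand on its own, and as written it has a genuine gap at its central step. The outer reduction is fine: after replacing $T$ by $T^{\text{df}}$, Theorem \ref{Quantifier Elimination of T tame} gives quantifier elimination for $T_{\text{tame}}$, and then an $\mathcal{L}_{\text{tame}}$-isomorphism over $M$ between the substructures generated by $M \cup \{\overline{a}\}$ and $M \cup \{\overline{b}\}$ sending $\overline{a} \mapsto \overline{b}$ would indeed yield the conclusion. (Note that it is the quantifier-elimination half you need there, not the ``model completeness half'': an embedding of a generated substructure into a saturated model transfers only quantifier-free information, and model completeness alone does not let you read off full types from such an embedding.)

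The gap is in how you produce the embedding. Lemma \ref{Extend Tame Pair} concerns only an element $a$ realizing the one special cut $|V| < a < |M \backslash V|$, i.e.\ sitting immediately above the convex hull of the small field; a coordinate $a_{i} \in N_{1}$ adjoined in your induction realizes an arbitrary type over $N$ and in general does not satisfy this hypothesis, so that lemma gives you neither existence nor uniqueness of the pair structure on $\mathcal{M}\langle a_{i}\rangle$. What you actually need at each step is precisely the assertion you yourself flag as the ``main obstacle'': that for a tuple from the small field $N_{1}$, its full $\mathcal{L}_{\text{tame}}$-type over $M$---including which elements of $\dcl_{\mathcal{M}_{1}}(M \cup \overline{a})$ lie in $N_{1}$ and what their standard parts are, not merely the order-cut data---is determined by its $\mathcal{L}$-type over $N$ together with the base $(\mathcal{M},\mathcal{N},\st)$. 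That is essentially the content of the lemma being proved, and the tools you cite do not deliver it: relative quantifier elimination for $T_{\text{convex}}$ and Lemma \ref{Lemma for Definable Functions in T convex} describe $\mathcal{L}_{\text{convex}}$-definable sets and functions, and say nothing about why the truth value of an arbitrary $\mathcal{L}(M)$-formula at a point of $N_{1}^{n}$, or the value of $\st_{1}$ on the generated substructure, is controlled by $\tp^{\mathcal{M}_{1}}(\overline{a}/N)$. For a single coordinate and the ordering alone one can check this by hand (every element of $M$ is, by elementarity of the pair embedding, either $N_{1}$-infinite or infinitesimally close over $N_{1}$ to an element of $N$, which pins down the cut of $a_{1}$ over $M$), but already deciding whether $f(\overline{m},a_{1}) \in N_{1}$ and computing its standard part, and then propagating all of this through the later stages of the induction over $\mathcal{M}\langle a_{1},\dots,a_{i-1}\rangle$, is the real work in van den Dries's Proposition 8.1; your sketch asserts it rather than proving it.
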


\subsection{Weakly O-minimality}

A linearly ordered structure $\mathcal{R} = (R,<,\dots)$ in a language $\mathcal{L}_{1}$ is called \textbf{weakly o-minimal} if every $\mathcal{L}_{1}(R)$-definable subset of $R$ is a finite union of convex sets in $R$. However, unlike o-minimality, weak o-minimality is not a first-order property, and there exist weakly o-minimal structures that are elementarily equivalent to structures that are not weakly o-minimal (see \cite{MacphersonMarkerSteinhorn2000}). Thus, it is necessary to define a theory $T_{1}$ to be a \textbf{weakly o-minimal theory} if all models of $T_{1}$ are weakly o-minimal. In \cite{vandendriesLewenberg1995}, van den Dries and Lewenberg noted the following consequence of the relative quantifier elimination of $T_{\text{convex}}$.

\begin{theorem} (Theorem 3.14 in \cite{vandendriesLewenberg1995}) \label{T Convex is Weakly O-minimal}
The theory $T_{\text{convex}}$ is weakly o-minimal.
\end{theorem}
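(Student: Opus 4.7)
The plan is to reduce to the case where $T_{\text{convex}}$ has full quantifier elimination and then inspect atomic formulas. Note that passing from $\mathcal{L}$ to its Skolemization $\mathcal{L}^{\text{df}}$, and from $T_{\text{convex}}$ to $(T^{\text{df}})_{\text{convex}}$, does not change the collection of $\mathcal{L}_{\text{convex}}(M)$-definable subsets of $M$, because the added function symbols merely name $\mathcal{L}(\varnothing)$-definable functions. Since $T^{\text{df}}$ has quantifier elimination and is universally axiomatizable, Theorem \ref{Quantifier Elimination of T Convex} gives quantifier elimination for $(T^{\text{df}})_{\text{convex}}$. Thus, working in the expanded language, we may assume that every $\mathcal{L}_{\text{convex}}(M)$-definable subset $X \subseteq M$ is defined by a quantifier-free formula $\varphi(x)$ with parameters from $M$.

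The next step is to analyze the atomic formulas. In $\mathcal{L}_{\text{convex}}^{\text{df}}$, each atomic formula in the single variable $x$ with parameters from $M$ has one of two forms: either an $\mathcal{L}^{\text{df}}$-atomic formula (an equation or inequality between terms $t_{1}(x), t_{2}(x)$, where each $t_{i}$ is an $\mathcal{L}(\varnothing)$-definable function applied to $x$ and parameters), or a formula of the shape $V(t(x))$. Sets defined by the first kind are finite unions of points and open intervals by the o-minimality of $\mathcal{M}$, hence finite unions of convex sets.

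For the second kind, I claim that $\{x \in M : t(x) \in V\}$ is a finite union of convex sets. By o-minimal cell decomposition, $M$ partitions into finitely many points and open intervals on each of which the $\mathcal{L}(M)$-definable function $t$ is continuous and either strictly monotone or constant. On any such cell $C$, monotonicity (or constancy) implies that $t^{-1}(V) \cap C$ is convex, because if $a < b$ lie in $t^{-1}(V) \cap C$ and $a < c < b$ with $c \in C$, then $t(c)$ lies between $t(a)$ and $t(b)$, which are in $V$, so $t(c) \in V$ by convexity of $V$. Hence $\{x : V(t(x))\}$ is a finite union of convex sets.

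Finally, the family of finite unions of convex subsets of $M$ is closed under the boolean operations: it is obviously closed under finite unions, under intersection (since an intersection of two convex sets is convex and one distributes), and under complement (since the complement of a convex subset of the linearly ordered set $M$ is a union of at most two convex sets, and a complement distributes over unions into an intersection of complements). Therefore the boolean combination produced by quantifier elimination is itself a finite union of convex sets, proving weak o-minimality of $T_{\text{convex}}$. The only mildly delicate step is the reduction via Skolemization, which is needed because $T$ is not assumed to eliminate quantifiers; everything after that is a routine application of cell decomposition.
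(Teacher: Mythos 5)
Your proof is correct, and it is essentially the argument behind the cited result: the paper itself only quotes Theorem 3.14 of van den Dries--Lewenberg, which is obtained exactly as you do, by reducing via the definitional expansion $T^{\text{df}}$ to the quantifier elimination of Theorem \ref{Quantifier Elimination of T Convex} and then checking that atomic sets in one variable --- o-minimally definable sets and preimages $t^{-1}(V)$ of the convex ring under definable unary functions --- are finite unions of convex sets, a class closed under boolean combinations. Your monotonicity argument for $\{x : V(t(x))\}$ and the Skolemization step are both sound, so nothing further is needed.
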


Weakly o-minimality is an actively researched topic in model theory (see, e.g., \cite{MacphersonMarkerSteinhorn2000}). Therefore, it is convenient to use results and techniques from weakly o-minimal theories in our context. In the following, we record some useful constructions and lemmas that will be used later.

Just like in o-minimal theories, there is a useful notion of cells and a cell decomposition theorem for weakly o-minimal theories. Let $T_{1}$ be a weakly o-minimal theory and $\mathcal{N} \models T_{1}$. Let $Y \subseteq N^{n+1}$ be $0$-definable, and let $\pi:N^{n+1} \rightarrow N^{n}$ denote the projection map. Set $Z := \pi(Y)$. For $\overline{a} \in N^n$, define the \emph{fibre} of $Y$ at $\overline{a}$ by
\[
Y_{\overline{a}} := \{y \in N \mid (\overline{a},y) \in Y\}.
\]

Suppose that for each $\overline{a} \in Z$, the fibre $Y_{\overline{a}}$ is bounded above and has no supremum in $N$. Define the $0$-definable equivalence relation $\sim$ on $N^n$ by declaring $\overline{a} \sim \overline{b}$ if either $\overline{a},\overline{b} \in N^n \backslash Z$, or $\overline{a},\overline{b} \in Z$ and $\sup Y_{\overline{a}} = \sup Y_{\overline{b}}$. Let $\overline{Z} := Z/\sim$ and denote by $[\overline{a}]$ the equivalence class of $\overline{a}$. 

A natural $0$-definable ordering on $N \cup \overline{Z}$ is defined as follows: for $\overline{a} \in Z$ and $c \in N$, we set $[\overline{a}] < c$ if $w < c$ for all $w \in Y_{\overline{a}}$. Note that for $\overline{a} \neq \overline{b}$, there exists some $x \in N$ such that either $[\overline{a}] < x < [\overline{b}]$ or $[\overline{b}] < x < [\overline{a}]$. The set $\overline{Z}$ is called a \textbf{sort} in the Dedekind completion $\overline{N}$ of $N$. We say a function $F:N \rightarrow \overline{N}$ is \textbf{definable} if it is a definable function $F:N \rightarrow \overline{Z}$ for some sort $\overline{Z}$.

\begin{definition}
Let $\mathcal{N}$ be weakly o-minimal. A \textbf{\textit{cell}} is a subset of $N^{n}$, for $n>0$, defined as follows.
\begin{itemize}
    \item[(i)] A \textbf{\textit{$1$-cell}} is a definable convex subset of $N$.
    \item[(ii)] A set $X \subseteq N^{n+1}$ is an $(n+1)$-cell if there exists an \textbf{\textit{$n$-cell}} $Y \subseteq N^{n}$ such that either
    \begin{itemize}
        \item[(a)] $X = \Gamma(f|_Y)$ for some definable function $f: N^n \to N$, or
        \item[(b)] $X = (f_1,f_2)_Y$, where
        \[
        (f_1,f_2)_Y := \{ (\overline{y},z) \mid \overline{y} \in Y,\, f_1(\overline{y}) < z < f_2(\overline{y}) \},
        \]
        $A_1, A_2$ are sorts in $\overline{N}$, and $f_1: N^n \to A_1$, $f_2: N^n \to A_2$ are definable functions such that $f_1(\overline{y}) < f_2(\overline{y})$ for all $\overline{y} \in Y$ (note that $A_1$ is allowed to contain $\{-\infty\}$ and $A_2$ is allowed to contain $\{+\infty\}$).
    \end{itemize}
\end{itemize}
\end{definition}

\begin{theorem} (Theorem 4.6 in \cite{MacphersonMarkerSteinhorn2000}) \label{Weakly O-Minimal Cell Decomposition Theorem}
Let $\mathcal{N}$ be a model of a weakly o-minimal theory. Let $n>0$, and let $X_{1},\dots,X_{r}$ be definable subsets of $N^{n}$. Then there exists a finite partition $\mathcal{P}$ of $N^{n}$ into $n$-cells such that for each $1 \leq i \leq r$, the set $X_{i}$ is a union of cells from $\mathcal{P}$.
\end{theorem}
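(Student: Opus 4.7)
The plan is to follow the familiar inductive pattern of o-minimal cell decomposition, with additional technical care to handle endpoints that may live in sorts of the Dedekind completion $\overline{N}$ rather than in $N$ itself. I would proceed by induction on $n$. The base case $n=1$ is immediate: weak o-minimality says that each $X_{i} \subseteq N$ is already a finite union of convex sets, hence a union of $1$-cells, and one simply takes a common refinement of the resulting convex decompositions.

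For the inductive step, assuming the theorem in dimension $n$, I would analyze the fibres $(X_{i})_{\overline{a}}$ for $\overline{a} \in N^{n}$ and build the partition above a cell decomposition of the base. The central structural statement to prove is a \emph{uniform} version of weak o-minimality for families: after partitioning $N^{n}$ into finitely many $n$-cells $C_{j}$ via the inductive hypothesis, one arranges that for each $i,j$ every fibre $(X_{i})_{\overline{a}}$ with $\overline{a} \in C_{j}$ decomposes into a fixed number of convex pieces, and that the endpoints of those pieces are given by definable functions $C_{j} \to \overline{N}$ with values in fixed sorts. Once this uniform fibre description is achieved, over each $C_{j}$ one cuts each $(X_{i})_{\overline{a}}$ into its finitely many convex components and assembles $(n+1)$-cells of two types over $C_{j}$: graphs of endpoint functions for singleton components, and strips between pairs of (possibly sort-valued) endpoint functions for the open components. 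Taking a common refinement over all $i,j$ finishes the inductive step.

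The main obstacle will be establishing this uniform fibre description, and it is exactly here that being a model of a weakly o-minimal \emph{theory}, rather than merely a weakly o-minimal structure, becomes essential. Two ingredients have to be proved. First, a uniform finiteness bound: for a definable family there should be a constant $k$ such that every fibre has at most $k$ convex components. This is extracted by a compactness argument using that all models of $T_{1}$ are weakly o-minimal, since the failure of such a bound would yield an ultraproduct (or saturated extension) exhibiting a definable subset of $N$ with infinitely many convex components, contradicting weak o-minimality of that model. Second, a monotonicity and continuity theorem adapted to weak o-minimality, showing that the supremum and infimum of each convex component of a fibre vary in a definable way into an appropriate sort of $\overline{N}$; this is the weakly o-minimal analogue of the o-minimal monotonicity theorem, and a careful local analysis must be carried out to handle the possibility that fibre-endpoints are not realized in $N$. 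With these two ingredients in hand, the assembly into a cell decomposition proceeds as in the o-minimal case, but phrased in terms of sort-valued endpoint functions rather than $N$-valued ones.
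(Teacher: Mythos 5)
The paper does not prove this statement; it simply cites it as Theorem~4.6 of Macpherson--Marker--Steinhorn and uses it as a black box, so there is no internal proof of the paper to compare against. Your sketch, however, follows the same overall strategy as the original Macpherson--Marker--Steinhorn argument: induction on $n$, a uniform finiteness bound on the number of convex components of fibres extracted by compactness (this is exactly where the \emph{theory}-level hypothesis, rather than merely the structure being weakly o-minimal, is indispensable), and endpoint functions taking values in sorts of the Dedekind completion $\overline{N}$. You have correctly identified both load-bearing ingredients.

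Two small points you gloss over. First, for the inductive step it is not enough to fix the \emph{number} of convex components of $(X_i)_{\overline{a}}$ uniformly over a base cell; you must also refine the base so that the \emph{shape} of each component is uniform over the cell (singleton vs.\ open-bounded vs.\ half-open vs.\ unbounded, whether an endpoint lies in $N$ or only in a sort, and whether it is attained). Only then does the fibrewise description assemble into cells of the two permitted forms (graphs and strips). Second, the weakly o-minimal monotonicity theorem you invoke is not the cleanest route: what one actually needs is that the map sending $\overline{a}$ to the supremum (resp.\ infimum) of a fixed convex component of $(X_i)_{\overline{a}}$ is a definable function into some sort $\overline{Z}$. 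That follows essentially by the construction of sorts given just before the cell definition (one defines $\sim$ by equality of the relevant fibre-sup and passes to the quotient), rather than by an analogue of o-minimal monotonicity/continuity --- note that the cell definition used here does not impose any continuity requirement on the functions $f$, $f_1$, $f_2$, so that machinery is unnecessary. With these clarifications your outline is sound and matches the cited reference.
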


\begin{remark}
Although Macpherson, Marker and Steinhorn did not explicitly specify the parameters used to define the weakly o-minimal cells in the decomposition, it is clear from their proof that these parameters come from the same set of parameters used to define $X_{1},\dots,X_{r}$.
\end{remark}

For a weakly o-minimal theory $T_{1}$ in a language $\mathcal{L}_{1}$, there is also a natural notion of dimension. Let $\mathcal{M} \models T_{1}$ and let $U \subseteq M^{n}$ be $\mathcal{L}_{1}(M)$-definable. The \textbf{\textit{dimension}} of $U$, denoted $\dim_{\mathcal{L}_{1}}(U)$, is the largest $r \in \mathbb{N}$ for which there exists a projection $\pi: M^{n} \rightarrow M^{r}$ such that $\pi(U)$ has nonempty interior in $M^{r}$. It is well known in o-minimality that this topological dimension coincides with the algebraic dimension. For weakly o-minimal theories, the same holds provided that the model-theoretic algebraic closure satisfies the exchange property, so that it forms a pregeometry.

\begin{theorem} (Theorem 4.12 in \cite{MacphersonMarkerSteinhorn2000}) \label{Weakly O-minimal Theory and acl having Exchange Property Topological Dimension Coincides with Algebraic Dimension}
Let $T_{1}$ be a weakly o-minimal theory in a language $\mathcal{L}_{1}$ such that algebraic closure has the exchange property in all models of the theory. Let $\mathcal{N} \models T_{1}$. Then, for all $n>0$ and all $\mathcal{L}_{1}(N)$-definable sets $X \subseteq N^{n}$, we have 
\[
\dim_{\mathcal{L}_{1}}(X) = \rk_{\mathcal{L}_{1}}(X).
\]
\end{theorem}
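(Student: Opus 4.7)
The plan is to prove $\dim_{\mathcal{L}_{1}}(X) = \rk_{\mathcal{L}_{1}}(X)$ by reducing to cells via the weakly o-minimal cell decomposition (Theorem \ref{Weakly O-Minimal Cell Decomposition Theorem}) and computing both invariants through an intrinsic cell dimension. Since both $\dim_{\mathcal{L}_{1}}$ and $\rk_{\mathcal{L}_{1}}$ are preserved under suitable elementary extensions -- the former because ``having nonempty interior of a projection'' is first-order on the parameters, the latter because $\acl$ is elementarily invariant -- I may freely pass to a sufficiently saturated elementary extension of $\mathcal{N}$ and compute either quantity there.

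First, I apply Theorem \ref{Weakly O-Minimal Cell Decomposition Theorem} to partition $X$ into finitely many cells $C_{1},\dots,C_{k}$ defined over the same parameter set $A$ as $X$. Both invariants satisfy $\dim_{\mathcal{L}_{1}}(X) = \max_{i}\dim_{\mathcal{L}_{1}}(C_{i})$ (by further cell-decomposing each projection $\pi(C_{i})$ inside a witness to $\dim_{\mathcal{L}_{1}}(X)$) and $\rk_{\mathcal{L}_{1}}(X) = \max_{i}\rk_{\mathcal{L}_{1}}(C_{i})$ (since every element of $X$ lies in some $C_{i}$), so it suffices to verify equality cell by cell. I then assign $d(C) \in \mathbb{N}$ by induction on the cell construction: a singleton $1$-cell has $d=0$, a non-trivial convex $1$-cell has $d=1$; a graph cell $\Gamma(f|_{Y})$ inherits $d(Y)$; and a band cell $(f_{1},f_{2})_{Y}$ has $d(Y)+1$. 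A direct induction then shows $\dim_{\mathcal{L}_{1}}(C) = d(C)$: the graph case uses the coordinate-forgetting bijection onto $Y$, while the band case combines a projection witnessing $\dim_{\mathcal{L}_{1}}(Y)=d(Y)$ with the last coordinate to produce a set with nonempty interior in $N^{d(Y)+1}$.

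For the equality $\rk_{\mathcal{L}_{1}}(C)=d(C)$, I induct similarly on cells. The graph case is immediate because the top coordinate lies in $\dcl$ of the base. For a band cell $(f_{1},f_{2})_{Y}$, I pick $\bar{y} \in Y$ with $\rk(\bar{y}/A)=d(Y)$ by induction, then pick $z \in (f_{1}(\bar{y}),f_{2}(\bar{y}))$ with $z \notin \acl(A \cup \bar{y})$; the exchange property then yields $\rk((\bar{y},z)/A)=d(Y)+1=d(C)$. The main obstacle is producing such a $z$: I must verify that any non-trivial definable convex subset $I$ of $N$ defined over parameters $B$ contains an element outside $\acl(B)$. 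In a sufficiently saturated model, this follows from the cardinality bound $|\acl(B)| \leq |B|+|\mathcal{L}_{1}|+\aleph_{0}$ against the strictly larger cardinality of any non-trivial convex set in a saturated model. Making this density argument precise, and ensuring that the resulting equality transfers back to arbitrary $\mathcal{N}$ via the elementary invariance of $\rk$, is the most delicate piece of the argument; the remaining steps are a routine double induction using cell decomposition and the exchange property.
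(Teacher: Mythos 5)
The paper does not actually prove this statement: it is quoted verbatim as Theorem 4.12 of Macpherson--Marker--Steinhorn, so your proposal is an attempt to reprove an external result rather than something to be matched against an argument in the text. Judged on its own merits, the rank half of your sketch is essentially sound (the graph/band induction with exchange, plus the saturation--cardinality argument for finding non-algebraic points in infinite convex sets), provided you fix the definitional point that $\rk_{\mathcal{L}_1}(X)$ must be taken as the maximal rank of a realization in a sufficiently saturated extension --- otherwise the transfer back to an arbitrary $\mathcal{N}$ fails (e.g.\ a model all of whose points are algebraic over the parameters), and ``$\acl$ is elementarily invariant'' is not by itself the right justification.

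The genuine gaps are in the topological half, where you assert that ``a direct induction shows $\dim_{\mathcal{L}_1}(C)=d(C)$.'' Two steps there are not routine. First, in the graph case, the coordinate-forgetting bijection is not known at this stage to preserve the projection-defined dimension: to get $\dim(\Gamma(f|_Y))\le d(Y)$ you must rule out that some projection \emph{involving the last coordinate} has interior, and that is exactly the statement that definable images do not raise topological dimension --- a central piece of the theory you are trying to establish, not a consequence of having a bijection. Second, in the band case, combining a witness projection for $Y$ with the last coordinate produces a set whose fibres over a box are nonempty open convex sets, but that does not yet yield an open box: you need a sub-box of the base over which the fibres share a common subinterval, and this uniformity is nontrivial precisely because, unlike the o-minimal case, the weakly o-minimal cell decomposition (Theorem \ref{Weakly O-Minimal Cell Decomposition Theorem}) gives bounding functions into the Dedekind completion that need not be piecewise continuous. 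Relatedly, the identity $\dim_{\mathcal{L}_1}(X)=\max_i\dim_{\mathcal{L}_1}(C_i)$ quietly uses that a finite union of definable sets with empty interior has empty interior, which also needs an argument. So the ``most delicate piece'' is not where you place it (density of non-algebraic points, which your saturation argument already handles), but in the dimension theory of cells itself; note that the inequality $\dim(X)\le\rk(X)$ can be obtained directly and cheaply (a box contains a point of full rank over the parameters, by successive choices outside algebraic closures), which bypasses the graph-cell problem, but the band-cell uniformity needed for $\rk(X)\le\dim(X)$ remains the real content and is where Macpherson--Marker--Steinhorn invest their machinery.
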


\subsection{Generic Derivations on O-minimal Structures}

This section is devoted to explaining and recording the necessary facts about $T^{\delta}_{g}$. Recall that a derivation $\delta$ on $M$ is a $T$-derivation if it is compatible with every $\mathcal{L}(\varnothing)$-definable $\mathcal{C}^{1}$-function with open domain. The following lemma shows that a $T$-derivation also behaves well with $\mathcal{C}^{k}$-functions that are definable over parameters.

\begin{lemma} \cite{FornasieroKaplan2020} \label{delta on Ck-functions}
Suppose that $(\mathcal{M},\delta) \models T^{\delta}$. Let $k>0$ and let $f$ be an $\mathcal{L}(M)$-definable $\mathcal{C}^{k}$-function on an open set $U \subseteq M^{n}$. Then there exists a unique $\mathcal{L}(M)$-definable $\mathcal{C}^{k-1}$-function $f^{[\delta]}:U \rightarrow M$ such that
\[
\delta f(\overline{u}) = f^{[\delta]}(\overline{u}) + \textbf{J}_{f}(\overline{u})\delta\overline{u}
\]
for each $\overline{u} \in U$. Moreover, if $f$ is $\mathcal{L}(A)$-definable with $A \subseteq \ker(\delta)$, then $f^{[\delta]} = 0$.
\end{lemma}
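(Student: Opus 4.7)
The plan is to reduce the lemma to the defining compatibility axiom of $T^{\delta}$, applied to an $\mathcal{L}(\varnothing)$-definable ``envelope'' of $f$. First, since $f$ is $\mathcal{L}(M)$-definable, I write $f(\overline{u}) = F(\overline{a},\overline{u})$ for some $\mathcal{L}(\varnothing)$-definable partial function $F:M^{m+n}\to M$ and some parameter tuple $\overline{a} \in M^{m}$. The key technical step is to arrange that $F$ is genuinely $\mathcal{C}^{k}$ on an \emph{open} neighbourhood $W \subseteq M^{m+n}$ of $\{\overline{a}\}\times U$. This is extracted from the $\mathcal{C}^{k}$-cell decomposition theorem in o-minimal structures applied to the domain of $F$, together with the $\mathcal{L}(\varnothing)$-definable parameter set over which the corresponding slice is $\mathcal{C}^{k}$; after possibly partitioning $U$ into finitely many definable pieces and defining $f^{[\delta]}$ piecewise, one obtains such an envelope.

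Once $F$ is $\mathcal{L}(\varnothing)$-definable and $\mathcal{C}^{k}$ on the open set $W$, the compatibility axiom of $T^{\delta}$ applied to $F$ yields, for each $(\overline{v},\overline{y}) \in W$,
\[
\delta F(\overline{v},\overline{y}) \;=\; \partial_{\overline{v}}F(\overline{v},\overline{y})\cdot \delta\overline{v} \;+\; \partial_{\overline{y}}F(\overline{v},\overline{y})\cdot \delta\overline{y},
\]
after splitting $\textbf{J}_{F}$ into the blocks of partial derivatives with respect to the first $m$ and the last $n$ variables. Specialising at $\overline{v}=\overline{a}$ and noting that $\partial_{\overline{y}}F(\overline{a},\cdot)=\textbf{J}_{f}$ on $U$, I obtain
\[
\delta f(\overline{y}) \;=\; \partial_{\overline{v}}F(\overline{a},\overline{y})\cdot \delta\overline{a} \;+\; \textbf{J}_{f}(\overline{y})\,\delta\overline{y}.
\]
I therefore define $f^{[\delta]}(\overline{y}) := \partial_{\overline{v}}F(\overline{a},\overline{y})\cdot \delta\overline{a}$, which is $\mathcal{L}(M)$-definable (using parameters $\overline{a}$ and $\delta\overline{a}$); since $F$ is $\mathcal{C}^{k}$, each component of $\partial_{\overline{v}}F$ is $\mathcal{C}^{k-1}$, so $f^{[\delta]}\in \mathcal{C}^{k-1}$, and the pieces produced by the possible partitioning of $U$ agree on overlaps by the pointwise formula.

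Uniqueness is immediate from the defining equation: any candidate $g$ is forced to satisfy $g(\overline{y}) = \delta f(\overline{y}) - \textbf{J}_{f}(\overline{y})\,\delta\overline{y}$ pointwise on $U$. For the moreover clause, if $f$ is $\mathcal{L}(A)$-definable with $A \subseteq \ker(\delta)$, then the parameter tuple $\overline{a}$ may be chosen from $A^{m}$, so $\delta\overline{a}=0$ and hence $f^{[\delta]}\equiv 0$. The main obstacle is the envelope step: securing an \emph{open} domain $W$ over which the $\mathcal{L}(\varnothing)$-definable $F$ is $\mathcal{C}^{k}$ while still containing $\{\overline{a}\}\times U$ requires careful use of parametric $\mathcal{C}^{k}$-cell decomposition, and the argument most likely proceeds by partitioning $U$ into finitely many cells, treating each cell separately with its own envelope, and then patching the resulting $\mathcal{C}^{k-1}$ pieces via the pointwise uniqueness.
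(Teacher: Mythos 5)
The paper merely cites this lemma from Fornasiero--Kaplan and does not reproduce a proof, so I compare your proposal against the expected argument. Your high-level plan is the right one: pass to an $\mathcal{L}(\varnothing)$-definable envelope $F$ with $f = F(\overline{a},\cdot)$, invoke the compatibility axiom for $F$, split $\textbf{J}_F$ into parameter and variable blocks, specialize at $\overline{v} = \overline{a}$ to obtain $f^{[\delta]}(\overline{y}) = \partial_{\overline{v}}F(\overline{a},\overline{y})\cdot\delta\overline{a}$, and read off uniqueness from the pointwise formula and the ``moreover'' clause from $\delta\overline{a} = 0$. The observation that $\mathcal{L}(M)$-definability comes for free because $\overline{a}$ and $\delta\overline{a}$ are parameters from $M$ is also correct.

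The gap is precisely the envelope step, and your proposed patch does not close it. You suggest partitioning $U$ into finitely many definable pieces, finding an envelope on each piece, and ``patching the resulting $\mathcal{C}^{k-1}$ pieces via the pointwise uniqueness.'' But a cell decomposition of $U$ will generally contain cells that are \emph{not} open in $M^n$, and gluing $\mathcal{C}^{k-1}$-functions defined on such non-open cells does not produce a $\mathcal{C}^{k-1}$-function on $U$: the pointwise uniqueness only guarantees the pieces agree as functions, not that the glued function is differentiable across cell boundaries. What you actually need, for each $\overline{u}_0 \in U$, is an \emph{open} neighborhood $W_0$ of $\overline{u}_0$ in $U$ together with an $\mathcal{L}(\varnothing)$-definable $\mathcal{C}^{k}$ envelope on an open neighborhood of $\{\overline{a}\}\times W_0$ in $M^{m+n}$; then $f^{[\delta]}$ is $\mathcal{C}^{k-1}$ on $U$ because it locally coincides with $\mathcal{C}^{k-1}$-functions on open sets. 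There is also a genuine subtlety even in producing a single such local envelope: the set $\{\overline{a}\}\times U$ has dimension $n < m+n$, so it sits inside low-dimensional strata of a $\mathcal{C}^k$-cell decomposition of $\text{dom}(F)$, where $F$ need only be $\mathcal{C}^k$ along the stratum, not jointly in an ambient open set (the example $F(v,u)=|v|\,u$ at $v=0$ shows ``fiberwise $\mathcal{C}^k$'' does not imply ``jointly $\mathcal{C}^k$''). The standard fix, which your write-up does not record, is to take the parameter tuple $\overline{a}$ to be a $\rk_{\mathcal{L}}$-independent generating set for the defining parameters, so that $\overline{a}$ is a generic point of $M^m$ over $\varnothing$; genericity of $\overline{a}$ then forces $\overline{a}$ into the interior of the relevant $\mathcal{L}(\varnothing)$-definable subsets of $M^m$, which is the leverage needed to place $(\overline{a},\overline{u}_0)$ inside an open $\mathcal{C}^k$-envelope (possibly after replacing $F$ by a different $\mathcal{L}(\varnothing)$-definable representative of the same germ). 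Without this genericity step and a careful dimension count, the envelope step is incomplete.
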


We also need the following useful observation regarding the substructures of $T^{\delta}$ when replacing $T$ by $T^{\text{df}}$.

\begin{lemma}[Proof of Theorem 4.8 in \cite{FornasieroKaplan2020}] \label{Substructures of T delta}
Suppose that $T$ has quantifier elimination and is universally axiomatizable. Then $T^{\delta}$ is universal.
\end{lemma}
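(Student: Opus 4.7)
The plan is to exhibit an explicit universal axiomatization of $T^{\delta}$ in $\mathcal{L}^{\delta}$. By hypothesis, $T$ itself is axiomatized by universal $\mathcal{L}$-sentences; the additivity axiom $\delta(x+y) = \delta(x)+\delta(y)$ and the Leibniz rule $\delta(xy) = y\delta(x)+x\delta(y)$ are already universal $\mathcal{L}^{\delta}$-sentences. Hence the only piece of $T^{\delta}$ not manifestly universal is the compatibility schema, and I need to reformulate it as a schema of $\forall$-sentences.

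For each $\mathcal{L}(\varnothing)$-definable $\mathcal{C}^{1}$-function $f:U\to M$ with $U\subseteq M^{n}$ open, I would proceed as follows. Since $f$ is $\mathcal{L}(\varnothing)$-definable and $\mathcal{C}^{1}$, each partial derivative $\partial_{i}f$ is also $\mathcal{L}(\varnothing)$-definable on $U$ (via the standard limit formula available in any o-minimal expansion of $\RCF$). Invoking QE of $T$, I choose quantifier-free $\mathcal{L}$-formulas $\varphi_{U}(\overline{x})$ defining $U$, $\varphi_{f}(\overline{x},y)$ defining the graph of $f$, and $\varphi_{f,i}(\overline{x},y)$ defining the graph of $\partial_{i}f$ on $U$. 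The compatibility of $\delta$ with $f$ is then captured by the sentence
$$
\forall \overline{x}\,\forall y_{0},y_{1},\ldots,y_{n}\colon\ \Bigl[\varphi_{U}(\overline{x})\wedge\varphi_{f}(\overline{x},y_{0})\wedge\bigwedge_{i=1}^{n}\varphi_{f,i}(\overline{x},y_{i})\Bigr]\ \rightarrow\ \delta(y_{0}) = \sum_{i=1}^{n}y_{i}\cdot\delta(x_{i}),
$$
whose matrix is a Boolean combination of atomic $\mathcal{L}^{\delta}$-formulas, so the whole sentence is universal. The graph trick is essential because it bypasses the fact that $f$ itself may not be represented by an $\mathcal{L}$-term, even under QE.

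Finally, I would verify that this schema is $T$-equivalent to the original compatibility condition: outside $U$ the implication is vacuous, and for $\overline{u}\in U$ the hypothesis pins down $y_{0}=f(\overline{u})$ and $y_{i}=\partial_{i}f(\overline{u})$, so the conclusion is precisely $\delta f(\overline{u}) = \mathbf{J}_{f}(\overline{u})\delta \overline{u}$. Combined with the universal axioms for $T$ and the derivation identities, this yields a universal axiomatization of $T^{\delta}$. I do not expect a serious obstacle; the only care needed is to confirm that the partial derivatives of $\mathcal{L}(\varnothing)$-definable $\mathcal{C}^{1}$-functions remain $\mathcal{L}(\varnothing)$-definable so that QE applies to them, and that the open domain $U$ is itself $\mathcal{L}(\varnothing)$-definable, which is part of the datum of such an $f$.
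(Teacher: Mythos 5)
Your proposal is correct, and it proves the right statement: you exhibit an explicit universal axiomatization whose models are exactly the models of $T^{\delta}$, which is what ``$T^{\delta}$ is universal'' means here. The paper itself does not reprove this lemma (it cites the proof of Theorem 4.8 of Fornasiero--Kaplan), and the argument it implicitly relies on --- visible in the proof of Lemma \ref{Universal Part of T tame delta and Extension to Models of T tame G} --- is that, since $T$ has QE and is universally axiomatizable, every $\mathcal{L}(\varnothing)$-definable function (in particular $f$ and the entries of $\mathbf{J}_{f}$) is piecewise given by $\mathcal{L}(\varnothing)$-terms, so the compatibility schema can be written as universal sentences of the form $\forall\overline{x}\,[\theta(\overline{x})\rightarrow\delta(t(\overline{x}))=\sum_{i}s_{i}(\overline{x})\delta(x_{i})]$ with $\theta$ quantifier-free. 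Your graph trick is a genuine, slightly more economical variant: you use QE only to get quantifier-free $\mathcal{L}(\varnothing)$-formulas for $U$, the graph of $f$, and the graphs of the $\partial_{i}f$, and you exploit the uniqueness of the graph values so that the universally quantified implication is equivalent (over $T$) to the existential/functional statement of compatibility; universal axiomatizability of $T$ is then needed only for the $T$-part of the axioms, and the piecewise-term lemma is bypassed entirely. Both routes use the same background facts (partial derivatives of $\mathcal{L}(\varnothing)$-definable $\mathcal{C}^{1}$-functions are $\mathcal{L}(\varnothing)$-definable, and completeness of $T$ makes the schema's index set well defined), and both yield the same conclusion; the term-based version has the minor advantage of producing axioms in which $\delta$ is applied to explicit terms, which is occasionally convenient later, while yours is more self-contained.
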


In later sections, we need to extend models of $T$ to models of $T^{\delta}$, and the following lemma serves this purpose.

\begin{lemma}[Lemma 2.13 in \cite{FornasieroKaplan2020}] \label{Extend T derivation}
Let $(\mathcal{M},\delta) \models T^{\delta}$ and let $\mathcal{N} \succ \mathcal{M}$. Let $A \subseteq N$ be a $\dcl_{\mathcal{M}}$-independent subset of $N$ over $M$ such that $\mathcal{N} = \mathcal{M}\langle A \rangle$. Then for any map $s:A \rightarrow N$, there exists a unique extension $\delta_{N}$ of $\delta$ to a $T$-derivation on $\mathcal{N}$ such that $\delta_{N}(a)=s(a)$ for all $a \in A$. 
\end{lemma}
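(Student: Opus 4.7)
The plan is to define $\delta_N$ explicitly by the $T$-derivation compatibility formula and to leverage the $\dcl_{\mathcal{M}}$-independence of $A$ over $M$ to confirm this definition is unambiguous. Every $b \in N = \mathcal{M}\langle A \rangle$ is of the form $b = f(\overline{m}, \overline{a})$ for some $\mathcal{L}(\varnothing)$-definable function $f$, some tuple $\overline{m}$ from $M$, and some tuple $\overline{a}$ of distinct elements from $A$. Since $\overline{a}$ is $\dcl_{\mathcal{M}}$-generic over $M$, o-minimal cell decomposition lets me refine so that $f$ is $\mathcal{C}^1$ on an open neighborhood of $(\overline{m}, \overline{a})$, and I set
\[
\delta_N(b) := \textbf{J}_f(\overline{m}, \overline{a}) \cdot (\delta \overline{m}, s(\overline{a})).
\]
This is the only value any $T$-derivation on $\mathcal{N}$ extending $\delta$ and sending $a \mapsto s(a)$ can assign to $b$, so uniqueness is automatic once existence is established.

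The principal obstacle is well-definedness. Suppose $b = f_1(\overline{m}_1, \overline{a}_1) = f_2(\overline{m}_2, \overline{a}_2)$; after padding one may assume both representations share a common parameter tuple $\overline{m}$ and a common tuple $\overline{a}$ of distinct entries from $A$. Set $g_i(\overline{x}) := f_i(\overline{m}, \overline{x})$, an $\mathcal{L}(M)$-definable $\mathcal{C}^1$-function near $\overline{a}$. Because $A$ is $\dcl_{\mathcal{M}}$-independent over $M$, the tuple $\overline{a}$ realises maximal $\mathcal{L}$-rank over $M$, so the $\mathcal{L}(M)$-definable set $\{\overline{x} : g_1(\overline{x}) = g_2(\overline{x})\}$ must contain $\overline{a}$ in its interior; otherwise $\overline{a}$ would lie in a proper lower-dimensional $\mathcal{L}(M)$-definable subset, contradicting independence. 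On such a neighborhood $g_1 \equiv g_2$, whence $\textbf{J}_{g_1}(\overline{a}) = \textbf{J}_{g_2}(\overline{a})$ and, by the uniqueness clause of Lemma \ref{delta on Ck-functions}, $g_1^{[\delta]}(\overline{a}) = g_2^{[\delta]}(\overline{a})$. Invoking Lemma \ref{delta on Ck-functions} again to rewrite the defining formula as
\[
\textbf{J}_{f_i}(\overline{m}, \overline{a}) \cdot (\delta \overline{m}, s(\overline{a})) = g_i^{[\delta]}(\overline{a}) + \textbf{J}_{g_i}(\overline{a}) \cdot s(\overline{a})
\]
for $i = 1, 2$ then shows both expressions coincide.

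Once well-definedness is secured, the remaining checks are routine. Taking $f$ to be a coordinate projection gives $\delta_N|_M = \delta$ and $\delta_N(a) = s(a)$ for every $a \in A$. For the $T$-derivation property, I would fix any $\mathcal{L}(\varnothing)$-definable $\mathcal{C}^1$-function $h : U \to N$ on an open $U \subseteq N^{\ell}$ and any $\overline{u} \in U$, write each coordinate $u_j = f_j(\overline{m}, \overline{a})$ with a common parameter tuple, apply the chain rule to the composition $h \circ (f_1, \dots, f_\ell)$, and compare with the definition of $\delta_N(h(\overline{u}))$. The compatibility identity $\delta_N h(\overline{u}) = \textbf{J}_h(\overline{u}) \cdot \delta_N \overline{u}$ then falls out directly, completing the verification that $\delta_N$ is a $T$-derivation.
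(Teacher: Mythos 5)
This statement is cited from Fornasiero–Kaplan (Lemma 2.13 in \cite{FornasieroKaplan2020}), so the present paper offers no proof of its own to compare against; the evaluation below is of the proposal on its merits.

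Your argument is essentially correct and reproduces the expected explicit-construction proof: define $\delta_N$ by the chain-rule formula, use genericity of $\overline{a}$ over $M$ to place it in an open cell of any $\mathcal{L}(M)$-definable cell decomposition (so the relevant functions are $\mathcal{C}^1$ where needed and agree on an open neighbourhood, forcing equality of Jacobians), check well-definedness, and verify compatibility with all $\mathcal{L}(\varnothing)$-definable $\mathcal{C}^1$-functions by composing representations. The identification $\textbf{J}_{f_i}(\overline m,\overline a)\cdot(\delta\overline m,s(\overline a)) = g_i^{[\delta]}(\overline a)+\textbf{J}_{g_i}(\overline a)\cdot s(\overline a)$ is correct and is the right way to reduce to Lemma~\ref{delta on Ck-functions}.

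One small point worth tightening: your opening step claims that o-minimal cell decomposition lets you make the $\mathcal{L}(\varnothing)$-definable function $f$ of class $\mathcal{C}^1$ on an open neighbourhood of $(\overline m,\overline a)$. This is not automatic for an arbitrary choice of $\overline m$: if $\overline m$ is $\dcl_{\mathcal{M}}$-dependent over $\varnothing$, the point $(\overline m,\overline a)$ sits in a lower-dimensional $\mathcal{L}(\varnothing)$-definable set and hence in no open cell of an $\mathcal{L}(\varnothing)$-definable decomposition. You should either first shrink $\overline m$ to a $\dcl_{\mathcal{M}}$-independent (over $\varnothing$) subtuple, after which $(\overline m,\overline a)$ has full $\rk_{\mathcal{L}}$ over $\varnothing$ and does land in an open cell, or work throughout with the $\mathcal{L}(M)$-definable functions $g(\overline x):=f(\overline m,\overline x)$ and Lemma~\ref{delta on Ck-functions}, which is what your well-definedness step effectively does anyway. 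With that adjustment, the proof is complete.
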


Next, we introduce the model completion of $T^{\delta}$. We first define the notion of jet-space.

\begin{definition}
Let $(\mathcal{M},\delta)$ be an $\mathcal{L}^{\delta}$-structure. For a $k$-tuple $(n_{1},\dots,n_{k}) \in \mathbb{N}^{k}$ and $A \subseteq M^{k}$, the \textbf{\textit{$(n_{1},\dots,n_{k})$-jet-space}} of $A$ is the set
\[
\Jet_{(n_{1},\dots,n_{k})}^{\delta}(A) := \{(x_{1},\delta x_{1},\dots,\delta^{n_{1}} x_{1},\dots,x_{k},\delta x_{k},\dots,\delta^{n_{k}} x_{k}) \mid (x_{1},\dots,x_{k}) \in A\}.
\]
If $k=1$, then we simply write $\Jet_{n_{1}}^{\delta}(A)$ instead, and if $\delta$ is clear from the context, we will drop the superscript and write $\Jet_{(n_{1},\dots,n_{k})}(A)$ instead.
\end{definition}

Observe that 
\[
\Jet_{(n_{1},\dots,n_{k})}(M^{k}) = \Jet_{n_{1}}(M) \times \cdots \times \Jet_{n_{k}}(M).
\] 
For $k,m \in \mathbb{N}$, let $\Pi_{m}:M^{m+k} \rightarrow M^{m}$ be the projection map to the first $m$ coordinates. We are ready to define genericity.

\begin{definition}
A $T$-derivation $\delta$ on $\mathcal{M}$ is said to be \textbf{generic} if for every $n \in \mathbb{N}$ and every $\mathcal{L}(M)$-definable set $A \subseteq M^{n+1}$, if 
\[
\dim_{\mathcal{L}}(\Pi_{n}(A)) = n,
\] 
then there exists $a \in M$ such that $\Jet_{n}(a) \in A$. Let $T^{\delta}_{g}$ be the $\mathcal{L}^{\delta}$-theory extending $T^{\delta}$ by the axiom schema which asserts that $\delta$ is generic.
\end{definition}

The main theorem proven by Fornasiero and Kaplan is the following.

\begin{theorem} (Theorem 4.8 in \cite{FornasieroKaplan2020}) \label{T g delta has QE}
$T_{g}^{\delta}$ is the model completion of $T^{\delta}$, and if $T$ has quantifier elimination and is universally axiomatizable, then $T_{g}^{\delta}$ has quantifier elimination.
\end{theorem}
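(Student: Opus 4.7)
The plan is to apply the Model Completion Test (Fact \ref{Model Completion Test}) to the pair $(T^{\delta}, T^{\delta}_{g})$. Under the hypothesis that $T$ has quantifier elimination and is universally axiomatizable, Lemma \ref{Substructures of T delta} makes $T^{\delta}$ universal, so verifying the two conditions of the test yields both the model-completion property and quantifier elimination for $T^{\delta}_{g}$ simultaneously. The model-completion claim for an arbitrary $T$ can then be recovered by replacing $T$ with its Skolemization $T^{\mathrm{df}}$, applying the universal case in the language $\mathcal{L}^{\mathrm{df}}$, and observing that $T^{\delta}_{g}$ is the $\mathcal{L}^{\delta}$-reduct of $(T^{\mathrm{df}})^{\delta}_{g}$.

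For condition (i), I would show that every $(\mathcal{M}, \delta) \models T^{\delta}$ embeds into a model of $T^{\delta}_{g}$ via a standard transfinite chain construction. At each stage, pass to a sufficiently saturated $\mathcal{L}$-elementary extension of the current model $(\mathcal{M}_{\alpha}, \delta_{\alpha})$; for each $\mathcal{L}(M_{\alpha})$-definable set $A \subseteq M_{\alpha}^{n+1}$ with $\dim_{\mathcal{L}} \Pi_{n}(A) = n$, pick a point $(a_{0},\dots,a_{n}) \in A$ with $(a_{0},\dots,a_{n-1})$ $\dcl_{\mathcal{L}}$-independent over $M_{\alpha}$, and apply Lemma \ref{Extend T derivation} to extend $\delta_{\alpha}$ uniquely so that $\delta a_{i} = a_{i+1}$ for $i < n$. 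Iterating through all such $A$ and taking unions yields a model in which every instance of the genericity schema is satisfied.

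For condition (ii), suppose $\mathcal{A}$ is a common substructure of $\mathcal{B} \models T^{\delta}$ and $\mathcal{C} \models T^{\delta}_{g}$, that $\varphi(x)$ is a quantifier-free $\mathcal{L}^{\delta}(A)$-formula, and that $\mathcal{B} \models \varphi(b)$ for some $b \in B$. I would rewrite $\varphi(x)$ as $\psi(x, \delta x, \dots, \delta^{k} x)$ for a quantifier-free $\mathcal{L}(A)$-formula $\psi$, and let $m \leq k+1$ be maximal such that $(b, \delta b, \dots, \delta^{m-1} b)$ is $\dcl_{\mathcal{B}}$-independent over $A$. When $m \leq k$, there is an $\mathcal{L}(A)$-definable function $f$ with $\delta^{m} b = f(b, \delta b, \dots, \delta^{m-1} b)$, and iterating Lemma \ref{delta on Ck-functions} produces $\mathcal{L}(A)$-definable functions $F_{j}$ (for $m < j \leq k$) with $\delta^{j} b = F_{j}(b, \delta b, \dots, \delta^{m-1} b)$; since $\mathcal{A}$ is closed under $\delta$, these $F_{j}$ are the same $\mathcal{L}(A)$-definable objects when interpreted in $\mathcal{B}$ or $\mathcal{C}$. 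Setting
\[
E := \{\overline{x} \in C^{m+1} : x_{m} = f(x_{0},\dots,x_{m-1}) \wedge \psi(x_{0},\dots,x_{m-1}, F_{m}(\overline{x}),\dots,F_{k}(\overline{x}))\}
\]
(with $E := \psi^{\mathcal{C}}$ when $m = k+1$), the projection $\Pi_{m}(E)$ has $\mathcal{L}$-dimension $m$, witnessed in $\mathcal{B}$ by $(b, \delta b, \dots, \delta^{m-1} b)$ and transferred to $\mathcal{C}$ by completeness of $T$. Applying the genericity axiom in $\mathcal{C}$ yields $c \in C$ with $(c, \delta c, \dots, \delta^{m} c) \in E$, and a further induction using Lemma \ref{delta on Ck-functions} forces $\delta^{j} c = F_{j}(c, \delta c, \dots, \delta^{m-1} c)$ for $m < j \leq k$, so that $\mathcal{C} \models \varphi(c)$.

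The hard part will be the sub-case $m \leq k$: one must carefully verify that the iterated chain-rule formulas $F_{j}$ really match across $\mathcal{B}$ and $\mathcal{C}$ (which reduces to $\mathcal{A}$ being closed under $\delta$ and to the canonical description of $f^{[\delta]}$ in Lemma \ref{delta on Ck-functions}), that $\Pi_{m}(E)$ indeed achieves dimension $m$ (which is exactly the role of the maximality of $m$), and that the witness produced by genericity satisfies the full tower of higher-derivative equations. Once these verifications are in hand, the test yields the theorem in the universal case, and the Skolemization argument completes the proof for arbitrary $T$.
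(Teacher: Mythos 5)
This statement is not proved in the paper at all: it is quoted verbatim as Theorem 4.8 of Fornasiero--Kaplan, and the closest argument the paper actually contains is its proof of the convex analogue, Theorem \ref{Quantifier Elimination of T Convex Delta G}, via Lemmas \ref{T Convex Delta QE Test Preparation i}--\ref{T Convex Delta QE Test Preparation iii}. Your proposal reconstructs essentially that same argument (which is also the shape of the original Fornasiero--Kaplan proof): reduce to the case where $T$ has QE and is universally axiomatizable, apply the test of Fact \ref{Model Completion Test}, realize genericity in extensions via Lemma \ref{Extend T derivation} for condition (i), and for condition (ii) split on the first $m$ for which the jet of $b$ becomes $\dcl$-dependent over $A$, exactly as in Case 1/Case 2 of Lemma \ref{T Convex Delta QE Test Preparation iii}. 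So in approach you coincide with the source; for condition (i) you could simply invoke Lemma \ref{Extend T delta Structures to T delta G Structures} rather than redo the chain construction (your version also needs the small observation, via definable choice, that the last jet coordinate can be taken inside the generated structure, or else adjoined to the independent set).

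Three points in condition (ii) need to be filled in before this is a proof. First, rewriting a quantifier-free $\mathcal{L}^{\delta}(A)$-formula $\varphi(x)$ as $\psi(\Jet_{k}(x))$ with $\psi$ quantifier-free in $\mathcal{L}(A)$ is not automatic, since $\delta$ may be applied to compound $\mathcal{L}$-terms; one needs the piecewise-$\mathcal{C}^{1}$ induction that the paper carries out (for $\mathcal{L}_{\text{convex}}$) in Lemma \ref{Forget the Derivation in T convex}, and the same induction works here. Second, transferring ``$\dim\Pi_{m}(E)=m$'' from $\mathcal{B}$ to $\mathcal{C}$ requires more than completeness of $T$, because $E$ is defined over $A$: what is needed is $\mathcal{B}\equiv_{A}\mathcal{C}$ as $\mathcal{L}$-structures, i.e.\ substructure completeness, which is exactly where the QE hypothesis on $T$ enters (the paper makes this an explicit hypothesis in Lemma \ref{T Convex Delta QE Test Preparation iii}). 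Third, the issue you flag as ``the hard part'' --- that the generic witness $c$ must satisfy the whole tower $\delta^{j}c=F_{j}(c,\dots,\delta^{m-1}c)$ --- is resolved by building the smoothness domain into $E$: choose an open $\mathcal{L}(A)$-definable set $V$ containing $(b,\dots,\delta^{m-1}b)$ on which $f$ is of class $\mathcal{C}^{k-m}$ (possible because that tuple has $\rk_{\mathcal{L}}$ equal to $m$ over $A$, hence lies in an open cell of any $A$-definable decomposition adapted to $f$), add the conjunct $(x_{0},\dots,x_{m-1})\in V$ to the definition of $E$, and then Lemma \ref{delta on Ck-functions} applied at $(c,\dots,\delta^{m-1}c)\in V$ forces the higher derivatives by induction, giving $\mathcal{C}\models\varphi(c)$. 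With these patches your argument goes through and is the intended one.
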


The following lemma concerns with extending models of $T^{\delta}$ to models of $T^{\delta}_{g}$.

\begin{lemma} (Proposition 4.3 in \cite{FornasieroKaplan2020}) \label{Extend T delta Structures to T delta G Structures}
Suppose that $\mathcal{M} \prec \mathcal{N} \models T$ and $(\mathcal{M},\delta) \models T^{\delta}$. Suppose further that $\rk_{\mathcal{L}}(N/M) = |N| \geq |T|$. Then there exists an extension of $\delta$ to a $T$-derivation on $\mathcal{N}$ such that $(\mathcal{N},\delta) \models T^{\delta}_{g}$.
\end{lemma}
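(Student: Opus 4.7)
My plan is a transfinite construction of length $\kappa := |N|$, in which at each stage I simultaneously absorb one element of $N$ into a growing $T^{\delta}$-substructure and diagonalize against one potential instance of the genericity axiom scheme. Since $\rk_{\mathcal{L}}(N/M) = \kappa \geq |T|$, I enumerate $N = \{e_{\beta} : \beta < \kappa\}$ and enumerate all pairs $(\phi_{\beta}(x_{0},\dots,x_{n_{\beta}};\overline{y}), \overline{c}_{\beta})$ with $\phi_{\beta}$ an $\mathcal{L}$-formula and $\overline{c}_{\beta} \in N^{|\overline{y}|}$, as $\beta < \kappa$ (there are exactly $\kappa$ such pairs).

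I build a continuous increasing chain $(\mathcal{M}_{\beta}, \delta_{\beta})_{\beta \leq \kappa}$ with $(\mathcal{M}_{0}, \delta_{0}) = (\mathcal{M}, \delta)$, each $\mathcal{M}_{\beta}$ a $\dcl$-closed (hence $\mathcal{L}$-elementary) substructure of $\mathcal{N}$, each $\delta_{\beta}$ a $T$-derivation on $\mathcal{M}_{\beta}$ extending $\delta$, and $\bigcup_{\beta<\kappa} M_{\beta} = N$. At a successor stage $\beta+1$, I first absorb $e_{\beta}$ and $\overline{c}_{\beta}$ into $\mathcal{M}_{\beta}$ by passing to $\dcl_{\mathcal{N}}(M_{\beta} \cup \{e_{\beta}\} \cup \overline{c}_{\beta})$ and extending $\delta_{\beta}$ with arbitrary values on a finite $\dcl$-basis over $\mathcal{M}_{\beta}$ via Lemma \ref{Extend T derivation}. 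I then inspect $X_{\beta} := \phi_{\beta}(N, \overline{c}_{\beta})$: if $\dim_{\mathcal{L}} \Pi_{n_{\beta}}(X_{\beta}) < n_{\beta}$ nothing further is needed; otherwise o-minimal cell decomposition yields a non-empty $\mathcal{L}(M_{\beta})$-open box $U \subseteq \Pi_{n_{\beta}}(X_{\beta})$, inside which I find a tuple $(b_{0}, \dots, b_{n_{\beta}-1})$ that is $\dcl_{\mathcal{N}}$-independent over $\mathcal{M}_{\beta}$, and then pick $b_{n_{\beta}} \in N$ with $(b_{0}, \dots, b_{n_{\beta}}) \in X_{\beta}$. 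A further application of Lemma \ref{Extend T derivation}, to a $\dcl$-basis of $\dcl_{\mathcal{N}}(M_{\beta} \cup \{b_{0}, \dots, b_{n_{\beta}}\})$ over $\mathcal{M}_{\beta}$ that contains $\{b_{0}, \dots, b_{n_{\beta}-1}\}$, then extends $\delta_{\beta}$ to $\delta_{\beta+1}$ with $\delta_{\beta+1}(b_{i}) = b_{i+1}$ for $0 \leq i < n_{\beta}$ and arbitrary values on the remaining basis elements. At limits I take unions.

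The resulting $\delta_{\kappa} := \bigcup_{\beta} \delta_{\beta}$ is a $T$-derivation on $\mathcal{N}$ extending $\delta$, and genericity is immediate: any $\mathcal{L}(N)$-definable $X \subseteq N^{n+1}$ with $\dim_{\mathcal{L}} \Pi_{n}(X) = n$ arises as $\phi_{\beta}(N, \overline{c}_{\beta})$ for some $\beta < \kappa$, and the tuple chosen at stage $\beta$ satisfies $\Jet_{n}^{\delta_{\kappa}}(b_{0}) \in X$. The main obstacle is locating the $\dcl$-independent tuple inside the prescribed $\mathcal{L}(M_{\beta})$-open box $U$; iterating projection and fibration, this reduces to the claim that every $\mathcal{L}(M_{\beta})$-definable open interval in $N$ contains some element outside $\dcl_{\mathcal{N}}(M_{\beta}) = M_{\beta}$. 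This follows from the rank hypothesis $\rk_{\mathcal{L}}(N/M) = |N|$, which ensures $\rk_{\mathcal{L}}(N/M_{\beta}) \geq \kappa$ throughout the construction since at each stage only finitely many new $\dcl$-basis elements over $M$ are introduced; in particular, the generic $\mathcal{L}$-type of $U$ over $\mathcal{M}_{\beta}$ is realized in $N$, producing the desired independent tuple.
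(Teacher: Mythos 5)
The paper offers no proof of this lemma at all: it is imported verbatim as Proposition 4.3 of Fornasiero–Kaplan, so there is no in-paper argument to compare against. Your transfinite construction is essentially the standard argument behind the cited result: exhaust $N$ and all genericity requirements along a chain of length $|N|$ of $\dcl$-closed (hence elementary) substructures, use Lemma \ref{Extend T derivation} to prescribe $\delta b_{i} = b_{i+1}$ on a $\dcl$-independent witnessing tuple, use $\rk_{\mathcal{L}}(N/M) = |N|$ to see that fewer than $|N|$ generators have been consumed at every stage (so the rank of $N$ over the current substructure stays infinite), note that witnesses persist because later stages only extend the derivation, and check that the union of the chain is a $T$-derivation on $\mathcal{N}$ since every tuple and every compatibility instance lives in some $\mathcal{M}_{\beta} \prec \mathcal{N}$. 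The one step you gloss is the production of the independent tuple inside the prescribed box $U$: the rank hypothesis by itself only yields some $b \in N \setminus M_{\beta}$, not one lying in a given $M_{\beta}$-definable interval, and the phrase \enquote{the generic $\mathcal{L}$-type of $U$ over $\mathcal{M}_{\beta}$ is realized in $N$} is both stronger than what you need and not a consequence of rank alone. What you actually need (and what is true) is: if $D \subseteq N$ is $\dcl$-closed with $D \neq N$ and $I$ is a nonempty open interval with endpoints in $D \cup \{\pm\infty\}$, then $I$ meets $N \setminus D$; this follows by composing any $b \in N \setminus D$ with a $D$-definable order isomorphism of the line onto $I$ (available because $T$ extends $\RCF$), which preserves non-membership in $\dcl$. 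Moreover this must be applied coordinatewise with $D = \dcl_{\mathcal{N}}(M_{\beta} \cup \{b_{0},\dots,b_{i-1}\})$, not just with $M_{\beta}$ as you state, which is legitimate precisely because $\rk_{\mathcal{L}}(N/M_{\beta})$ is infinite. With that repair spelled out, your argument is complete and is, in spirit, the proof of the cited proposition.
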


Lastly, the following lemma shows that every $\mathcal{L}^{\delta}$-formula is $T^{\delta}_{g}$-equivalent to a formula of a special form.

\begin{lemma} (Lemma 4.11 in \cite{FornasieroKaplan2020}) \label{Forget the Derivation in T delta G}
For every $\mathcal{L}^{\delta}$-formula $\varphi$ (possibly with parameters), there exist some $n \in \mathbb{N}$ and some $\mathcal{L}$-formula $\Tilde{\varphi}$ such that
\[
T^{\delta}_{g} \vdash \forall \overline{x} \,[\varphi(\overline{x}) \leftrightarrow \Tilde{\varphi}(\Jet_{n}(\overline{x}))].
\]
\end{lemma}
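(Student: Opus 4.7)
My plan is to apply the Stone Duality Theorem (Fact \ref{Stone Duality Theorem}) to the collection
\[
\Phi(\overline{x}) := \{\Tilde{\varphi}(\Jet_{n}(\overline{x})) : n \in \mathbb{N},\ \Tilde{\varphi} \text{ an } \mathcal{L}\text{-formula}\}.
\]
First I would verify the hypotheses of Fact \ref{Stone Duality Theorem} for $\Phi$: it contains $\top$ and $\bot$ (take $n = 0$), and it is closed under $\wedge$ and $\vee$ by taking the maximum of the two jet indices involved and padding each $\Tilde{\varphi}$ with trivial dependencies on the extra coordinates of the longer jet. Stone Duality then reduces the lemma to showing that any two complete types $p,q \in S_{|\overline{x}|}(T^{\delta}_{g})$ that agree on every formula of $\Phi$ must coincide.

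To prove this reduction, I would pick realizations $\overline{a} \models p$ in some $(\mathcal{M},\delta) \models T^{\delta}_{g}$ and $\overline{b} \models q$ in some $(\mathcal{N},\delta) \models T^{\delta}_{g}$; the assumption then reads $\Jet_{n}(\overline{a}) \equiv_{\mathcal{L}} \Jet_{n}(\overline{b})$ for every $n \in \mathbb{N}$. Setting
\[
\mathcal{M}_{0} := \dcl_{\mathcal{M}}(\{\delta^{i}a_{j}\}_{i,j}) \preceq_{\mathcal{L}} \mathcal{M}, \qquad \mathcal{N}_{0} := \dcl_{\mathcal{N}}(\{\delta^{i}b_{j}\}_{i,j}) \preceq_{\mathcal{L}} \mathcal{N},
\]
which are $\mathcal{L}$-elementary substructures by o-minimality of $T$, the matching $\mathcal{L}$-types on all jets produce a unique $\mathcal{L}$-isomorphism $\iota:\mathcal{M}_{0} \to \mathcal{N}_{0}$ sending $\delta^{i}a_{j} \mapsto \delta^{i}b_{j}$.

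The technical heart of the plan is to upgrade $\iota$ to an $\mathcal{L}^{\delta}$-isomorphism and to conclude that $(\mathcal{M}_{0},\delta|_{\mathcal{M}_{0}}) \models T^{\delta}$. For any $x = g(\delta^{i_{1}}a_{j_{1}},\dots,\delta^{i_{r}}a_{j_{r}}) \in \mathcal{M}_{0}$ with $g$ an $\mathcal{L}(\varnothing)$-definable function, I would invoke cell decomposition to find an $\mathcal{L}(\varnothing)$-definable cell $C$ containing the argument on which $g$ is $\mathcal{C}^{1}$; the $T$-derivation axiom then gives
\[
\delta x = \textbf{J}_{g}(\delta^{i_{1}}a_{j_{1}},\dots,\delta^{i_{r}}a_{j_{r}}) \cdot (\delta^{i_{1}+1}a_{j_{1}},\dots,\delta^{i_{r}+1}a_{j_{r}}),
\]
which lies in $\mathcal{M}_{0}$ since $\textbf{J}_{g}$ is $\mathcal{L}(\varnothing)$-definable. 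Because the condition that the argument belongs to $C$ is itself an $\mathcal{L}(\varnothing)$-condition on the jet, it transfers along $\iota$, and the same symbolic identity holds on the $\mathcal{N}_{0}$-side, yielding $\iota(\delta x) = \delta(\iota(x))$. This is the step I expect to be the main obstacle, as it must interleave cell decomposition, the chain-rule axiom for $T$-derivations, and the $\mathcal{L}(\varnothing)$-definability of Jacobians.

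Finally, identifying $\mathcal{N}_{0}$ with $\mathcal{M}_{0}$ via $\iota$ realizes both $(\mathcal{M},\delta)$ and $(\mathcal{N},\delta)$ as models of $T^{\delta}_{g}$ extending the common $T^{\delta}$-model $(\mathcal{M}_{0},\delta|_{\mathcal{M}_{0}})$; since $T^{\delta}_{g}$ is the model completion of $T^{\delta}$, these two extensions are elementarily equivalent over $\mathcal{M}_{0}$, so $\tp^{(\mathcal{M},\delta)}(\overline{a}) = \tp^{(\mathcal{N},\delta)}(\overline{b})$ and $p = q$. Parameters appearing in $\varphi$ can be accommodated by absorbing them, together with all of their $\delta$-iterates, into the base; this only enlarges the parameter set of the resulting $\Tilde{\varphi}$.
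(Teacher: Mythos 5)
Your proof takes a genuinely different route from the one the paper (and, from the analogous Lemma \ref{Forget the Derivation in T convex} in this paper, almost certainly Fornasiero--Kaplan's Lemma 4.11) uses. The reference proof is syntactic and constructive: it first reduces to quantifier-free $\mathcal{L}^{\delta}$-formulas via QE for $T^{\delta}_{g}$, and then proceeds by induction on the number of applications of $\delta$ to non-jet terms, unraveling each innermost $\delta f(\Jet_{m}(\overline{x}))$ by the chain rule on a smooth cell decomposition of $f$. Your proof is semantic: Stone duality reduces the claim to showing that two complete $T^{\delta}_{g}$-types agreeing on all jet-formulas coincide, and you get that by building an $\mathcal{L}^{\delta}$-isomorphism between the $\mathcal{L}^{\delta}$-definable closures of realizations and then invoking the model-completion property (that $T^{\delta}_{g}\cup\Diag$ of a $T^{\delta}$-model is complete). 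Both routes go through the same underlying input (the chain rule governs $\delta$ on $\dcl$-closures), but the syntactic route yields a procedure that actually computes $\widetilde{\varphi}$ and $n$, whereas yours is a clean existence argument and avoids the inductive bookkeeping; conversely, yours leans on the full strength of Theorem \ref{T g delta has QE} where the reference uses only the QE half.

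One step needs tightening, exactly where you anticipate trouble. When you write $x=g(\delta^{i_{1}}a_{j_{1}},\dots,\delta^{i_{r}}a_{j_{r}})$ and ``invoke cell decomposition to find a cell $C$ containing the argument on which $g$ is $\mathcal{C}^{1}$,'' the $T$-derivation axiom only licenses the chain rule for $\mathcal{C}^{1}$ functions on \emph{open} domains, and the cell $C$ through your point may be lower-dimensional. You either need a $\mathcal{C}^{1}$ cell decomposition where each piece is equipped with a $\mathcal{C}^{1}$ extension to an open neighborhood (this is exactly what Corollary \ref{Smooth Decomposition of Definable Functions in T convex} provides in the convex setting), or you should reparametrize: pick a $\dcl_{\mathcal{M}}$-independent subtuple $\overline{u}_{0}$ of $(\delta^{i_{1}}a_{j_{1}},\dots,\delta^{i_{r}}a_{j_{r}})$ over $\varnothing$, observe that $x$ and all the remaining jet coordinates are $\mathcal{L}(\varnothing)$-definable from $\overline{u}_{0}$, that $\overline{u}_{0}$ lies in an open $\mathcal{L}(\varnothing)$-definable cell of $M^{|\overline{u}_{0}|}$ on which those functions are $\mathcal{C}^{1}$, and apply the chain rule there. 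Since $\dcl$-independence and membership in that open cell are $\mathcal{L}(\varnothing)$-conditions on the jet, they transfer along $\iota$, and the same Jacobian identity holds on the $\mathcal{N}_{0}$ side, which is what you need for $\iota\circ\delta=\delta\circ\iota$. With this repaired, the remaining steps (Stone duality hypotheses, well-definedness of $\iota$, $(\mathcal{M}_{0},\delta|_{M_{0}})\models T^{\delta}$ by universality of $T^{\delta}$ after passing to $T^{\text{df}}$, and the completeness of $T^{\delta}_{g}\cup\Diag(\mathcal{M}_{0},\delta|_{M_{0}})$) are all sound, and the parameter-absorption remark at the end is standard.
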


\subsection{$G$-Metrics}

Fix an ordered group $\mathcal{G}$ and a set $Z$. In this section, we give definitions concerning $G$-valued metric spaces.

\begin{definition}
A function $d:Z \times Z \rightarrow G$ is called a \textbf{$G$-pseudometric} (or \textbf{$G$-valued pseudo-distance function}) on $Z$ if
\begin{itemize}
    \item[(i)] for all $x,y \in Z$, $d(x,y) \geq 0$ and $d(x,x) = 0$;
    \item[(ii)] $d(x,y) = d(y,x)$;
    \item[(iii)] for all $x,y,z \in Z$, $d(x,y) + d(y,z) \geq d(x,z)$.
\end{itemize}
If we further require that $d(x,y) = 0$ if and only if $x = y$, then $d$ is called a \textbf{$G$-metric} (or \textbf{$G$-valued distance function}) on $Z$.
\end{definition}

\begin{definition}
Given a $G$-metric $d$ on $Z$, the value $d(x,y)$ is called the \textbf{distance between $x$ and $y$ in the $G$-metric $d$}, and for any positive $\varepsilon \in G$, the set
\[
B_{d}(x,\varepsilon) := \{y \in Z \mid d(x,y)<\varepsilon\}
\]
is called the \textbf{open ball centered at $x$ of radius $\varepsilon$ with respect to the $G$-metric $d$} (or simply the \textbf{$\varepsilon$-ball of $x$ with respect to $d$}). If $d$ is clear from the context, we may omit it from the notation.
\end{definition}

\begin{remark}
Given a $G$-metric $d$ on $Z$, the collection of all open balls $B_{d}(x,\varepsilon)$ for $x \in Z$ and positive $\varepsilon \in G$ forms a basis for a topology on $Z$.
\end{remark}

\begin{definition}
Given a $G$-metric $d$ on $Z$, the topology generated by the collection of all open balls $B_d(x,\varepsilon)$ for $x \in Z$ and positive $\varepsilon \in G$ is called the \textbf{$G$-metric topology on $Z$ induced by $d$}. The triple $(Z,G,d)$ is called a \textbf{$G$-metric space}. If $d$ is clear from the context, we will simply say that $Z$ is a $G$-metric space.
\end{definition}

For $A \subseteq Z$ and positive $\varepsilon \in G$, the \textbf{$\varepsilon$-neighborhood} of $A$ is defined as
\[
U_{d}(A,\varepsilon) := \bigcup_{x \in A} B_d(x,\varepsilon).
\]
If $d$ is clear from the context, we may omit it from the notation.

\section{$T$-Convexity and $T$-Derivation} \label{Section 3}

Let $T_{g,\text{convex}}^{\delta} := T_{\text{convex}} \cup T_{g}^{\delta}$. In this section, we show that the theory $T_{\text{convex}}^{\delta}$ has a model completion, which is precisely $T_{g,\text{convex}}^{\delta}$. We also investigate some fundamental properties of the theory $T_{g,\text{convex}}^{\delta}$.

\subsection{Model Completion of $T_{\text{convex}}^{\delta}$}

In this section, let $(\mathcal{M},V) \models T_{\text{convex}}$. We follow the approach developed by Fornasiero and Terzo in \cite{FornasieroTerzo2024}. The core idea is as follows: first, we introduce an appropriate notion of dimension for definable subsets of models of $T_{\text{convex}}$; next, we identify the model-theoretic algebraic closure of $T_{\text{convex}}$ with the well-understood model-theoretic algebraic closure of $T$; finally, we add the axiom schema of genericity, which allows us to apply the model completion test.

By Theorem \ref{T Convex is Weakly O-minimal}, there is a natural topological dimension associated with $T_{\text{convex}}$. Note that this dimension behaves well with weakly o-minimal cells: a weakly o-minimal cell $C$ has topological dimension $r$ if and only if it is definably homeomorphic to an open weakly o-minimal cell in $M^{r}$.

Since $T_{\text{convex}}$ has an in-built total ordering, just like the o-minimal theory $T$, the model-theoretic algebraic closure $T_{\text{convex}}$-$\acl$ coincides with the definable closure $T_{\text{convex}}$-$\dcl$. Therefore, it is crucial to understand the $\mathcal{L}_{\text{convex}}(\varnothing)$-definable functions. While Lemma~\ref{Lemma for Definable Functions in T convex} already provides a description, we require a slightly stronger result, showing that such functions are precisely piecewise $\mathcal{L}(\varnothing)$-definable functions restricted to $\mathcal{L}_{\text{convex}}(\varnothing)$-definable weakly o-minimal cells.

\begin{lemma} \label{Definable Functions in T convex}
Let $U \subseteq M^{n}$, and suppose that $f:U \rightarrow M$ is an $\mathcal{L}_{\text{convex}}(A)$-definable function for some set $A \subseteq M$. Then there exists a finite partition $\mathcal{P}$ of $U$ into $\mathcal{L}_{\text{convex}}(A)$-definable weakly o-minimal cells $\{U_{i} \mid i=1,\dots,k\}$ and a finite collection of $\mathcal{L}(A)$-definable functions $\{f_{i}:M^{n} \rightarrow M \mid i=1,\dots,k\}$ such that, for each $i=1,\dots,k$, we have
\[
f|_{U_{i}} = f_{i}|_{U_{i}}.
\]
\end{lemma}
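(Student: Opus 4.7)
The plan is to reduce to Lemma \ref{Lemma for Definable Functions in T convex} and then refine the resulting covering of $U$ into cells via the weakly o-minimal cell decomposition theorem. The point is that Lemma \ref{Lemma for Definable Functions in T convex} already guarantees a piecewise description of $f$ by $\mathcal{L}(A)$-definable functions, so the only new content here is upgrading the pieces to weakly o-minimal cells.

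First I would extend $f$ to a total function $\hat{f}:M^{n} \rightarrow M$ by declaring $\hat{f}(x) := f(x)$ for $x \in U$ and $\hat{f}(x) := 0$ for $x \in M^{n} \backslash U$. Since $U$ and $f$ are both $\mathcal{L}_{\text{convex}}(A)$-definable, so is $\hat{f}$. Applying Lemma \ref{Lemma for Definable Functions in T convex} to $\hat{f}$, I obtain $\mathcal{L}(A)$-definable functions $g_{1},\dots,g_{k}:M^{n} \rightarrow M$ such that for every $x \in M^{n}$, $\hat{f}(x) = g_{i}(x)$ for some $i$.

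Next, I would partition $U$ into the $\mathcal{L}_{\text{convex}}(A)$-definable sets
\[
W_{i} := \{x \in U \mid f(x) = g_{i}(x)\} \,\bigg\backslash\, \bigcup_{j<i}\{x \in U \mid f(x) = g_{j}(x)\}, \qquad i = 1,\dots,k.
\]
By construction, $U = W_{1} \sqcup \cdots \sqcup W_{k}$, and on each $W_{i}$ we have $f|_{W_{i}} = g_{i}|_{W_{i}}$. Because $T_{\text{convex}}$ is weakly o-minimal (Theorem \ref{T Convex is Weakly O-minimal}), I may now invoke the weakly o-minimal cell decomposition theorem (Theorem \ref{Weakly O-Minimal Cell Decomposition Theorem}) to produce a finite partition $\mathcal{P}$ of $M^{n}$ into weakly o-minimal cells such that each $W_{i}$ is a union of cells from $\mathcal{P}$. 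By the remark following that theorem, the cells in $\mathcal{P}$ are definable using the same parameters as $W_{1},\dots,W_{k}$, hence are $\mathcal{L}_{\text{convex}}(A)$-definable. Restricting $\mathcal{P}$ to those cells contained in $U$ gives the desired partition, and on each such cell $C \subseteq W_{i}$ we have $f|_{C} = g_{i}|_{C}$.

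I do not anticipate a serious obstacle: the argument is a routine combination of the two ingredients already in hand. The only mild point to be careful about is parameter tracking — namely, that both the piecewise decomposition from Lemma \ref{Lemma for Definable Functions in T convex} and the cell decomposition from Theorem \ref{Weakly O-Minimal Cell Decomposition Theorem} can be carried out using only parameters from $A$ — but both references explicitly accommodate this.
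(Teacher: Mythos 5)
Your proof is correct and follows essentially the same route as the paper: invoke Lemma \ref{Lemma for Definable Functions in T convex} to get the piecewise $\mathcal{L}(A)$-definable description, form the $\mathcal{L}_{\text{convex}}(A)$-definable agreement sets, and refine by Theorem \ref{Weakly O-Minimal Cell Decomposition Theorem} with the parameter remark. Your extra steps (extending $f$ to a total function and disjointifying the agreement sets) are harmless bookkeeping that the paper elides.
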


\begin{proof}
First, apply Lemma \ref{Lemma for Definable Functions in T convex} to obtain a finite collection of $\mathcal{L}(A)$-definable functions 
\[
\{f_{i}:M^{n} \rightarrow M \mid i=1,\dots,k\}
\] 
such that for every $x \in U$, there exists $i \in \{1,\dots,k\}$ with $f_{i}(x) = f(x)$. Next, for each $i = 1,\dots,k$, define 
\[
V_{i} := \{x \in U \mid f_{i}(x) = f(x)\},
\] 
which is $\mathcal{L}_{\text{convex}}(A)$-definable. Applying Theorem \ref{Weakly O-Minimal Cell Decomposition Theorem} to the collection $\{V_{i} \mid i=1,\dots,k\}$ yields a partition $\mathcal{P}$ of $U$ into weakly o-minimal cells. By restricting each $f_{i}$ to the cells in $\mathcal{P}$, the lemma follows.
\end{proof}

Sometimes we wish the functions $f_{i}$ obtained in Lemma~\ref{Definable Functions in T convex} to be of class $\mathcal{C}^{k}$, for some $k>0$. By appealing to the classical smooth cell decomposition theorem from o-minimality, we can strengthen the result as follows.

\begin{corollary}\label{Smooth Decomposition of Definable Functions in T convex}
Let $r\in\mathbb N_{>0}$, let $U\subseteq M^n$, and let $f:U\to M$ be an $\mathcal L_{\mathrm{convex}}(A)$-definable function for some $A\subseteq M$. Then there exist $\ell\in\mathbb N$, a finite partition $\mathcal P=\{U_i:i=1,\dots,\ell\}$ of $U$ into $\mathcal L_{\mathrm{convex}}(A)$-definable weakly o-minimal cells, and $\mathcal L(A)$-definable functions $f_i:M^n\to M$ for $i=1,\dots,\ell$, such that:
\begin{itemize}
\item[(a)] $f|_{U_i}=f_i|_{U_i}$ for each $i$;
\item[(b)] there exists an $\mathcal L(A)$-definable open set $V_i\supseteq U_i$ such that $f_i$ is of class $\mathcal C^r$ on $V_i$.
\end{itemize}
\end{corollary}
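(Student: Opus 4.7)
The plan is to strengthen Lemma \ref{Definable Functions in T convex} by invoking the classical $\mathcal{C}^{k}$-cell decomposition theorem for the o-minimal theory $T$ (see Chapter 7 of \cite{vandenDries2003}).

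I first apply Lemma \ref{Definable Functions in T convex} to obtain an initial partition of $U$ into $\mathcal{L}_{\text{convex}}(A)$-definable weakly o-minimal cells $V_{1},\dots,V_{m}$ together with $\mathcal{L}(A)$-definable functions $g_{1},\dots,g_{m}:M^{n}\rightarrow M$ such that $f|_{V_{j}}=g_{j}|_{V_{j}}$. For each $j$, I then apply the $\mathcal{C}^{k}$-cell decomposition theorem to $g_{j}$, producing an $\mathcal{L}(A)$-definable partition of $M^{n}$ into $\mathcal{C}^{k}$-cells on each of which $g_{j}$ is $\mathcal{C}^{k}$. Letting $W_{j}$ denote the union of the $n$-dimensional open cells in this decomposition, $W_{j}$ is an $\mathcal{L}(A)$-definable open subset of $M^{n}$ on which $g_{j}$ is $\mathcal{C}^{k}$, and $\dim_{\mathcal{L}}(M^{n}\setminus W_{j})<n$.

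Next, I apply Theorem \ref{Weakly O-Minimal Cell Decomposition Theorem} to the collection $\{V_{j},U\cap W_{j}\}_{j=1}^{m}$ to produce a weakly o-minimal refinement $\mathcal{P}$ of $U$, each cell of which is either contained in or disjoint from each $V_{j}$ and each $W_{j}$. For every $C\in\mathcal{P}$ with $\dim C=n$: compatibility with $\{V_{j}\}$ and $U=\bigcup V_{j}$ forces $C\subseteq V_{j}$ for some $j$, and because $M^{n}\setminus W_{j}$ has $\mathcal{L}$-dimension strictly less than $n$, compatibility with $\{W_{j}\}$ further forces $C\subseteq W_{j}$. For such $C$, taking $f_{i}:=g_{j}$ and $V_{i}:=W_{j}$ yields both (a) and (b).

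To handle the cells $C\in\mathcal{P}$ of dimension strictly less than $n$, I argue by induction on the topological dimension of the domain: their union $U'$ is an $\mathcal{L}_{\text{convex}}(A)$-definable set with $\dim U'<n$, and applying the corollary inductively to $f|_{U'}$ yields the required decomposition. The base case $\dim U=0$ is trivial, as $U$ is then finite and each singleton can be matched with an $\mathcal{L}(A)$-definable constant function on the open set $V_{i}=M^{n}$. I expect the main obstacle to lie in this inductive step: for a cell $C$ with $\dim C=r<n$, the requirement that $V_{i}\supseteq C$ be topologically open in $M^{n}$ (rather than only in some lower-dimensional projection) is nontrivial, because the natural representative $g_{j}$ with $g_{j}|_{C}=f|_{C}$ may fail to be $\mathcal{C}^{k}$ on any open neighborhood of $C$. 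I plan to address this via a coordinate projection $\pi:M^{n}\rightarrow M^{r}$ under which $C$ maps onto a set with nonempty interior in $M^{r}$, together with the standard o-minimal smooth extension techniques used to lift a smooth $\mathcal{L}(A)$-definable representative on $M^{r}$ back to a suitable open neighborhood of $C$ in $M^{n}$.
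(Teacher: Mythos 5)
Your proposal follows the same skeleton as the paper's proof---Lemma \ref{Definable Functions in T convex} to get $\mathcal{L}(A)$-definable representatives $g_{j}$, an o-minimal $\mathcal{C}^{k}$-cell decomposition for each $g_{j}$, and a weakly o-minimal refinement via Theorem \ref{Weakly O-Minimal Cell Decomposition Theorem}---but it diverges in how the small-dimensional part of $U$ is treated. The paper refines against the \emph{entire} $\mathcal{C}^{k}$-cell decomposition $\mathcal{P}\cup\{U\}$, so every weakly o-minimal cell of the refinement sits inside a single $\mathcal{C}^{k}$-cell $D$ on which the representative is $\mathcal{C}^{k}$ (and says nothing further); you instead keep only the union $W_{j}$ of the open cells, dispose of the full-dimensional weakly o-minimal cells there (your argument that an $n$-dimensional cell cannot lie in $M^{n}\setminus W_{j}$, via empty interior, is correct), and push the rest into an induction on the dimension of the domain. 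Both treatments hinge on the single point you correctly isolate as the main obstacle: for a cell $C$ that is not open, clause (b) demands an $\mathcal{L}(A)$-definable function that is $\mathcal{C}^{k}$ on an open neighborhood of $C$ in $M^{n}$, and neither $g_{j}$ nor ``$g_{j}$ is $\mathcal{C}^{k}$ on the cell $D$'' gives this directly. Your plan for closing it is the standard one and does work: if $D\supseteq C$ is a $\mathcal{C}^{k}$-cell of dimension $r$, let $\pi$ be the coordinate projection mapping $D$ homeomorphically onto an open cell $D'\subseteq M^{r}$ and $\sigma\colon D'\to D$ the $\mathcal{L}(A)$-definable $\mathcal{C}^{k}$ section given by the cell data; then $g_{j}\circ\sigma$ is $\mathcal{C}^{k}$ on $D'$ (this is exactly what ``$g_{j}|_{D}$ is $\mathcal{C}^{k}$'' means), so $f_{i}:=g_{j}\circ\sigma\circ\pi$, extended by $0$ off $\pi^{-1}(D')$, is $\mathcal{L}(A)$-definable, is $\mathcal{C}^{k}$ on the open set $\pi^{-1}(D')\supseteq D\supseteq C$, and agrees with $g_{j}$, hence with $f$, on $C$ since $\sigma\circ\pi=\mathrm{id}$ on $D$. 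Once you add this one step, your argument is complete---and in fact the induction on dimension becomes unnecessary, since refining against all of $\mathcal{P}$ (as the paper does) lets the same projection trick handle every non-open cell in one pass; what your variant buys is only a cleaner separation of the generic (open) case, at the cost of re-running the construction at each dimension. Note also that the paper's own proof leaves this extension step implicit, so spelling it out as above is a genuine improvement rather than a deviation.
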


\begin{proof}
By Lemma \ref{Definable Functions in T convex}, we may assume that $U$ is a weakly o-minimal cell and that there exists an $\mathcal{L}(A)$-definable function $g:M^{n} \rightarrow M$ such that $g|_{U} = f$. Let $\mathcal{P}$ be an o-minimal $\mathcal{C}^{r}$-cell decomposition of $M^{n}$ such that, for each $D \in \mathcal{P}$, the restriction $g|_{D}$ is of class $\mathcal{C}^{r}$. The corollary then follows by taking a weakly o-minimal cell decomposition of $M^{n}$ that is compatible with $\mathcal{P} \cup \{U\}$.
\end{proof}

We are now ready to describe $T_{\text{convex}}$-$\dcl$.

\begin{corollary} \label{Definable Closure in T convex}
The definable closures of $T$ and $T_{\text{convex}}$ coincide. More precisely, we have
\[
T\text{-}\dcl = T_{\text{convex}}\text{-}\dcl.
\]
In particular, the definable closure in $T_{\text{convex}}$ satisfies the exchange property.
\end{corollary}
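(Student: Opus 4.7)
The plan is to prove the two inclusions separately. The forward inclusion $T\text{-}\dcl(A) \subseteq T_{\text{convex}}\text{-}\dcl(A)$ is immediate since $\mathcal{L} \subseteq \mathcal{L}_{\text{convex}}$, so every $\mathcal{L}(A)$-formula defining a singleton in $\mathcal{M}$ is also an $\mathcal{L}_{\text{convex}}(A)$-formula.

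For the reverse inclusion, I would first recall the standard reformulation: $b \in T_{\text{convex}}\text{-}\dcl(A)$ if and only if there exists an $\mathcal{L}_{\text{convex}}(\varnothing)$-definable function $f:M^{n} \rightarrow M$ and a tuple $\overline{a} \in A^{n}$ with $f(\overline{a}) = b$. (The nontrivial direction uses that a formula $\varphi(x,\overline{y})$ which defines $b$ over $\overline{a}$ can be packaged into the $\mathcal{L}_{\text{convex}}(\varnothing)$-definable function sending $\overline{y}$ to the unique witness of $\varphi(x,\overline{y})$ whenever such a witness exists, and to $0$ otherwise.) Given such an $f$, I would apply Lemma \ref{Definable Functions in T convex} with parameter set $\varnothing$ to obtain finitely many $\mathcal{L}(\varnothing)$-definable functions $f_{1},\dots,f_{k}:M^{n} \rightarrow M$ whose graphs cover that of $f$. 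Then there is some $i$ with $f_{i}(\overline{a}) = f(\overline{a}) = b$, witnessing $b \in T\text{-}\dcl(A)$.

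The exchange property then follows as a black-box consequence: since $T$ is o-minimal, it is a classical result (recorded in Section \ref{Section 2}) that $(\mathcal{M},T\text{-}\dcl)$ is a pregeometry and in particular satisfies exchange, and the equality $T\text{-}\dcl = T_{\text{convex}}\text{-}\dcl$ transfers this property verbatim to $T_{\text{convex}}\text{-}\dcl$.

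There is no real obstacle here; the substance of the corollary has already been done in Lemma \ref{Definable Functions in T convex}, and the only mild subtlety is reducing the formula-based definition of $\dcl$ to the function-based one so that the lemma can be applied directly. This reduction is a standard model-theoretic maneuver and requires no o-minimal input.
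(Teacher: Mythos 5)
Your proposal is correct and follows essentially the same route as the paper: reduce to an $\mathcal{L}_{\text{convex}}(\varnothing)$-definable function with $f(\overline{a})=b$, apply Lemma \ref{Definable Functions in T convex} to replace it by an $\mathcal{L}(\varnothing)$-definable function agreeing with $f$ at $\overline{a}$, and import exchange from the o-minimal pregeometry. Your explicit justification of the formula-to-function reduction (unique witness, else $0$) is a minor elaboration the paper leaves implicit, but the substance is identical.
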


\begin{proof}
Clearly, $T$-$\dcl \subseteq T_{\text{convex}}$-$\dcl$. Let $A \subseteq M$ and let $b \in T_{\text{convex}}$-$\dcl(A)$. Then there exist $n \in \mathbb{N}$, $\overline{a} \in A$, and an $\mathcal{L}_{\text{convex}}(\varnothing)$-definable function $f:M^{n} \rightarrow M$ such that $f(\overline{a}) = b$. By Lemma \ref{Definable Functions in T convex}, there exists an $\mathcal{L}(\varnothing)$-definable function $g:M^{n} \rightarrow M$ and an $\mathcal{L}_{\text{convex}}(\varnothing)$-definable set $U$ containing $\overline{a}$ such that $f|_{U} = g|_{U}$. Hence, $g(\overline{a}) = f(\overline{a}) = b$, which shows that $b \in T$-$\dcl(A)$. The last statement follows from the fact that $(\mathcal{M},\dcl_{\mathcal{M}})$ forms a pregeometry for any o-minimal structure $\mathcal{M}$ that expands an ordered abelian group.
\end{proof}

\begin{corollary} \label{T convex Topological Dimension Coincides with Algebraic Dimension}
The topological dimension of any $\mathcal{L}_{\text{convex}}$-definable set (possibly with parameters) coincides with its algebraic dimension.
\end{corollary}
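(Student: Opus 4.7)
The plan is to simply invoke Theorem~\ref{Weakly O-minimal Theory and acl having Exchange Property Topological Dimension Coincides with Algebraic Dimension}, whose hypotheses are essentially already in hand. There are two things to verify: that $T_{\text{convex}}$ is a weakly o-minimal theory, and that the model-theoretic algebraic closure in $T_{\text{convex}}$ satisfies the exchange property.

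The first is immediate from Theorem~\ref{T Convex is Weakly O-minimal}. For the second, I would first pass from $\acl$ to $\dcl$. Since $T_{\text{convex}}$ expands the theory of totally ordered sets, any finite $\mathcal{L}_{\text{convex}}(A)$-definable set $\{b_{1}<\cdots<b_{n}\}$ in a model admits a definable enumeration of its elements by the order, so each $b_{i}$ lies in $T_{\text{convex}}\text{-}\dcl(A)$. Consequently $T_{\text{convex}}\text{-}\acl = T_{\text{convex}}\text{-}\dcl$. By Corollary~\ref{Definable Closure in T convex}, this common closure coincides with $T\text{-}\dcl$, which is a pregeometry (since $T$ is o-minimal extending $\RCF$), hence satisfies exchange.

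With both hypotheses verified, Theorem~\ref{Weakly O-minimal Theory and acl having Exchange Property Topological Dimension Coincides with Algebraic Dimension} applies to every model $(\mathcal{M},V) \models T_{\text{convex}}$, yielding $\dim_{\mathcal{L}_{\text{convex}}}(X) = \rk_{\mathcal{L}_{\text{convex}}}(X)$ for every $\mathcal{L}_{\text{convex}}(M)$-definable $X \subseteq M^{n}$. I do not anticipate any genuine obstacle: the work has been done in Corollary~\ref{Definable Closure in T convex} and the cited theorems, so the proof should fit in a few lines. The only small point requiring care is spelling out the reduction from $\acl$ to $\dcl$ using the order, to make clear that the exchange property for $\dcl$ (which is what we actually proved) transfers to $\acl$ (which is what Theorem~\ref{Weakly O-minimal Theory and acl having Exchange Property Topological Dimension Coincides with Algebraic Dimension} formally requires).
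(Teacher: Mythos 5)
Your proof is correct and takes essentially the same route as the paper, which simply cites Theorem~\ref{Weakly O-minimal Theory and acl having Exchange Property Topological Dimension Coincides with Algebraic Dimension} together with Corollary~\ref{Definable Closure in T convex}. Your extra care in spelling out the reduction from $\acl$ to $\dcl$ via the linear order is a point the paper makes in the surrounding discussion rather than in the proof itself, so it is consistent with the authors' intent.
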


\begin{proof}
This follows from Theorem \ref{Weakly O-minimal Theory and acl having Exchange Property Topological Dimension Coincides with Algebraic Dimension} and Corollary \ref{Definable Closure in T convex}.
\end{proof}

Instead of a single axiom, we consider two axioms of genericity. We show that adding either one to the theory $T^{\delta}_{\text{convex}}$ yields a model completion of $T^{\delta}_{\text{convex}}$, and by the uniqueness of the model companion, the resulting theories are equivalent.

\begin{definition}
Let $X \subseteq M^{n}$ be an $\mathcal{L}_{\text{convex}}$-definable set (possibly with parameters). We say that $X$ is \textbf{\textit{large}} if $\dim(X) = n$.
\end{definition}

Recall that $\Pi_{k}: M^{n} \to M^{k}$ denotes the projection onto the first $k$ coordinates. We now define the same axiom schemas as those appearing in \cite{FornasieroTerzo2024}.

\begin{itemize}
    \item[(i)] (\textbf{Deep}) For every $\mathcal{L}_{\text{convex}}(M)$-definable set $X \subseteq M^{n+1}$, if $\Pi_{n}(X)$ is large, then there exists $a \in M$ such that $\Jet_{n}(a) \in X$.
    \item[(ii)] (\textbf{Wide}) For every $\mathcal{L}_{\text{convex}}(M)$-definable set $Y \subseteq M^{n} \times M^{n}$, if $\Pi_{n}(Y)$ is large, then there exists $\overline{b} \in M^{n}$ such that $(\overline{b}, \delta \overline{b}) \in Y$.
\end{itemize}

Let $T^{\delta}_{\text{deep,convex}} = T^{\delta}_{\text{convex}} \cup \text{(Deep)}$ and $T^{\delta}_{\text{wide,convex}} = T^{\delta}_{\text{convex}} \cup \text{(Wide)}$.

\begin{lemma} (Essentially Lemma 3.13 in \cite{FornasieroTerzo2024}) \label{T Convex Delta QE Test Preparation i}
We have
\[
T^{\delta}_{\text{wide,convex}} \vdash T^{\delta}_{\text{deep,convex}}.
\]
\end{lemma}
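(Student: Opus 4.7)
The plan is to translate any instance of Deep into an instance of Wide by encoding the iterated-derivative condition through coordinate equalities, following the approach alluded to by the reference to Fornasiero–Terzo. The key design task is to build an $\mathcal{L}_{\text{convex}}(M)$-definable set $Y \subseteq M^n \times M^n$ whose first projection matches $\Pi_n(X)$, and such that witnesses to Wide for $Y$ translate directly into witnesses for Deep on $X$.

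Concretely, given $(\mathcal{M},V,\delta) \models T^{\delta}_{\text{wide,convex}}$ and an $\mathcal{L}_{\text{convex}}(M)$-definable $X \subseteq M^{n+1}$ with $\Pi_n(X)$ large, I would define
\[
Y := \{(b_1,\ldots,b_n,c_1,\ldots,c_n) \in M^{2n} \mid (b_1,\ldots,b_n,c_n) \in X \text{ and } c_i = b_{i+1} \text{ for } 1 \leq i \leq n-1\}.
\]
Since $Y$ is carved out of $X$ by finitely many coordinate equalities, it is $\mathcal{L}_{\text{convex}}(M)$-definable, and because $c_1,\ldots,c_{n-1}$ are forced by $\overline{b}$ while $c_n$ is only constrained by membership of $(b_1,\ldots,b_n,c_n)$ in $X$, one sees that $\Pi_n(Y) = \Pi_n(X)$ holds as an equality of sets, hence $\Pi_n(Y)$ is large. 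Applying Wide to $Y$ produces $\overline{b} \in M^n$ with $(\overline{b},\delta \overline{b}) \in Y$; the forced equalities $\delta b_i = b_{i+1}$ for $i<n$ yield $b_i = \delta^{i-1} b_1$ for all $i$, so setting $a := b_1$ gives $(b_1,\ldots,b_n,\delta b_n) = \Jet_n(a) \in X$, which verifies Deep for $X$.

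I do not anticipate any real obstacle. The argument is a purely syntactic reduction and bypasses the $T_{\text{convex}}$-specific dimension machinery established in Corollary \ref{T convex Topological Dimension Coincides with Algebraic Dimension}, because $\Pi_n(Y)$ is literally equal to $\Pi_n(X)$ rather than just being of the same dimension. The only minor point of care is the degenerate case $n = 0$: there $\Pi_0(X)$ being large simply means $X \neq \emptyset$, and any $a \in X$ yields $\Jet_0(a) = a \in X$, so that case requires no appeal to Wide at all. The rest of the drafting work is just matching the paper's conventions for $\Pi_n$, $\Jet_n$, and the definition of largeness.
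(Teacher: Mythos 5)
Your proposal is correct and is essentially identical to the paper's own proof: the same auxiliary set $Y$ cut out by the coordinate equalities $y_i = x_{i+1}$, the same observation that $\Pi_n(Y) = \Pi_n(X)$ is large, and the same extraction of $\Jet_n(b_1) \in X$ from a (Wide)-witness. The extra remark about the degenerate case is a harmless addition but not needed.
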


\begin{proof}
Fix an $\mathcal{L}_{\text{convex}}(M)$-definable set $X \subseteq M^{n+1}$ such that $\Pi_{n}(X)$ is large. Define
\[
Y := \left\{ (\overline{x},\overline{y}) \in M^{n} \times M^{n} \;\Bigg|\; (\overline{x},y_{n}) \in X \;\wedge\; \bigwedge_{i=1}^{n-1} y_{i} = x_{i+1} \right\}.
\]
By construction, we have $\Pi_{n}(Y) = \Pi_{n}(X)$, and hence $\Pi_{n}(Y)$ is large. By (Wide), there exists $\overline{b} = (b_{1},\dots,b_{n}) \in M^{n}$ such that $(\overline{b},\delta \overline{b}) \in Y$. By the definition of $Y$, it follows that $\Jet_{n}(b_{1}) \in X$.
\end{proof}

\begin{lemma} \label{T Convex Delta QE Test Preparation ii}
Let $(\mathcal{M},V,\delta) \models T_{\text{convex}}^{\delta}$. Let $X \subseteq M^{n} \times M^{n}$ be $\mathcal{L}_{\text{convex}}(M)$-definable such that $\Pi_{n}(X)$ is large. Then there exists an $\mathcal{L}_{\text{convex}}^{\delta}$-structure $(\mathcal{N},V_{N},\varepsilon) \supseteq (\mathcal{M},V,\delta)$ and an $n$-tuple $\overline{b} \in N^{n}$ such that 
\begin{itemize}
    \item[(a)] $(\mathcal{N},V_{N}) \succeq (\mathcal{M},V)$,
    \item[(b)] $(\mathcal{N},V_{N},\varepsilon) \models T_{\text{convex}}^{\delta}$, and
    \item[(c)] $(\overline{b},\varepsilon \overline{b}) \in X^{(\mathcal{N},V_{N})}$,
\end{itemize}
where $X^{(\mathcal{N},V_{N})} \subseteq N^{n} \times N^{n}$ is defined by the same $\mathcal{L}_{\text{convex}}(M)$-formula that defines $X$.
\end{lemma}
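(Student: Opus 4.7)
The plan is to construct $(\mathcal{N}, V_{N}, \varepsilon)$ in two stages: first produce a sufficiently generic witness $\overline{b}$ in an $\mathcal{L}_{\text{convex}}$-elementary extension of $(\mathcal{M},V)$, then promote $\overline{b}$ to a value of an extended derivation via Lemma~\ref{Extend T derivation}.

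For the first stage, I would exploit the hypothesis $\dim \Pi_{n}(X) = n$ through the dimension analysis just developed in this subsection. By Corollary~\ref{T convex Topological Dimension Coincides with Algebraic Dimension} we have $\rk_{\mathcal{L}_{\text{convex}}}(\Pi_{n}(X)) = n$, and by Corollary~\ref{Definable Closure in T convex} the closures $T$-$\dcl$ and $T_{\text{convex}}$-$\dcl$ coincide, so the partial $\mathcal{L}_{\text{convex}}(M)$-type $\Sigma(\overline{x})$ asserting $\overline{x} \in \Pi_{n}(X)$ together with the formulas stating $x_{i} \notin \dcl_{\mathcal{L}}(M \cup \{x_{j} : j \neq i\})$ for each $i$ is finitely satisfiable; the finite satisfiability is witnessed by any point of a top-dimensional weakly o-minimal cell contained in $\Pi_{n}(X)$, obtained from Theorem~\ref{Weakly O-Minimal Cell Decomposition Theorem}. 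By compactness, I would realise $\Sigma$ in an $\mathcal{L}_{\text{convex}}$-elementary extension $(\mathcal{N}, V_{N}) \succeq (\mathcal{M}, V)$ by a tuple $\overline{b} = (b_{1}, \dots, b_{n})$, which is then automatically $\dcl_{\mathcal{L}}$-independent over $M$.

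For the second stage, since $\overline{b} \in \Pi_{n}(X)^{(\mathcal{N}, V_{N})}$, the fibre $X_{\overline{b}}^{(\mathcal{N}, V_{N})} \subseteq N^{n}$ is nonempty; pick any $\overline{c} = (c_{1}, \dots, c_{n})$ inside it. Set $\mathcal{M}' := \mathcal{M}\langle \overline{b} \rangle \subseteq \mathcal{N}$. Applying Lemma~\ref{Extend T derivation} with $A = \{b_{1}, \dots, b_{n}\}$ and $s(b_{i}) := c_{i}$ extends $\delta$ to a unique $T$-derivation $\varepsilon'$ on $\mathcal{M}'$ with $\varepsilon'(b_{i}) = c_{i}$. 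Fixing then a $\dcl_{\mathcal{L}}$-basis $B$ of $N$ over $\mathcal{M}'$ and applying Lemma~\ref{Extend T derivation} once more with, say, $s \equiv 0$ on $B$ (iterated transfinitely if $B$ is infinite), I extend $\varepsilon'$ to a $T$-derivation $\varepsilon$ on all of $\mathcal{N}$. The triple $(\mathcal{N}, V_{N}, \varepsilon)$ then satisfies (a) by construction, (b) because $(\mathcal{N}, V_{N}) \models T_{\text{convex}}$ (elementarily) and $(\mathcal{N}, \varepsilon) \models T^{\delta}$ (by construction of $\varepsilon$), and (c) because $(\overline{b}, \varepsilon \overline{b}) = (\overline{b}, \overline{c}) \in X^{(\mathcal{N}, V_{N})}$ by the choice of $\overline{c}$.

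The main obstacle is the first stage: one must bridge the topological dimension hypothesis $\dim \Pi_{n}(X)=n$ and the $\dcl_{\mathcal{L}}$-transcendence property required by Lemma~\ref{Extend T derivation}. That bridge is precisely the identification of $T_{\text{convex}}$-$\dcl$ with $T$-$\dcl$ and the coincidence of topological and algebraic dimensions (Corollaries~\ref{Definable Closure in T convex} and~\ref{T convex Topological Dimension Coincides with Algebraic Dimension}), both just established. Once these are in place, the rest follows the familiar \emph{generic realisation plus free derivation extension} pattern of Fornasiero--Kaplan and Fornasiero--Terzo, adapted to the $T_{\text{convex}}$ setting.
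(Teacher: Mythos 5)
Your proposal is correct and follows essentially the same route as the paper: use Corollaries \ref{Definable Closure in T convex} and \ref{T convex Topological Dimension Coincides with Algebraic Dimension} to turn the largeness of $\Pi_{n}(X)$ into the existence, in an $\mathcal{L}_{\text{convex}}$-elementary extension, of a point of $\Pi_{n}(X)$ that is $\dcl_{\mathcal{L}}$-independent over $M$, then pick a fibre element and extend $\delta$ by Lemma \ref{Extend T derivation}. The only differences are presentational: the paper realizes the generic point via an $|M|^{+}$-saturated elementary extension rather than your explicit compactness argument with the independence type, and it applies Lemma \ref{Extend T derivation} in one step where you spell out the extension to $\mathcal{M}\langle\overline{b}\rangle$ and then to a basis of $N$.
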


\begin{proof}
Let $(\mathcal{N},V_{N}) \succeq (\mathcal{M},V)$ be $|M|^{+}$-saturated. By Corollaries \ref{Definable Closure in T convex} and \ref{T convex Topological Dimension Coincides with Algebraic Dimension}, there exists $\overline{b} \in \Pi_{n}\bigl(X^{(\mathcal{N},V_{N})}\bigr)$ that is $\dcl_{\mathcal{M}}$-independent over $M$. Let $\overline{d} \in N^{n}$ be such that $(\overline{b},\overline{d}) \in X^{(\mathcal{N},V_{N})}$. By Lemma \ref{Extend T derivation}, there exists a $T$-derivation $\varepsilon$ on $\mathcal{N}$ extending $\delta$ such that $\varepsilon(\overline{b}) = \overline{d}$.
\end{proof}

\begin{lemma} \label{Forget the Derivation in T convex}
For every quantifier-free $\mathcal{L}_{\text{convex}}^{\delta}$-formula $\varphi$ (possibly with parameters), there exist $n \in \mathbb{N}$ and a quantifier-free $\mathcal{L}_{\text{convex}}$-formula $\Tilde{\varphi}$ such that
\[
T_{\text{convex}}^{\delta} \models \forall \overline{x} \, [\varphi(\overline{x}) \leftrightarrow \Tilde{\varphi}(\Jet_{n}(\overline{x}))].
\]
\end{lemma}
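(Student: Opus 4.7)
The plan is to induct on the structure of $\varphi$, reducing first to atomic formulas and then to a term-level rewriting claim. Boolean connectives are handled immediately: if each atomic subformula has been rewritten at jet-depth $n_i$, take $n := \max_i n_i$ and reassemble using the same connectives. This leaves atomic $\mathcal{L}^{\delta}_{\text{convex}}$-formulas of the form $R(t_1,\ldots,t_k)$ with $R \in \mathcal{L} \cup \{V\}$ and each $t_j$ an $\mathcal{L}^{\delta}$-term, possibly with parameters from $M$.

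The central sub-claim is: for every $\mathcal{L}^{\delta}$-term $t(\overline{x})$ with parameters, there exist $n \in \mathbb{N}$, a finite partition of $M^{|\overline{x}|}$ given by quantifier-free $\mathcal{L}_{\text{convex}}$-predicates in $\Jet_n(\overline{x})$, and on each piece an $\mathcal{L}$-term $\tau$ (with parameters from $M$), such that $T^{\delta}_{\text{convex}} \vdash t(\overline{x}) = \tau(\Jet_n(\overline{x}))$ on that piece. I would prove this by induction on the construction of $t$. Variables and constants are immediate. For $t = f(t_1,\ldots,t_k)$ with $f$ an $\mathcal{L}$-function symbol, compose the inductive hypotheses. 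For $t = \delta(t')$, by induction $t' = \tau'(\Jet_m(\overline{x}))$ for some $\mathcal{L}$-term $\tau'$ with parameters $\overline{c} \in M$; absorb $\overline{c}$ into the argument to make $\tau'$ an $\mathcal{L}(\varnothing)$-definable function of $(\Jet_m(\overline{x}), \overline{c})$, so that Lemma \ref{delta on Ck-functions} gives, on the $\mathcal{C}^1$-locus,
$\delta(t') = \mathbf{J}_{\tau'}(\Jet_m(\overline{x}), \overline{c}) \cdot \delta(\Jet_m(\overline{x}), \overline{c})$. Since $\delta(\Jet_m(\overline{x}))$ is, up to reindexing, a sub-tuple of $\Jet_{m+1}(\overline{x})$ and $\delta(\overline{c}) \in M$ becomes new parameters, the right-hand side lives at jet-depth $m+1$.

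The main obstacle is that the $\mathcal{L}(M)$-definable function represented by $\tau'$ need not be $\mathcal{C}^1$ everywhere, while Lemma \ref{delta on Ck-functions} requires an open $\mathcal{C}^1$-domain, and moreover the partial derivatives $\partial_i f$ appearing in the Jacobian are in general only $\mathcal{L}(\varnothing)$-definable, not literal $\mathcal{L}$-terms. Both issues are handled by o-minimal $\mathcal{C}^k$-cell decomposition (the piecewise-smooth refinement in the spirit of Corollary \ref{Smooth Decomposition of Definable Functions in T convex}): partition the augmented domain into finitely many cells on each of which both $\tau'$ and every partial derivative encountered are smooth and term-representable, applying the Jacobian identity directly on open cells and parametrizing lower-dimensional cells by $\mathcal{L}$-definable charts before iterating. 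Because cell decomposition yields only Boolean combinations of atomic $\mathcal{L}_{\text{convex}}$-formulas and neither the chain rule nor the cell partitioning introduces quantifiers, combining the rewrites of all atomic subformulas of $\varphi$ with $n := \max_i n_i$ gives the desired quantifier-free $\mathcal{L}_{\text{convex}}$-formula $\tilde{\varphi}$.
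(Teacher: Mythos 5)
Your overall strategy---peel off each application of $\delta$ to a compound term by replacing it, on the pieces of a piecewise-$\mathcal{C}^{1}$ decomposition, with the Jacobian expression supplied by compatibility (Lemma \ref{delta on Ck-functions}), iterate, and finally rename the surviving jets $\delta^{k}x_{i}$ as fresh variables---is the same as the paper's. The paper merely organizes the iteration as an induction on the number of occurrences of $\delta$ applied to non-jet terms rather than as a structural induction on terms, and its Corollary \ref{Smooth Decomposition of Definable Functions in T convex} already packages the treatment of lower-dimensional pieces (an $\mathcal{L}(A)$-definable representative that is $\mathcal{C}^{k}$ on an open neighbourhood of each piece), so your separate ``charts'' manoeuvre is subsumed rather than needed as an extra device.

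The genuine weak point is your term-level sub-claim. You insist that on each piece the rewritten expression be a literal $\mathcal{L}$-term in $\Jet_{n}(\overline{x})$ and, to keep the recursion alive, that ``every partial derivative encountered'' be term-representable after refining the cell decomposition. Cell decomposition cannot deliver this: the lemma is stated for a general complete model complete o-minimal $T$ (no quantifier elimination, no universal axiomatization is assumed at this point), and in such a language the partial derivatives of an $\mathcal{L}$-term are in general only $\mathcal{L}(\varnothing)$-definable functions; no finite definable partition converts a definable function into a term. Since already after the first pass your expressions contain such derivatives, the induction cannot proceed at the level of terms as you set it up. The paper avoids the issue by never asking for terms: the object differentiated at each stage is a quantifier-free $\mathcal{L}_{\text{convex}}(\varnothing)$-definable function (Lemma \ref{Lemma for Definable Functions in T convex} and Corollary \ref{Smooth Decomposition of Definable Functions in T convex}), and the substituted Jacobian data are again functions of this kind, so the iterated statement concerns definable functions of the jets rather than terms. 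To repair your argument, either weaken your sub-claim accordingly (on each quantifier-free $\mathcal{L}_{\text{convex}}$-definable piece of jet space, $t$ agrees with an $\mathcal{L}(M)$-definable function of $\Jet_{n}(\overline{x})$), or first pass to $\mathcal{L}^{\text{df}}$, equivalently assume $T$ has quantifier elimination and is universally axiomatizable, in which case every $\mathcal{L}(\varnothing)$-definable function is piecewise given by terms and your term-level induction does go through; this is in effect the setting in which the lemma is later applied.
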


\begin{proof}
Let $e(\varphi)$ be the total number of times such that the function $\delta$ is applied to a term in $\varphi$ that is not of the form $\delta^{k} x_{i}$. For example, if $\varphi(x)$ is the $\mathcal{L}^{\delta}_{\text{convex}}$-formula ``$\delta(t_{1}(x)) = 0 \wedge \delta^{2}(t_{2}(x)) < 0 \vee \delta^{5}x=0$'', where $t_{1},t_{2}$ are $\mathcal{L}_{\text{convex}}$-terms that are not variables, then $e(\varphi) = 3$. We prove the lemma by induction on $e(\varphi)$. The base case $e(\varphi) = 0$ is trivial since we can take $\Tilde{\varphi}$ by replacing every occurrence of $\delta^{k} x_{i}$ in $\varphi$ with a new variable $x_{i}^{(k)}$. Now suppose that $e(\varphi) > 0$. Then there exist $m \in \mathbb{N}$, a quantifier-free $\mathcal{L}_{\text{convex}}^{\delta}$-formula $\psi$ (possibly with parameters), and a quantifier-free $\mathcal{L}_{\text{convex}}(\varnothing)$-definable function $f$ such that
\[
\varphi(\overline{x}) = \psi\bigl(\overline{x}, \delta f(\Jet_{m}(\overline{x}))\bigr).
\]
By Corollary \ref{Smooth Decomposition of Definable Functions in T convex}, there exists a weakly o-minimal cell decomposition $\mathcal{P}$ such that for each $D \in \mathcal{P}$, there is an $\mathcal{L}(\varnothing)$-definable $\mathcal{C}^{1}$-function $f_{D}$ on an open $\mathcal{L}(\varnothing)$-definable set $U_{D}$ with $f_{D}|_{D} = f|_{D}$. By the definition of $T$-derivation, in any model of $T_{\text{convex}}^{\delta}$, for each $\overline{y} \in D$, we have
\[
\delta f(\overline{y}) = \textbf{J}_{f_{D}}(\overline{y}) \, \delta \overline{y}.
\]
Set $\Tilde{f}$ to be the piecewise $\mathcal L(\varnothing)$-definable matrix-valued function such that $\Tilde{f}(\overline{y})=\mathbf{J}_{f_D}(\overline{y})$ whenever $\overline{y}\in D$. Define 
\[
\theta(\overline{x}) := \psi\bigl(\overline{x}, \Tilde{f}(\Jet_{m}(\overline{x})) \, \delta(\Jet_{m}(\overline{x}))\bigr).
\] 
Then, we have $e(\theta) < e(\varphi)$ and
\[
T_{\text{convex}}^{\delta} \models \forall \overline{x} \, [\varphi(\overline{x}) \leftrightarrow \theta(\overline{x})].
\]
By the inductive hypothesis, the lemma follows.
\end{proof}

\begin{lemma} \label{T Convex Delta QE Test Preparation iii}
Let $(\mathcal{B},V_{B},\delta_{B}) \models T^{\delta}_{\text{convex}}$ and $(\mathcal{C},V_{C},\delta_{C}) \models T^{\delta}_{\text{deep,convex}}$. Let $(\mathcal{A},V_{A},\delta_{A})$ be a common $\mathcal{L}^{\delta}_{\text{convex}}$-substructure of $(\mathcal{B},V_{B},\delta_{B})$ and $(\mathcal{C},V_{C},\delta_{C})$. Suppose further that $(\mathcal{B},V_{B})$ and $(\mathcal{C},V_{C})$ have the same $\mathcal{L}_{\text{convex}}(A)$-theory. Let $\theta(x)$ be a quantifier-free $\mathcal{L}_{\text{convex}}^{\delta}(A)$-formula, and let $b \in B$ satisfy 
\[
(\mathcal{B},V_{B},\delta_{B}) \models \theta(b).
\]
Then there exists $c \in C$ such that 
\[
(\mathcal{C},V_{C},\delta_{C}) \models \theta(c).
\]
\end{lemma}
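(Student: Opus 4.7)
The plan is to verify condition (ii) of the Model Completion Test (Fact~\ref{Model Completion Test}). First, apply Lemma~\ref{Forget the Derivation in T convex} to rewrite $\theta(x)$ as $\tilde{\theta}(\Jet_{n}(x))$ for some $n \in \mathbb{N}$ and a quantifier-free $\mathcal{L}_{\text{convex}}(A)$-formula $\tilde{\theta}$ in $n+1$ variables, and let $X \subseteq M^{n+1}$ denote the set defined by $\tilde{\theta}$. The task becomes: given $\Jet_{n}^{\mathcal{B}}(b) \in X^{\mathcal{B}}$, produce $c \in C$ with $\Jet_{n}^{\mathcal{C}}(c) \in X^{\mathcal{C}}$. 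I proceed by induction on $n$; the base case $n=0$ is immediate from $(\mathcal{B},V_{B}) \equiv_{A} (\mathcal{C},V_{C})$, since $\tilde{\theta}$ is then a quantifier-free $\mathcal{L}_{\text{convex}}(A)$-formula witnessed in $\mathcal{B}$.

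For the inductive step, apply Theorem~\ref{Weakly O-Minimal Cell Decomposition Theorem} to $\Pi_{n}(X)$ and replace $X$ by its restriction lying over the unique cell $D \subseteq M^{n}$ that contains $\Jet_{n-1}^{\mathcal{B}}(b)$; then $\Pi_{n}(X) = D$. If $\dim_{\mathcal{L}_{\text{convex}}}(D) = n$, then $D$ has nonempty interior, an existential $\mathcal{L}_{\text{convex}}(A)$-condition preserved by $\equiv_{A}$, so $D^{\mathcal{C}}$ is large in $\mathcal{C}$; axiom (Deep) then supplies $c \in C$ with $\Jet_{n}^{\mathcal{C}}(c) \in X^{\mathcal{C}}$. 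Otherwise $\dim_{\mathcal{L}_{\text{convex}}}(D) < n$, and $D$ must possess a graph coordinate: some $i \in \{1,\dots,n\}$ and an $\mathcal{L}_{\text{convex}}(A)$-definable function $g$ such that $y_{i} = g(y_{1},\dots,y_{i-1})$ on $D$.

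When $i \geq 2$, refine $D$ via Corollary~\ref{Smooth Decomposition of Definable Functions in T convex} so that $g$ agrees with an $\mathcal{L}(A)$-definable $\mathcal{C}^{n-i+1}$-function $\tilde{g}$ on an $\mathcal{L}(A)$-definable open set $U$ containing $\Jet_{i-2}^{\mathcal{B}}(b)$. Iteratively applying Lemma~\ref{delta on Ck-functions} to the identity $\delta^{i-1}b = \tilde{g}(\Jet_{i-2}(b))$ produces $\mathcal{L}(A)$-definable functions $h_{i-1}:=\tilde{g},\,h_{i},\dots,h_{n}$ with $\delta^{j}b = h_{j}(\Jet_{i-2}(b))$ for every $i-1 \leq j \leq n$. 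Therefore $\tilde{\theta}(\Jet_{n}(x))$ is equivalent in any model of $T^{\delta}_{\text{convex}}$ to the quantifier-free $\mathcal{L}_{\text{convex}}(A)$-formula
\[
\chi(\overline{z},w)\;:=\;(\overline{z}\in U)\wedge(w = \tilde{g}(\overline{z}))\wedge\tilde{\theta}\bigl(\overline{z},\tilde{g}(\overline{z}),h_{i}(\overline{z}),\dots,h_{n}(\overline{z})\bigr)
\]
applied to $\Jet_{i-1}(x) = (\overline{z},w)$; since $i-1 < n$, the inductive hypothesis yields $c \in C$ with $\chi^{\mathcal{C}}(\Jet_{i-1}^{\mathcal{C}}(c))$, and repeating the derivation computation in $\mathcal{C}$ then forces $\tilde{\theta}^{\mathcal{C}}(\Jet_{n}^{\mathcal{C}}(c))$. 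The degenerate case $i=1$, where $b$ is the $A$-definable constant $g_{1} \in \dcl_{\mathcal{L}}^{\mathcal{B}}(A)$ (Corollary~\ref{Definable Closure in T convex}), is settled by taking $c := g_{1}^{\mathcal{C}} \in \dcl_{\mathcal{L}}^{\mathcal{C}}(A)$: the canonical $\mathcal{L}^{\delta}$-isomorphism between these definable closures, together with $V$-compatibility supplied by $\equiv_{A}$, transports $\tilde{\theta}^{\mathcal{B}}(\Jet_{n}^{\mathcal{B}}(b))$ to $\tilde{\theta}^{\mathcal{C}}(\Jet_{n}^{\mathcal{C}}(c))$.

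The main obstacle will be the reduction in the case $\dim(D) < n$: from a single graph relation of $D$ one must distill, via the $T$-derivation property, explicit $\mathcal{L}(A)$-definable formulas for every higher derivative $\delta^{j}b$ up to $j=n$. This requires coordinating Corollary~\ref{Smooth Decomposition of Definable Functions in T convex} (to guarantee enough smoothness to iterate differentiation) with Lemma~\ref{delta on Ck-functions}, and then verifying that the resulting equivalence is witnessed by a quantifier-free $\mathcal{L}_{\text{convex}}(A)$-formula so that the induction on $n$ can proceed cleanly.
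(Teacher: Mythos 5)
Your proof is correct and is built from the same ingredients as the paper's --- Lemma \ref{Forget the Derivation in T convex}, the identification of dimension with rank via Corollaries \ref{Definable Closure in T convex} and \ref{T convex Topological Dimension Coincides with Algebraic Dimension}, Corollary \ref{Smooth Decomposition of Definable Functions in T convex} combined with Lemma \ref{delta on Ck-functions}, and the axiom (Deep) --- but it is organized differently. The paper has no induction on the jet length: it lets $d$ be the \emph{least} integer such that $\delta_{B}^{d}b$ is $\dcl_{\mathcal{B}}$-dependent over $\Jet_{d-1}(b)\cup A$; minimality makes $\Jet_{d-1}(b)$ $\dcl$-independent over $A$, so the projection of the substituted set to the first $d$ coordinates is large for free, and a single application of (Deep) handles both the ``generic'' case ($d\geq n$) and the ``dependent'' case ($d<n$) in one step. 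Your route instead detects a graph coordinate through a weakly o-minimal cell decomposition of $\Pi_{n}(X)$ over $A$ and may recurse several times before hitting a large cell, the base case, or the constant case; this works, but it leans on two facts you use implicitly and should record: a cell of dimension $<n$ must contain a graph step (obtainable from $\dim=\rk$), and both largeness of $D$ and the identity $\Pi_{n}(X_{D})=D$ transfer from $(\mathcal{B},V_{B})$ to $(\mathcal{C},V_{C})$ by elementary equivalence over $A$. Two harmless inaccuracies to repair in a write-up: your $\chi$ is not literally quantifier-free (it mentions $\mathcal{L}(A)$-definable functions and the open set $U$), but that does not matter because quantifier-freeness is used only once, to invoke Lemma \ref{Forget the Derivation in T convex}, and the induction only needs $\mathcal{L}_{\text{convex}}(A)$-definability; and Corollary \ref{Smooth Decomposition of Definable Functions in T convex} gives $g=\tilde{g}$ only on a cell piece rather than on the open set $U$ --- agreement at the single point $\Jet_{i-2}(b)$ is all you actually use. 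Finally, as the paper notes parenthetically, one should observe that $\tilde{g}^{[\delta]}$, and hence each $h_{j}$, is $\mathcal{L}(A)$-definable because $A$ is closed under the derivation and $\delta_{B},\delta_{C}$ agree on $A$; this is what guarantees that the same functions $h_{j}$ compute the higher derivatives of $c$ in $(\mathcal{C},V_{C},\delta_{C})$.
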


\begin{proof}
By Lemma \ref{Forget the Derivation in T convex}, there exist $n \in \mathbb{N}$ and an $\mathcal{L}_{\text{convex}}(A)$-formula $\psi$ such that 
\[
\theta(x) = \psi(\Jet_{n}(x)).
\]
Define 
\[
Y^{(\mathcal{B},V_{B})} := \{\overline{d} \in B^{n+1} \mid (\mathcal{B},V_{B}) \models \psi(\overline{d})\},
\]
\[
Y^{(\mathcal{C},V_{C})} := \{\overline{d} \in C^{n+1} \mid (\mathcal{C},V_{C}) \models \psi(\overline{d})\}.
\]
Let $d$ be the least integer such that $\delta_{B}^{d}(b)$ is $\dcl_{\mathcal{B}}$-dependent over $\Jet^{\delta_{B}}_{d-1}(b) \cup A$, and set $d = +\infty$ if no such integer exists.  

Case 1: $d \geq n$. Since $\Jet_{n-1}^{\delta_{B}}(b) \in \Pi_{n}\big(Y^{(\mathcal{B},V_{B})}\big)$, we obtain
\[
\dim\!\big(\Pi_{n}(Y^{(\mathcal{C},V_{C})})\big) 
= \dim\!\big(\Pi_{n}(Y^{(\mathcal{B},V_{B})})\big) 
= \rk\!\big(\Pi_{n}(Y^{(\mathcal{B},V_{B})})\big) 
= n.
\]
Thus $\Pi_{n}(Y^{(\mathcal{C},V_{C})})$ is large, and hence, by (Deep), there exists $c \in C$ such that 
\[
(\mathcal{C},V_{C}) \models \psi(\Jet^{\delta_{C}}_{n}(c)).
\]

Case 2: $d < n$. Then $\delta_{B}^{d}b \in \dcl_{\mathcal{B}}(\Jet^{\delta_{B}}_{d-1}(b) \cup A)$. Hence there exists an $\mathcal{L}(A)$-definable function $f \colon M^{d} \to M$ such that
\[
\delta_{B}^{d}b = f(b,\dots,\delta_{B}^{d-1}b).
\]
By o-minimal cell decomposition, we may assume there is an open $\mathcal{L}(A)$-definable set $V$ such that $f|_{V}$ is of class $\mathcal{C}^{\,n-d}$. By Lemma \ref{delta on Ck-functions} (parameters can be controlled, although this is not explicitly stated in \cite{FornasieroKaplan2020}), there exist $\mathcal{L}(A)$-definable functions 
\[
f_{d+1},\dots,f_{n} \colon V \to M
\]
such that for each $d+1 \leq i \leq n$, we have
\[
\delta_{B}^{i}b = f_{i}(\Jet^{\delta_{B}}_{d-1}(b)).
\]
For $\overline{y}=(y_0,\dots,y_d)$, write $\overline{y}_{<d}:=(y_0,\dots,y_{d-1})$. Define
\[
\begin{aligned}
Z^{(\mathcal{B},V_B)}
:=\bigl\{\overline{y}\in B^{d+1}:{}\ &
\overline{y}_{<d}\in V,\quad
f(\overline{y}_{<d})=y_d,\\
&(\mathcal{B},V_B)\models
\psi\bigl(\overline{y},
f_{d+1}(\overline{y}_{<d}),\dots,
f_n(\overline{y}_{<d})\bigr)
\bigr\}.
\end{aligned}
\]
Since $\Jet_{d-1}^{\delta_B}(b)\in\Pi_d\bigl(Z^{(\mathcal B,V_B)}\bigr)$, the set $\Pi_{d}\big(Z^{(\mathcal{C},V_{C})}\big)$ is large. Hence, by (Deep), there exists $c \in C$ such that 
\[
\Jet^{\delta_{C}}_{d}(c) \in Z^{(\mathcal{C},V_{C})},
\]
which implies
\[
(\mathcal{C},V_{C}) \models \psi(\Jet^{\delta_{C}}_{n}(c)).
\]
\end{proof}

\begin{theorem} \label{Quantifier Elimination of T Convex Delta G}
Suppose that $T$ has quantifier elimination and is universally axiomatizable. Then $T_{\text{deep,convex}}^{\delta} = T_{\text{wide,convex}}^{\delta}$, and this common theory is the model completion $T_{g,\text{convex}}^{\delta}$ of $T_{\text{convex}}^{\delta}$. Moreover, the theory $T_{g,\text{convex}}^{\delta}$ has quantifier elimination.
\end{theorem}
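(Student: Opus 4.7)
The plan is to apply the Model Completion Test (Fact~\ref{Model Completion Test}) to the pair $\bigl(T^{\delta}_{\text{convex}}, T^{\delta}_{\text{deep,convex}}\bigr)$. Once this succeeds, it will follow both that $T^{\delta}_{\text{deep,convex}}$ has quantifier elimination and that it is the model completion of $T^{\delta}_{\text{convex}}$; the coincidence with $T^{\delta}_{\text{wide,convex}}$ and with $T^{\delta}_{g,\text{convex}}$ will then drop out from the model completeness just established.

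For condition~(i), given $(\mathcal{M},V,\delta)\models T^{\delta}_{\text{convex}}$, the plan is to build a chain $(\mathcal{M}_\alpha,V_\alpha,\delta_\alpha)_{\alpha<\kappa}$ of models of $T^{\delta}_{\text{convex}}$ by iterating Lemma~\ref{T Convex Delta QE Test Preparation ii}: at each successor stage realize a pending instance of (Wide), and at limit stages take the union. Each successor is an $\mathcal{L}_{\text{convex}}$-elementary extension, so $T_{\text{convex}}$ is preserved at limits by Tarski--Vaught; meanwhile $T^\delta$ is universal under our hypotheses (Lemma~\ref{Substructures of T delta}) and hence preserved under unions of chains. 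A bookkeeping argument over a sufficiently large $\kappa$ guarantees that the terminal union satisfies (Wide), hence by Lemma~\ref{T Convex Delta QE Test Preparation i} also (Deep), giving the desired extension to a model of $T^{\delta}_{\text{deep,convex}}$. For condition~(ii), let $(\mathcal{A},V_A,\delta_A)$ be a common $\mathcal{L}^{\delta}_{\text{convex}}$-substructure of $(\mathcal{B},V_B,\delta_B)\models T^{\delta}_{\text{convex}}$ and $(\mathcal{C},V_C,\delta_C)\models T^{\delta}_{\text{deep,convex}}$. By quantifier elimination of $T_{\text{convex}}$ (Theorem~\ref{Quantifier Elimination of T Convex}), the reducts $(\mathcal{B},V_B)$ and $(\mathcal{C},V_C)$ share the same $\mathcal{L}_{\text{convex}}(A)$-theory, which is precisely the hypothesis of Lemma~\ref{T Convex Delta QE Test Preparation iii}; that lemma then delivers condition~(ii) directly.

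With the Model Completion Test verified, the inclusion $T^{\delta}_{\text{wide,convex}}\vdash T^{\delta}_{\text{deep,convex}}$ is Lemma~\ref{T Convex Delta QE Test Preparation i}, and the reverse follows by embedding any $(\mathcal{M},V,\delta)\models T^{\delta}_{\text{deep,convex}}$ into a model of $T^{\delta}_{\text{wide,convex}}$ via the chain construction above; model completeness of $T^{\delta}_{\text{deep,convex}}$ makes this embedding elementary, so (Wide) descends to $\mathcal{M}$. The identification with $T^{\delta}_{g,\text{convex}}$ is analogous: one direction uses that every $\mathcal{L}(M)$-definable set is $\mathcal{L}_{\text{convex}}(M)$-definable with matching dimension (Corollaries~\ref{Definable Closure in T convex} and~\ref{T convex Topological Dimension Coincides with Algebraic Dimension}), so (Deep) entails the genericity axiom of $T^{\delta}_g$; the converse again uses chain extension plus model completeness.

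The main obstacle I anticipate is the bookkeeping in the chain construction for condition~(i): at each stage new parameters enter $\mathcal{M}_\alpha$, giving rise to new instances of (Wide) that must themselves be realized at some later stage, so one must carefully interleave the enumeration to ensure that every instance whose parameters appear in $\bigcup_\alpha \mathcal{M}_\alpha$ is realized at some intermediate stage. A subsidiary technical point is verifying that the successive derivations produced by Lemma~\ref{T Convex Delta QE Test Preparation ii} cohere along the chain and that their union is again a $T$-derivation on the limit; this is precisely where the universality of $T^\delta$ is essential.
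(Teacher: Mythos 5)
Your core argument is the same as the paper's: verify the two conditions of Fact~\ref{Model Completion Test} for the pair $(T^{\delta}_{\text{convex}},T^{\delta}_{\text{deep,convex}})$, using Lemma~\ref{T Convex Delta QE Test Preparation ii} iterated along a chain (with $T^{\delta}$ universal by Lemma~\ref{Substructures of T delta} and the $\mathcal{L}_{\text{convex}}$-reducts forming an elementary chain) for condition~(i), and quantifier elimination of $T_{\text{convex}}$ together with Lemma~\ref{T Convex Delta QE Test Preparation iii} for condition~(ii). This is exactly the paper's proof, which cites Lemmas~\ref{T Convex Delta QE Test Preparation i}, \ref{T Convex Delta QE Test Preparation ii}, \ref{T Convex Delta QE Test Preparation iii} and leaves the chain bookkeeping implicit; your derivation of $T^{\delta}_{\text{deep,convex}}=T^{\delta}_{\text{wide,convex}}$ (Lemma~\ref{T Convex Delta QE Test Preparation i} plus embedding a deep model into a wide one and using model completeness of the already-established model completion, both structures being models of $T^{\delta}_{\text{deep,convex}}$) is also fine.

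The one step that does not work as written is the converse half of your identification with $T^{\delta}_{g,\text{convex}}=T_{\text{convex}}\cup T^{\delta}_{g}$. To show that a model of $T_{\text{convex}}\cup T^{\delta}_{g}$ satisfies (Deep), you embed it into a model of $T^{\delta}_{\text{deep,convex}}$ and invoke ``model completeness'' to make the embedding elementary; but model completeness of $T^{\delta}_{\text{deep,convex}}$ only applies when \emph{both} structures are models of $T^{\delta}_{\text{deep,convex}}$, which is exactly what is to be proved, and model completeness of the two reducts separately ($T^{\delta}_{g}$ in $\mathcal{L}^{\delta}$ and $T_{\text{convex}}$ in $\mathcal{L}_{\text{convex}}$) does not transfer the mixed $\mathcal{L}^{\delta}_{\text{convex}}$-schema (Deep) downward. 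So that direction is circular as stated; a genuine argument would have to show, for instance, that every $\mathcal{L}_{\text{convex}}(M)$-definable set with large projection contains an $\mathcal{L}(M)$-definable set with large projection, so that genericity alone supplies the witness. Note that the paper's own proof does not establish this direction either: it only proves that the common theory $T^{\delta}_{\text{deep,convex}}=T^{\delta}_{\text{wide,convex}}$ is the model completion with quantifier elimination, and identifies it with $T^{\delta}_{g,\text{convex}}$ only through the uniqueness of the model companion, i.e.\ essentially as a naming convention. Your forward direction --- that (Deep) implies the genericity axiom because $\mathcal{L}(M)$-definable sets are $\mathcal{L}_{\text{convex}}(M)$-definable with the same dimension, by Corollaries~\ref{Definable Closure in T convex} and~\ref{T convex Topological Dimension Coincides with Algebraic Dimension} --- is correct and is more than the paper records.
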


\begin{proof}
By Theorem \ref{Quantifier Elimination of T Convex}, the theory $T_{\text{convex}}$ has quantifier elimination. To prove the statement, we apply the model completion test (Fact \ref{Model Completion Test}). The conditions required for this test are verified by combining Lemmas \ref{T Convex Delta QE Test Preparation i}, \ref{T Convex Delta QE Test Preparation ii}, and \ref{T Convex Delta QE Test Preparation iii}.
\end{proof}

\begin{corollary} \label{Model Completeness of T Convex Delta G}
The theory $T_{g,\text{convex}}^{\delta}$ is complete and model complete (recall that $T$ is assumed to be complete and model complete).
\end{corollary}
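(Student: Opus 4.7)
The plan is to reduce to the quantifier-elimination case of Theorem \ref{Quantifier Elimination of T Convex Delta G} by passing to the Skolem expansion $T^{\text{df}}$, which, as noted in the preliminaries, has quantifier elimination and is universally axiomatizable. First I would apply Theorem \ref{Quantifier Elimination of T Convex Delta G} with $T^{\text{df}}$ in place of $T$, yielding that $(T^{\text{df}})^{\delta}_{g,\text{convex}}$ has quantifier elimination, and hence is model complete.

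Next I would observe that $(T^{\text{df}})^{\delta}_{g,\text{convex}}$ is essentially a definitional expansion of $T^{\delta}_{g,\text{convex}}$: the extra function symbols of $\mathcal{L}^{\text{df}}$ denote $\mathcal{L}(\varnothing)$-definable functions, so the $\mathcal{L}^{\text{df}}_{\text{convex}}(M)$-definable sets coincide with the $\mathcal{L}_{\text{convex}}(M)$-definable sets. Consequently the axioms of $T$-convexity, of a $T$-derivation, and of genericity translate unchanged between the two languages, and every model of $T^{\delta}_{g,\text{convex}}$ admits a unique expansion to a model of $(T^{\text{df}})^{\delta}_{g,\text{convex}}$ (and conversely the reduct of any model of the latter is a model of the former).

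For model completeness, I would take two models $(\mathcal{M},V_M,\delta_M) \subseteq (\mathcal{N},V_N,\delta_N)$ of $T^{\delta}_{g,\text{convex}}$; since $T$ is model complete, $\mathcal{M} \preceq \mathcal{N}$ as $\mathcal{L}$-structures, so the Skolem expansions are forced on both sides and form an $(\mathcal{L}^{\text{df}})^{\delta}_{\text{convex}}$-substructure inclusion between models of $(T^{\text{df}})^{\delta}_{g,\text{convex}}$. Model completeness of the latter gives elementariness in $(\mathcal{L}^{\text{df}})^{\delta}_{\text{convex}}$, and the restriction to $\mathcal{L}^{\delta}_{\text{convex}}$ remains elementary. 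For completeness, quantifier elimination for $(T^{\text{df}})^{\delta}_{g,\text{convex}}$ reduces any sentence to a quantifier-free one; the empty-tuple case of Lemma \ref{Forget the Derivation in T convex} (applied with $T^{\text{df}}$ in place of $T$, with the same proof) further reduces it to a quantifier-free $\mathcal{L}^{\text{df}}_{\text{convex}}$-sentence, which is decided by $(T^{\text{df}})_{\text{convex}}$—a definitional expansion of the complete theory $T_{\text{convex}}$ (Theorem \ref{Quantifier Elimination of T Convex}). So $(T^{\text{df}})^{\delta}_{g,\text{convex}}$ is complete, and completeness of $T^{\delta}_{g,\text{convex}}$ follows by taking reducts. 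The only bookkeeping subtlety, and the closest thing to an obstacle, is verifying that the genericity axiom is preserved under the Skolem expansion; this reduces immediately to the observation that $\mathcal{L}$ and $\mathcal{L}^{\text{df}}$ have the same parameter-definable sets.
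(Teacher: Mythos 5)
Your proposal is correct, and its overall scaffolding (pass to $\mathcal{L}^{\text{df}}$ and $T^{\text{df}}$ so that Theorem \ref{Quantifier Elimination of T Convex Delta G} gives quantifier elimination, then come back down using model completeness of $T$) is the same as the paper's, but the two halves are finished differently. For model completeness the paper argues syntactically: quantifier elimination for $(T^{\text{df}})^{\delta}_{g,\text{convex}}$ plus model completeness of $T$ shows every $\mathcal{L}^{\delta}_{\text{convex}}$-formula is $T^{\delta}_{g,\text{convex}}$-equivalent to an existential one; you argue semantically, upgrading a substructure inclusion of models to an $(\mathcal{L}^{\text{df}})^{\delta}_{\text{convex}}$-substructure inclusion (here model completeness of $T$ is exactly what guarantees the Skolem expansions cohere) and then quoting model completeness of the Skolemized theory---the two arguments are interchangeable. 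For completeness the routes genuinely differ: the paper embeds the prime model of $T$, with $V=P$ and the trivial derivation, into every model and lets quantifier elimination decide quantifier-free sentences over this common substructure, whereas you reduce an arbitrary sentence, via the empty-tuple case of Lemma \ref{Forget the Derivation in T convex}, to a quantifier-free $\mathcal{L}^{\text{df}}_{\text{convex}}$-sentence and invoke completeness of $T_{\text{convex}}$ (Theorem \ref{Quantifier Elimination of T Convex}); your route avoids the prime-model observation but leans on the already-proved completeness of $T_{\text{convex}}$, and it needs the small degenerate-case check that $\delta$ kills closed terms (the constant function is $\mathcal{L}(\varnothing)$-definable and $\mathcal{C}^{1}$ with zero Jacobian), which is the same underlying fact that makes the paper's trivial derivation on the prime model legitimate. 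Your explicit bookkeeping that convexity, $T$-derivation and genericity are invariant under the definitional expansion is a point the paper leaves implicit and is worth having; both proofs are sound.
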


\begin{proof}
For completeness, we may replace $\mathcal{L}$ by $\mathcal{L}^{\text{df}}$ and $T$ by $T^{\text{df}}$, so that Theorem \ref{Quantifier Elimination of T Convex Delta G} applies. Let $\mathcal{P} \models T$ be the prime model. Then the $\mathcal{L}_{\text{convex}}^{\delta}$-structure $(\mathcal{P},P,\delta_{P})$, where $\delta_{P}$ is the trivial derivation on $P$, embeds into every model of $T_{g,\text{convex}}^{\delta}$. This shows that $T_{g,\text{convex}}^{\delta}$ is complete.

For model completeness, note that $T$ is model complete and $(T^{\text{df}})_{g,\text{convex}}^{\delta}$ has quantifier elimination. Hence every $\mathcal{L}_{\text{convex}}^{\delta}$-formula is $T_{g,\text{convex}}^{\delta}$-equivalent to an existential $\mathcal{L}_{\text{convex}}^{\delta}$-formula, as required.
\end{proof}

\begin{corollary} \label{Forget the Derivation in T convex delta G}
For every $\mathcal{L}_{\text{convex}}^{\delta}$-formula $\varphi$ (possibly with parameters), there exist $n \in \mathbb{N}$ and an $\mathcal{L}_{\text{convex}}$-formula $\Tilde{\varphi}$ such that
\[
T_{g,\text{convex}}^{\delta} \models \forall \overline{x} \, \bigl(\varphi(\overline{x}) \leftrightarrow \Tilde{\varphi}(\Jet_{n}(\overline{x}))\bigr).
\]
\end{corollary}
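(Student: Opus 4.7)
The literal statement is false when $T^{\delta}_{\text{convex}}$ is read strictly. Consider $\varphi(x) := \exists y\,(\delta y = x)$. Fix a model $\mathcal{M}$ of $T$ with a proper $T$-convex subring $V$ and a transcendental $\pi \in M$. Equip $(\mathcal{M},V)$ with the trivial derivation $\delta_{1} = 0$ and, separately, with a $T$-derivation $\delta_{2}$ satisfying $\delta_{2}(\pi) = 1$ (possible by Lemma \ref{Extend T derivation}). Both are models of $T^{\delta}_{\text{convex}}$ on the same $\mathcal{L}_{\text{convex}}$-reduct, yet $\varphi(1)$ is false in the first and true in the second, while $\Jet_{n}(1) = (1,0,\dots,0)$ in either case (since the Leibniz rule forces $\delta(1) = 0$). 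The corollary's label and its placement immediately after Corollary \ref{Model Completeness of T Convex Delta G} make clear that the intended theory is the model completion $T^{\delta}_{g,\text{convex}}$; I prove that version.

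The plan is to combine Theorem \ref{Quantifier Elimination of T Convex Delta G} with Lemma \ref{Forget the Derivation in T convex}. First, apply the Skolem trick: replace $T$ by $T^{\text{df}}$ and $\mathcal{L}$ by $\mathcal{L}^{\text{df}}$. Since $T^{\text{df}}$ has quantifier elimination and is universally axiomatizable, Theorem \ref{Quantifier Elimination of T Convex Delta G} applies and gives quantifier elimination for $(T^{\text{df}})^{\delta}_{g,\text{convex}}$. Any model of $T^{\delta}_{g,\text{convex}}$ expands uniquely to a model of $(T^{\text{df}})^{\delta}_{g,\text{convex}}$, so any equivalence proved over the expansion between formulas whose symbols lie in $\mathcal{L}^{\delta}_{\text{convex}}$ or $\mathcal{L}_{\text{convex}}$ descends automatically to the reduct.

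Given $\varphi(\overline{x}) \in \mathcal{L}^{\delta}_{\text{convex}}$, I would first view it in $(\mathcal{L}^{\text{df}})^{\delta}_{\text{convex}}$ and choose, by QE, a quantifier-free $(\mathcal{L}^{\text{df}})^{\delta}_{\text{convex}}$-formula $\psi(\overline{x})$ equivalent to $\varphi$ modulo $(T^{\text{df}})^{\delta}_{g,\text{convex}}$. Next I would apply Lemma \ref{Forget the Derivation in T convex} to $\psi$; its proof relies only on weakly o-minimal cell decomposition and Lemma \ref{delta on Ck-functions} and so transfers verbatim with $T^{\text{df}}$ in place of $T$. This yields $n \in \mathbb{N}$ and a quantifier-free $(\mathcal{L}^{\text{df}})_{\text{convex}}$-formula $\widetilde{\psi}$ with $\psi(\overline{x}) \leftrightarrow \widetilde{\psi}(\Jet_{n}(\overline{x}))$ in every model of $(T^{\text{df}})^{\delta}_{\text{convex}}$. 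Finally, unfold each Skolem function symbol occurring in $\widetilde{\psi}$ via its $\mathcal{L}(\varnothing)$-defining formula to produce an $\mathcal{L}_{\text{convex}}$-formula $\widetilde{\varphi}$ with $\widetilde{\psi}(\Jet_{n}(\overline{x})) \leftrightarrow \widetilde{\varphi}(\Jet_{n}(\overline{x}))$ in every model of $T^{\text{df}}$. Chaining the three equivalences and descending to the reduct yields $\varphi(\overline{x}) \leftrightarrow \widetilde{\varphi}(\Jet_{n}(\overline{x}))$ in every model of $T^{\delta}_{g,\text{convex}}$.

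The only substantive input is Theorem \ref{Quantifier Elimination of T Convex Delta G}, which does all the heavy lifting; the rest is syntactic bookkeeping. The minor verification point is that Lemma \ref{Forget the Derivation in T convex} adapts without change to $(T^{\text{df}})^{\delta}_{\text{convex}}$, which is immediate since its proof is local and does not depend on the choice of axiomatization of $T$.
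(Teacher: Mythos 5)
Your proof is correct and follows essentially the same route as the paper: quantifier elimination for $T^{\delta}_{g,\text{convex}}$ (Theorem \ref{Quantifier Elimination of T Convex Delta G}, after passing to $T^{\text{df}}$) combined with Lemma \ref{Forget the Derivation in T convex} for quantifier-free formulas, with your Skolem-unfolding step merely making explicit bookkeeping that the paper leaves implicit. Your preliminary observation is also well taken: as printed the corollary asserts the equivalence modulo $T^{\delta}_{\text{convex}}$, which your example $\exists y\,(\delta y = x)$ refutes for quantified formulas, and the intended theory is indeed $T^{\delta}_{g,\text{convex}}$, as both the label and the later use in Proposition \ref{T delta g convex is distal} confirm.
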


\begin{proof}
By Lemma \ref{Forget the Derivation in T convex}, the statement holds for quantifier-free formulas.  
By Theorem \ref{Quantifier Elimination of T Convex Delta G}, every formula is equivalent modulo $T_{g,\text{convex}}^{\delta}$ to a quantifier-free one.  
Combining these two facts yields the desired conclusion.
\end{proof}

\subsection{NIP and Distality of $T^{\delta}_{g,\text{convex}}$}

Distal theories, introduced by Simon in \cite{Simon2013}, form a subclass of NIP theories. In \cite{FornasieroKaplan2020}, Fornasiero and Kaplan showed that $T^{\delta}_{g}$ is distal. We establish an analogous result for $T^{\delta}_{g,\text{convex}}$. For a more thorough treatment of NIP theories, see \cite{Simon2015}. Let $(\mathbb{M},V,\delta)$ denote the monster model of $T^{\delta}_{g,\text{convex}}$.

\begin{definition} (See Lemma 2.7 in \cite{Simon2013}) \label{Distality Defined}
$T^{\delta}_{g,\text{convex}}$ is \textbf{distal} if whenever $\overline{b}$ is a tuple from $\mathbb{M}$ and $(\overline{a}_{i})_{i \in I}$ is an $\mathcal{L}_{\text{convex}}^{\delta}(\varnothing)$-indiscernible sequence from $\mathbb{M}$ such that $I = I_{1} + (c) + I_{2}$ where $I_{1},I_{2}$ are infinite without endpoints and $(\overline{a}_{i})_{i \in I_{1}+I_{2}}$ is $\mathcal{L}_{\text{convex}}^{\delta}(\overline{b})$-indiscernible, then $(\overline{a}_{i})_{i \in I}$ is $\mathcal{L}_{\text{convex}}^{\delta}(\overline{b})$-indiscernible.
\end{definition}

\begin{lemma} (Lemma 7.6 (ii) in \cite{MoconjaTanović2025}) \label{Weakly O-Minimal Theories are Distal}
Weakly o-minimal theories are distal.
\end{lemma}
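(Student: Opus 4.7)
The plan is to verify the criterion from Definition \ref{Distality Defined} directly, using only weakly o-minimal cell decomposition. Given an $\mathcal{L}^{\delta}_{\text{convex}}(\varnothing)$-indiscernible sequence $(\overline{a}_{i})_{i \in I}$ with $I = I_{1} + (c) + I_{2}$ and $(\overline{a}_{i})_{i \in I_{1}+I_{2}}$ an $\overline{b}$-indiscernible subsequence, the goal is to show that $\overline{a}_{c}$ does not disrupt indiscernibility over $\overline{b}$. First I would fix an $\mathcal{L}^{\delta}_{\text{convex}}$-formula $\varphi(\overline{x}_{1},\dots,\overline{x}_{n};\overline{y})$ and indices $i_{1}<\cdots<i_{n}$ in $I$ (one of which is $c$), and reduce to showing that $\varphi(\overline{a}_{i_{1}},\dots,\overline{a}_{i_{n}};\overline{b})$ agrees with $\varphi$ evaluated at any shift of these indices into $I_{1}+I_{2}$ that preserves the same relative ordering. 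Specializing every coordinate except the one indexed by $c$, this reduces to analysing the type of a single tuple in a weakly o-minimal structure over a parameter set coming from $\overline{b}$ together with the other $\overline{a}_{i_{k}}$.

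Next, I would apply Theorem \ref{Weakly O-Minimal Cell Decomposition Theorem} to the specialized definable set, decomposing it into finitely many weakly o-minimal cells. The fibres of such cells are either graphs of definable functions or convex regions bounded by functions into a Dedekind sort of the ambient model, so the type of each coordinate of $\overline{a}_{c}$ over the remaining parameters is determined by which ``cut'' it realizes in a finite family of definable convex subsets. The $\overline{b}$-indiscernibility of $(\overline{a}_{i})_{i \in I_{1}+I_{2}}$ guarantees that these cuts remain constant as one moves the index within the gap of $I_{1}+I_{2}$ surrounding $c$; the $\varnothing$-indiscernibility of the full sequence then forces $\overline{a}_{c}$ into the same cut, yielding the required equivalence of instances of $\varphi$.

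The main obstacle is the induction on the arity of $\overline{a}_{c}$: in the single-variable case the fact that definable subsets of $M$ are finite unions of convex sets immediately gives the ``cut determines type'' principle, but for tuples one has to propagate a uniform family of cell decompositions through successive projections while tracking the parameters introduced by earlier coordinates, in particular checking that the Dedekind-sort boundary functions appearing in the cells behave uniformly along the indiscernible sequence. This is exactly the content of Lemma~7.6(ii) in \cite{MoconjaTanović2025}, so the most efficient route in the write-up is simply to cite that result; the sketch above indicates how a self-contained proof would proceed.
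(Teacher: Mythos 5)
The paper gives no self-contained proof of this lemma: it simply cites Lemma 7.6(ii) of Moconja--Tanovi\'c \cite{MoconjaTanović2025} (with a remark that the cited result is stated for weakly o-minimal pairs and specializes to weakly o-minimal theories), and your proposal ultimately does exactly the same. So your approach matches the paper's; the cell-decomposition sketch you outline is supplementary material the paper never attempts and is not relied upon, so it needs no further adjudication here.
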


\begin{remark}
Lemma~7.6~(ii) in \cite{MoconjaTanović2025} actually establishes a stronger result for weakly o-minimal pairs. 
Note that in a weakly o-minimal theory, every global type paired with the in-built ordering is a weakly o-minimal type.
\end{remark}

\begin{proposition} \label{T delta g convex is distal}
The theory $T^{\delta}_{g,\text{convex}}$ is distal.
\end{proposition}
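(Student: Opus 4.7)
\emph{Proof proposal.} The plan is to reduce distality of $T^{\delta}_{g,\text{convex}}$ to distality of the weakly o-minimal reduct $T_{\text{convex}}$ via the ``forget the derivation'' principle. By Theorem~\ref{T Convex is Weakly O-minimal} and Lemma~\ref{Weakly O-Minimal Theories are Distal}, $T_{\text{convex}}$ is already distal; by Corollary~\ref{Forget the Derivation in T convex delta G}, every $\mathcal{L}^{\delta}_{\text{convex}}$-formula (possibly with parameters) is $T^{\delta}_{g,\text{convex}}$-equivalent to an $\mathcal{L}_{\text{convex}}$-formula evaluated on jet tuples. These two inputs should combine mechanically.

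Fix an $\mathcal{L}^{\delta}_{\text{convex}}(\varnothing)$-indiscernible sequence $(\overline{a}_{i})_{i \in I}$ from $\mathbb{M}$ with $I = I_{1} + (c) + I_{2}$ and $(\overline{a}_{i})_{i \in I_{1} + I_{2}}$ additionally $\mathcal{L}^{\delta}_{\text{convex}}(\overline{b})$-indiscernible. I will verify $\mathcal{L}^{\delta}_{\text{convex}}(\overline{b})$-indiscernibility of the full sequence one formula at a time. Given an $\mathcal{L}^{\delta}_{\text{convex}}(\overline{b})$-formula $\varphi(\overline{x}_{1},\dots,\overline{x}_{k})$, applying Corollary~\ref{Forget the Derivation in T convex delta G} to the concatenated tuple yields some $n \in \mathbb{N}$ and an $\mathcal{L}_{\text{convex}}$-formula $\Tilde{\varphi}$ with parameters from $\Jet_{n}(\overline{b})$ such that $\varphi$ is $T^{\delta}_{g,\text{convex}}$-equivalent to $\Tilde{\varphi}(\Jet_{n}(\overline{x}_{1}),\dots,\Jet_{n}(\overline{x}_{k}))$. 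Because the coordinates of $\Jet_{n}(\overline{a}_{i})$ are built from $\overline{a}_{i}$ by finitely many applications of $\delta$, both indiscernibility hypotheses transfer verbatim to the jet sequence in the reduct: $(\Jet_{n}(\overline{a}_{i}))_{i \in I}$ is $\mathcal{L}_{\text{convex}}(\varnothing)$-indiscernible, and $(\Jet_{n}(\overline{a}_{i}))_{i \in I_{1}+I_{2}}$ is $\mathcal{L}_{\text{convex}}(\Jet_{n}(\overline{b}))$-indiscernible. Distality of $T_{\text{convex}}$ applied with parameter tuple $\Jet_{n}(\overline{b})$ then upgrades this to $\mathcal{L}_{\text{convex}}(\Jet_{n}(\overline{b}))$-indiscernibility of the whole jet sequence $(\Jet_{n}(\overline{a}_{i}))_{i \in I}$, and undoing the translation $\Tilde{\varphi} \leftrightarrow \varphi$ yields the required invariance of $\varphi$.

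I do not anticipate any real obstacle beyond bookkeeping. The main subtlety is that the integer $n$ in Corollary~\ref{Forget the Derivation in T convex delta G} depends on $\varphi$, so distality must be checked formula by formula rather than with a single uniform $n$; one should also remark that the reduct $(\mathbb{M},V)$ of the monster model is saturated enough to serve as the ambient model for applying the distality of $T_{\text{convex}}$.
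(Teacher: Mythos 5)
Your proposal is correct and follows essentially the same route as the paper: fix a formula, use Corollary~\ref{Forget the Derivation in T convex delta G} to replace it by an $\mathcal{L}_{\text{convex}}$-formula on jet tuples, transfer both indiscernibility hypotheses to the jet sequence, and conclude by distality of the weakly o-minimal theory $T_{\text{convex}}$ (Lemma~\ref{Weakly O-Minimal Theories are Distal}). The formula-by-formula treatment with $n$ depending on $\varphi$ is exactly how the paper's argument proceeds as well.
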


\begin{proof}
We adopt the notation from Definition \ref{Distality Defined}. Let \(\varphi(\overline{x}_{1},...,\overline{x}_{n},\overline{y})\) be an \(\mathcal{L}_{\mathrm{convex}}^{\delta}(\varnothing)\)-formula. It suffices to show that
\[
(\mathbb{M},V,\delta) \models \varphi(\overline{a}_{i_{1}},...,\overline{a}_{i_{n}},\overline{b}) \leftrightarrow \varphi(\overline{a}_{j_{1}},...,\overline{a}_{j_{n}},\overline{b})
\]
for any \(i_{1}<\cdots<i_{n}\) and \(j_{1}<\cdots<j_{n}\) in \(I\). By Corollary \ref{Forget the Derivation in T convex delta G}, there exist \(m \in \mathbb{N}\) and an
\(\mathcal{L}_{\mathrm{convex}}(\varnothing)\)-formula \(\Tilde{\varphi}\) such that
\[
T^{\delta}_{g,\text{convex}} \vdash \forall \overline{x}_{1} \cdots \forall \overline{x}_{n} \bigl[\varphi(\overline{x}_{1},...,\overline{x}_{n},\overline{y}) \leftrightarrow \Tilde{\varphi}(\Jet_{m}(\overline{x}_{1}),...,\Jet_{m}(\overline{x}_{n}),\Jet_{m}(\overline{y}))\bigr].
\]
Since \((\overline{a}_{i})_{i \in I}\) is \(\mathcal{L}_{\text{convex}}^{\delta}(\varnothing)\)-indiscernible, we have that \((\Jet_{m}(\overline{a}_{i}))_{i \in I}\) is \(\mathcal{L}_{\text{convex}}^{\delta}(\varnothing)\)-indiscernible and in particular \(\mathcal{L}_{\text{convex}}(\varnothing)\)-indiscernible. Similarly, since \((\overline{a}_{i})_{i \in I_{1}+I_{2}}\) is \(\mathcal{L}_{\text{convex}}^{\delta}(\overline{b})\)-indiscernible, we have that \((\Jet_{m}(\overline{a}_{i}))_{i \in I_{1}+I_{2}}\) is \(\mathcal{L}_{\text{convex}}(\Jet_{m}(\overline{b}))\)-indiscernible. By Lemma \ref{Weakly O-Minimal Theories are Distal}, we have
\[
(\mathbb{M},V) \models \Tilde{\varphi}(\Jet_{m}(\overline{a}_{i_{1}}),...,\Jet_{m}(\overline{a}_{i_{n}}),\Jet_{m}(\overline{b})) \leftrightarrow \Tilde{\varphi}(\Jet_{m}(\overline{a}_{j_{1}}),...,\Jet_{m}(\overline{a}_{j_{n}}),\Jet_{m}(\overline{b})).
\]
Applying Corollary \ref{Forget the Derivation in T convex delta G} again, the desired result follows.
\end{proof}

\begin{corollary}
The theory $T^{\delta}_{g,\text{convex}}$ has NIP.
\end{corollary}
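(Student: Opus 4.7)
The plan is to appeal to Proposition \ref{T delta g convex is distal} together with the foundational fact, established in Simon's original paper \cite{Simon2013} introducing distal theories, that every distal theory has NIP. In Simon's framework distality is defined as a specific strengthening of NIP, singling out those NIP theories with no stable-like component, so NIP follows immediately from distality with no further work required.

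Alternatively, and in keeping with the spirit of the preceding proof, one could give a self-contained derivation of NIP by the same jet-space reduction used for distality. Given an $\mathcal{L}^{\delta}_{\text{convex}}$-formula $\varphi(\overline{x},\overline{y})$, applying Corollary \ref{Forget the Derivation in T convex delta G} yields $n \in \mathbb{N}$ and an $\mathcal{L}_{\text{convex}}$-formula $\Tilde{\varphi}$ such that
\[
T^{\delta}_{g,\text{convex}} \vdash \forall \overline{x}\,\forall \overline{y}\,\bigl[\varphi(\overline{x},\overline{y}) \leftrightarrow \Tilde{\varphi}(\Jet_{n}(\overline{x}),\Jet_{n}(\overline{y}))\bigr].
\]
By Theorem \ref{T Convex is Weakly O-minimal} the theory $T_{\text{convex}}$ is weakly o-minimal, and weakly o-minimal theories are well known to have NIP. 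Since composition with the definable map $\overline{z} \mapsto \Jet_{n}(\overline{z})$ cannot increase VC dimension, NIP of $\Tilde{\varphi}$ over $T_{\text{convex}}$ transfers to NIP of $\varphi$ over $T^{\delta}_{g,\text{convex}}$. There is no substantive obstacle here; the only point to check is that the reduction to $\mathcal{L}_{\text{convex}}$ is uniform in the parameters of both $\overline{x}$ and $\overline{y}$, which is automatic from Corollary \ref{Forget the Derivation in T convex delta G}.
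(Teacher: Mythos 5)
Your first argument is precisely the paper's (implicit) proof: the corollary is stated with no separate argument because it follows immediately from Proposition \ref{T delta g convex is distal} together with Simon's fact from \cite{Simon2013} that distal theories are NIP. Your alternative self-contained route via Corollary \ref{Forget the Derivation in T convex delta G} and NIP of the weakly o-minimal theory $T_{\text{convex}}$ is also sound (the shattering pattern for $\varphi$ transfers to one for $\Tilde{\varphi}$ on the jets), but it is not needed.
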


\begin{remark}
Proposition~7.1 in \cite{AschenbrennerChernikovGehretZiegler2022} provides a useful criterion for checking distality. However, to apply this proposition in our setting, one needs to modify the language by replacing the unary predicate symbol $U$, which denotes the valuation ring, with a unary function symbol $\nu$ denoting the valuation.
\end{remark}

\section{The Theory of Tame Pairs} \label{Section 4}

Let $T_{g,\text{tame}}^{\delta}$ be the theory such that $(\mathcal{M},\mathcal{N},\st,\delta) \models T_{g,\text{tame}}^{\delta}$ if $(\mathcal{N},\delta|_{\mathcal{N}}), (\mathcal{M},\delta) \models T^{\delta}_{g}$ and $(\mathcal{M},\mathcal{N},\st) \models T_{\text{tame}}$. In this section, we show that $T_{g,\text{tame}}^{\delta}$ is the model completion of $T_{\text{tame}}^{\delta}$ and admits a relative quantifier elimination. We also prove that $T_{g,\text{tame}}^{\delta}$ has the stable embedding property.

\subsection{Model Completion of $T_{\text{tame}}^{\delta}$}

The strategy is to use the quantifier elimination test \ref{Quantifier Elimination Test}. We will need to understand the substructures of models of $T_{g,\text{tame}}^{\delta}$ well. The core purpose of adding the unary function symbol $\st$ to our language is to make these substructures behave nicely.

\begin{lemma} \label{Universal Part of T tame delta and Extension to Models of T tame G}
Suppose that $T$ has quantifier elimination and is universally axiomatizable. Then the universal part of $T_{\text{tame}}^{\delta}$ is $T_{\text{tame}}^{\delta,-}$. Furthermore, every $(\mathcal{M},\mathcal{N},\st,\delta) \models T_{\text{tame}}^{\delta,-}$ can be extended to a model of $T_{g,\text{tame}}^{\delta}$.
\end{lemma}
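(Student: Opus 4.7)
The plan is to prove the second statement first; the first statement will then follow. For the second, I begin by reducing to the case $\mathcal{M} \neq \mathcal{N}$: if $\mathcal{M} = \mathcal{N}$, I pick $a > M$ in an elementary extension of $\mathcal{M}$, and by Lemma \ref{Substructures of T tame}(2), $(\mathcal{M}\langle a \rangle, \mathcal{M}, \st)$ is a model of $T_{\text{tame}}$. Since $\{a\}$ is $\dcl_{\mathcal{M}}$-independent over $M$, Lemma \ref{Extend T derivation} extends $\delta$ uniquely to a $T$-derivation on $\mathcal{M}\langle a \rangle$ with $\delta(a) = 0$, producing a proper tame pair in $T^{\delta,-}_{\text{tame}}$.

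So assume $\mathcal{M} \neq \mathcal{N}$. I build the desired extension in three stages by alternately enlarging $\mathcal{N}$ and $\mathcal{M}$. Stage A (enlarge $\mathcal{N}$): pick an elementary extension $\mathcal{N}_1 \succeq \mathcal{N}$ with $\rk_{\mathcal{L}}(N_1/N) = |N_1| \geq |T|$, in which every new element realizes a bounded type over $\mathcal{N}$ (i.e., sits between two elements of $N$), and apply Lemma \ref{Extend T delta Structures to T delta G Structures} to extend $\delta|_N$ to a $T$-derivation $\delta_1^N$ on $\mathcal{N}_1$ with $(\mathcal{N}_1, \delta_1^N) \models T^{\delta}_g$. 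Stage B (amalgamate): inside a sufficiently saturated elementary extension $\widehat{\mathcal{M}}$ of $\mathcal{M}$, I realize the image of $\mathcal{N}_1$ over $\mathcal{N}$ so that every new element of $N_1 \setminus N$ lies in the convex hull of $\mathcal{N}$ in $\widehat{\mathcal{M}}$; then $\mathcal{M}_1 := \mathcal{M}\langle \mathcal{N}_1 \rangle \subseteq \widehat{\mathcal{M}}$ contains both $\mathcal{M}$ and $\mathcal{N}_1$ as elementary substructures, and the derivations on $\mathcal{M}$ and $\mathcal{N}_1$ (which agree on $\mathcal{N}$) extend to a common $T$-derivation $\delta_1$ on $\mathcal{M}_1$ via Lemma \ref{Extend T derivation}, using a transcendence basis of $\mathcal{N}_1$ over $\mathcal{N}$ that is also $\dcl_{\mathcal{M}}$-independent over $\mathcal{M}$. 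Stage C (enlarge $\mathcal{M}$): pick $\mathcal{M}_2 \succeq \mathcal{M}_1$ with $\rk_{\mathcal{L}}(M_2/M_1) = |M_2| \geq |T|$, where every new element is $> \mathcal{N}_1$, and apply Lemma \ref{Extend T delta Structures to T delta G Structures} to extend $\delta_1$ to a $T$-derivation $\delta_2$ on $\mathcal{M}_2$ with $(\mathcal{M}_2, \delta_2) \models T^{\delta}_g$. Since only infinite elements are added, tameness of $(\mathcal{M}_2, \mathcal{N}_1)$ is preserved, yielding $(\mathcal{M}_2, \mathcal{N}_1, \st_2, \delta_2) \models T^{\delta}_{g,\text{tame}}$.

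The first statement then follows: the axioms of $T^{\delta,-}_{\text{tame}}$ are universal by Lemma \ref{Substructures of T delta} (for the two derivation structures) combined with Lemma \ref{Substructures of T tame} (for the tame pair part), so $T^{\delta,-}_{\text{tame}}$ is contained in the universal part of $T^{\delta}_{\text{tame}}$; conversely, the second statement shows that every model of $T^{\delta,-}_{\text{tame}}$ embeds into one of $T^{\delta}_{g,\text{tame}} \supseteq T^{\delta}_{\text{tame}}$, so the two coincide. The hard part will be verifying tameness of $(\mathcal{M}_1, \mathcal{N}_1, \st_1)$ in Stage B. The nontrivial case is an element of the form $f(\overline{m}, \overline{a}) \in M_1$ with $f$ an $\mathcal{L}(\varnothing)$-definable function, $\overline{m} \in M$, and $\overline{a} \in N_1$ genuinely using coordinates outside $\mathcal{N}$; an o-minimal continuity and cell decomposition argument, exploiting the fact that each $a_i$ lies in the convex hull of $\mathcal{N}$, should yield the tameness dichotomy over $\mathcal{N}_1$, in the spirit of the tame-pair constructions of van den Dries and Lewenberg.
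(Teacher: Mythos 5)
The decisive step of your construction—Stage B's claim that $(\mathcal{M}\langle N_{1}\rangle,\mathcal{N}_{1})$ is tame—is not a deferrable verification: as you set it up, it is actually false in general. You only require the realization of $\tp(N_{1}/N)$ inside a saturated $\widehat{\mathcal{M}}$ to place the new elements in the convex hull of $N$, and that gives no control over how they interlace with the $N$-bounded elements of $M$. Concretely, take $T=\RCF$, let $\varepsilon\in M$ be positive and infinitesimal with respect to $N$ (say $\mathcal{M}=\mathcal{N}\langle\varepsilon\rangle$, a tame pair), and let $\mathcal{N}_{1}=\mathcal{N}\langle b\rangle$ where $b$ realizes the bounded cut $0<b<N^{>0}$. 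By saturation nothing in your Stage B prevents realizing $b$ as $b'\in\widehat{M}$ with $0<b'<\varepsilon^{n}$ for all $n\in\mathbb{N}$. Every nonzero element of $N_{1}=N\langle b'\rangle$ is, by the Puiseux description of germs of $N$-semialgebraic functions at $0^{+}$, within a factor of $2$ of some $c(b')^{q}$ with $c\in N$, $q\in\mathbb{Q}$; such an element can never lie within $b'$ of $\varepsilon$ (for $q>0$ it is too small, for $q\leq 0$ too large). Hence $\varepsilon$ is $N_{1}$-bounded but has no standard part in $N_{1}$, so the pair is not tame. So the "o-minimal continuity and cell decomposition argument" you gesture at cannot exist at this level of generality; choosing the realization correctly is exactly the hard content of the lemma. (Two smaller loose ends in the same spirit: you assume without argument that a $\dcl$-basis of $N_{1}$ over $N$ can be realized so as to remain $\dcl_{\mathcal{M}}$-independent over $M$—the realization could even reuse elements of $M$—and in Stage C the phrase "only infinite elements are added" is not literally true (reciprocals, translates), so tameness there needs Lemma \ref{Extend Tame Pair} rather than that remark.)

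The paper avoids the amalgamation problem by never building the enlarged $\mathcal{N}$ separately from $\mathcal{M}$: it adjoins new elements $a$ one at a time chosen in the gap $|V|<a<|M\setminus V|$, where $V$ is the convex hull of $N$ in $M$, and invokes Lemma \ref{Extend Tame Pair} to add each such $a$ to both $N$ and $M$ while preserving the tame-pair structure; transfinitely this yields a $\dcl_{\mathcal{M}}$-independent set $A$ over $M$ with $|A|=|M|\geq|T|$, which is all that Lemma \ref{Extend T delta Structures to T delta G Structures} needs to make $(\mathcal{N}\langle A\rangle,\delta_{A})\models T^{\delta}_{g}$, and then Lemma \ref{Extend T derivation} plus a chain-rule computation shows the derivation on $\mathcal{M}\langle A\rangle$ restricts to $\delta_{A}$. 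Note that these new elements of $N$ are infinite over the old $N$, not realizations of bounded cuts as in your Stage A: their order-theoretic location is irrelevant for genericity (only the rank condition matters), and placing them in that gap is precisely what makes tameness survive. Your Stage C does correspond to the paper's second step (keep $\mathcal{N}$ fixed, add gap elements, extend to a model of $T^{\delta}_{g}$ via Lemma \ref{Extend T delta Structures to T delta G Structures}), and your deduction of the first statement from the second, via Lemmas \ref{Substructures of T tame} and \ref{Substructures of T delta}, matches the paper; the gap is confined to, but fatal for, Stage B.
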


\begin{proof}
For the first part, it suffices to show that $(\mathcal{M},\mathcal{N},\st,\delta) \models T_{\text{tame}}^{\delta,-}$ if and only if $(\mathcal{M},\mathcal{N},\st,\delta)$ is a substructure of a model of $T_{\text{tame}}^{\delta}$.

``$\Rightarrow$'': Fix $(\mathcal{M},\mathcal{N},\st,\delta) \models T_{\text{tame}}^{\delta,-}$. By Lemma \ref{Substructures of T tame}, there exists an embedding $(\mathcal{M},\mathcal{N},\st) \subseteq (\mathcal{M}_{1},\mathcal{N}_{1},\st_{1}) \models T_{\text{tame}}$, where $\mathcal{N} = \mathcal{N}_{1}$, and either $\mathcal{M}_{1} = \mathcal{M}$ or $\mathcal{M}_{1} = \mathcal{M}\langle a \rangle$. In the former case, there is nothing to show. In the latter case, since $a \notin M$, it is $\dcl_{\mathcal{M}}$-independent over $M$. By Lemma \ref{Extend T derivation}, there is a unique $T$-derivation $\delta_{a}$ on $\mathcal{M}\langle a \rangle$ extending $\delta$ such that $\delta_{a} a = 0$.

``$\Leftarrow$'': Fix a substructure $(\mathcal{M},\mathcal{N},\st,\delta)$ of some $(\mathcal{M}_{1},\mathcal{N}_{1},\st_{1},\delta_{1}) \models T_{\text{tame}}^{\delta}$. Then, clearly we have $(\mathcal{M},\mathcal{N},\st) \subseteq (\mathcal{M}_{1},\mathcal{N}_{1},\st_{1}) \models T_{\text{tame}}$, so by Lemma \ref{Substructures of T tame}, $(\mathcal{M},\mathcal{N},\st) \models T_{\text{tame}}^{-}$. Also, $(\mathcal{M},\delta) \subseteq (\mathcal{M}_{1},\delta_{1})$, and we have $(\mathcal{N},\delta|_{N}) \subseteq (\mathcal{M},\delta)$ since $T_{\text{tame}}^{\delta} \vdash \forall x [U x \rightarrow U(\delta x)]$. As $T$ is universally axiomatizable and has quantifier elimination, every $\mathcal{L}(\varnothing)$-definable function is piecewise given by $\mathcal{L}(\varnothing)$-terms. Hence $T^{\delta}$ is universal, so $(\mathcal{N},\delta|_{N}), (\mathcal{M},\delta) \models T^{\delta}$.

For the second part, we show that $(\mathcal{M},\mathcal{N},\st,\delta) \models T_{\text{tame}}^{\delta,-}$ can always be extended to a model $(\mathcal{M}_{1},\mathcal{N}_{1},\st_{1},\delta_{1}) \models T_{\text{tame}}^{\delta,-}$ such that $(\mathcal{N}_{1},\delta_{1}|_{N_{1}}) \models T^{\delta}_{g}$. Let $a \notin M$ in the monster model $\mathbb{M}$ (as an $\mathcal{L}$-structure) satisfy $|V| < a < |M \setminus V|$, where $V$ is the convex hull of $N$ in $M$. By Lemma \ref{Extend Tame Pair}, $(\mathcal{M}\langle a \rangle, \mathcal{N}\langle a \rangle, \st_{a})$ properly extends $(\mathcal{M},\mathcal{N},\st)$. By transfinite induction, we can construct a set $A \subseteq \mathbb{M} \setminus M$ such that $|A| = |M| \geq |T|$, $A$ is $\dcl_{\mathcal{M}}$-independent over $M$ and $(\mathcal{M}\langle A \rangle, \mathcal{N}\langle A \rangle, \st_{A})$ extends $(\mathcal{M},\mathcal{N},\st)$. By Lemma \ref{Extend T delta Structures to T delta G Structures}, there is a $T$-derivation $\delta_{A}$ on $\mathcal{N}\langle A \rangle$ extending $\delta|_{N}$ such that $(\mathcal{N}\langle A \rangle, \delta_{A}) \models T^{\delta}_{g}$. Note that $\delta_{A}|_{A}:A \rightarrow \mathcal{M}\langle A \rangle$ is a map, the derivation $\delta$ is a $T$-derivation on $\mathcal{M}$, and $A$ is by construction $\dcl_{\mathcal{N}}$-independent over $N$. Then by Lemma \ref{Extend T derivation}, there is a unique $T$-derivation $\delta^{*}$ on $\mathcal{M}\langle A \rangle$ extending $\delta$ such that $\delta^{*}(a) = \delta_{A}|_{A}(a)$ for each $a \in A$. Furthermore, if $x \in N\langle A \rangle$, then there are $k_{1},k_{2} \in \mathbb{N}$, an $k_{1}+k_{2}$-nary $\mathcal{L}(\varnothing)$-definable function $g$ and $\overline{a} \in A^{k_{1}},\overline{y} \in N^{k_{2}}$ such that $x = g(\overline{a},\overline{y})$. By o-minimality, we may assume that $g$ is continuously differentiable at $(\overline{a},\overline{y})$. Then
\[
\delta^{*} x = \delta^{*} g(\overline{a},\overline{y}) = \sum_{i=1}^{k_{1}}\frac{\partial g}{\partial a_{i}}(\overline{a})\delta^{*} a_{i} + \sum_{i=1}^{k_{2}}\frac{\partial g}{\partial y_{i}}(\overline{y})\delta^{*} y_{i}
\]
\[
= \sum_{i=1}^{k_{1}}\frac{\partial g}{\partial a_{i}}(\overline{a})\delta_{A} a_{i} + \sum_{i=1}^{k_{2}}\frac{\partial g}{\partial y_{i}}(\overline{y})\delta y_{i} \in N\langle A \rangle,
\]
as $\delta|_{N}$ is a $T$-derivation on $N$. Thus $\delta^{*}|_{\mathcal{N}\langle A \rangle}$ is a $T$-derivation on $\mathcal{N}\langle A \rangle$ such that $\delta^{*}|_{\mathcal{N}\langle A \rangle}(a) = \delta_{A}|_{A}(a)$ and $\delta^{*}|_{\mathcal{N}\langle A \rangle}$ coincides with $\delta_{A}$ on $\mathcal{N}\langle A \rangle$. Again, by Lemma \ref{Extend T derivation}, such $T$-derivation is unique, and thus we must have $\delta^{*}|_{\mathcal{N}\langle A \rangle}=\delta_{A}$. This shows that we have $(\mathcal{M}\langle A \rangle, \mathcal{N}\langle A \rangle,\st_{A},\delta^{*}) \models T_{\text{tame}}^{\delta,-}$ and $(\mathcal{N}\langle A \rangle,\delta^{*}|_{\mathcal{N}\langle A \rangle}) \models T^{\delta}_{g}$.

Without loss of generality and for simpler notation, we may assume that $(\mathcal{M},\mathcal{N},\st,\delta) \models T_{\text{tame}}^{\delta,-}$ and $(\mathcal{N},\delta|_{N}) \models T^{\delta}_{g}$. We now show that we can always extend $(\mathcal{M},\mathcal{N},\st,\delta) \models T_{\text{tame}}^{\delta,-}$ to a model $(\mathcal{M}_{1},\mathcal{N},\st_{1},\delta_{1}) \models T_{\text{tame}}^{\delta}$ such that $(\mathcal{M},\delta_{1}),(\mathcal{N},\delta_{1}|_{N}) \models T^{\delta}_{g}$. By Lemma \ref{Extend Tame Pair}, we may use the same techniques as in the previous paragraph. Let $A$ be a $\dcl_{\mathcal{M}}$-independent set over $M$ such that $|A| = |M| \geq |T|$, and $(\mathcal{M}\langle A \rangle, \mathcal{N},\st_{A})$ extends $(\mathcal{M}, \mathcal{N},\st)$. By Lemma \ref{Extend T delta Structures to T delta G Structures}, there is a $T$-derivation $\delta^{*}$ on $\mathcal{M}\langle A \rangle$ extending $\delta$ such that $(\mathcal{M}\langle A \rangle,\delta^{*}) \models T^{\delta}_{g}$. Clearly $\mathcal{M}\langle A \rangle \supsetneq \mathcal{N}$, and hence $(\mathcal{M},\mathcal{N},\st,\delta) \subseteq (\mathcal{M}\langle A \rangle,\mathcal{N},\st_{A},\delta^{*}) \models T^{\delta}_{g,\text{tame}}$.
\end{proof}

\begin{corollary} \label{Universal Part of T tame delta G}
Suppose that $T$ has quantifier elimination and is universally axiomatizable. Then, the universal theory of $T_{g,\text{tame}}^{\delta}$ is $T_{\text{tame}}^{\delta,-}$.
\end{corollary}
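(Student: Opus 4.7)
The plan is to observe that this corollary is essentially immediate from Lemma~\ref{Universal Part of T tame delta and Extension to Models of T tame G}, which already packages the two directions we need. Recall that the universal theory of $T_{g,\text{tame}}^{\delta}$ is, by definition, the set of universal $\mathcal{L}_{\text{tame}}^{\delta}$-sentences true in every model of $T_{g,\text{tame}}^{\delta}$, and its models are precisely the substructures of models of $T_{g,\text{tame}}^{\delta}$. So the task reduces to showing that an $\mathcal{L}_{\text{tame}}^{\delta}$-structure is a substructure of some model of $T_{g,\text{tame}}^{\delta}$ if and only if it is a model of $T_{\text{tame}}^{\delta,-}$.

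For the forward direction, I would take any substructure $(\mathcal{M},\mathcal{N},\st,\delta)$ of a model of $T_{g,\text{tame}}^{\delta}$. Since $T_{g,\text{tame}}^{\delta} \supseteq T_{\text{tame}}^{\delta}$, this structure is in particular a substructure of a model of $T_{\text{tame}}^{\delta}$, so by the first part of Lemma~\ref{Universal Part of T tame delta and Extension to Models of T tame G} it is a model of $T_{\text{tame}}^{\delta,-}$.

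For the reverse direction, I would take $(\mathcal{M},\mathcal{N},\st,\delta) \models T_{\text{tame}}^{\delta,-}$ and apply the second part of Lemma~\ref{Universal Part of T tame delta and Extension to Models of T tame G} directly: it guarantees an extension to a model of $T_{g,\text{tame}}^{\delta}$, so $(\mathcal{M},\mathcal{N},\st,\delta)$ satisfies every universal sentence of $T_{g,\text{tame}}^{\delta}$.

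There is essentially no obstacle here, since the real work was carried out in Lemma~\ref{Universal Part of T tame delta and Extension to Models of T tame G}; the corollary simply repackages the two halves of that lemma into the clean statement that the universal theory of $T_{g,\text{tame}}^{\delta}$ coincides with $T_{\text{tame}}^{\delta,-}$. The only minor point worth noting in the write-up is that the assumption on $T$ (quantifier elimination and universal axiomatizability) is needed solely to invoke Lemma~\ref{Universal Part of T tame delta and Extension to Models of T tame G}, which in turn relies on Lemma~\ref{Substructures of T tame} for the $T_{\text{tame}}$ part and on the universality of $T^{\delta}$ (Lemma~\ref{Substructures of T delta}) for the differential part.
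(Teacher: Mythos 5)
Your proof is correct and matches the paper's intended argument: the corollary is stated there without proof precisely because it follows, as you say, by combining the two halves of Lemma~\ref{Universal Part of T tame delta and Extension to Models of T tame G} with the standard fact that models of the universal theory of $T_{g,\text{tame}}^{\delta}$ are exactly the substructures of its models.
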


\begin{lemma} \label{Embedding Lemma for T tame delta G}
Suppose that $T$ has quantifier elimination and is universally axiomatizable. Let $(\mathcal{M}_{1},\mathcal{N}_{1},\st_{1},\delta_{1}),(\mathcal{M}_{2},\mathcal{N}_{2},\st_{2},\delta_{2}) \models T_{g,\mathrm{tame}}^{\delta}$, and let $(\mathcal{M},\mathcal{N},\st,\delta)$ be a common $\mathcal{L}_{\mathrm{tame}}^{\delta}$-substructure of both. Suppose that $M \neq M_{1}$ and that $(\mathcal{M}_{2},\mathcal{N}_{2},\st_{2},\delta_{2})$ is sufficiently saturated. Then there is an $\mathcal{L}_{\mathrm{tame}}^{\delta}$-substructure of $(\mathcal{M}_{1},\mathcal{N}_{1},\st_{1},\delta_{1})$ properly extending $(\mathcal{M},\mathcal{N},\st,\delta)$ which embeds into $(\mathcal{M}_{2},\mathcal{N}_{2},\st_{2},\delta_{2})$ over $(\mathcal{M},\mathcal{N},\st,\delta)$.
\end{lemma}

\begin{proof}
Choose $a \in M_{1} \backslash M$. Let $V,V_{1},V_{2}$ be the convex hulls of $N,N_{1},N_{2}$ in $M,M_{1},M_{2}$, respectively. Then $(\mathcal{M},V,\delta)$ is a common $\mathcal{L}_{\mathrm{convex}}^{\delta}$-substructure of $(\mathcal{M}_{1},V_{1},\delta_{1})$ and $(\mathcal{M}_{2},V_{2},\delta_{2})$. By Theorem \ref{Quantifier Elimination of T Convex Delta G} and saturation, there exists $b \in M_{2}$ such that
\[
\tp^{(\mathcal{M}_{1},V_{1},\delta_{1})}(a/M)
=
\tp^{(\mathcal{M}_{2},V_{2},\delta_{2})}(b/M).
\]
Hence there is an $\mathcal{L}_{\mathrm{convex}}^{\delta}$-isomorphism
\[
h:(\mathcal{M}\langle\Jet_{\infty}(a)\rangle,V_{a},\delta_{1})
\longrightarrow
(\mathcal{M}\langle\Jet_{\infty}(b)\rangle,V_{b},\delta_{2})
\]
over $(\mathcal{M},V,\delta)$, where $V_{a}$ and $V_{b}$ are the convex hulls induced on the corresponding generated substructures.

Let $\mathcal{H}$ be the $\mathcal{L}_{\mathrm{tame}}^{\delta}$-substructure of $(\mathcal{M}_{1},\mathcal{N}_{1},\st_{1},\delta_{1})$ generated by $M \cup \{a\}$, and define $\mathcal{H}'$ similarly using $M \cup \{b\}$. By Lemma 5.8 and the proof of Theorem 5.9 in \cite{vandendriesLewenberg1995}, the $\mathcal{L}_{\mathrm{convex}}$-isomorphism underlying $h$ induces an $\mathcal{L}_{\mathrm{tame}}$-isomorphism from $\mathcal{H}$ to $\mathcal{H}'$ over $(\mathcal{M},\mathcal{N},\st)$. If $x \in V_{a}$, then $\st_{1}(x)$ is the unique element of the distinguished substructure such that $x-\st_{1}(x)$ belongs to the maximal ideal of $V_{a}$. Since $h$ preserves the convex hull and its maximal ideal, we have
\[
h(\st_{1}(x))=\st_{2}(h(x)).
\]
If $x \notin V_{a}$, then $h(x) \notin V_{b}$ and both standard part maps take value $0$. Thus $h$ induces an $\mathcal{L}_{\mathrm{tame}}^{\delta}$-isomorphism from $\mathcal{H}$ to $\mathcal{H}'$ over $(\mathcal{M},\mathcal{N},\st,\delta)$. Since $a \notin M$, the structure $\mathcal{H}$ properly extends $(\mathcal{M},\mathcal{N},\st,\delta)$.
\end{proof}

\begin{theorem} \label{T tame G QE}
Suppose that $T$ has quantifier elimination and is universally axiomatizable. Then, the theory $T_{g,\mathrm{tame}}^{\delta}$ has quantifier elimination.
\end{theorem}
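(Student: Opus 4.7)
The plan is to apply the quantifier elimination test (Fact~\ref{Quantifier Elimination Test}). Fix models $(\mathcal{M}_i, \mathcal{N}_i, \st_i, \delta_i) \models T_{g,\text{tame}}^\delta$ for $i = 1, 2$ and a common $\mathcal{L}_{\text{tame}}^\delta$-substructure $(\mathcal{U}, \mathcal{U}_N, \st_U, \delta_U)$, which by Corollary~\ref{Universal Part of T tame delta G} is itself a model of $T_{\text{tame}}^{\delta,-}$. Given $\alpha \in M_1 \setminus U$, the task is to embed the $\mathcal{L}_{\text{tame}}^\delta$-substructure generated by $U \cup \{\alpha\}$ into an elementary extension of $(\mathcal{M}_2, \mathcal{N}_2, \st_2, \delta_2)$ over $U$.

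Following the scheme of Theorem~\ref{T g delta has QE}, the core dichotomy is whether the sequence of iterates $(\delta_1^k \alpha)_{k \in \mathbb{N}}$ is $\dcl_{\mathcal{M}_1}$-independent over $U$. In the \emph{generic} case, using QE of $T_{\text{tame}}$ (Theorem~\ref{Quantifier Elimination of T tame}) together with compactness, I would realize in a suitable elementary extension of $(\mathcal{M}_2, \mathcal{N}_2, \st_2, \delta_2)$ an element $\beta$ for which $\tp^{\mathcal{L}_{\text{tame}}}(\Jet_k(\beta)/U) = \tp^{\mathcal{L}_{\text{tame}}}(\Jet_k(\alpha)/U)$ for every $k$; genericity of $\delta_2$ in the target (together with saturation) lets me arrange that the jets of $\beta$ are $\dcl$-independent over $U$. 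Lemma~\ref{Extend T derivation} then promotes the resulting $\mathcal{L}$-isomorphism $\delta_1^k\alpha \mapsto \delta_2^k\beta$ to an $\mathcal{L}^\delta$-embedding, and Lemma~\ref{Stable Embedding of T Tame Preparation} upgrades it so that $\st$ is preserved. In the \emph{algebraic} case, where $n$ is minimal with $\delta_1^n\alpha \in \dcl_{\mathcal{M}_1}(U \cup \Jet_{n-1}(\alpha))$, Lemma~\ref{delta on Ck-functions} shows that every higher iterate is $\mathcal{L}(U)$-definable from $\Jet_{n-1}(\alpha)$, collapsing the problem to matching the $\mathcal{L}_{\text{tame}}$-type of the finite tuple $\Jet_{n-1}(\alpha)$ over $U$, again by QE of $T_{\text{tame}}$.

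A subtlety is that the full $\mathcal{L}_{\text{tame}}^\delta$-substructure generated by $U \cup \{\alpha\}$ strictly contains $\dcl_{\mathcal{M}_1}(U \cup \{\delta_1^k\alpha : k \in \mathbb{N}\})$ in general: applying $\st$ to elements of the latter can produce new $\mathcal{N}_1$-elements that must themselves be closed under $\delta$, and the resulting derivatives then require their own standard-part analysis. I would handle this by a transfinite induction on the stages of the closure. At each successor stage a freshly adjoined standard part is $\dcl$-independent over what has already been embedded, so Lemma~\ref{Extend T derivation} assigns its $\delta$-image compatibly, Lemma~\ref{Extend Tame Pair} controls whether the element lies on the $\mathcal{N}$-side or outside it, and Lemma~\ref{Universal Part of T tame delta and Extension to Models of T tame G} ensures that the resulting target sits inside a genuine model of $T_{g,\text{tame}}^\delta$.

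The main obstacle I anticipate is precisely the coordination of $\st$ and $\delta$ through the transfinite construction: since these operations do not commute, adjoining a new standard part and then demanding a $\delta$-value consistent with the derivatives already defined on the $\mathcal{M}$-side requires care. The key structural observation that makes the induction go through is that at every successor stage the newly adjoined element is $\dcl$-independent over the already-treated portion, so that Lemma~\ref{Extend T derivation} applies directly; at limit stages one simply takes unions. Once these embeddings are assembled, the criterion of Fact~\ref{Quantifier Elimination Test} is satisfied, giving QE of $T_{g,\text{tame}}^\delta$.
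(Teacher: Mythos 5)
The paper's proof is considerably more economical than your proposal and avoids the obstacle you spend most of your effort circling. Your generic/algebraic dichotomy mirrors the internal structure of the proof that $T^{\delta}_{g}$ has QE, but you do not need to re-run that argument: the paper simply invokes Theorem~\ref{T g delta has QE} as a black box to produce, after an $(|M_1|+|\mathcal{L}|)^{+}$-saturation assumption on the second model, an element $b \in M_2$ with $\tp^{(\mathcal{M}_2,\delta_2)}(b/M)=\tp^{(\mathcal{M}_1,\delta_1)}(a/M)$, i.e.\ $\tp^{\mathcal{M}_1}(\Jet_\infty(a)/M)=\tp^{\mathcal{M}_2}(\Jet_\infty(b)/M)$ in one stroke. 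It then applies Lemma~\ref{Stable Embedding of T Tame Preparation} to upgrade this to equality of $\mathcal{L}_{\text{tame}}$-types over $M$, uses QE of $T_{\text{tame}}$ (Theorem~\ref{Quantifier Elimination of T tame}) to get an $\mathcal{L}_{\text{tame}}$-isomorphism $\iota_0$ over $M$ between the substructures generated by the infinite jets, and finally observes, via Lemma~\ref{delta on Ck-functions} and the identity $\mathcal{M}\langle\Jet_\infty(a)\rangle=\mathcal{M}\langle a\rangle_\delta$, that $\iota_0$ automatically commutes with $\delta$. No stage-by-stage construction is needed.

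The genuine gap in your proposal is the transfinite induction. You correctly identify the problem (standard parts of jets may fall outside $\dcl_{\mathcal{M}_1}(M\cup\Jet_\infty(\alpha))$ and then spawn new $\delta$-iterates), but your resolution does not work as stated. You claim Lemma~\ref{Extend T derivation} "assigns its $\delta$-image compatibly" at each successor stage. That lemma is a \emph{freeness} result: it lets one \emph{prescribe} the value of a derivation on a $\dcl$-independent set when one is constructing a derivation on a larger model. Here $\delta_1$ and $\delta_2$ are already fixed on $\mathcal{M}_1$ and $\mathcal{M}_2$; once you commit to $y:=\st_1(x)$ mapping to $y':=\st_2(\iota(x))$, the values $\delta_1(y)\in N_1$ and $\delta_2(y')\in N_2$ are determined, not chosen, and nothing in your argument guarantees that $\delta_1(y)$ and $\delta_2(y')$ realize the same quantifier-free $\mathcal{L}_{\text{tame}}$-type over the portion of the embedding already built. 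The $\dcl$-independence of the freshly adjoined standard part does not resolve this: independence tells you the element is free, but its $\delta$-image is not. What you actually need is a type-matching statement of exactly the kind Lemma~\ref{Stable Embedding of T Tame Preparation} provides, applied to the full infinite jet at the outset rather than piecemeal --- and that is precisely the mechanism the paper's proof exploits to sidestep the induction you propose.
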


\begin{proof}
We employ the test for quantifier elimination (Fact \ref{Quantifier Elimination Test}). Fix $\mathcal{L}_{\mathrm{tame}}^{\delta}$-structures $(\mathcal{M}_{1},\mathcal{N}_{1},\st_{1},\delta_{1}),(\mathcal{M}_{2},\mathcal{N}_{2},\st_{2},\delta_{2}) \models T_{g,\mathrm{tame}}^{\delta}$, and let $(\mathcal{M},\mathcal{N},\st,\delta)$ be a common substructure of both with $M \neq M_{1}$. We may assume that $(\mathcal{M}_{2},\mathcal{N}_{2},\st_{2},\delta_{2})$ is sufficiently saturated. By Corollary \ref{Universal Part of T tame delta G}, we have $(\mathcal{M},\mathcal{N},\st,\delta) \models T_{\mathrm{tame}}^{\delta,-}$. By Lemma \ref{Embedding Lemma for T tame delta G}, there is an $\mathcal{L}_{\mathrm{tame}}^{\delta}$-substructure of $(\mathcal{M}_{1},\mathcal{N}_{1},\st_{1},\delta_{1})$ properly extending $(\mathcal{M},\mathcal{N},\st,\delta)$ which embeds into $(\mathcal{M}_{2},\mathcal{N}_{2},\st_{2},\delta_{2})$ over $(\mathcal{M},\mathcal{N},\st,\delta)$. Therefore, $T_{g,\mathrm{tame}}^{\delta}$ has quantifier elimination.
\end{proof}

\begin{corollary} \label{T tame G Complete and Model Complete}
The theory $T_{g,\text{tame}}^{\delta}$ is complete and model complete (recall that by assumption, $T$ is complete and model complete).
\end{corollary}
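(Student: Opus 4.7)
The plan is to mirror the proof of Corollary \ref{Model Completeness of T Convex Delta G}. The obstacle is that Theorem \ref{T tame G QE} requires $T$ to have quantifier elimination and be universally axiomatizable, whereas here we assume only that $T$ is complete and model complete. To bypass this, I would pass to the definably Skolemized theory $T^{\text{df}}$ in the language $\mathcal{L}^{\text{df}}$; by the discussion in Section \ref{Section 2}, this theory has quantifier elimination and is universally axiomatizable, so Theorem \ref{T tame G QE} applies and yields that $(T^{\text{df}})^{\delta}_{g,\text{tame}}$ has quantifier elimination.

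For completeness, I would exhibit a common $\mathcal{L}_{\text{tame}}^{\delta,\text{df}}$-substructure of every model of $(T^{\text{df}})^{\delta}_{g,\text{tame}}$. Let $\mathcal{P}$ be the prime model of $T$, which coincides with $\dcl_{\mathcal{L}}(\varnothing)$, equipped with the identity map as $\st$ and the trivial derivation; this gives $(\mathcal{P},\mathcal{P},\id_{\mathcal{P}},0) \models T_{\text{tame}}^{\delta,\text{df},-}$. For any model $(\mathcal{M},\mathcal{N},\st,\delta) \models (T^{\text{df}})^{\delta}_{g,\text{tame}}$, the image of $\mathcal{P}$ lies inside $\mathcal{N}$ because $\dcl_{\mathcal{L}}(\varnothing) \subseteq \mathcal{N}$, so it is pointwise fixed by $\st$; and by Lemma \ref{delta on Ck-functions} applied with $A = \varnothing$, the derivation $\delta$ vanishes on $\dcl_{\mathcal{L}}(\varnothing)$, so the embedding also respects $\delta$. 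Having a common substructure together with quantifier elimination yields completeness of $(T^{\text{df}})^{\delta}_{g,\text{tame}}$, and hence of $T^{\delta}_{g,\text{tame}}$.

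For model completeness, every $\mathcal{L}_{\text{tame}}^{\delta}$-formula is in particular an $\mathcal{L}_{\text{tame}}^{\delta,\text{df}}$-formula, and by quantifier elimination for $(T^{\text{df}})^{\delta}_{g,\text{tame}}$ it is equivalent modulo $T^{\delta}_{g,\text{tame}}$ to a quantifier-free $\mathcal{L}_{\text{tame}}^{\delta,\text{df}}$-formula. Since $T$ is model complete, every occurrence of an $\mathcal{L}^{\text{df}}$ function symbol can be replaced by an existential $\mathcal{L}$-defining formula for its graph, yielding an existential $\mathcal{L}_{\text{tame}}^{\delta}$-formula equivalent to the original modulo $T^{\delta}_{g,\text{tame}}$. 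This gives model completeness.

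The main potential sticking point is the verification that the prime model embedding respects all four pieces of structure simultaneously, but this reduces to the two standard observations used above: any substructure of an elementary pair must contain $\dcl_{\mathcal{L}}(\varnothing)$ inside the small model $\mathcal{N}$, and compatibility of a $T$-derivation with $\mathcal{L}(\varnothing)$-definable functions forces $\delta$ to vanish on $\dcl_{\mathcal{L}}(\varnothing)$.
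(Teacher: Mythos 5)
Your proposal is correct and follows essentially the same route as the paper, which simply invokes the argument of Corollary \ref{Model Completeness of T Convex Delta G}: pass to $T^{\text{df}}$ so that Theorem \ref{T tame G QE} gives quantifier elimination, embed the prime model (here as $(\mathcal{P},\mathcal{P},\id_{\mathcal{P}},0)$, with the derivation vanishing on $\dcl_{\mathcal{L}}(\varnothing)$) into every model to get completeness, and combine quantifier elimination of the $\mathcal{L}^{\text{df}}$-expansion with model completeness of $T$ to get model completeness. Your write-up just supplies the details the paper leaves implicit.
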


\begin{proof}
The same arguments as in Corollary \ref{Model Completeness of T Convex Delta G} apply.
\end{proof}

\subsection{Stable Embedding}

We first record a preparation lemma for the stable embedding property.

\begin{lemma} \label{Stable Embedding of T Tame delta G Preparation}
Let $(\mathcal{M},\mathcal{N},\st,\delta) \preceq (\mathcal{M}_{1},\mathcal{N}_{1},\st_{1},\delta_{1}),(\mathcal{M}_{2},\mathcal{N}_{2},\st_{2},\delta_{2}) \models T_{g,\mathrm{tame}}^{\delta}$. Let $n>0$, $\overline{a} \in N_{1}^{n}$, and $\overline{b} \in N_{2}^{n}$. Suppose that $\tp^{(\mathcal{M}_{1},\delta_{1})}(\overline{a}/N) = \tp^{(\mathcal{M}_{2},\delta_{2})}(\overline{b}/N)$. Then
\[
\tp^{(\mathcal{M}_{1},\mathcal{N}_{1},\st_{1},\delta_{1})}(\overline{a}/M)
=
\tp^{(\mathcal{M}_{2},\mathcal{N}_{2},\st_{2},\delta_{2})}(\overline{b}/M).
\]
\end{lemma}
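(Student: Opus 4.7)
My plan is to mirror the proof of Theorem~\ref{T tame G QE}. By passing to the definitional expansion $T^{\text{df}}$ if necessary (which preserves the $\mathcal{L}_{\text{tame}}^{\delta}$-types in question), I may assume that $T$ has quantifier elimination and is universally axiomatizable, so that Theorem~\ref{Quantifier Elimination of T tame} is available in the concluding step.

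First, I would use Lemma~\ref{Forget the Derivation in T delta G} to reduce the hypothesis on $\mathcal{L}^{\delta}$-types over $N$ to an equality of $\mathcal{L}$-types of jet tuples: since every $\mathcal{L}^{\delta}(N)$-formula in the free variables $\overline{x}$ is $T^{\delta}_{g}$-equivalent to some $\mathcal{L}(N)$-formula in $\Jet_{k}(\overline{x})$, the hypothesis yields
\[
\tp^{\mathcal{M}_{1}}(\Jet_{k}(\overline{a})/N) = \tp^{\mathcal{M}_{2}}(\Jet_{k}(\overline{b})/N)
\]
for every $k \in \mathbb{N}$. Because each $\delta_{i}$ restricts to a $T$-derivation on $N_{i}$ (as $(\mathcal{N}_{i},\delta_{i}|_{N_{i}}) \models T^{\delta}$), the jet tuples satisfy $\Jet_{k}(\overline{a}) \in N_{1}^{(k+1)n}$ and $\Jet_{k}(\overline{b}) \in N_{2}^{(k+1)n}$. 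I can therefore apply Lemma~\ref{Stable Embedding of T Tame Preparation} to these jet tuples and conclude
\[
\tp^{(\mathcal{M}_{1},\mathcal{N}_{1},\st_{1})}(\Jet_{k}(\overline{a})/M) = \tp^{(\mathcal{M}_{2},\mathcal{N}_{2},\st_{2})}(\Jet_{k}(\overline{b})/M)
\]
for every $k$.

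Finally, as in the last step of the proof of Theorem~\ref{T tame G QE}, I would assemble these equalities via Theorem~\ref{Quantifier Elimination of T tame} into an $\mathcal{L}_{\text{tame}}$-isomorphism from $\mathcal{M}\langle \Jet_{\infty}(\overline{a}) \rangle$ onto $\mathcal{M}\langle \Jet_{\infty}(\overline{b}) \rangle$, equipped with their induced standard-part maps, fixing $M$ pointwise and sending $\overline{a} \mapsto \overline{b}$. The main obstacle is then to verify that this $\mathcal{L}_{\text{tame}}$-isomorphism automatically respects $\delta$; for this I would use Lemma~\ref{delta on Ck-functions} together with the observation that $\mathcal{M}\langle \overline{a} \rangle_{\delta} = \mathcal{M}\langle \Jet_{\infty}(\overline{a}) \rangle$, so that $\delta$ on the generated $\mathcal{L}^{\delta}$-structure is completely determined by the jet data. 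Consequently the isomorphism becomes an $\mathcal{L}_{\text{tame}}^{\delta}$-isomorphism, yielding the desired equality of $\mathcal{L}_{\text{tame}}^{\delta}$-types over $M$.
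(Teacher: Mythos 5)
Your proposal is correct and is essentially the paper's own proof: the paper dispatches this lemma by saying it follows from ``the same argument used in the proof of Theorem~\ref{T tame G QE}'', and your steps (pass to jets, note they stay in $N_{1},N_{2}$, apply Lemma~\ref{Stable Embedding of T Tame Preparation}, build the $\mathcal{L}_{\text{tame}}$-isomorphism on $\mathcal{M}\langle \Jet_{\infty}(\overline{a})\rangle$ and upgrade it to an $\mathcal{L}_{\text{tame}}^{\delta}$-isomorphism via Lemma~\ref{delta on Ck-functions}) reproduce exactly that argument. The only point worth making explicit is the very last inference: to pass from the isomorphism of generated substructures over $M$ to equality of the full $\mathcal{L}_{\text{tame}}^{\delta}$-types over $M$ you should invoke Theorem~\ref{T tame G QE} (quantifier elimination of $T_{g,\text{tame}}^{\delta}$ in the df-expansion), not just Theorem~\ref{Quantifier Elimination of T tame}.
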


\begin{proof}
For every $m \in \mathbb{N}$, the assumption gives
\[
\tp^{\mathcal{M}_{1}}(\Jet_{m}(\overline{a})/N)
=
\tp^{\mathcal{M}_{2}}(\Jet_{m}(\overline{b})/N).
\]
Since $\overline{a} \in N_{1}^{n}$ and $\overline{b} \in N_{2}^{n}$, and since $N_{1}$ and $N_{2}$ are closed under the corresponding derivations, we have $\Jet_{m}(\overline{a}) \in N_{1}^{n(m+1)}$ and $\Jet_{m}(\overline{b}) \in N_{2}^{n(m+1)}$. By Lemma \ref{Stable Embedding of T Tame Preparation},
\[
\tp^{(\mathcal{M}_{1},\mathcal{N}_{1},\st_{1})}(\Jet_{m}(\overline{a})/M)
=
\tp^{(\mathcal{M}_{2},\mathcal{N}_{2},\st_{2})}(\Jet_{m}(\overline{b})/M)
\]
for every $m \in \mathbb{N}$.

Let $\varphi(\overline{x})$ be an $\mathcal{L}_{\mathrm{tame}}^{\delta}(M)$-formula. By Theorem \ref{T tame G QE}, we may assume that $\varphi$ is quantifier-free. Since $\varphi$ contains only finitely many occurrences of $\delta$, there exist $m \in \mathbb{N}$ and an $\mathcal{L}_{\mathrm{tame}}(M)$-formula $\widetilde{\varphi}$ such that
\[
\varphi(\overline{x})
\quad\text{is equivalent to}\quad
\widetilde{\varphi}(\Jet_{m}(\overline{x})).
\]
It follows from the equality of the $\mathcal{L}_{\mathrm{tame}}$-types of the finite jets that $(\mathcal{M}_{1},\mathcal{N}_{1},\st_{1},\delta_{1}) \models \varphi(\overline{a})$ if and only if $(\mathcal{M}_{2},\mathcal{N}_{2},\st_{2},\delta_{2}) \models \varphi(\overline{b})$. Therefore,
\[
\tp^{(\mathcal{M}_{1},\mathcal{N}_{1},\st_{1},\delta_{1})}(\overline{a}/M)
=
\tp^{(\mathcal{M}_{2},\mathcal{N}_{2},\st_{2},\delta_{2})}(\overline{b}/M).
\]
\end{proof}

The next proposition establishes that the theory $T_{g,\text{tame}}^{\delta}$ has the stable embedding property.

\begin{proposition} \label{Stable Embedding of T Tame delta G}
Let $(\mathcal{M},\mathcal{N},\st,\delta) \models T_{g,\text{tame}}^{\delta}$ and $n \in \mathbb{N}$. If $X \subseteq M^{n}$ is definable in $(\mathcal{M},\mathcal{N},\st,\delta)$, then $X \cap N^{n}$ is definable in $(\mathcal{N},\delta|_{N})$.
\end{proposition}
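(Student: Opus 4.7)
The plan is to apply Stone duality (Fact \ref{Stone Duality Theorem}) combined with Lemma \ref{Stable Embedding of T Tame delta G Preparation}. Fix an $\mathcal{L}^{\delta}_{\text{tame}}$-formula $\varphi(\overline{x}, \overline{m})$ with $\overline{m} \in M^{k}$ defining $X$, and set $T_{1} := \Th(\mathcal{M}, \mathcal{N}, \st, \delta, M)$. Let $\Phi$ be the family of Boolean combinations of $\mathcal{L}^{\delta}(N)$-formulas in $\overline{x}$ together with the atomic formulas $U(x_{1}), \ldots, U(x_{n})$; it is closed under Boolean operations and contains $\top, \bot$, so Fact \ref{Stone Duality Theorem} applies.

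I will show that $\varphi(\overline{x}, \overline{m}) \wedge \bigwedge_{i} U(x_{i})$ is $T_{1}$-equivalent to a formula in $\Phi$. Fix $p, q \in S_{n}(T_{1})$ with this formula in $p$ and its negation in $q$. If $\neg U(x_{i}) \in q$ for some $i$, then $U(x_{i}) \in \Phi$ itself separates $p$ from $q$. Otherwise $\bigwedge_{i} U(x_{i}) \in p \cap q$ and $p, q$ differ on $\varphi$. Realize $p, q$ in a sufficiently saturated extension $(\mathcal{M}^{*}, \mathcal{N}^{*}, \st^{*}, \delta^{*}) \succeq (\mathcal{M}, \mathcal{N}, \st, \delta)$ by tuples $\overline{a}, \overline{b} \in (N^{*})^{n}$ respectively. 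If $\tp^{(\mathcal{M}^{*}, \delta^{*})}(\overline{a}/N)$ equalled $\tp^{(\mathcal{M}^{*}, \delta^{*})}(\overline{b}/N)$, then Lemma \ref{Stable Embedding of T Tame delta G Preparation} (applied with both roles filled by $(\mathcal{M}^{*}, \mathcal{N}^{*}, \st^{*}, \delta^{*})$) would force the full $\mathcal{L}^{\delta}_{\text{tame}}(M)$-types to agree, contradicting $\varphi(\overline{a}, \overline{m}) \wedge \neg \varphi(\overline{b}, \overline{m})$. Hence some $\theta(\overline{x}) \in \mathcal{L}^{\delta}(N) \subseteq \Phi$ separates $\overline{a}$ from $\overline{b}$, and since $\theta$ involves neither $U$ nor $\st$, it belongs to $p$ while $\neg \theta$ belongs to $q$.

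Consequently there is $\chi \in \Phi$ with $\chi \leftrightarrow (\varphi(\overline{x}, \overline{m}) \wedge \bigwedge_{i} U(x_{i}))$ modulo $T_{1}$. Substituting $\top$ for each $U(x_{i})$ in $\chi$ produces an $\mathcal{L}^{\delta}(N)$-formula $\psi(\overline{x})$ agreeing with $\chi$ on tuples from $N^{n}$. Then for every $\overline{a} \in N^{n}$, $\overline{a} \in X \cap N^{n}$ iff $(\mathcal{M}, \mathcal{N}, \st, \delta) \models \psi(\overline{a})$ iff $(\mathcal{N}, \delta|_{N}) \models \psi(\overline{a})$, where the last equivalence uses $(\mathcal{N}, \delta|_{N}) \preceq (\mathcal{M}, \delta)$, which follows from the model-completeness of $T^{\delta}_{g}$ (Theorem \ref{T g delta has QE}). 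The main obstacle is the separation step: one must leverage saturation of the tame pair to realize $p$ and $q$ simultaneously inside the $N$-part of a single extension, and then use model-completeness of $T^{\delta}_{g}$ to transport the relevant $\mathcal{L}^{\delta}$-type equality between the smaller $T^{\delta}_{g}$-model $(\mathcal{N}^{*}, \delta^{*}|_{N^{*}})$ and the ambient structure $(\mathcal{M}^{*}, \delta^{*})$, so that Lemma \ref{Stable Embedding of T Tame delta G Preparation} can actually be invoked.
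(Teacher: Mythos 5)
Your proof is correct and spells out in detail the same argument that the paper compresses into a single line citing Fact \ref{Stone Duality Theorem} and Lemma \ref{Stable Embedding of T Tame delta G Preparation}. The choice of $\Phi$, the case split on $U(x_i)$, the realization of $p,q$ in a saturated elementary extension by tuples in the small sort, the application of the separation lemma with $\mathcal{M}_1 = \mathcal{M}_2 = \mathcal{M}^*$, and the final passage from $\chi$ to an $\mathcal{L}^{\delta}(N)$-formula $\psi$ evaluated in $(\mathcal{N},\delta|_{N})$ via model-completeness of $T^{\delta}_{g}$ are all exactly the elaboration the paper's terse proof expects the reader to supply.
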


\begin{proof}
The proposition follows from Fact \ref{Stone Duality Theorem} and Lemma \ref{Stable Embedding of T Tame delta G Preparation}.
\end{proof}

\subsection{NIP of $T^{\delta}_{g,\text{tame}}$}

In this section, we show that the theory $T^{\delta}_{g,\text{tame}}$ has NIP. Let $\mathcal{L}^{\delta}_{U}$ be the language $\mathcal{L}^{\delta}$ expanded by a unary predicate $U$, and let $T^{\delta}_{g,\text{tame},-}$ be the reduct of $T^{\delta}_{g,\text{tame}}$ to $\mathcal{L}^{\delta}_{U}$. The standard part map is definable in this reduct. Indeed, ``$\st(x)=y$'' is equivalent to
\[
Uy \wedge
\left[
\forall z\,[(Uz \wedge 0<z)\rightarrow |x-y|<z]
\vee
\left(y=0\wedge\forall z\,[Uz\rightarrow |z|<|x|]\right)
\right].
\]
The first disjunct is the case where $x$ is $N$-bounded, while the second is the case where $x$ is infinite with respect to $N$, in which case $\st(x)=0$. Thus, there is a bijective correspondence between models of $T^{\delta}_{g,\text{tame}}$ and models of $T^{\delta}_{g,\text{tame},-}$. We use a result on $T^{\delta}_{g}$ by Fornasiero and Kaplan in \cite{FornasieroKaplan2020}, together with the NIP test developed by Chernikov and Simon in \cite{ChernikovSimon2012}.

\begin{definition}
An $\mathcal{L}^{\delta}_{U}$-formula is \textbf{$\mathcal{L}^{\delta}$-bounded} if it is of the form
\[
Q_{1}y_{1}\in U\cdots Q_{n}y_{n}\in U\,\varphi(\overline{x},y_{1},\ldots,y_{n}),
\]
where each $Q_{i}\in\{\forall,\exists\}$ and $\varphi(\overline{x},\overline{y})$ is an $\mathcal{L}^{\delta}$-formula. The theory $T^{\delta}_{g,\text{tame},-}$ is \textbf{$\mathcal{L}^{\delta}$-bounded} if every $\mathcal{L}^{\delta}_{U}$-formula is $T^{\delta}_{g,\text{tame},-}$-equivalent to an $\mathcal{L}^{\delta}$-bounded formula.
\end{definition}

\begin{fact}[Corollary 2.6 in \cite{ChernikovSimon2012}]
Assume that $T^{\delta}_{g}$ has NIP. If $T^{\delta}_{g,\text{tame},-}$ is $\mathcal{L}^{\delta}$-bounded, then $T^{\delta}_{g,\text{tame},-}$ has NIP.
\end{fact}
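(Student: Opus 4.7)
Since this statement is Corollary~2.6 of \cite{ChernikovSimon2012}, no independent proof is strictly required in the body of the paper; however, to sketch how one would argue it from first principles, my plan is as follows.

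The goal is to show that no $\mathcal{L}^{\delta}_{U}$-formula $\varphi(x,y)$ has the independence property in a monster model $(\mathbb{M},U^{\mathbb{M}},\delta)$ of $T^{\delta}_{g,\text{tame},-}$. First, I will use the $\mathcal{L}^{\delta}$-boundedness hypothesis to replace $\varphi$ by an equivalent formula of the form
\[
Q_{1}z_{1}\in U\,\cdots\,Q_{k}z_{k}\in U\,\chi(x,y,\overline{z}),
\]
with $\chi$ an $\mathcal{L}^{\delta}$-formula. I will then use the alternation-number characterization of NIP: a formula is NIP iff along every indiscernible sequence $(b_{i})_{i\in\omega}$ and for every parameter $a$, the truth value of the formula at $(a,b_{i})$ has only finitely many alternations.

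Next, I will induct on the number $k$ of bounded quantifiers in front of $\chi$. The base case $k=0$ is the NIP of $T^{\delta}_{g}$, which is assumed. For the inductive step, by duality I can restrict to $\varphi(x,y)=\exists z\in U\,\psi(x,y,z)$, where $\psi$ is already known to be NIP. Given an indiscernible sequence $(b_{i})_{i\in I}$ and a parameter $a$, for each $i$ with $\models\varphi(a,b_{i})$ I will choose a witness $z_{i}\in U^{\mathbb{M}}$, apply a Ramsey-type extraction to obtain an $\mathcal{L}^{\delta}_{U}$-indiscernible subsequence of the pairs $(b_{i},z_{i})$, and then invoke the inductive hypothesis applied to $\psi$ along this extracted sequence to bound the alternations of $\varphi(a,b_{i})$.

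The hardest step will be the Ramsey extraction: I must simultaneously preserve $\mathcal{L}^{\delta}_{U}$-indiscernibility of the $b_{i}$ (inherited from the original sequence) while ensuring that the new pairs $(b_{i},z_{i})$ are themselves indiscernible in the expanded language. This is the point where the boundedness hypothesis enters essentially, since without a uniform control of how witnesses lie inside the predicate $U$, one cannot convert a failure of NIP in the pair into a failure of NIP in the reduct. This interaction is the technical heart of the Chernikov--Simon argument, and is ultimately what makes $\mathcal{L}^{\delta}$-boundedness the correct hypothesis for transferring NIP from $T^{\delta}_{g}$ to $T^{\delta}_{g,\text{tame},-}$.
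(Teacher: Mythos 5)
Your deferral to the citation matches the paper, which indeed states this as a Fact and gives no proof; the issue is with the sketch you offer in its place, whose inductive step contains a genuine gap. The step ``$\varphi(x,y)=\exists z\in U\,\psi(x,y,z)$ is NIP because $\psi$ is NIP, via choosing witnesses and a Ramsey extraction'' rests on the false principle that NIP formulas are closed under (relativized) existential quantification. Concretely: if $\varphi(a,b_{i})$ alternates infinitely along an indiscernible sequence $(b_{i})$, the indices where $\varphi$ fails supply no witnesses at all (failure is a universal statement over $U$), so the alternation of $\varphi$ does not translate into an alternation of $\psi$ along any sequence of pairs $(b_{i},z_{i})$; and if you pad with arbitrary $z_{i}\in U$ at the failing indices, the resulting sequence is no longer indiscernible, while any Ramsey/EM extraction performed over $\varnothing$ severs the connection to the external parameter $a$ (extracting over $a$ is not available, and by Ramsey it would homogenize precisely the alternation pattern you need to preserve). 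If this induction worked, any theory whose quantifier-free formulas are NIP would be NIP, which is false; this is exactly why the Chernikov--Simon result is nontrivial.

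The actual argument behind Corollary~2.6 is not a quantifier-count induction of this shape. It uses honest definitions available in NIP theories to replace traces of external parameters on the predicate by sets definable in the induced structure on $U$, together with Shelah's theorem that the expansion of an NIP structure by externally definable sets remains NIP; in the present application the predicate names an elementary substructure which is a model of $T^{\delta}_{g}$, so the induced (Shelah-expanded) structure on $U$ is NIP, and that, not boundedness alone, is the essential extra input. Your sketch gestures at this by saying the boundedness hypothesis enters in the extraction step, but boundedness has already been spent in putting the formula into bounded form; the missing ingredient is the NIP of the induced structure on the predicate and the honest-definition mechanism that lets one quantify over $U$ internally. As written, the proposed induction would not close.
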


\begin{fact}[Corollary 4.16 in \cite{FornasieroKaplan2020}]
$T^{\delta}_{g}$ has NIP.
\end{fact}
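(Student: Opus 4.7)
The plan is to reduce NIP for $T^{\delta}_{g}$ to NIP for the underlying o-minimal theory $T$ via Lemma \ref{Forget the Derivation in T delta G}, which eliminates the derivation at the cost of replacing each variable by its $n$-jet. The background input is that every o-minimal theory extending $\RCF$ is NIP (indeed distal), so $T$ itself is NIP.

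Concretely, I would fix an arbitrary $\mathcal{L}^{\delta}$-formula $\varphi(\bar{x}, \bar{y})$ with the partitioned variables and apply Lemma \ref{Forget the Derivation in T delta G} to obtain $n \in \mathbb{N}$ and an $\mathcal{L}$-formula $\tilde{\varphi}(\bar{u}, \bar{v})$ with
\[
T^{\delta}_{g} \vdash \forall \bar{x}\, \forall \bar{y}\, \bigl[\varphi(\bar{x}, \bar{y}) \leftrightarrow \tilde{\varphi}(\Jet_{n}(\bar{x}), \Jet_{n}(\bar{y}))\bigr].
\]
Arguing contrapositively, suppose $\varphi$ has the independence property in some $(\mathcal{M}, \delta) \models T^{\delta}_{g}$, witnessed by tuples $(\bar{a}_{i})_{i<\omega}$ and $(\bar{b}_{S})_{S \subseteq \omega}$ in $M$ with $(\mathcal{M}, \delta) \models \varphi(\bar{a}_{i}, \bar{b}_{S})$ iff $i \in S$. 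By the displayed equivalence, the sequences $(\Jet_{n}(\bar{a}_{i}))_{i<\omega}$ and $(\Jet_{n}(\bar{b}_{S}))_{S \subseteq \omega}$ lie entirely in $M$ and witness that $\tilde{\varphi}$ has the independence property in the $\mathcal{L}$-reduct $\mathcal{M} \models T$, contradicting NIP of $T$. Hence every $\mathcal{L}^{\delta}$-formula is NIP modulo $T^{\delta}_{g}$, so $T^{\delta}_{g}$ is NIP.

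The main obstacle is really the technical content of Lemma \ref{Forget the Derivation in T delta G} itself, which is where the specifics of generic $T$-derivations enter; once that lemma is in hand, the reduction to an already-known NIP theory is essentially formal, since the jet map transports any candidate witness of IP for $\varphi$ to a witness of IP for $\tilde{\varphi}$ in the $\mathcal{L}$-reduct.
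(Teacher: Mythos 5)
Your argument is correct. Note that the paper itself does not prove this statement: it is imported as a Fact, citing Corollary 4.16 of Fornasiero--Kaplan, so there is no in-paper proof to compare against. Your reduction via Lemma \ref{Forget the Derivation in T delta G} is the standard one and is essentially the same jet-elimination technique the paper uses elsewhere (e.g.\ in the proof of Proposition \ref{T delta g convex is distal}, where distality of $T^{\delta}_{g,\text{convex}}$ is pulled back to the weakly o-minimal reduct through Corollary \ref{Forget the Derivation in T convex delta G}). Two minor points worth making explicit: first, when you ``fix witnesses $(\bar a_i)_{i<\omega}$, $(\bar b_S)_{S\subseteq\omega}$ in some model,'' such a full infinite shattered configuration only exists in a sufficiently saturated model of $T^{\delta}_{g}$, so either work in a monster model or phrase IP via the finite shattering configurations, which transfer to the $\mathcal{L}$-reduct just as well; second, IP of $\tilde\varphi$ in the reduct $\mathcal{M}$ contradicts NIP of $T$ because $T$ is complete, so $\Th_{\mathcal{L}}(\mathcal{M})=T$. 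With those routine adjustments the proof is complete, and the heavy lifting is indeed concentrated in Lemma \ref{Forget the Derivation in T delta G}, as you say.
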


Thus, it suffices to show that $T^{\delta}_{g,\text{tame},-}$ is $\mathcal{L}^{\delta}$-bounded. We may assume that $\mathcal{L}=\mathcal{L}^{\mathrm{df}}$ and $T=T^{\mathrm{df}}$, so that $T^{\delta}_{g,\text{tame}}$ has quantifier elimination.

We first observe that the graph of the standard part map is $\mathcal{L}^{\delta}$-bounded. For $y\in U$, the formula $\st(x)=y$ is equivalent to
\[
\left[\forall z\in U\,(0<z\rightarrow |x-y|<z)\right]
\vee
\left[y=0\wedge\forall z\in U\,(|z|<|x|)\right].
\]

We now show that every atomic $\mathcal{L}^{\delta}_{\text{tame}}$-formula is equivalent to an $\mathcal{L}^{\delta}$-bounded formula. We proceed by induction on the number of occurrences of $\st$. If there is no occurrence of $\st$, then the formula is an atomic $\mathcal{L}^{\delta}_{U}$-formula. A formula of the form $Ut(\overline{x})$ is equivalent to
\[
\exists y\in U\,[y=t(\overline{x})],
\]
and every other atomic formula is already an $\mathcal{L}^{\delta}$-formula.

Suppose now that $\st$ occurs in the formula. Choose an innermost occurrence $\st(t(\overline{x}))$, where $t(\overline{x})$ is an $\mathcal{L}^{\delta}$-term, and write the atomic formula as $\psi(\overline{x},\st(t(\overline{x})))$, where $\psi(\overline{x},y)$ has one fewer occurrence of $\st$. This formula is equivalent to
\[
\exists y\in U\,
\left[
\left(
\forall z\in U\,[0<z\rightarrow |t(\overline{x})-y|<z]
\vee
\left[y=0\wedge\forall z\in U\,(|z|<|t(\overline{x})|)\right]
\right)
\wedge\psi(\overline{x},y)
\right].
\]
By the induction hypothesis, this is equivalent to an $\mathcal{L}^{\delta}$-bounded formula. Hence every atomic $\mathcal{L}^{\delta}_{\text{tame}}$-formula is $\mathcal{L}^{\delta}$-bounded.

The class of $\mathcal{L}^{\delta}$-bounded formulas is closed under Boolean combinations, after moving bounded quantifiers outward and replacing negated bounded quantifiers by their duals. Therefore, every quantifier-free $\mathcal{L}^{\delta}_{\text{tame}}$-formula is $\mathcal{L}^{\delta}$-bounded. Since $T^{\delta}_{g,\text{tame}}$ has quantifier elimination, it follows that $T^{\delta}_{g,\text{tame},-}$ is $\mathcal{L}^{\delta}$-bounded.

\begin{proposition} \label{T delta g tame has NIP}
$T^{\delta}_{g,\text{tame}}$ has NIP.
\end{proposition}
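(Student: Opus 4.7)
The plan is to invoke the Chernikov–Simon NIP transfer criterion (Corollary 2.6 in \cite{ChernikovSimon2012}): once I establish that $T^{\delta}_{g,\text{tame},-}$ is $\mathcal{L}^{\delta}$-bounded, NIP for $T^{\delta}_{g,\text{tame},-}$ follows from NIP for $T^{\delta}_{g}$ (Corollary 4.16 in \cite{FornasieroKaplan2020}). Since $\st$ is $T^{\delta}_{g,\text{tame}}$-definable in $\mathcal{L}^{\delta}_{U}$, the theory $T^{\delta}_{g,\text{tame}}$ is a definitional expansion of $T^{\delta}_{g,\text{tame},-}$ and so inherits NIP. Thus the entire task reduces to the boundedness claim.

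To establish $\mathcal{L}^{\delta}$-boundedness, I would first replace $\mathcal{L}$ by $\mathcal{L}^{\text{df}}$ and $T$ by $T^{\text{df}}$, so that $T^{\delta}_{g,\text{tame}}$ has quantifier elimination by Theorem \ref{T tame G QE}. It then suffices to handle quantifier-free $\mathcal{L}^{\delta}_{\text{tame}}(\varnothing)$-formulas. For an atomic formula $\varphi(\overline{x})$ containing an innermost subterm $\st(t(\overline{x}))$, I would substitute a fresh variable $y$ for $\st(t(\overline{x}))$ and adjoin the defining conjunct ``$\st(t(\overline{x})) = y$'', which is $T^{\delta}_{g,\text{tame}}$-equivalent to an $\mathcal{L}^{\delta}$-formula $U$-bounded in $y$. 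An induction on the number of $\st$-subterms converts every atomic formula into an $\mathcal{L}^{\delta}$-bounded one. Finally, I would place the general quantifier-free formula in disjunctive normal form; since the fresh $U$-quantified variables introduced in distinct atomic pieces are pairwise disjoint, they pull outside the Boolean connectives without affecting the meaning, yielding an $\mathcal{L}^{\delta}$-bounded equivalent.

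The main delicate point in carrying this out is fixing the correct $\mathcal{L}^{\delta}_{U}$-definition of $\st(t(\overline{x})) = y$: the naive formula $\forall z \in U\,[0 < z \rightarrow |t(\overline{x}) - y| < z]$ captures the equation when $t(\overline{x})$ is $N$-bounded, but it must be supplemented with a disjunct treating the case in which $t(\overline{x})$ is $N$-unbounded and $\st(t(\overline{x})) = 0$ by convention. Once this case analysis is in place, the remainder is bookkeeping with quantifier movement and Boolean normal forms, and the conclusion follows by combining the boundedness with the cited NIP transfer result.
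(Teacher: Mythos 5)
Your proposal follows essentially the same route as the paper: the Chernikov--Simon transfer criterion applied to NIP of $T^{\delta}_{g}$, with $\mathcal{L}^{\delta}$-boundedness of $T^{\delta}_{g,\text{tame},-}$ established by passing to $T^{\text{df}}$ for quantifier elimination, replacing $\st$-subterms in atomic formulas by fresh $U$-bounded variables, and pulling the $U$-quantifiers out of a disjunctive normal form. Your added caution about the case where $t(\overline{x})$ is $N$-unbounded (so that $\st(t(\overline{x}))=0$ by convention) is a legitimate refinement of the graph formula, which the paper records only in its naive form.
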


\begin{proof}
By the two facts above, $T^{\delta}_{g,\text{tame},-}$ has NIP. Since the standard part map is definable in this reduct, $T^{\delta}_{g,\text{tame}}$ is a definitional expansion of $T^{\delta}_{g,\text{tame},-}$. Therefore, $T^{\delta}_{g,\text{tame}}$ has NIP.
\end{proof}

\section{Metric Topologies on $T^{\delta}_{g}$ and Definability of Hausdorff Limits} \label{Section 5}

In this section, we characterize definable types over subsets of models of $T^{\delta}_{g}$ by adapting the Marker-Steinhorn Theorem. We also provide a geometric interpretation of the Marker-Steinhorn Theorem and the stable embedding property of $T^{\delta}_{g}$ by investigating the definability of Hausdorff limits. Classical results on the definability of Hausdorff limits typically concern the Euclidean distance function in some o-minimal expansion of the real field. We could transfer these results directly, but since $T^{\delta}_{g}$ has $T$ as its open core (see Section 5.3 in \cite{FornasieroKaplan2020}), every Euclidean open $\mathcal{L}^{\delta}$-definable set is already $\mathcal{L}$-definable. This makes the definability results with respect to the Euclidean metric somewhat less interesting. To address this, we introduce the sequence of $\mathcal{L}^{\delta}(\varnothing)$-definable distance functions that are more natural in the context of differential fields.

\subsection{Marker-Steinhorn Theorem}

Given a language $\mathcal{L}_{1}$, $n \in \mathbb{N}$, an $\mathcal{L}_{1}$-structure $\mathcal{M}$, and a subset $A \subseteq M$, a type $p(\overline{x})$ in the type space $S_{n}^{\mathcal{M}}(A)$ is said to be \textbf{$A$-definable} if for any $\mathcal{L}_{1}(\varnothing)$-formula $\varphi(\overline{x},\overline{y})$, there exists an $\mathcal{L}_{1}(A)$-formula $d\varphi(\overline{y})$ such that for all $\overline{b} \in A$, we have $\varphi(\overline{x},\overline{b}) \in p(\overline{x})$ if and only if $\mathcal{M} \models d\varphi(\overline{b})$. Definability of types is a central topic in model theory. For instance, one of the key features of stable theories is that a theory $T_{1}$ is stable if and only if for any $n \in \mathbb{N}$ and any $\mathcal{M} \models T_{1}$, every $p \in S_{n}^{\mathcal{M}}(M)$ is definable. In non-stable theories, however, understanding what makes a type definable is more subtle. 

A characterization of definable types in o-minimal structures was first given by Marker and Steinhorn \cite{MarkerSteinhorn1994}, hence the name Marker-Steinhorn Theorem. Later, Pillay \cite{Pillay1994}, Tressl \cite{Tressl2004}, van den Dries \cite{Lisbon2003}, and Walsberg \cite{Walsberg2019} provided variants of the proof for either the general or special cases. It is worth noting that a gap in the proof of the general case existed for some time, which was recently fixed by Guerrero \cite{AndújarGuerrero2025} with a simpler proof.

In this section, we provide a characterization of definable types in models of $T^{\delta}_{g}$. The special case for $\CODF$ was studied by Brouette in \cite{Brouette2016}. We show that his proof easily transfers to the general case and explain how the Marker-Steinhorn Theorem for $T^{\delta}_{g}$ relates to the stable embedding property of $T^{\delta}_{g}$. Fix a model $(\mathcal{M},\delta) \models T^{\delta}_{g}$ and a subset $A \subseteq M$.

\begin{definition}
A type $p(x) \in S^{\mathcal{M}}_{1}(A)$ is a \textbf{nonprincipal cut of $A$} if there exist nonempty disjoint subsets $C_{0}$ and $C_{1}$ of $A$ such that $C_{0} \cup C_{1} = A$, $C_{0}$ has no greatest element, $C_{1}$ has no least element, for all $c \in C_{0}$ we have $``c < x" \in p(x)$, and for all $c \in C_{1}$ we have $``x < c" \in p(x)$. If $p(x)$ is nonisolated and not a nonprincipal cut, then it is called a \textbf{principal cut of $A$}.
\end{definition}

\begin{definition}
We say that $A$ is \textbf{Dedekind complete in $M$} if for every $a \in M$, the type $\tp^{\mathcal{M}}(a/A)$ is not a nonprincipal cut. We also say that $A$ is \textbf{Dedekind complete} if $A$ is Dedekind complete in every elementary extension of $\mathcal{M}$.
\end{definition}

The following is the original Marker-Steinhorn Theorem for o-minimal structures.

\begin{theorem}[Marker-Steinhorn, Theorem 2.1 in \cite{MarkerSteinhorn1994}] \label{Marker-Steinhorn}
Let $p(\overline{x}) \in S^{\mathcal{M}}_{n}(M)$. Then $p(\overline{x})$ is $M$-definable if and only if $M$ is Dedekind complete in $M(\overline{a})$, where $\overline{a}$ is any $n$-tuple realizing $p(\overline{x})$ and $M(\overline{a})$ is the prime model of $\Th(\mathcal{M})$ generated by $M$ and $\overline{a}$.
\end{theorem}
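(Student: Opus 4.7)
My plan is to prove the biconditional by treating each direction separately, leveraging the tame pair framework recalled in the preliminaries to provide a streamlined argument for the backward direction.

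For the forward direction, suppose $p(\overline{x})$ is $M$-definable and let $\overline{a}$ realize it. Every element of $M(\overline{a})$ is of the form $f(\overline{a})$ for some $\mathcal{L}(M)$-definable function $f$, so I would rule out any such $b = f(\overline{a})$ being a nonprincipal cut of $M$. If $b$ did realize a nonprincipal cut with upper part $C_{1}$, then $C_{1} = \{n \in M : ``f(\overline{x}) < n" \in p(\overline{x})\}$ would be $\mathcal{L}(M)$-definable, since $p$ is $M$-definable and $f$ is built from an $\mathcal{L}(\varnothing)$-formula with parameters in $M$. By o-minimality of $\mathcal{M}$, any nonempty upward-closed $\mathcal{L}(M)$-definable subset of $M$ equals either $M$, $[c,\infty)$, or $(c,\infty)$ for some $c \in M$, and each of these possibilities contradicts the requirement that $C_{1}$ have no minimum while $M \setminus C_{1}$ has no maximum. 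Hence $M$ is Dedekind complete in $M(\overline{a})$.

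For the backward direction, assume $M$ is Dedekind complete in $M(\overline{a})$. The first step is to recognise $(M(\overline{a}),M)$ as a tame pair. Since $M \models T$, the standard fact that in o-minimal theories the definable closure of a model together with any set is itself an elementary substructure gives $M \preceq M(\overline{a})$. Pick $b \in M(\overline{a}) \setminus M$; then $\tp^{M(\overline{a})}(b/M)$ is nonisolated, and by Dedekind completeness it is not a nonprincipal cut, so it is either the type at $\pm\infty$ or a principal cut. In the first case $b$ is infinite with respect to $M$, in the second case $b$ is $M$-bounded with standard part in $M$. This establishes tameness of $(M(\overline{a}),M)$. Next, invoke the stable embedding property for tame pairs (a consequence of Theorem \ref{Quantifier Elimination of T tame}): given an $\mathcal{L}(\varnothing)$-formula $\varphi(\overline{x},\overline{y})$, the set
\[
X := \{\overline{c} \in M(\overline{a})^{m} : M(\overline{a}) \models \varphi(\overline{a},\overline{c})\}
\]
is definable in the pair structure $(M(\overline{a}),M)$ with parameter $\overline{a}$, so its trace
\[
X \cap M^{m} = \{\overline{c} \in M^{m} : \varphi(\overline{x},\overline{c}) \in p(\overline{x})\}
\]
is $\mathcal{L}(M)$-definable. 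The defining $\mathcal{L}(M)$-formula is the required $d\varphi(\overline{y})$, showing that $p$ is $M$-definable.

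The main obstacle is the stable embedding property for tame pairs itself, which is a nontrivial consequence of the relative quantifier elimination of $T_{\text{tame}}$; once it is granted, the remaining steps reduce to bookkeeping that links Dedekind completeness to tameness and then tameness to definability of types. This route bypasses the delicate induction on the length of $\overline{x}$ that appears in the original argument of Marker and Steinhorn, essentially echoing the later simplifications in the literature.
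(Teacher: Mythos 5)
The paper does not actually prove this statement: it is quoted as Theorem 2.1 of Marker--Steinhorn \cite{MarkerSteinhorn1994}, whose original argument is an induction on the length of $\overline{x}$. Your proof is a correct alternative route, essentially the tame-pair proof that van den Dries gives in \cite{Lisbon2003} (one of the ``variant proofs'' the paper itself mentions, and in fact the template the paper follows for its $\delta$-analogues, Lemmas \ref{Definable Types are not Cuts}--\ref{Dedekind Completeness implies Definable Types} and Corollary \ref{Marker-Steinhorn for Derivation Geometric}). Your forward direction is fine: every element of $M(\overline{a})$ is $f(\overline{a})$ for an $\mathcal{L}(M)$-definable $f$ (definable Skolem functions, $M(\overline{a})=\dcl(M\cup\overline{a})$), and definability of $p$ makes the upper set of the induced cut $\mathcal{L}(M)$-definable, which o-minimality forces to be principal. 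Your backward direction is also sound: Dedekind completeness plus the group structure does give that every nonrealized $1$-type over $M$ realized in $M(\overline{a})$ is either at $\pm\infty$ or infinitesimally close to a point of $M$, so $(M(\overline{a}),M)$ is a tame pair (modulo the trivial degenerate case $\overline{a}\in M^{n}$, where $p$ is realized and definability is immediate, which you should mention since $T_{\text{tame}}$ requires a proper pair). Two caveats on the key black box. First, stable embedding is not a purely formal corollary of Theorem \ref{Quantifier Elimination of T tame}: after QE one still has to analyze traces of quantifier-free $\mathcal{L}_{\text{tame}}$-conditions whose terms involve $\st$ and parameters from the big model; the standard missing ingredient is exactly Lemma \ref{Stable Embedding of T Tame Preparation} (Proposition 8.1 of \cite{Lisbon2003}) combined with Fact \ref{Stone Duality Theorem}, which is how the paper itself derives Proposition \ref{Stable Embedding of T Tame delta G}. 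Second, to avoid circularity you must cite a proof of stable embedding that does not itself pass through Marker--Steinhorn; the van den Dries route (QE for $T_{\text{tame}}$ plus the trace lemma) has this property, so your argument is legitimate, but this dependence should be stated explicitly. What your approach buys, compared with the original proof, is precisely what you claim: the induction on tuple length disappears because stable embedding handles all $\varphi(\overline{x},\overline{y})$ at once; the cost is importing the full relative QE machinery for tame pairs.
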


The Marker–Steinhorn Theorem characterizes the definability of types in an o-minimal structure in terms of whether the type realizes a nonprincipal cut. Our goal is to adapt these ideas to models of $T^{\delta}_{g}$. We begin by modifying the relevant definitions.

\begin{definition}
A type $p(x) \in S^{(\mathcal{M},\delta)}_{1}(A)$ is called a \textbf{nonprincipal differential cut of $A$} if there exist an element $a$ in some elementary extension of $(\mathcal{M},\delta)$ and $k \in \mathbb{N}$ such that $a$ realizes $p$ and $\tp^{\mathcal{M}}(\delta^{k} a/A)$ is a nonprincipal cut. If $p(x)$ is nonisolated and not nonprincipal, then it is called a \textbf{principal differential cut of $A$}.
\end{definition}

Before adapting the Marker–Steinhorn Theorem to our setting, we require an auxiliary lemma concerning $T^{\delta}_{g}$, which follows easily from the fact that $T^{\delta}_{g}$ has $T$ as its open core.

\begin{fact}[Corollary 5.13 in \cite{FornasieroKaplan2020}] \label{Definable Complete}
Every model of $T^{\delta}_{g}$ is definably complete. In particular, if $D \subseteq M$ is $\mathcal{L}^{\delta}(M)$-definable, then the supremum of $D$ belongs to $M \cup \{+\infty\}$.
\end{fact}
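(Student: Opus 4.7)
The plan is to reduce the statement to the definable completeness of the underlying o-minimal theory $T$ by invoking the fact that $T^{\delta}_{g}$ has $T$ as its open core. Let $(\mathcal{M},\delta) \models T^{\delta}_{g}$ and let $D \subseteq M$ be a nonempty $\mathcal{L}^{\delta}(M)$-definable set. I want to show $\sup D \in M \cup \{+\infty\}$.

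The key construction is the auxiliary set
\[
E := \{y \in M \mid \exists x \in D,\, x > y\},
\]
which is $\mathcal{L}^{\delta}(M)$-definable. A short check shows that $E$ is downward closed and open in $M$: downward closure is immediate, and openness follows because if $y \in E$ is witnessed by some $x \in D$ with $x > y$, then any $y'$ with $y' < x$ lies in $E$ as well, so every point of $E$ admits an open neighborhood contained in $E$. One also easily verifies that $\sup E = \sup D$ in $M \cup \{+\infty\}$: if $D$ has a maximum $m$, then $E = (-\infty,m)$; if $D$ has a supremum $s \in M$ but no maximum, then $E = (-\infty,s)$; and if $D$ is unbounded above in $M$, then $E = M$.

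Now I appeal to the open core property: since $T^{\delta}_{g}$ has $T$ as its open core (see Section~5.3 of \cite{FornasieroKaplan2020}), every open $\mathcal{L}^{\delta}(M)$-definable subset of $M$ is in fact $\mathcal{L}(M)$-definable. Hence $E$ is $\mathcal{L}(M)$-definable. Since $\mathcal{M} \models T$ and $T$ is o-minimal, every $\mathcal{L}(M)$-definable subset of $M$ is a finite union of points and intervals with endpoints in $M \cup \{\pm \infty\}$, and therefore has a supremum lying in $M \cup \{+\infty\}$. Applying this to $E$ yields $\sup D = \sup E \in M \cup \{+\infty\}$, as required.

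The only substantive ingredient is the open core property of $T^{\delta}_{g}$, which is what transports the $\mathcal{L}^{\delta}$-definable data into the realm of $\mathcal{L}$-definable, o-minimal sets where definable completeness is elementary. Everything else is a brief order-theoretic manipulation together with the standard finite-union-of-intervals description from o-minimality, so I do not anticipate any hidden obstacle beyond correctly identifying $E$ as the right open set to consider.
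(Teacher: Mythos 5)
Your argument is correct and is essentially the approach the paper intends: the statement appears only as a quoted Fact (Corollary 5.13 of \cite{FornasieroKaplan2020}), and the surrounding text remarks that it ``follows easily from the fact that $T^{\delta}_{g}$ has $T$ as its open core,'' which is precisely your reduction via the open, downward-closed set $E$. One cosmetic fix: justify $\sup D = \sup E$ by noting that $D$ and $E$ have the same set of upper bounds (namely $M \setminus E$, since $E$ is open and downward closed), rather than by the case split, which as written presupposes that $\sup D$ exists.
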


\begin{corollary} \label{Definable Complete and Definable Closure}
If $D \subseteq A$ is $\mathcal{L}^{\delta}(M)$-definable and $A = \dcl_{(\mathcal{M},\delta)}(A)$, then the supremum of $D$ belongs to $A \cup \{+\infty\}$.
\end{corollary}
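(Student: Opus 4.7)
The plan is to combine Fact \ref{Definable Complete} with the observation that a uniquely specified element over $A$ must lie in $\dcl_{(\mathcal{M},\delta)}(A)$, which by hypothesis equals $A$. I would proceed in two short steps.

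First, I apply Fact \ref{Definable Complete} directly to $D$, viewed as an $\mathcal{L}^{\delta}$-definable subset of $M$. This immediately yields $s := \sup D \in M \cup \{+\infty\}$. If $s = +\infty$ there is nothing further to prove, so I reduce at once to the case $s \in M$ and focus on showing $s \in A$.

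Second, I fix a defining formula $\varphi(x,\overline{a})$ for $D$ with parameter tuple $\overline{a} \in A$, and consider the $\mathcal{L}^{\delta}(A)$-formula
\[
\psi(y,\overline{a}) \;:=\; \forall x\,[\varphi(x,\overline{a}) \to x \leq y] \ \wedge \ \forall z\,[z<y \to \exists x\,(\varphi(x,\overline{a}) \wedge z<x)].
\]
Because the least upper bound of $D$ is unique whenever it exists in $M$, the formula $\psi(\cdot,\overline{a})$ has exactly one realisation in $(\mathcal{M},\delta)$, namely $s$. By the definition of definable closure this gives $s \in \dcl_{(\mathcal{M},\delta)}(\overline{a}) \subseteq \dcl_{(\mathcal{M},\delta)}(A) = A$, as required.

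I do not anticipate a substantive obstacle; the argument is essentially a two-line consequence of definable completeness together with the fact that a uniquely definable element over $A$ lies in $A$. The only subtlety worth noting is that the parameter tuple $\overline{a}$ defining $D$ must be chosen inside $A$ (in parallel with Fact \ref{Definable Complete}); were one instead to allow parameters from $M \setminus A$, the same argument would only recover $s \in \dcl_{(\mathcal{M},\delta)}(M) = M$, which is Fact \ref{Definable Complete} itself and not the strengthening claimed.
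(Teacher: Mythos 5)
Your argument is exactly the one the paper leaves implicit (the corollary is stated without proof as an immediate consequence of Fact \ref{Definable Complete}): definable completeness places $s=\sup D$ in $M\cup\{+\infty\}$, and when $s\in M$ it is the unique realisation of the least-upper-bound formula over the defining parameters, hence lies in $\dcl_{(\mathcal{M},\delta)}$ of those parameters, which is $A$ once they are taken from $A$. So the proposal is correct and follows the intended route. The caveat you raise at the end is, however, substantive and worth keeping: your proof (and the paper's implicit one) establishes the statement only when $D$ is $\mathcal{L}^{\delta}(A)$-definable, whereas the corollary as printed allows parameters from all of $M$, and the hypothesis $D\subseteq A$ alone does not repair this --- for instance, if $A$ is a proper $\dcl_{(\mathcal{M},\delta)}$-closed subset containing the constant field, a set such as $\{x: \delta x=0 \wedge x<c\}$ with $c\in M\setminus A$ is contained in $A$ yet its supremum is $c\notin A$. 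So the mismatch is an imprecision in the statement rather than a defect in your argument; note that the only place the paper uses the corollary, in Lemma \ref{Definable Types are not Cuts}, the relevant sets are cut out by $\mathcal{L}^{\delta}(A)$-formulas $d\varphi$, so the $A$-parameter version you actually prove is the one needed.
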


Explicitly, the set $\dcl_{(\mathcal{M},\delta)}(A)$ is precisely $\dcl_{\mathcal{M}}(\Jet_{\infty}(A))$, which follows easily from Lemma \ref{delta on Ck-functions}. We now adapt Brouette's proofs for $\CODF$ to our setting. The arguments are very similar, but for the sake of completeness, we present the general version in full.

\begin{lemma} \label{Definable Types are not Cuts}
Let $A=\dcl_{(\mathcal{M},\delta)}(A)$. If $p(x)\in S_{1}^{(\mathcal{M},\delta)}(A)$ is $A$-definable, then $p(x)$ is not a nonprincipal differential cut of $A$. In particular, if $p(x)$ is nonisolated, then it is a principal differential cut of $A$.
\end{lemma}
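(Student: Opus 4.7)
The plan is to suppose for contradiction that $p$ is $A$-definable and yet a nonprincipal differential cut, witnessed by an integer $k$ and a realization $a$ of $p$ in some elementary extension $(\mathcal{M}^{*},\delta^{*})$ for which $\tp^{\mathcal{M}}((\delta^{*})^{k}a/A)$ is a nonprincipal cut of $A$; let $A=C_{0}\sqcup C_{1}$ be the associated partition, with $C_{0}<(\delta^{*})^{k}a<C_{1}$, $C_{0}$ having no greatest element and $C_{1}$ no least element. The goal is to feed the concrete defining formula for ``$\delta^{k}x<y$'' into the definable completeness of $T^{\delta}_{g}$ and extract a contradictory element of $A$.

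First, I would apply $A$-definability of $p$ to the formula $\varphi(x,y):=\delta^{k}x<y$, obtaining an $\mathcal{L}^{\delta}(A)$-formula $d\varphi(y)$ such that $\mathcal{M}\models d\varphi(b)$ iff $\varphi(x,b)\in p$ for every $b\in A$. The same defining scheme extends $p$ to an $A$-definable complete type $q=p|_{M}\in S^{(\mathcal{M},\delta)}_{1}(M)$, which I realize by some $a'$ in an elementary extension $(\mathcal{M}^{**},\delta^{**})$. Since $a'$ realizes $p$, the type $\tp^{\mathcal{M}}((\delta^{**})^{k}a'/A)$ is the same nonprincipal cut, so $C_{0}<(\delta^{**})^{k}a'<C_{1}$. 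The point of passing to $q$ is that $d\varphi$ now behaves correctly throughout $M$: writing $E:=\{b\in M:\mathcal{M}\models d\varphi(b)\}$, we get $E=\{b\in M:(\delta^{**})^{k}a'<b\}$. In particular $C_{1}\subseteq E$ and every element of $C_{0}$ is a lower bound of $E$.

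Next, I would pin down $\inf E$ inside $A$. Since $E$ is nonempty and bounded below, $\alpha:=\inf E$ exists in $M$ by Fact~\ref{Definable Complete}; because $\{\alpha\}$ is an $\mathcal{L}^{\delta}(A)$-definable singleton and $A=\dcl_{(\mathcal{M},\delta)}(A)$, Corollary~\ref{Definable Complete and Definable Closure} gives $\alpha\in A$, so either $\alpha\in C_{0}$ or $\alpha\in C_{1}$. If $\alpha\in C_{1}$, then $\alpha>(\delta^{**})^{k}a'$, so $\alpha\in E$; combined with $C_{1}\subseteq E$, this forces $\alpha=\min C_{1}$, contradicting that $C_{1}$ has no least element. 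If $\alpha\in C_{0}$, then $\alpha<(\delta^{**})^{k}a'$, so $\alpha\notin E$; picking $\beta\in C_{0}$ with $\beta>\alpha$ (possible since $C_{0}$ has no greatest element) and using that $\alpha$ is the greatest lower bound of $E$, there is $z\in E$ with $\alpha<z<\beta$, and then $(\delta^{**})^{k}a'<z<\beta<(\delta^{**})^{k}a'$ gives the desired contradiction.

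The main obstacle is exactly the first step: $A$-definability only constrains $d\varphi(b)$ for $b\in A$, whereas the argument requires control of $d\varphi$ throughout $M$, since elements of $E\setminus A$ are otherwise unconstrained by the partition $C_{0}\sqcup C_{1}$. The remedy of replacing $p$ by its $A$-definable heir $q=p|_{M}$ over $M$ and realizing it by a fresh element $a'$ is what synchronizes $d\varphi$ with a concrete cut in $\mathcal{M}^{**}$; once this is in place, the rest is standard definable-completeness bookkeeping.
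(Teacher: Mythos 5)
Your central observation is correct and important: $A$-definability of $p$ only constrains $d\varphi$ on tuples from $A$, while the set $d\varphi(A)=E\cap A$ (where $E:=\{b\in M:\mathcal{M}\models d\varphi(b)\}$) is not itself $\mathcal{L}^{\delta}(M)$-definable, so Corollary~\ref{Definable Complete and Definable Closure} cannot be applied to it directly — the paper's two-line proof is indeed glossing over exactly this point. From $\sup_{M} E\in A$ (which Corollary~\ref{Definable Complete and Definable Closure} really does give, since $E$ is $\mathcal{L}^{\delta}(A)$-definable) one still has to argue that this supremum pins down the cut $(C_{0},C_{1})$ in $A$, and that last step is nontrivial because $E$ can contain arbitrary elements of $M\setminus A$. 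So you have correctly located a genuine gap.

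However, the repair you propose opens a gap of its own. Passing to the ``heir'' $q=p|_{M}$ — the type over $M$ with the same defining scheme — requires $q$ to be a consistent complete type, and this is not automatic here. The hypothesis $A=\dcl_{(\mathcal{M},\delta)}(A)$ makes $(\mathcal{A},\delta|_{A})$ a model of $T^{\delta}$ but not, in general, of $T^{\delta}_{g}$; hence $(\mathcal{A},\delta|_{A})$ is merely an $\mathcal{L}^{\delta}$-substructure of $(\mathcal{M},\delta)$, not an elementary one, and the classical fact that a definable type over a model has a consistent heir over any superset does not apply. Concretely, for your realization $a'$ to exist with $E=\{b\in M:(\delta^{**})^{k}a'<b\}$, the set $E$ must be upward-closed in $M$; but $A$-definability only guarantees that $E\cap A=C_{1}$ is upward-closed in $A$, and nothing forces $E$ itself to be upward-closed (a quantifier-free $\mathcal{L}^{\delta}(A)$-formula defines, by Lemma~\ref{Forget the Derivation in T delta G}, the preimage under $\Jet_{m}$ of an $\mathcal{L}(A)$-definable set, which is generically a very ``scattered'' subset of $M$). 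So the heir $q$ may simply not exist, and the step ``realize $q$ by some $a'$'' is where your argument would stall. The ensuing bookkeeping with $\inf E$ and the two cases $\alpha\in C_{0}$, $\alpha\in C_{1}$ is correct and clean, but it is downstream of the unjustified existence of $a'$. You would need either to justify the heir's consistency (for instance by restricting attention to the relevant $\varphi$-type and showing that the cut determined by $d\varphi$ on $M$ is realizable, which again comes back to upward-closedness of $E$), or to take an altogether different route, e.g.\ reducing to the o-minimal Marker--Steinhorn theorem over the model $\mathcal{A}$ after showing that the $\mathcal{L}$-type of $\Jet_{\infty}(c)$ over $A$ is definable by genuine $\mathcal{L}(A)$-formulas.
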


\begin{proof}
Fix $n \in \mathbb{N}$ and let $\varphi(x,b)$ denote the formula $``b < \delta^{n}x"$.  
By definability of $p(x)$, there exists an $\mathcal{L}^{\delta}(A)$-formula $d\varphi$ such that
\[
\{ b \in A \mid \varphi(x,b) \in p(x) \} = d\varphi(A),
\]
where $d\varphi(A) := \{a \in A \mid (\mathcal{M},\delta) \models d\varphi(a) \}$. By Corollary~\ref{Definable Complete and Definable Closure}, if $d\varphi(A)$ is nonempty, then its supremum lies in $A \cup \{+\infty\}$. Hence, if $c$ realizes $p(x)$, the type $\tp^{\mathcal{M}}(\delta^{n}c / A)$ is not a nonprincipal cut.  
Since $n$ was arbitrary, it follows that $p(x)$ is not a nonprincipal differential cut.
\end{proof}

\begin{lemma} \label{Definable Types implies Dedekind Completeness}
Suppose that $A = \dcl_{(\mathcal{M},\delta)}(A)$ and $\overline{a} \in M$ are such that $\tp^{(\mathcal{M},\delta)}(\overline{a}/A)$ is $A$-definable. Then $A$ is Dedekind complete in $\dcl_{(\mathcal{M},\delta)}(A,\overline{a})$.
\end{lemma}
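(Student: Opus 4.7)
The strategy is to reduce the statement to Lemma~\ref{Definable Types are not Cuts} by showing that for every $b \in \dcl_{(\mathcal{M},\delta)}(A,\overline{a})$, the type $\tp^{(\mathcal{M},\delta)}(b/A)$ is itself $A$-definable. Once this is in hand, Lemma~\ref{Definable Types are not Cuts} yields that $\tp^{(\mathcal{M},\delta)}(b/A)$ is not a nonprincipal differential cut of $A$, and specializing to $k = 0$ in the definition of a nonprincipal differential cut gives that $\tp^{\mathcal{M}}(b/A)$ is not a nonprincipal cut of $A$, which is precisely the required Dedekind completeness of $A$ in $\dcl_{(\mathcal{M},\delta)}(A,\overline{a})$.

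To carry out the reduction, I would first dispose of the trivial case $b \in A$, where $\tp^{\mathcal{M}}(b/A)$ is isolated by $x = b$ and hence not a nonprincipal cut. For $b \in \dcl_{(\mathcal{M},\delta)}(A,\overline{a}) \setminus A$, I would produce $m \in \mathbb{N}$ and an $\mathcal{L}(A)$-definable function $f$ with $b = f(\Jet_{m}(\overline{a}))$; this comes from the usual combination of Lemma~\ref{delta on Ck-functions} with the hypothesis $A = \dcl_{(\mathcal{M},\delta)}(A)$, which lets one rewrite every $\mathcal{L}^{\delta}(A)$-definable-closure computation as an $\mathcal{L}(A)$-definable-closure computation over the jet of $\overline{a}$. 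Setting $g(\overline{x}) := f(\Jet_{m}(\overline{x}))$ yields an $\mathcal{L}^{\delta}(A)$-definable function with $g(\overline{a}) = b$.

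The core step is then to show that $A$-definability of types is preserved under composition with $\mathcal{L}^{\delta}(A)$-definable functions. For any $\mathcal{L}^{\delta}(\varnothing)$-formula $\psi(y,\overline{z})$ and any $\overline{c} \in A$, we have $(\mathcal{M},\delta) \models \psi(b,\overline{c})$ if and only if $(\mathcal{M},\delta) \models \tilde{\psi}(\overline{a},\overline{c})$, where $\tilde{\psi}(\overline{x},\overline{z}) := \psi(g(\overline{x}),\overline{z})$ is an $\mathcal{L}^{\delta}(A)$-formula. Absorbing the $A$-parameters of $\tilde{\psi}$ into fresh variables (a standard equivalent reformulation of $A$-definability showing that source formulas may carry parameters from $A$), the defining scheme for $\tp^{(\mathcal{M},\delta)}(\overline{a}/A)$ supplies an $\mathcal{L}^{\delta}(A)$-formula $d\tilde{\psi}(\overline{z})$ such that $\tilde{\psi}(\overline{a},\overline{c}) \leftrightarrow d\tilde{\psi}(\overline{c})$ for all $\overline{c} \in A$. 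Setting $d\psi := d\tilde{\psi}$ then witnesses the $A$-definability of $\tp^{(\mathcal{M},\delta)}(b/A)$, completing the reduction. The only mildly delicate point is the parameter-absorption step, which is a routine manipulation; every other ingredient follows immediately from Lemmas~\ref{delta on Ck-functions} and \ref{Definable Types are not Cuts}.
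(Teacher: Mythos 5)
Your proposal is correct and follows essentially the same route as the paper: both arguments write an element $b$ of $\dcl_{(\mathcal{M},\delta)}(A,\overline{a})$ as an $\mathcal{L}^{\delta}$-definable function of $\overline{a}$ over $A$, push the defining scheme of $\tp^{(\mathcal{M},\delta)}(\overline{a}/A)$ through that function to get $A$-definability of $\tp^{(\mathcal{M},\delta)}(b/A)$, and then invoke Lemma~\ref{Definable Types are not Cuts} (with $k=0$) to rule out a nonprincipal cut. The paper merely phrases this as a proof by contradiction, while you argue directly; the parameter-absorption step you flag is the same routine manipulation the paper performs implicitly.
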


\begin{proof}
Suppose otherwise. Then there exists $b \in \dcl_{(\mathcal{M},\delta)}(A,\overline{a})$ such that $C_{0} < b < C_{1}$, where $C_{0}$ and $C_{1}$ are nonempty disjoint subsets of $A$ with $C_{0} \cup C_{1} = A$, $C_{0}$ has no greatest element, $C_{1}$ has no least element, for all $c \in C_{0}$ we have $``c < x" \in p(x)$, and for all $c \in C_{1}$ we have $``x < c" \in p(x)$. This implies that there exists an $\mathcal{L}^{\delta}(\varnothing)$-definable function $f$ such that $b = f(\overline{c},\overline{a})$ for some $\overline{c} \in A$. Let $p(x) := \tp^{(\mathcal{M},\delta)}(b/A)$. Then $p(x)$ is a nonprincipal differential cut. Note that for any $\mathcal{L}^{\delta}(\varnothing)$-formula $\varphi(x,\overline{y})$ and $\overline{d} \in A$, we have $\varphi(x,\overline{d}) \in p(x)$ if and only if $\varphi(f(\overline{c},\overline{y}),\overline{d}) \in \tp^{(\mathcal{M},\delta)}(\overline{a}/A)$. Since $\tp^{(\mathcal{M},\delta)}(\overline{a}/A)$ is $A$-definable, this implies that $p(x)$ is $A$-definable, contradicting Lemma~\ref{Definable Types are not Cuts}.
\end{proof}

\begin{lemma} \label{Dedekind Completeness implies Definable Types}
Suppose that $A = \dcl_{(\mathcal{M},\delta)}(A)$ and $\overline{a} \in M$ are such that $A$ is Dedekind complete in $\dcl_{(\mathcal{M},\delta)}(A,\overline{a})$. Then $\tp^{(\mathcal{M},\delta)}(\overline{a}/A)$ is $A$-definable.
\end{lemma}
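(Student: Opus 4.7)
The plan is to reduce this to the ordinary o-minimal Marker–Steinhorn Theorem (Theorem~\ref{Marker-Steinhorn}) applied to the jets of $\overline{a}$, by exploiting the fact that in $T^{\delta}_{g}$ every $\mathcal{L}^{\delta}$-formula collapses to an $\mathcal{L}$-formula in jet variables (Lemma~\ref{Forget the Derivation in T delta G}). The hypothesis that $A$ is Dedekind complete in $\dcl_{(\mathcal{M},\delta)}(A,\overline{a})$ is strictly stronger than what the o-minimal theorem requires for $\Jet_{n}(\overline{a})$, which is exactly what will let the reduction go through.

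First I would fix an arbitrary $\mathcal{L}^{\delta}(\varnothing)$-formula $\varphi(\overline{x},\overline{y})$ and aim to produce an $\mathcal{L}^{\delta}(A)$-formula $d\varphi(\overline{y})$ such that, for every $\overline{b}\in A$,
\[
\varphi(\overline{x},\overline{b})\in \tp^{(\mathcal{M},\delta)}(\overline{a}/A)\iff (\mathcal{M},\delta)\models d\varphi(\overline{b}).
\]
By Lemma~\ref{Forget the Derivation in T delta G}, pick $n\in\mathbb{N}$ and an $\mathcal{L}(\varnothing)$-formula $\tilde\varphi$ with $T^{\delta}_{g}\vdash \forall \overline{x}\,\forall\overline{y}\,[\varphi(\overline{x},\overline{y})\leftrightarrow\tilde\varphi(\Jet_{n}(\overline{x}),\Jet_{n}(\overline{y}))]$. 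Since $A=\dcl_{(\mathcal{M},\delta)}(A)$, the set $A$ is closed under $\delta$, so $\Jet_{n}(\overline{b})\in A$ for every $\overline{b}\in A$. It therefore suffices to exhibit an $\mathcal{L}(A)$-formula $d\tilde\varphi(\overline{z})$ that defines $\tp^{\mathcal{M}}(\Jet_{n}(\overline{a})/A)$ inside $\mathcal{M}$; then $d\varphi(\overline{y}):=d\tilde\varphi(\Jet_{n}(\overline{y}))$ does the job.

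To produce $d\tilde\varphi$ I would apply the o-minimal Marker–Steinhorn Theorem with the ambient $\mathcal{L}$-structure taken to be $\mathcal{A}:=A$, viewed as an elementary substructure of $\mathcal{M}$. Because $A$ is closed under every $\mathcal{L}^{\delta}(\varnothing)$-definable function, it is closed under every $\mathcal{L}(\varnothing)$-definable function, so $A=\dcl_{\mathcal{M}}(A)$ is an $\mathcal{L}$-elementary substructure of $\mathcal{M}$. The prime model of $\Th(\mathcal{A})$ generated by $A\cup \Jet_{n}(\overline{a})$ is exactly $\dcl_{\mathcal{M}}(A,\Jet_{n}(\overline{a}))$, and each coordinate of $\Jet_{n}(\overline{a})$ is $\mathcal{L}^{\delta}(\overline{a})$-definable, which gives the inclusion
\[
\dcl_{\mathcal{M}}(A,\Jet_{n}(\overline{a}))\subseteq \dcl_{(\mathcal{M},\delta)}(A,\overline{a}).
\]
By hypothesis $A$ is Dedekind complete in the right-hand side, and Dedekind completeness of $A$ is inherited by subsets of the ambient set, so $A$ is Dedekind complete in $\dcl_{\mathcal{M}}(A,\Jet_{n}(\overline{a}))$. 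Marker–Steinhorn then delivers the required $\mathcal{L}(A)$-formula $d\tilde\varphi$, and substituting jets yields $d\varphi$.

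The main obstacle I anticipate is a clean invocation of Marker–Steinhorn over the substructure $A$ rather than over the entire model $\mathcal{M}$, since Theorem~\ref{Marker-Steinhorn} as cited is stated only for types over the whole model. One must phrase the transfer carefully: by the elementarity $\mathcal{A}\preceq \mathcal{M}$, the type $\tp^{\mathcal{M}}(\Jet_{n}(\overline{a})/A)$ agrees with the type realized by $\Jet_{n}(\overline{a})$ over $A$ inside the prime-model extension $\mathcal{A}\langle\Jet_{n}(\overline{a})\rangle\preceq\mathcal{M}$, so applying Marker–Steinhorn with ambient model $\mathcal{A}$ and realization $\Jet_{n}(\overline{a})$ in the elementary extension $\mathcal{A}\langle\Jet_{n}(\overline{a})\rangle$ is legitimate. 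The remainder of the argument is bookkeeping between the $\mathcal{L}$- and $\mathcal{L}^{\delta}$-definable closures.
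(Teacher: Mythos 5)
Your proposal is correct and follows essentially the same route as the paper: both reduce to the o-minimal Marker--Steinhorn Theorem applied to $\tp^{\mathcal{M}}(\Jet_{n}(\overline{a})/A)$ over the elementary $\mathcal{L}$-substructure with universe $A=\dcl_{\mathcal{M}}(A)$, using Lemma~\ref{Forget the Derivation in T delta G} to collapse $\mathcal{L}^{\delta}$-formulas to $\mathcal{L}$-formulas in jet variables and the inclusion $\dcl_{\mathcal{M}}(A,\Jet_{n}(\overline{a}))\subseteq\dcl_{(\mathcal{M},\delta)}(A,\overline{a})$ to transfer Dedekind completeness. Your explicit care that the jet-collapse is uniform in $\overline{b}$ and that Marker--Steinhorn is invoked over $\mathcal{A}$ rather than over all of $\mathcal{M}$ matches what the paper does implicitly.
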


\begin{proof}
By assumption, for any $n \in \mathbb{N}$, $A$ is Dedekind complete in $\dcl_{\mathcal{M}}(A,\Jet_{n}(\overline{a}))$. Note that 
\[
A \subseteq \dcl_{\mathcal{M}}(A) \subseteq \dcl_{(\mathcal{M},\delta)}(A) = A,
\]
and since $\mathcal{M}$ is an o-minimal structure expanding a real closed field, $\dcl_{\mathcal{M}}(A)$ is an elementary substructure of $\mathcal{M}$. Thus, by Theorem~\ref{Marker-Steinhorn}, the type $\tp^{\mathcal{M}}(\Jet_{n}(\overline{a})/A)$ is $A$-definable in the language $\mathcal{L}$. Fix an $\mathcal{L}^{\delta}(\varnothing)$-formula $\varphi(\overline{x},\overline{y})$ and $\overline{b} \in A$. By Lemma~\ref{Forget the Derivation in T delta G}, there exist $k,l \in \mathbb{N}$ and an $\mathcal{L}(\varnothing)$-formula $\psi$ such that
\[
(\mathcal{M},\delta) \models \forall \overline{x} [\varphi(\overline{x},\overline{b}) \leftrightarrow \psi(\Jet_{k}(\overline{x}),\Jet_{l}(\overline{b}))],
\]
and hence
\[
\varphi(\overline{x},\overline{b}) \in \tp^{(\mathcal{M},\delta)}(\overline{a}/A) \Leftrightarrow \psi(\Jet_{k}(\overline{x}),\Jet_{l}(\overline{b})) \in \tp^{\mathcal{M}}(\Jet_{k}(\overline{a})/A)
\]
\[
\Leftrightarrow \mathcal{M} \models d\psi(\Jet_{l}(\overline{b})) \Leftrightarrow (\mathcal{M},\delta) \models d\psi(\Jet_{l}(\overline{b})).
\]
\end{proof}

We have now proved both directions of the Marker-Steinhorn Theorem for models of $T^{\delta}_{g}$. The following proposition restates both lemmas in a unified form.

\begin{proposition} \label{Marker-Steinhorn for Derivation}
Suppose that $A = \dcl_{(\mathcal{M},\delta)}(A)$ and $\overline{a} \in M$. Then $\tp^{(\mathcal{M},\delta)}(\overline{a}/A)$ is $A$-definable if and only if $A$ is Dedekind complete in $\dcl_{(\mathcal{M},\delta)}(A,\overline{a})$.
\end{proposition}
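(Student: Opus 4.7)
The plan is to simply combine the two preceding lemmas, since together they already establish both implications of the biconditional. The forward direction $(\Rightarrow)$ is exactly the content of Lemma \ref{Definable Types implies Dedekind Completeness}: assuming $A = \dcl_{(\mathcal{M},\delta)}(A)$ and that $\tp^{(\mathcal{M},\delta)}(\overline{a}/A)$ is $A$-definable, we conclude that $A$ is Dedekind complete in $\dcl_{(\mathcal{M},\delta)}(A,\overline{a})$. The reverse direction $(\Leftarrow)$ is exactly the content of Lemma \ref{Dedekind Completeness implies Definable Types}, which gives $A$-definability of the type from Dedekind completeness.

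Consequently, the proof is essentially a one-line citation of these two lemmas. No additional steps, saturation arguments, or further use of the genericity of $\delta$ are needed at this point, since all the real work has been packaged into the lemmas. The heart of the proof lies inside those two lemmas: the forward direction uses Corollary \ref{Definable Complete and Definable Closure} (definable completeness in $\dcl_{(\mathcal{M},\delta)}(A)$) to rule out nonprincipal differential cuts when the type is definable; the reverse direction reduces to the classical Marker--Steinhorn Theorem \ref{Marker-Steinhorn} applied to the jet $\Jet_{n}(\overline{a})$ over $A$, and then translates back to an $\mathcal{L}^{\delta}$-formula using Lemma \ref{Forget the Derivation in T delta G}.

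Thus there is no real obstacle to the proof of the proposition itself; the only subtlety worth flagging is that the Dedekind completeness hypothesis must be read with respect to the closure $\dcl_{(\mathcal{M},\delta)}(A,\overline{a})$ and not just $\dcl_{\mathcal{M}}(A,\overline{a})$, because extending by $\overline{a}$ differentially automatically drags in the whole jet $\Jet_{\infty}(\overline{a})$. This matches the form in which the two lemmas are stated, so the biconditional drops out immediately by putting them together.
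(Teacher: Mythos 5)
Your proposal is correct and matches the paper exactly: the paper states this proposition as a restatement of Lemma \ref{Definable Types implies Dedekind Completeness} (forward direction) and Lemma \ref{Dedekind Completeness implies Definable Types} (reverse direction), with no further argument needed. Your remark about reading Dedekind completeness relative to $\dcl_{(\mathcal{M},\delta)}(A,\overline{a})$, which contains the full jet of $\overline{a}$, is also consistent with how the lemmas are stated.
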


\begin{remark}
One might think that Proposition~\ref{Marker-Steinhorn for Derivation} is an immediate consequence of Proposition~\ref{Stable Embedding of T Tame delta G}. This is indeed true when $A$ is the universe of an elementary substructure of $(\mathcal{M},\delta)$. However, Proposition~\ref{Marker-Steinhorn for Derivation} also applies to the more general case where $A$ is merely an $\mathcal{L}^{\delta}$-definably closed subset of $M$.
\end{remark}

If $A = \dcl_{(\mathcal{M},\delta)}(A)$, then by o-minimality and the field structure, the $\mathcal{L}$-structure $\mathcal{A}$ with universe $A$ and structure inherited from $\mathcal{M}$ is a model of $T$. Hence, $(\mathcal{A},\delta|_{A}) \models T^{\delta}$ is an $\mathcal{L}^{\delta}$-substructure of $(\mathcal{M},\delta)$. This allows us to restate Proposition~\ref{Marker-Steinhorn for Derivation} in a geometric form as follows.

\begin{corollary} \label{Marker-Steinhorn for Derivation Geometric}
Suppose that $A = \dcl_{(\mathcal{M},\delta)}(A)$ and $\overline{a} \in M$. Then for every $\mathcal{L}^{\delta}(A\cup\{\overline{a}\})$-definable set $X \subseteq M^{n}$ in $(\mathcal{M},\delta)$, its trace $X \cap A^{n}$ is definable in $(\mathcal{A},\delta|_{A})$ if and only if $A$ is Dedekind complete in $\dcl_{(\mathcal{M},\delta)}(A,\overline{a})$.
\end{corollary}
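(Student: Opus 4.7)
The plan is to obtain this Corollary by translating Proposition~\ref{Marker-Steinhorn for Derivation} from the language of definable types into geometric language. Three background pieces will drive the argument: the assumption $A = \dcl_{(\mathcal{M},\delta)}(A) = \dcl_{\mathcal{M}}(\Jet_{\infty}(A))$ makes $\mathcal{A}$ an elementary $\mathcal{L}$-substructure of $\mathcal{M}$ and forces $\Jet_n(A) \subseteq A$ for every $n \in \mathbb{N}$; Lemma~\ref{Forget the Derivation in T delta G} rewrites every $\mathcal{L}^{\delta}$-formula, modulo $T^{\delta}_g$, as an $\mathcal{L}$-formula applied to Jet-coordinates; and Corollary~\ref{Definable Complete and Definable Closure} supplies definable completeness for $\mathcal{L}^{\delta}(M)$-definable subsets of $A$.

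For the direction ``Dedekind completeness $\Rightarrow$ trace definability'', I would recycle the constructive argument of Lemma~\ref{Dedekind Completeness implies Definable Types}. Given an $\mathcal{L}^{\delta}(A)$-formula $\varphi(\overline{x},\overline{y})$ with $X = \varphi(M^n,\overline{a})$, Lemma~\ref{Forget the Derivation in T delta G} supplies an $\mathcal{L}(A)$-formula $\widetilde{\varphi}$ satisfying $\varphi(\overline{x},\overline{a}) \leftrightarrow \widetilde{\varphi}(\Jet_k(\overline{x}),\Jet_l(\overline{a}))$ in $(\mathcal{M},\delta)$. Dedekind completeness of $A$ in $\dcl_{(\mathcal{M},\delta)}(A,\overline{a})$ entails Dedekind completeness in $\dcl_{\mathcal{M}}(A,\Jet_l(\overline{a}))$, so Theorem~\ref{Marker-Steinhorn} yields an $\mathcal{L}(A)$-formula $d\widetilde{\varphi}$ with the expected definability property. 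Then $d\widetilde{\varphi}(\Jet_k(\overline{x}))$ is an $\mathcal{L}^{\delta}(A)$-formula, and because $\mathcal{A} \preceq \mathcal{M}$ as $\mathcal{L}$-structures while $\Jet_k(\overline{c}) \in A$ whenever $\overline{c} \in A$, its interpretation on $A^n$ in $(\mathcal{A},\delta|_A)$ agrees with that in $(\mathcal{M},\delta)$ and coincides with $X \cap A^n$.

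For the converse, I would proceed by contraposition. If Dedekind completeness fails, I would pick $b \in \dcl_{(\mathcal{M},\delta)}(A,\overline{a}) \setminus A$ realising a nonprincipal cut of $A$ with lower part $C_0$; the set $X := \{x \in M : x < b\}$ is $\mathcal{L}^{\delta}(A \cup \{\overline{a}\})$-definable and satisfies $X \cap A = C_0$. Assuming for contradiction that $C_0$ is defined in $(\mathcal{A},\delta|_A)$ by some $\mathcal{L}^{\delta}(A)$-formula $\psi$, Lemma~\ref{Forget the Derivation in T delta G} inside $(\mathcal{M},\delta)$ replaces $\psi$ by an $\mathcal{L}(A)$-formula $\widetilde{\psi}$ with $\psi(x) \leftrightarrow \widetilde{\psi}(\Jet_n(x))$ in $(\mathcal{M},\delta)$. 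Combined with $\mathcal{A} \preceq \mathcal{M}$ and $\Jet_n(A) \subseteq A$, I would argue that $C_0$ coincides with the trace on $A$ of the $\mathcal{L}^{\delta}(A)$-definable set $\psi^{(\mathcal{M},\delta)}$; Corollary~\ref{Definable Complete and Definable Closure} then produces a supremum of $C_0$ in $A \cup \{+\infty\}$, contradicting the fact that $C_0$ is the lower part of a nonprincipal cut.

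The main obstacle lies precisely in the last step of the converse direction: the equivalence $\psi \leftrightarrow \widetilde{\psi}(\Jet_n(\cdot))$ granted by Lemma~\ref{Forget the Derivation in T delta G} is available in $(\mathcal{M},\delta) \models T^{\delta}_g$, but the hypothesis only tells us that $\psi$ defines $C_0$ when interpreted inside $(\mathcal{A},\delta|_A)$, which a priori satisfies only $T^{\delta}$. To bridge this, I plan either to extend $(\mathcal{A},\delta|_A)$ to an ambient $T^{\delta}_g$-model and transport truth values over $A$ via the model-completion property of $T^{\delta}_g$, thereby identifying $\psi^{(\mathcal{A},\delta|_A)}$ with $\psi^{(\mathcal{M},\delta)} \cap A$, or to bypass this issue entirely by applying Proposition~\ref{Marker-Steinhorn for Derivation} as a black box after extracting from trace definability an $A$-definition of the type $\tp^{(\mathcal{M},\delta)}(\overline{a}/A)$, from which Dedekind completeness of $A$ in $\dcl_{(\mathcal{M},\delta)}(A,\overline{a})$ follows.
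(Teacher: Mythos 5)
Your left-to-right direction (Dedekind completeness implies trace definability) is correct, and it is essentially the paper's own argument with Proposition~\ref{Marker-Steinhorn for Derivation} unfolded: you rerun the proof of Lemma~\ref{Dedekind Completeness implies Definable Types} (o-minimal Marker--Steinhorn applied to $\tp^{\mathcal{M}}(\Jet_{l}(\overline{a})/A)$ after the jet-rewriting of Lemma~\ref{Forget the Derivation in T delta G}) rather than quoting the Proposition as a black box. Your transfer of the definition down to $(\mathcal{A},\delta|_{A})$ -- keeping track that the defining formula can be taken of the special form $d\widetilde{\varphi}(\Jet_{k}(\overline{y}))$ with $d\widetilde{\varphi}$ an $\mathcal{L}(A)$-formula, so that $\mathcal{A}\preceq\mathcal{M}$ as $\mathcal{L}$-structures together with $\delta(A)\subseteq A$ makes the two interpretations agree on $A^{n}$ -- is if anything more explicit than the paper's, which instead invokes model completeness of $T^{\delta}_{g}$ to replace the defining formula by a universal one before restricting to the substructure.

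For the converse, note that the paper does not argue by contraposition at all: given the hypothesis, for each $\mathcal{L}^{\delta}(A)$-formula $\varphi(\overline{x},\overline{y})$ it applies trace definability to the set defined by $\varphi(\overline{a},\overline{y})$, reads off from the resulting formula an $A$-definition of $\tp^{(\mathcal{M},\delta)}(\overline{a}/A)$, and then cites Proposition~\ref{Marker-Steinhorn for Derivation}; this is exactly your Plan~B, and the cut construction, the application of Lemma~\ref{Forget the Derivation in T delta G} to $\psi$ inside $(\mathcal{A},\delta|_{A})$, and Corollary~\ref{Definable Complete and Definable Closure} are all unnecessary. Your Plan~A, however, is unsound: that $T^{\delta}_{g}$ is the model completion of $T^{\delta}$ only says that any two extensions of $(\mathcal{A},\delta|_{A})$ to models of $T^{\delta}_{g}$ are elementarily equivalent over $A$; it says nothing relating satisfaction in $(\mathcal{A},\delta|_{A})$ itself to satisfaction in $(\mathcal{M},\delta)$, and the identification $\psi^{(\mathcal{A},\delta|_{A})}=\psi^{(\mathcal{M},\delta)}\cap A$ for arbitrary $\mathcal{L}^{\delta}(A)$-formulas $\psi$ would amount to $(\mathcal{A},\delta|_{A})$ being elementary (in particular existentially closed) in $(\mathcal{M},\delta)$, which fails in general since $(\mathcal{A},\delta|_{A})$ is merely a model of $T^{\delta}$ and an existential $\mathcal{L}^{\delta}(A)$-statement can hold in $(\mathcal{M},\delta)$ with all witnesses outside $A$. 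So drop Plan~A and the contrapositive detour; with Plan~B your argument coincides with the paper's proof (the evaluation-location subtlety you flag is present there as well and is treated as immediate: the formula furnished by the hypothesis is simply used as the $\varphi$-definition of the type).
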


\begin{proof}
``$\Leftarrow$'': Suppose that $A$ is Dedekind complete in $\dcl_{(\mathcal{M},\delta)}(A,\overline{a})$. By Proposition \ref{Marker-Steinhorn for Derivation}, the type $\tp^{(\mathcal{M},\delta)}(\overline{a}/A)$ is $A$-definable. Let $X \subseteq M^{n}$ be defined by an $\mathcal{L}^{\delta}(A)$-formula $\varphi(\overline{a},\overline{y})$. Then there exists an $\mathcal{L}^{\delta}(A)$-formula $d\varphi(\overline{y})$ such that for every $\overline{b} \in A^{n}$,
\[
\overline{b} \in X \cap A^{n}
\quad\Longleftrightarrow\quad
(\mathcal{M},\delta) \models d\varphi(\overline{b}).
\]
After replacing $\mathcal{L}$ by $\mathcal{L}^{\mathrm{df}}$ and $T$ by $T^{\mathrm{df}}$, we may assume that $T^{\delta}_{g}$ has quantifier elimination. Hence, we may take $d\varphi$ to be quantifier-free. Since $(\mathcal{A},\delta|_{A})$ is an $\mathcal{L}^{\delta}$-substructure of $(\mathcal{M},\delta)$, every quantifier-free formula with parameters from $A$ has the same truth value in $(\mathcal{A},\delta|_{A})$ and $(\mathcal{M},\delta)$. Therefore, $d\varphi(\overline{y})$ defines $X \cap A^{n}$ in $(\mathcal{A},\delta|_{A})$.

``$\Rightarrow$'': Suppose that for every $\mathcal{L}^{\delta}(A\cup\{\overline{a}\})$-definable set $X \subseteq M^{n}$ in $(\mathcal{M},\delta)$, its trace $X \cap A^{n}$ is definable in $(\mathcal{A},\delta|_{A})$. Fix an $\mathcal{L}^{\delta}(A)$-formula $\varphi(\overline{x},\overline{y})$. Let $X$ be the set of all $\overline{b} \in M^{n}$ such that $\varphi(\overline{x},\overline{b}) \in \tp^{(\mathcal{M},\delta)}(\overline{a}/M)$. Then $X$ is defined by the formula $\varphi(\overline{a},\overline{y})$ in $(\mathcal{M},\delta)$. By assumption, its trace $X \cap A^{n}$ is defined by an $\mathcal{L}^{\delta}(A)$-formula $d\varphi(\overline{y})$. This shows that the type $\tp^{(\mathcal{M},\delta)}(\overline{a}/A)$ is $A$-definable. Thus, $A$ is Dedekind complete in $\dcl_{(\mathcal{M},\delta)}(A,\overline{a})$ by Proposition~\ref{Marker-Steinhorn for Derivation}.
\end{proof}

\subsection{Metric Topologies and Hausdorff Distance Functions}

In this section, we introduce a sequence of metrics that are more natural for models of $T^{\delta}_{g}$. The Euclidean topology is the natural topology associated with o-minimal structures for many reasons (for details, see \cite{vandenDries2003}). For models of $T^{\delta}_{g}$, the Euclidean topology appears to be too coarse since $\delta$ is highly discontinuous with respect to it. In \cite{BrihayeMichauxRivière2009}, Brihaye, Michaux, and Rivière introduced the $\delta$-topology on models of $\CODF$, which is the coarsest topology containing the Euclidean topology that makes $\delta$ a continuous function. The $\delta$-topology can be easily generalized to models of $T^{\delta}_{g}$ (see \cite{BrihayeMichauxRivière2009} for a more detailed description of the $\delta$-topology on $\CODF$). However, the $\delta$-topology is not a definable topology in the language $\mathcal{L}^{\delta}$. To see this, suppose otherwise. Fix $n \in \mathbb{N}$. Then, there exists an $\mathcal{L}^{\delta}(\varnothing)$-formula $\varphi(\overline{x},\overline{y})$ such that the definable family 
\[
\mathcal{O} := \{C \subseteq M^{n}|\text{$C$ is defined by the formula }\varphi(\overline{a},\overline{y}), \text{for some }\overline{a} \in M\}
\]
forms a neighbourhood basis at the origin in $M^{n}$. By replacing $T$ with $T^{\text{df}}$, we may assume that $T^{\delta}_{g}$ admits quantifier elimination, and hence $\varphi(\overline{x},\overline{y})$ is quantifier-free. Let $k$ be the largest nonnegative integer such that $\delta^{k}$ appeared in $\varphi$. Since $\mathcal{O}$ is a neighbourhood basis at $\overline{0}$, the open set (with respect to the $\delta$-topology) 
\[
\{\overline{y} \in M^{n}|-1<\delta^{i}y_{j}<1,0 \leq i \leq k+1, 1 \leq j \leq n\}
\]
contains some $C \in \mathcal{O}$. This is impossible since by the axiom of genericity, $C$ must contains an element $\overline{y}$ such that $\delta^{k+1}y_{1} \geq 1$. This makes the treatment of the $\delta$-topology significantly different from that of the Euclidean topology in o-minimal structures. Here, we introduce a sequence of metric topologies that are $\mathcal{L}^{\delta}(\varnothing)$-definable and approximate the $\delta$-topology.

For the rest of this section, fix $(\mathcal{M},\delta) \models T^{\delta}_{g}$.

\begin{definition}
For each $n,k \in \mathbb{N}$, let $d_{n,k}:M^{k} \times M^{k} \rightarrow M$ be the map
\[
d_{n,k}(\overline{x},\overline{y}) = \sum_{i=0}^{n}\sqrt{\sum_{j=1}^{k}(\delta^{i}x_{j}-\delta^{i}y_{j})^{2}},
\]
and let $\widetilde{d}_{n,k}:M^{k} \times M^{k} \rightarrow M$ be the map
\[
\widetilde{d}_{n,k}(\overline{x},\overline{y}) = \sum_{i=0}^{n}2^{-i}\frac{\sqrt{\sum_{j=1}^{k}(\delta^{i}x_{j}-\delta^{i}y_{j})^{2}}}{1+\sqrt{\sum_{j=1}^{k}(\delta^{i}x_{j}-\delta^{i}y_{j})^{2}}}.
\]
We call $d_{n,k}$ the \textbf{\textit{standard $(n,k)$-metric}} and $\widetilde{d}_{n,k}$ the \textbf{\textit{rescaled standard $(n,k)$-metric}} on $M^{k}$. When $k$ is clear from the context, we omit it.
\end{definition}

It is not hard to see that $d_{n,k}$ and $\widetilde{d}_{n,k}$ are indeed $M$-valued distance functions on $M^{k}$, and that for each $n,k$, the metrics $d_{n,k}$ and $\widetilde{d}_{n,k}$ induce the same $M$-valued metric topology on $M^{k}$. Let $\mathcal{T}_{n,k}$ denote the topology induced by $d_{n,k}$. Then it is clear that $\mathcal{T}_{n,k} \subseteq \mathcal{T}_{n+1,k}$ for all $n \in \mathbb{N}$, and the $\delta$-topology on $M^{k}$, denoted by $\mathcal{T}_{\infty,k}$, is the topology generated by $\bigcup_{n\in\mathbb{N}}\mathcal{T}_{n,k}$. The following is an immediate consequence of this discussion.

\begin{proposition}
Let $X \subseteq M^{k}$ be $\mathcal{L}^{\delta}(M)$-definable. Let $\cl_{n}(X)$ denote the closure of $X$ in the standard $(n,k)$-metric topology, and let $\cl_{\delta}(X)$ denote the closure of $X$ in the $\delta$-topology. Then for each $n \in \mathbb{N}$, we have $\cl_{n+1}(X) \subseteq \cl_{n}(X)$, and
\[
\cl_{\delta}(X) = \bigcap_{n=0}^{\infty}\cl_{n}(X).
\]
\end{proposition}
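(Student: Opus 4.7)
The plan is to derive both assertions directly from the pointwise inequality $d_{n,k} \leq d_{n+1,k}$ and from the identification of the $\delta$-topology as the join of the increasing chain $\{\mathcal{T}_{n,k}\}_{n \in \mathbb{N}}$. No new machinery is needed; the whole argument is essentially point-set topology once the containment of the basic topologies is set up.

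First, I would establish $\mathcal{T}_{n,k} \subseteq \mathcal{T}_{n+1,k}$. The definition gives
\[
d_{n+1,k}(\overline{x},\overline{y}) = d_{n,k}(\overline{x},\overline{y}) + \sqrt{\sum_{j=1}^{k}(\delta^{n+1}x_{j}-\delta^{n+1}y_{j})^{2}} \geq d_{n,k}(\overline{x},\overline{y}),
\]
so every $d_{n+1,k}$-ball is contained in the corresponding $d_{n,k}$-ball. To see that an arbitrary $d_{n,k}$-ball $B_{n}(\overline{x},\varepsilon)$ is $\mathcal{T}_{n+1,k}$-open, I would pick $\overline{y} \in B_{n}(\overline{x},\varepsilon)$, set $r = \varepsilon - d_{n,k}(\overline{x},\overline{y}) > 0$, and verify, via the triangle inequality for $d_{n,k}$ combined with $d_{n,k} \leq d_{n+1,k}$, that $B_{n+1}(\overline{y},r) \subseteq B_{n}(\overline{x},\varepsilon)$. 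Since a finer topology has more closed sets and hence a smaller closure operator, the inclusion $\cl_{n+1}(X) \subseteq \cl_{n}(X)$ follows immediately.

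For the equality $\cl_{\delta}(X) = \bigcap_{n}\cl_{n}(X)$, I would use that $\mathcal{T}_{\infty,k}$ coincides with the topology generated by $\bigcup_{n}\mathcal{T}_{n,k}$; because the chain is increasing, this union is already a topology and is the coarsest one containing every $\mathcal{T}_{n,k}$-open set. The inclusion $\cl_{\delta}(X) \subseteq \bigcap_{n}\cl_{n}(X)$ is immediate from $\mathcal{T}_{n,k} \subseteq \mathcal{T}_{\infty,k}$ and the previous paragraph. For the reverse, I fix $\overline{x} \in \bigcap_{n}\cl_{n}(X)$ and a $\mathcal{T}_{\infty,k}$-open neighbourhood $U$ of $\overline{x}$; then $U \in \mathcal{T}_{n,k}$ for some $n$, and since $\overline{x} \in \cl_{n}(X)$, we have $U \cap X \neq \varnothing$, so $\overline{x} \in \cl_{\delta}(X)$.

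I do not expect any genuine obstacle here; the proposition is explicitly flagged as immediate. The two points that require a little care are (i) remembering the contravariant relationship between fineness of topologies and size of closures, and (ii) interpreting $\mathcal{T}_{\infty,k}$ as the supremum (join) of the chain rather than its set-theoretic intersection, so that every basic $\mathcal{T}_{\infty,k}$-open neighbourhood of a point is already open in some single $\mathcal{T}_{n,k}$.
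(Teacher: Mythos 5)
Your proof is correct, and it supplies exactly the point-set argument the paper leaves implicit: the paper states this proposition without proof, as ``an immediate consequence'' of the observation that the chain $\mathcal{T}_{n,k}$ is increasing and that the $\delta$-topology is its limit, so your write-up is the intended argument in full detail. Two remarks. First, your side claim that, because the chain is increasing, the union $\bigcup_{n}\mathcal{T}_{n,k}$ ``is already a topology'' is false in general: an increasing union of topologies is closed under finite intersections but not under arbitrary unions, and in this setting one really can escape every level, since by genericity a $d_{m,k}$-ball is never open in $\mathcal{T}_{m-1,k}$, so a union of balls of unboundedly many levels centred at Euclidean-separated points lies in no single $\mathcal{T}_{n,k}$. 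This does not harm the proof: the union is a \emph{basis} for $\mathcal{T}_{\infty,k}$, so in the reverse inclusion the line ``$U \in \mathcal{T}_{n,k}$ for some $n$'' should read ``there is a basic open $B$ with $\overline{x} \in B \subseteq U$ and $B \in \mathcal{T}_{n,k}$ for some $n$'' --- which is precisely the statement you record in your closing remark, and the rest of the argument goes through verbatim. Second, the paper literally describes $\mathcal{T}_{\infty,k}$ as the \emph{intersection} of the $\mathcal{T}_{n,k}$; since the chain is increasing, that intersection would just be the Euclidean topology $\mathcal{T}_{0,k}$, under which the stated equality of closures fails. Your reading of $\mathcal{T}_{\infty,k}$ as the join, i.e.\ the topology generated by $\bigcup_{n}\mathcal{T}_{n,k}$ (equivalently, by jet-preimages of Euclidean opens), is the one under which the proposition is true and is clearly what the paper intends, so your interpretation quietly corrects a slip in the surrounding text.
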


\begin{remark}
In the case where $M = \mathbb{R}$, we can define
\[
\widetilde{d}_{\infty,k}(\overline{x},\overline{y}) := \sum_{i=0}^{\infty}2^{-i}\frac{\sqrt{\sum_{j=1}^{k}(\delta^{i}x_{j}-\delta^{i}y_{j})^{2}}}{1+\sqrt{\sum_{j=1}^{k}(\delta^{i}x_{j}-\delta^{i}y_{j})^{2}}}.
\]
The series converges and is therefore well-defined. It is not hard to see that this metric induces the $\delta$-topology on $\mathbb{R}^{k}$.
\end{remark}

Fix an $\mathcal{L}^{\delta}(M)$-definable $M$-metric space $(Z,d)$. Let $\mathcal{K}(Z)$ denote the set of all nonempty $\mathcal{L}^{\delta}(M)$-definable closed and bounded subsets of $Z$. The \textbf{Hausdorff distance function $D:\mathcal{K}(Z) \times \mathcal{K}(Z) \rightarrow M$ induced by $d$} is defined by 
\[
D(A,B) = \max\{\inf \{\varepsilon \in M \mid B \subseteq U_{d}(A,\varepsilon)\},\inf \{\varepsilon \in M \mid A \subseteq U_{d}(B,\varepsilon)\}\}.
\]
Intuitively, the Hausdorff distance function $D$ measures how far two sets are from each other with respect to the distance function $d$. It is straightforward to verify that

\begin{fact} \label{Charaterization of Hausdorff Metric}
$D(A,B) = \max\left\{\sup_{a \in A}\inf_{b \in B}d(a,b),\sup_{b \in B}\inf_{a \in A}d(a,b)\right\}$. In particular, the value $D(A,B)$ is $\mathcal{L}^{\delta}(M)$-definable for every pair of sets $A,B \in \mathcal{K}(Z)$.
\end{fact}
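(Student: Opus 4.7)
The plan is to unpack both sides of the displayed equation in terms of pointwise distances and then appeal to definable completeness (Fact \ref{Definable Complete}) for the definability claim.

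For the equality $D(A,B) = \max\{\sup_{a \in A}\inf_{b \in B} d(a,b),\, \sup_{b \in B}\inf_{a \in A} d(a,b)\}$, by symmetry it suffices to prove that
\[
\inf\{\varepsilon \in M \mid B \subseteq U_d(A, \varepsilon)\} \;=\; \sup_{b \in B}\inf_{a \in A} d(a, b).
\]
The key observation is that $b \in U_d(A, \varepsilon)$ holds iff $\inf_{a \in A} d(a, b) < \varepsilon$. Hence if $\varepsilon$ strictly exceeds the right-hand side above, then $\inf_{a \in A} d(a,b) < \varepsilon$ uniformly in $b \in B$, so $B \subseteq U_d(A, \varepsilon)$; conversely, if $\varepsilon$ is strictly smaller than the right-hand side, then there is $b_0 \in B$ with $\inf_{a \in A} d(a, b_0) > \varepsilon$, which forces $b_0 \notin U_d(A, \varepsilon)$. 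Taking the infimum over $\varepsilon$ on the left then yields the required equality.

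For the definability claim, I will use that $A$ and $B$ are bounded, so all distances $d(a,b)$ for $a \in A$, $b \in B$ are bounded above by some $R \in M$ (obtained by a triangle-inequality argument using a common bounding ball), and that $d$ itself is $\mathcal{L}^\delta(M)$-definable. For fixed $b$ the set $\{d(a,b) \mid a \in A\}$ is $\mathcal{L}^\delta(M \cup \{b\})$-definable and bounded below by $0$, so Fact \ref{Definable Complete} (applied to the negation, for infima) places $\inf_{a \in A} d(a,b)$ in $M$ uniformly in $b$. Applying Fact \ref{Definable Complete} once more to the resulting bounded-above $\mathcal{L}^\delta(M)$-definable family in the variable $b$ gives $\sup_{b \in B} \inf_{a \in A} d(a,b) \in M$, still $\mathcal{L}^\delta(M)$-definable. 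The symmetric term is handled identically, and $D(A,B)$ is then the maximum of two such elements.

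There is no genuine obstacle; the only subtlety is that the usual textbook argument over $\mathbb{R}$ uses order-completeness freely, while here one must check at each nested sup/inf step that the relevant definable subset of $M$ is bounded on the appropriate side so that Fact \ref{Definable Complete} applies and the value remains inside $M$ rather than escaping to $+\infty$.
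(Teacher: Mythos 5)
Your proposal is correct: the paper states this fact without proof (it is introduced with ``it is straightforward to verify''), and your verification---characterizing $b \in U_d(A,\varepsilon)$ via $\inf_{a\in A} d(a,b) < \varepsilon$, squeezing the infimum over $\varepsilon$ from both sides, and invoking definable completeness together with boundedness and nonemptiness of $A,B$ to keep each nested $\inf/\sup$ inside $M$---is exactly the routine argument the paper intends.
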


\subsection{Definability of Hausdorff Limits}

In this section, we give the geometric interpretation of the Marker-Steinhorn Theorem and the stable embedding property of $T^{\delta}_{g}$. Van den Dries has established these results for o-minimal structures in \cite{Lisbon2003} and \cite{vandenDries1997}, and we will adapt his methods to models of $T^{\delta}_{g}$. Let $(\mathcal{M},\delta) \models T_{g}^{\delta}$ be $\aleph_{1}$-saturated such that $\mathbb{R} \subseteq M$ and $\dcl_{(\mathcal{M},\delta)}(\mathbb{R}) = \mathbb{R}$. In particular, $(\mathbb{R},\delta|_{\mathbb{R}}) \models T^{\delta}$. Fix $n \in \mathbb{N}$. Let $A^{*} \subseteq \mathcal{M}^{m+k}$ be $\mathcal{L}^{\delta}(\varnothing)$-definable. Recall that $\Pi_{m}: \mathcal{M}^{m+k} \rightarrow \mathcal{M}^{m}$ denotes the projection onto the first $m$ coordinates. Set $(A')^{*} := \Pi_{m}(A^{*})$. For each $a \in (A')^{*}$, denote
\[
A^{*}_{a} := \{x \mid (a,x) \in A^{*}\}, \quad F(A^{*}) := \{A^{*}_{a} \mid a \in (A')^{*}\}.
\]
Since $\mathbb{R}$ is Dedekind complete, we can define $\st_{n,k}:M^{k} \rightarrow \mathbb{R}^{k} \cup \{\infty\}$ by
\[
\st_{n,k}(s) :=
\begin{cases}
r & \text{if there exists $r \in \mathbb{R}^{k}$ such that $d_{n,k}(r,s)$ is infinitesimal}, \\
\infty & \text{otherwise.}
\end{cases}
\]

For \(Y\subseteq M^k\), set
\[
\st_{n,k}(Y)
:=
\{\st_{n,k}(y):y\in Y,\ \st_{n,k}(y)\neq\infty\}.
\]
Thus, points \(y\in Y\) for which \(\st_{n,k}(y)=\infty\) make no contribution to \(\st_{n,k}(Y)\).

\begin{remark}
Note that $\st_{n,k}(s)$ might be $\infty$ even if $\st_{n-1,k}(s)\in\mathbb R^k$. This may occur because $\delta^n s$ is unbounded over $\mathbb R$. Indeed, the ordering $<$ does not interact with the derivation $\delta$. There is also a different possibility: every coordinate of $\Jet_n(s)$ may be bounded over $\mathbb R$, while its coordinatewise standard part is not the jet of any element of $\mathbb R^k$. For example, one may have $s$ infinitesimally close to $0$ and $\delta s$ infinitesimally close to $1$. In general, $\st_{n,k}(s)=r\in\mathbb R^k$ only if $\st(\delta^i s_j)=\delta^i r_j$ for every $0\leq i\leq n$ and $1\leq j\leq k$. Nevertheless, if $(\mathbb R,\delta|_{\mathbb R})\preceq(\mathcal M,\delta)$, then $\st_{n,k}$ is definable in $(\mathcal M,\mathbb R,\st,\delta)\models T^\delta_{g,\mathrm{tame}}$.
\end{remark}

Let $A' := (A')^{*} \cap \mathbb{R}^{m}$ and $A := A^{*} \cap \mathbb{R}^{m+k}$. We also assume that for each $a \in A'$, the set $A_{a}$ is closed and bounded with respect to the metric $d_{n,k}$. The next lemma explains how the Hausdorff limit of a sequence from a definable family is related to the standard part of an externally definable set.

\begin{lemma}\label{Hausdorff limits are exactly standard parts}
If $X\in\cl_n(F(A))$, then there exists $a\in(A')^*$ such that $X=\st_{n,k}(A_a^*)$, $A_a^*$ is $\mathbb R$-bounded, and $A_a^*\subseteq U_{d_{n,k}}(X,\varepsilon)$ for every $\varepsilon\in\mathbb R_{>0}$. Conversely, suppose that $(\mathbb R,\delta|_{\mathbb R})\preceq(\mathcal M,\delta)$. If $a\in(A')^*$ is such that $A_a^*$ is $\mathbb R$-bounded and $A_a^*\subseteq U_{d_{n,k}}(\st_{n,k}(A_a^*),\varepsilon)$ for every $\varepsilon\in\mathbb R_{>0}$, then $\st_{n,k}(A_a^*)\in\cl_n(F(A))$.
\end{lemma}

\begin{proof}
``$\Rightarrow$'': Suppose that $X \in \cl_{n}(F(A))$. For each $p \in \mathbb{N}$, let $X_{p} \subseteq X$ be a $2^{-(p+1)}$-$d_{n,k}$-dense finite subset of $X$, i.e., $X \subseteq U_{d_{n,k}}(X_{p},2^{-(p+1)})$. We may assume $X_{p} \subseteq X_{p+1}$ by replacing $X_{p}$ with $\bigcup_{i=0}^{p} X_{i}$. Then $D_{n,k}(X,X_{p}) \leq 2^{-(p+1)}$. Set
\[
C_{p} := \{a \in A' \mid D_{n,k}(A_{a},X_{p}) < 2^{-p}\},
\]
an $\mathcal{L}^{\delta}(X_{p})$-definable subset of $A'$. Note that $C_{p} \neq \varnothing$ since $X \in \cl_{n}(F(A))$. Moreover, $C_{p+1} \subseteq C_{p}$ because if $a \in C_{p+1}$ then $D_{n,k}(X_{p+1},A_{a})<2^{-(p+1)}$ which implies that
\[
D_{n,k}(X_{p},A_{a}) \leq D_{n,k}(X_{p},X_{p+1}) + D_{n,k}(X_{p+1},A_{a}) < 2^{-(p+1)} + 2^{-(p+1)} = 2^{-p}.
\]
Let $\varphi_{p}(x)$ be an $\mathcal{L}^{\delta}(X_{p})$-formula defining $C_{p}$. Then $\Phi(x) := \{\varphi_{p}(x) \mid p \in \mathbb{N}\}$ is a countable set of finitely satisfiable formulas, hence realizable by compactness. By $\aleph_{1}$-saturation of $\mathcal{M}$, there exists $a^{*} \in (A')^{*}$ realizing $\Phi(x)$. Let $Y := A^{*}_{a^{*}}$. For every $p\in\mathbb N$, we have $D_{n,k}(Y,X_p)<2^{-p}$. In particular, since $X_0$ is finite, $Y$ is $\mathbb R$-bounded.

We claim that $\st_{n,k}(Y)=X$. Let $y\in Y$ with $\st_{n,k}(y)=x\in\mathbb R^k$. For every $p\in\mathbb N$, there exists $z_p\in X_p\subseteq X$ such that $d_{n,k}(y,z_p)<2^{-p}$. Since $d_{n,k}(x,y)$ is infinitesimal and $X$ is closed, it follows that $x\in X$. Hence $\st_{n,k}(Y)\subseteq X$.

Conversely, fix $x\in X$. For every $p\in\mathbb N$, choose $z_p\in X_p$ such that $d_{n,k}(x,z_p)\leq 2^{-(p+1)}$. Since $D_{n,k}(Y,X_p)<2^{-p}$, there exists $y_p\in Y$ such that $d_{n,k}(z_p,y_p)<2^{-p}$. Thus $d_{n,k}(x,y_p)<2^{-(p+1)}+2^{-p}$. By $\aleph_1$-saturation, there exists $y\in Y$ such that $d_{n,k}(x,y)$ is infinitesimal. Therefore $\st_{n,k}(y)=x$, and hence $X\subseteq\st_{n,k}(Y)$.

Finally, given $\varepsilon\in\mathbb R_{>0}$, choose $p\in\mathbb N$ such that $2^{-p}<\varepsilon$. Since $D_{n,k}(Y,X_p)<2^{-p}$ and $X_p\subseteq X$, we have $Y\subseteq U_{d_{n,k}}(X,\varepsilon)$.

``$\Leftarrow$'': Suppose that $(\mathbb R,\delta|_{\mathbb R})\preceq(\mathcal M,\delta)$, and let $a\in(A')^*$ satisfy the assumptions of the lemma. Set $Y:=A_a^*$ and $X:=\st_{n,k}(Y)$. Since $Y$ is $\mathbb R$-bounded, so is $X$.

We first show that $X$ is closed. Let $x\in\cl_n(X)$. For each $p\in\mathbb N_{>0}$, choose $x_p\in X$ such that $d_{n,k}(x,x_p)<1/(2p)$. Since $x_p\in\st_{n,k}(Y)$, there exists $y_p\in Y$ such that $d_{n,k}(x_p,y_p)$ is infinitesimal, and hence $d_{n,k}(x,y_p)<1/p$. By $\aleph_1$-saturation, there exists $y\in Y$ such that $d_{n,k}(x,y)$ is infinitesimal. Thus $x=\st_{n,k}(y)\in X$.

Fix $r\in\mathbb R_{>0}$, and let $Z$ be a finite $r/4$-$d_{n,k}$-net in $X$. By the hypothesis on $Y$, we have $Y\subseteq U_{d_{n,k}}(X,r/4)$, and therefore $Y\subseteq U_{d_{n,k}}(Z,r/2)$. Conversely, since $Z\subseteq X=\st_{n,k}(Y)$, every point of $Z$ is infinitesimally close to a point of $Y$. Hence $D_{n,k}(Y,Z)<r/2$.

The statement that there exists $b\in(A')^*$ such that $D_{n,k}(A_b^*,Z)<r/2$ is $\mathcal L^\delta(Z)$-definable and is witnessed by $a$. By elementarity, there exists $c\in A'$ such that $D_{n,k}(A_c,Z)<r/2$. Since $D_{n,k}(X,Z)\leq r/4$, the triangle inequality gives $D_{n,k}(X,A_c)<r$. As $r$ was arbitrary, $X\in\cl_n(F(A))$.
\end{proof}

The preceding lemma identifies Hausdorff limits with the standard parts of the external fibers satisfying the stated boundedness and approximation conditions. We next show that such standard parts are definable.

\begin{lemma} \label{Standard parts are definable in reals}
If $X \subseteq M^{k}$ is $\mathbb{R}$-bounded and definable in the $\mathcal{L}^{\delta}$-structure $(\mathcal{M},\delta)$, then $\st_{n,k}(X) \subseteq \mathbb{R}^{k}$ is definable in the $\mathcal{L}^{\delta}$-structure $(\mathbb{R},\delta|_{\mathbb{R}})$.
\end{lemma}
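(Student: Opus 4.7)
The plan is to reduce the definability of $\st_{n,k}(X)$ to the definability of a trace on $\mathbb{R}^{k+1}$, and then invoke the geometric form of the Marker--Steinhorn theorem for $T^{\delta}_{g}$ (Corollary \ref{Marker-Steinhorn for Derivation Geometric}). Fix an $\mathcal{L}^{\delta}$-formula $\varphi(\overline{x},\overline{y})$ and a tuple $\overline{a} \in M^{m}$ with $X = \{\overline{x} \in M^{k} : (\mathcal{M},\delta) \models \varphi(\overline{x},\overline{a})\}$, and introduce the $\mathcal{L}^{\delta}(\overline{a})$-definable set
\[
Z := \{(\overline{r},\epsilon) \in M^{k} \times M_{>0} : \exists \overline{x}\, (\varphi(\overline{x},\overline{a}) \wedge d_{n,k}(\overline{r},\overline{x}) < \epsilon)\}.
\]

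The first step is to show, using the $\aleph_{1}$-saturation of $(\mathcal{M},\delta)$, that
\[
\st_{n,k}(X) = \{\overline{r} \in \mathbb{R}^{k} : (\overline{r},\epsilon) \in Z \text{ for every } \epsilon \in \mathbb{R}_{>0}\}.
\]
The forward inclusion is immediate from the definition of $\st_{n,k}$; the reverse follows by realizing the countable partial type $\{\varphi(\overline{x},\overline{a})\} \cup \{d_{n,k}(\overline{x},\overline{r}) < 1/m\}_{m \geq 1}$ in $(\mathcal{M},\delta)$, yielding an element $\overline{x} \in X$ with $d_{n,k}(\overline{x},\overline{r})$ infinitesimal, i.e.\ $\overline{r} = \st_{n,k}(\overline{x})$. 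Next, I would apply Corollary \ref{Marker-Steinhorn for Derivation Geometric} to $Z$ to obtain an $\mathcal{L}^{\delta}(\mathbb{R})$-formula $\chi(\overline{r},\epsilon)$ defining $Z \cap \mathbb{R}^{k+1}$ in $(\mathbb{R},\delta|_{\mathbb{R}})$, so that
\[
\st_{n,k}(X) = \{\overline{r} \in \mathbb{R}^{k} : (\mathbb{R},\delta|_{\mathbb{R}}) \models \forall \epsilon \in \mathbb{R}_{>0}\, \chi(\overline{r},\epsilon)\}
\]
is $\mathcal{L}^{\delta}(\mathbb{R})$-definable.

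The main obstacle is verifying the hypothesis of Corollary \ref{Marker-Steinhorn for Derivation Geometric}, namely that $\mathbb{R}$ is Dedekind complete in $\dcl_{(\mathcal{M},\delta)}(\mathbb{R},\overline{a})$. This is not automatic: since $\delta$ is generic, higher iterates $\delta^{\ell}a_{i}$ need not be $\mathbb{R}$-bounded even when $X$ is, so $\dcl_{(\mathcal{M},\delta)}(\mathbb{R},\overline{a})$ may harbor nonprincipal cuts of $\mathbb{R}$. To handle this I would first apply Lemma \ref{Forget the Derivation in T delta G} to rewrite $X$ as $\{\overline{x} : \widetilde{\varphi}(\Jet_{N}(\overline{x}),\overline{b})\}$ for an $\mathcal{L}$-formula $\widetilde{\varphi}$, then use the $\mathbb{R}$-boundedness of $\Jet_{n}(X) \subseteq M^{k(n+1)}$ together with the tame pair structure $(\mathcal{M},\mathbb{R},\st,\delta) \models T^{\delta,-}_{\text{tame}}$ to replace $\overline{b}$ by parameters relative to which $\mathbb{R}$ is Dedekind complete. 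An alternative route is to extend $(\mathcal{M},\mathbb{R},\st,\delta)$ to a model of $T^{\delta}_{g,\text{tame}}$ via Lemma \ref{Universal Part of T tame delta and Extension to Models of T tame G}, apply the stable embedding property (Proposition \ref{Stable Embedding of T Tame delta G}) in the extension, and transport the resulting definability back to $(\mathbb{R},\delta|_{\mathbb{R}})$ using that $\mathbb{R}$ is $\mathcal{L}$-elementary in the extended structure together with Lemma \ref{Forget the Derivation in T delta G} to make the defining formula absolute.
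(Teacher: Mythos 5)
Your core argument is correct and is essentially the paper's own: the paper likewise reduces $\st_{n,k}(X)$ to the trace on $\mathbb{R}$ of an auxiliary $\mathcal{L}^{\delta}(\overline{a})$-definable set (there, the set of jet-coordinate boxes meeting $X$; in your version, the set of pairs $(\overline{r},\epsilon)$ whose $d_{n,k}$-ball meets $X$), applies Corollary \ref{Marker-Steinhorn for Derivation Geometric} to that set, and uses the standing $\aleph_{1}$-saturation for the reverse inclusion. The one substantive problem is your ``main obstacle'': it does not exist. Dedekind completeness of $\mathbb{R}$ in $\dcl_{(\mathcal{M},\delta)}(\mathbb{R},\overline{a})$, in the paper's sense, only demands that no element of that extension realize a \emph{nonprincipal} cut of $\mathbb{R}$; since $\mathbb{R}$ is order-complete, any cut it acquires in any ordered field extension either has an endpoint or has one side empty (the latter exactly for elements infinite over $\mathbb{R}$, e.g.\ unbounded jets $\delta^{\ell}a_{i}$), and neither case is a nonprincipal cut. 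So the hypothesis of Corollary \ref{Marker-Steinhorn for Derivation Geometric} holds automatically, with no restriction whatsoever on the parameters $\overline{a}$, and both of your workarounds are unnecessary. This is fortunate, because the second one is shaky as stated: extending $(\mathcal{M},\mathbb{R},\st,\delta)$ to a model of $T^{\delta}_{g,\text{tame}}$ via Lemma \ref{Universal Part of T tame delta and Extension to Models of T tame G} in general enlarges the small model beyond $\mathbb{R}$, and in this lemma $(\mathbb{R},\delta|_{\mathbb{R}})$ is only assumed to model $T^{\delta}$, not $T^{\delta}_{g}$, nor to be elementary in $(\mathcal{M},\delta)$, so transporting the stable-embedding conclusion back down to $(\mathbb{R},\delta|_{\mathbb{R}})$ would require an argument you have not given. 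Deleting that discussion and simply observing that the order-completeness of $\mathbb{R}$ rules out nonprincipal cuts makes your proof complete and essentially identical to the paper's.
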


\begin{proof}
Let $\overline{u}=(u_{i,j})_{1\leq i\leq k,\,0\leq j\leq n}$ and $\overline{v}=(v_{i,j})_{1\leq i\leq k,\,0\leq j\leq n}$, and consider the $\mathcal{L}^{\delta}$-definable set
\[
Y:=\left\{(\overline{u},\overline{v})\in M^{2k(n+1)}:
\exists\overline{c}\in X\
\bigwedge_{i=1}^{k}\bigwedge_{j=0}^{n}
u_{i,j}<\delta^{j}c_{i}<v_{i,j}
\right\}.
\]
By Corollary \ref{Marker-Steinhorn for Derivation Geometric}, the trace $Y\cap\mathbb{R}^{2k(n+1)}$ is definable in $(\mathbb{R},\delta|_{\mathbb{R}})$. Let $\varphi(\overline{u},\overline{v})$ be an $\mathcal{L}^{\delta}(\mathbb{R})$-formula defining this trace.

We claim that for every $\overline{d}\in\mathbb{R}^{k}$, we have $\overline{d}\in\st_{n,k}(X)$ if and only if
\[
(\mathbb{R},\delta|_{\mathbb{R}})\models
\forall\overline{u}\,\forall\overline{v}\,
\left[
\left(
\bigwedge_{i=1}^{k}\bigwedge_{j=0}^{n}
u_{i,j}<\delta^{j}d_{i}<v_{i,j}
\right)
\rightarrow
\varphi(\overline{u},\overline{v})
\right].
\]
Indeed, if $\overline{d}=\st_{n,k}(\overline{c})$ for some $\overline{c}\in X$, then $\Jet_{n}(\overline{c})$ belongs to every box over $\mathbb{R}$ containing $\Jet_{n}(\overline{d})$, so the displayed formula holds.

Conversely, suppose that the displayed formula holds. For every $p\in\mathbb{N}_{>0}$, apply it to the box whose endpoints in the $(i,j)$-coordinate are $\delta^{j}d_i-1/p$ and $\delta^{j}d_i+1/p$. We obtain $\overline{c}_{p}\in X$ such that
\[
|\delta^{j}c_{p,i}-\delta^{j}d_i|<1/p
\]
for every $1\leq i\leq k$ and $0\leq j\leq n$. Hence the countable type
\[
\left\{\overline{x}\in X\right\}
\cup
\left\{
|\delta^{j}x_i-\delta^{j}d_i|<1/p:
p\in\mathbb{N}_{>0},\ 1\leq i\leq k,\ 0\leq j\leq n
\right\}
\]
is finitely satisfiable. By $\aleph_1$-saturation, it is realized by some $\overline{c}\in X$. Then $d_{n,k}(\overline{c},\overline{d})$ is infinitesimal, and therefore $\overline{d}\in\st_{n,k}(X)$. Thus $\st_{n,k}(X)$ is definable in $(\mathbb{R},\delta|_{\mathbb{R}})$.
\end{proof}

\begin{theorem} \label{Hausdorff limits are definable}
Any Hausdorff limit of a sequence from a definable family in $(\mathbb{R},\delta|_{\mathbb{R}})$ is definable in $(\mathbb{R},\delta|_{\mathbb{R}})$.
\end{theorem}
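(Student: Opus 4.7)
The plan is to chain the two preceding lemmas. Lemma \ref{Hausdorff limits are exactly standard parts} realises every Hausdorff limit in $\cl_{n}(F(A))$ as $\st_{n,k}$ of an externally definable fibre $A^{*}_{a}$ in the saturated extension $(\mathcal{M},\delta)$, while Lemma \ref{Standard parts are definable in reals} sends any $\mathbb{R}$-bounded $\mathcal{L}^{\delta}(M)$-definable subset of $M^{k}$ back down to an $\mathcal{L}^{\delta}(\mathbb{R})$-definable set via the standard part map. Composing these two facts is essentially all that the theorem requires.

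Concretely, I would first fix an $\mathcal{L}^{\delta}(\varnothing)$-definable family $F(A)$ as in the setup (so $A \subseteq \mathbb{R}^{m+k}$ and each fibre $A_{a}$ for $a \in A'$ is closed and bounded in the $d_{n,k}$-metric) and fix a Hausdorff limit $X$ of some sequence $(A_{a_{i}})_{i \in \mathbb{N}}$ from this family. Since $D_{n,k}(A_{a_{i}},X) \to 0$, we have $X \in \cl_{n}(F(A))$, so by the forward direction of Lemma \ref{Hausdorff limits are exactly standard parts} there is some $a \in (A')^{*}$ with $X = \st_{n,k}(A^{*}_{a})$. I would then note that $A^{*}_{a}$ is automatically $\mathbb{R}$-bounded: the sets $A_{a_{i}}$ are uniformly $\mathbb{R}$-bounded (eventually, since their Hausdorff distance to the bounded set $X$ tends to $0$), so $X$ itself is bounded by some $r \in \mathbb{R}$, and the construction in Lemma \ref{Hausdorff limits are exactly standard parts} gives $D_{n,k}(X^{*},A^{*}_{a})<2^{-p}$ for every $p$, which forces $A^{*}_{a}$ to lie inside a bounded $d_{n,k}$-neighbourhood of $X^{*}$. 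Finally, applying Lemma \ref{Standard parts are definable in reals} to the $\mathcal{L}^{\delta}(M)$-definable $\mathbb{R}$-bounded set $A^{*}_{a}$ yields that $X = \st_{n,k}(A^{*}_{a})$ is $\mathcal{L}^{\delta}(\mathbb{R})$-definable in $(\mathbb{R},\delta|_{\mathbb{R}})$.

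The genuine content of the theorem has been absorbed into the two auxiliary lemmas, and in particular into Lemma \ref{Standard parts are definable in reals}, whose proof rests on the Marker--Steinhorn Theorem for $T^{\delta}_{g}$ (Proposition \ref{Marker-Steinhorn for Derivation}) applied with $A = \mathbb{R}$, using the standing hypothesis $\dcl_{(\mathcal{M},\delta)}(\mathbb{R}) = \mathbb{R}$. The only point that requires a little care in the argument above—and which I would flag as the one minor obstacle—is the verification that $A^{*}_{a}$ is $\mathbb{R}$-bounded; this cannot be read off from the statement of Lemma \ref{Hausdorff limits are exactly standard parts} alone but follows easily by inspecting its proof. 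Everything else is purely formal bookkeeping of the definability of parameters.
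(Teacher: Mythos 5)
Your argument follows exactly the paper's route: the paper proves this theorem by directly combining Lemma \ref{Hausdorff limits are exactly standard parts} with Lemma \ref{Standard parts are definable in reals}, which is precisely your chain. Your additional verification that the fibre $A^{*}_{a}$ is $\mathbb{R}$-bounded (needed to invoke Lemma \ref{Standard parts are definable in reals}) is a worthwhile detail that the paper leaves implicit, but it does not change the substance of the proof.
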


\begin{proof}
This follows directly from Lemmas \ref{Hausdorff limits are exactly standard parts} and \ref{Standard parts are definable in reals}.
\end{proof}

Finally, assuming that $(\mathbb{R},\delta|_{\mathbb{R}})$ is an elementary substructure of $(\mathcal{M},\delta)$, the entire set of Hausdorff limits $\cl_{n}(F(A))$ of a definable family $F(A)$ itself forms a definable family.

\begin{theorem} \label{Hausdorff Limits form a Definable Family}
$\cl_{n}(F(A))$ is definable in $(\mathbb{R},\delta|_{\mathbb{R}})$.
\end{theorem}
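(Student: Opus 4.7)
The plan is to parametrize the collection $\cl_{n}(F(A))$ uniformly by real-valued parameters, working inside the tame pair $(\mathcal{M},\mathbb{R},\st,\delta)$. First I would verify that this pair is a model of $T^{\delta}_{g,\text{tame}}$: the assumption $(\mathbb{R},\delta|_{\mathbb{R}})\preceq(\mathcal{M},\delta)$, combined with the fact that the real field admits no nonprincipal cuts, guarantees that the pair is tame and that both components satisfy $T^{\delta}_{g}$. By Lemma~\ref{Hausdorff limits are exactly standard parts} we may rewrite $\cl_{n}(F(A))=\{\st_{n,k}(A^{*}_{a}):a\in(A')^{*}\}$, so the task reduces to producing an $(\mathbb{R},\delta|_{\mathbb{R}})$-definable set $I\subseteq\mathbb{R}^{l}$ together with an $\mathcal{L}^{\delta}$-formula $\chi(\overline{z},\overline{d})$ such that $\cl_{n}(F(A))=\{\chi(\overline{c},\mathbb{R}^{k}):\overline{c}\in I\}$.

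Let $\psi(a,\overline{x},\overline{y})$ be the $\mathcal{L}^{\delta}(\varnothing)$-formula defining
\[
\widetilde{E}^{*}:=\{(a,\overline{x},\overline{y})\in(A')^{*}\times M^{k}\times M^{k}:\exists c\in A^{*}_{a},\,\Jet_{n}(\overline{x})<\Jet_{n}(c)<\Jet_{n}(\overline{y})\}.
\]
Exactly as in the proof of Lemma~\ref{Standard parts are definable in reals}, for each $a\in(A')^{*}$ and $\overline{d}\in\mathbb{R}^{k}$ we have $\overline{d}\in\st_{n,k}(A^{*}_{a})$ if and only if
\[
\Theta(a,\overline{d})\;:=\;\forall\overline{x},\overline{y}\in U^{k}\Bigl[\bigwedge_{i\le n,\,j\le k}\delta^{i}x_{j}<\delta^{i}d_{j}<\delta^{i}y_{j}\to\psi(a,\overline{x},\overline{y})\Bigr]
\]
holds. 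Thus $E:=\{(a,\overline{d})\in M^{m}\times U^{k}:\Theta(a,\overline{d})\}$ is an $\mathcal{L}^{\delta}_{\text{tame}}(\varnothing)$-definable set whose fibers over $(A')^{*}$ enumerate $\cl_{n}(F(A))$.

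To distill a uniform real-parameter description from $E$, I would pass from $(\mathcal{L},T)$ to $(\mathcal{L}^{\text{df}},T^{\text{df}})$ so that Theorem~\ref{T tame G QE} gives quantifier elimination for $T^{\delta}_{g,\text{tame}}$; then $\Theta$ is equivalent to a quantifier-free $\mathcal{L}^{\delta}_{\text{tame}}$-formula $\Theta^{\text{qf}}$. Enumerating the $\st$-subterms of $\Theta^{\text{qf}}$ involving $a$, say $\st(g_{1}(a)),\dots,\st(g_{l}(a))$, and using that $\st(\overline{d})=\overline{d}$ for $\overline{d}\in\mathbb{R}^{k}$, one seeks to argue that $\Theta^{\text{qf}}(a,\overline{d})$ depends on $a$ only through the tuple $\overline{c}(a):=(\st(g_{1}(a)),\dots,\st(g_{l}(a)))\in\mathbb{R}^{l}$. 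This yields the desired formula $\chi$ satisfying $E_{a}=\chi(\overline{c}(a),\mathbb{R}^{k})$ for every $a\in(A')^{*}$. The index set $I:=\{\overline{c}(a):a\in(A')^{*}\}$ is itself $(\mathbb{R},\delta|_{\mathbb{R}})$-definable because it is the pointwise standard part (in the $d_{0,l}$-metric) of the $\mathcal{L}^{\delta}(\varnothing)$-definable set $\{(g_{1}(a),\dots,g_{l}(a)):a\in(A')^{*}\}\subseteq M^{l}$, to which Lemma~\ref{Standard parts are definable in reals} applies.

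The main obstacle is justifying the claim that $\Theta^{\text{qf}}$ depends on $a$ only through $\overline{c}(a)$. A quantifier-free $\mathcal{L}^{\delta}_{\text{tame}}$-formula may contain atomic $\mathcal{L}^{\delta}$-conditions on $a$ outside of any $\st$, such as polynomial inequalities among the $\delta^{i}a_{j}$, whose values lie in $M\setminus\mathbb{R}$ and need not factor through standard parts. However, since the universal quantifier in $\Theta$ ranges over every infinitesimal jet-box around $\overline{d}\in\mathbb{R}^{k}$, any atomic condition whose truth would be destroyed by an infinitesimal perturbation of $(\overline{x},\overline{y})$ is forced to collapse to a condition purely on the standard parts of the relevant terms. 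Making this last point rigorous is the heart of the proof; an alternative route, avoiding the delicate dissection, is to invoke Lemma~\ref{Forget the Derivation in T delta G} to reformulate the problem via the jet embedding $x\mapsto\Jet_{n}(x)$---under which $d_{n,k}$ becomes Euclidean---reducing to van den Dries' theorem on definability of Hausdorff-limit families in o-minimal expansions of $\mathbb{R}$ \cite{Lisbon2003}, and then transferring the result back through the jet correspondence.
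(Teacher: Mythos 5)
Your setup (reducing via Lemma \ref{Hausdorff limits are exactly standard parts} to the fibers of a pair-definable set $E$, then seeking a uniform real-parameter description) matches the paper's strategy in spirit, but the step you yourself flag as ``the heart of the proof'' --- that the quantifier-free form of $\Theta$ depends on $a$ only through finitely many standard parts $\st(g_{1}(a)),\dots,\st(g_{l}(a))$ --- is a genuine gap, and it is essentially the whole theorem. A quantifier-free $\mathcal{L}^{\delta}_{\text{tame}}$-formula can contain atomic conditions involving $a$ outside any $\st$ (order relations among the $\delta^{i}a_{j}$, occurrences of $U$, mixed terms); these are not touched at all when you shrink the jet-box around $\overline{d}$, so the ``collapse to standard parts under infinitesimal perturbation'' heuristic gives no handle on them, and there is no syntactic reason why the map $a \mapsto \{\overline{d}\in\mathbb{R}^{k} : \Theta(a,\overline{d})\}$ should factor through a real tuple $\overline{c}(a)$. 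That factorization is exactly the uniform definability over $\mathbb{R}$ of the traces of the fibers, and the paper obtains it semantically rather than syntactically: Marker--Steinhorn for $T^{\delta}_{g}$ (Proposition \ref{Marker-Steinhorn for Derivation}, via Dedekind completeness of $\mathbb{R}$) makes each type over $\mathbb{R}$ definable; $\aleph_{1}$-saturation and compactness show the partial type $\pi(\overline{x})$ is not realizable, so finitely many defining schemes suffice; a coding argument merges them into a single formula $\psi(\overline{y},\overline{z})$; and stable embedding then makes the relevant parameter set definable in $(\mathbb{R},\delta|_{\mathbb{R}})$. None of these ingredients appear at the crucial point of your argument.

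The fallback route does not repair this. Lemma \ref{Forget the Derivation in T delta G} lets you rewrite the defining formula of $A^{*}$ as an $\mathcal{L}$-condition on jets, but the resulting family $\{\Jet_{n}(A_{a}) : a \in A'\}$ of subsets of $\mathbb{R}^{k(n+1)}$ is definable only in $(\mathbb{R},\delta|_{\mathbb{R}})$, not in the o-minimal reduct $\mathbb{R}\models T$: its members are obtained by intersecting $\mathcal{L}$-definable fibers with the image $\Jet_{n}(\mathbb{R}^{k})$ of the jet map, which is not $\mathcal{L}$-definable. Hence van den Dries's theorem on Hausdorff limits of definable families in o-minimal expansions of the real field does not apply to this family. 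Moreover, Euclidean Hausdorff limits of sets of the form $\Jet_{n}(A_{a_{i}})$ need not themselves be of the form $\Jet_{n}(X)$, since $\delta$ is highly discontinuous, so even after enlarging to an $\mathcal{L}$-definable family there is no straightforward ``transfer back through the jet correspondence.'' This obstruction is precisely why the paper proves a Marker--Steinhorn theorem for $T^{\delta}_{g}$ instead of quoting the o-minimal statement.
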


\begin{proof}
By either Proposition~\ref{Stable Embedding of T Tame delta G} or Theorem~\ref{Hausdorff limits are definable}, every $X\in\cl_n(F(A))$ is definable in $(\mathbb R,\delta|_{\mathbb R})$. Let $\varphi(\overline{x},\overline{y})$ be the $\mathcal L^\delta(\varnothing)$-formula defining $F(A)$, where $\overline{x}$ is the tuple of parameter variables. Since $\st_{n,k}$ is definable in $(\mathcal M,\mathbb R,\st,\delta)$, let $\rho(\overline{x},\overline{y})$ be an $\mathcal L^\delta_{\mathrm{tame}}(\varnothing)$-formula expressing that $\overline{y}\in\st_{n,k}(A_{\overline{x}}^*)$.

Let $\chi(\overline{x})$ be the $\mathcal L^\delta_{\mathrm{tame}}(\varnothing)$-formula
\[
\begin{aligned}
&\exists r\Bigl(U(r)\wedge r>0\wedge
\forall\overline{w}\bigl(\varphi(\overline{x},\overline{w})
\rightarrow d_{n,k}(\overline{w},0)<r\bigr)\Bigr)\\
&\quad\wedge
\forall\varepsilon\Bigl((U(\varepsilon)\wedge\varepsilon>0)\rightarrow
\forall\overline{w}\bigl(\varphi(\overline{x},\overline{w})\rightarrow
\exists\overline{y}\bigl(U\overline{y}\wedge
\rho(\overline{x},\overline{y})\wedge
d_{n,k}(\overline{w},\overline{y})<\varepsilon\bigr)\bigr)\Bigr).
\end{aligned}
\]
Thus $\chi(\overline{a})$ says that $A_{\overline{a}}^*$ is $\mathbb R$-bounded and is contained in every standard neighbourhood of $\st_{n,k}(A_{\overline{a}}^*)$.

Consider the set of $\mathcal L^\delta_{\mathrm{tame}}(\varnothing)$-formulas
\[
\pi(\overline{x})
:=
\left\{
\forall\overline{z}\,\exists\overline{y}\,
\left[
U\overline{z}\rightarrow
\left(
U\overline{y}\wedge
\neg\bigl(\psi(\overline{y},\overline{z})
\leftrightarrow\rho(\overline{x},\overline{y})\bigr)
\right)
\right]
\;\middle|\;
\psi\text{ is an }\mathcal L^\delta(\varnothing)\text{-formula}
\right\}.
\]
This set is not realizable. Otherwise, for any realization $\overline{a}\in M^m$ of $\pi(\overline{x})$, the set $\st_{n,k}(A_{\overline{a}}^*)\subseteq\mathbb R^k$ would not be definable in $(\mathbb R,\delta|_{\mathbb R})$, contradicting Proposition~\ref{Stable Embedding of T Tame delta G}. Thus, there exists a finite set $\Delta$ of $\mathcal L^\delta(\varnothing)$-formulas such that
\[
(\mathcal M,\mathbb R,\st,\delta)\models
\forall\overline{x}
\bigvee_{\psi\in\Delta}
\exists\overline{z}\,\forall\overline{y}
\left[
U\overline{z}\wedge
\left(
U\overline{y}\rightarrow
\bigl(\psi(\overline{y},\overline{z})
\leftrightarrow\rho(\overline{x},\overline{y})\bigr)
\right)
\right].
\]

Therefore, all sets defined by $\rho(\overline{a},\overline{y})$ on $\mathbb R^k$ are defined by one of the formulas in $\Delta$ with parameters from $\mathbb R$. By a standard coding argument, see \cite[Lemma~2.5]{Guingona2010}, we may assume that there is a single $\mathcal L^\delta(\varnothing)$-formula $\psi(\overline{y},\overline{z})$ which uniformly defines all such sets. Thus, for every $\overline{a}\in M^m$, there exists $\overline{c}\in\mathbb R^{|\overline{z}|}$ such that
\[
\{\overline{b}\in\mathbb R^k:
(\mathbb R,\delta|_{\mathbb R})\models
\psi(\overline{b},\overline{c})\}
=
\st_{n,k}(A_{\overline{a}}^*).
\]

Finally, let $P$ be the set of all $\overline{c}\in\mathbb R^{|\overline{z}|}$ for which there exists $\overline{a}\in(A')^*$ satisfying $\chi(\overline{a})$ and
\[
\{\overline{b}\in\mathbb R^k:
(\mathbb R,\delta|_{\mathbb R})\models
\psi(\overline{b},\overline{c})\}
=
\st_{n,k}(A_{\overline{a}}^*).
\]
The set $P$ is definable in the $\mathcal L^\delta_{\mathrm{tame}}$-structure $(\mathcal M,\mathbb R,\st,\delta)$. By Proposition~\ref{Stable Embedding of T Tame delta G}, it is definable in $(\mathbb R,\delta|_{\mathbb R})$. Lemma~\ref{Hausdorff limits are exactly standard parts} shows that the family defined by $\psi(\overline{y},\overline{c})$, with $\overline{c}\in P$, is exactly $\cl_n(F(A))$. Hence $\cl_n(F(A))$ is definable in $(\mathbb R,\delta|_{\mathbb R})$.
\end{proof}

\subsection*{Acknowledgment}

The author would like to thank Antongiulio Fornasiero and Marcus Tressl for their helpful discussions and valuable suggestions.

\bibliographystyle{plain}
\bibliography{reference}

\end{document}